\newcommand{\R}{\mathbb{R}}
\newcommand{\C}{\mathbb{C}}
\newcommand{\deriv}[2][]{\frac{d #1}{d #2}}
\newcommand{\pderiv}[2][]{\frac{\partial #1}{\partial #2}}
\DeclareMathOperator{\id}{id}
\newcommand{\ii}{\sqrt{-1}}
\newcommand{\cob}[1]{C^{#1}_{\mathrm{COB}}}
\newcommand{\vmv}{\overline{C^\infty}}
\newcommand{\laggrass}{\mathcal{LG}}
\newcommand{\zeromas}{m_0}
\newcommand{\geo}{\mathcal{G}}
\newcommand{\lagcones}{\mathcal{LCO}}
\newcommand{\orientedprojective}{\mathbb{P}^+}
\DeclareMathOperator{\real}{Re}
\DeclareMathOperator{\imaginary}{Im}
\DeclareMathOperator{\vol}{vol}
\DeclareMathOperator{\crit}{Crit}
\DeclareMathOperator{\Gr}{Gr}
\DeclareMathOperator{\phase}{phase}
\DeclareMathOperator{\diff}{Diff}
\DeclareMathOperator{\im}{im}
\DeclareMathOperator{\tr}{trace}
\theoremstyle{plain}
\newtheorem{thm}{Theorem}[section]
\newtheorem{lemma}[thm]{Lemma}
\newtheorem{prop}[thm]{Proposition}
\newtheorem{cor}[thm]{Corollary}
\newtheorem{qu}[thm]{Question}
\newtheorem{conj}[thm]{Conjecture}
\theoremstyle{definition}
\newtheorem{dfn}[thm]{Definition}
\newtheorem{set}[thm]{Setting}
\newtheorem{ntn}[thm]{Notation}
\theoremstyle{remark}
\newtheorem{rem}[thm]{Remark}
\begin{document}
	\title{Special Lagrangian webbing}
	\author[J. Solomon]{Jake P. Solomon}
	\address{Institute of Mathematics\\ Hebrew University, Givat Ram\\Jerusalem, 91904, Israel}
	\email{jake@math.huji.ac.il}
	\author[A. Yuval]{Amitai M. Yuval}
	\address{Institute of Mathematics\\ Hebrew University, Givat Ram\\Jerusalem, 91904, Israel}
	\email{amitai.yuval@mail.huji.ac.il}
	\keywords{geodesic, positive Lagrangian, special Lagrangian, elliptic, boundary value problem, Lagrangian Grassmannian, tangent cone}
	\subjclass[2020]{53D12, 35J66 (Primary) 53C38, 35J70, 58B20 (Secondary)}

	\date{May 2026}
\begin{abstract}
We construct families of imaginary special Lagrangian cylinders near transverse Maslov index $0$ or $n$ intersection points of positive Lagrangian submanifolds in a general Calabi-Yau manifold. Hence, we obtain geodesics of open positive Lagrangian submanifolds near such intersection points. Moreover, this result is a first step toward the non-perturbative construction of geodesics of closed positive Lagrangian submanifolds. Also, we introduce a method for proving $C^{1,1}$ regularity of geodesics of positive Lagrangians at the non-smooth locus. This method is used to show that $C^{1,1}$ geodesics of positive Lagrangian spheres persist under small perturbations of endpoints, improving the regularity of a previous result of the authors. In particular, we obtain the first examples of $C^{1,1}$ solutions to the positive Lagrangian geodesic equation in arbitrary dimension that are not invariant under isometries. Along the way, we study geodesics of positive Lagrangian linear subspaces in a complex vector space, and prove an a priori existence result in the case of Maslov index $0$ or $n.$ Throughout the paper, the cylindrical transform introduced in previous work of the authors plays a key role.
\end{abstract}
\maketitle

	\tableofcontents
	
	\section{Introduction}
\subsection{Setting}
Let $(X,\omega,J,\Omega)$ be a \emph{Calabi-Yau manifold}. Namely, $X$ is a K\"ahler manifold with symplectic form $\omega$ and complex structure $J$, and $\Omega$ is a non-vanishing holomorphic volume form on $X$. We denote by $g$ the K\"ahler metric and by $n$ the complex dimension. Such a manifold is sometimes called almost Calabi-Yau as $g$ need not be Ricci flat, but we omit the `almost' for brevity.
	
	An oriented Lagrangian submanifold $\Lambda\subset X$, possibly immersed, is said to be \emph{positive} if $\real \Omega|_\Lambda$ is a positive volume form.
This condition is also known as being almost-calibrated~\cite{Wang}.
A positive Lagrangian submanifold is \emph{special} if $\imaginary \Omega|_\Lambda = 0.$ An oriented Lagrangian submanifold is called \emph{imaginary special} if $\real \Omega|_\Lambda = 0$ and $\imaginary \Omega|_\Lambda$ is a positive volume form. Imaginary special Lagrangians are also called~\cite{Joy07} special Lagrangians of phase $\ii$.
	
	Let $\mathcal{O}$ be a Hamiltonian isotopy class of closed smoothly embedded positive Lagrangians diffeomorphic to a given manifold $L.$ Then $\mathcal{O}$ is naturally a smooth Fr\'echet manifold. A Riemannian metric $G$ on $\mathcal{O}$ is defined in ~\cite{solomon}. It is shown in~\cite{solomon-curv} that the metric $G$ has a Levi-Civita connection and the associated sectional curvature is non-positive. The Levi-Civita connection, which we describe in detail in Section~\ref{subsection:positive Lagrangians and geodesics}, gives rise to the notion of geodesics. The geodesic equation is a fully non-linear degenerate elliptic PDE~\cite{rubinstein-solomon}. A satisfactory existence theory for these geodesics would have far-reaching consequences for the uniqueness and existence of special Lagrangian submanifolds in $\mathcal{O}$~\cite{solomon,solomon-curv} as well as rigidity of Lagrangian intersections~\cite{rubinstein-solomon}.

In~\cite{cylinders}, the authors introduce the cylindrical transform for geodesics of positive Lagrangians. A cylinder is a manifold of the form $N \times [0,1].$ The cylindrical transform of a geodesic of positive Lagrangians $(\Lambda_t)_{t \in [0,1]}$ is a one-parameter family of imaginary special Lagrangian cylinders. These cylinders satisfy the elliptic boundary condition that the boundary component corresponding to $N\times\{i\}$ is contained in $\Lambda_i$ for $i = 0,1.$ Thus, the degenerate elliptic geodesic equation is transformed to a family of elliptic equations. When the endpoints of the geodesic $\Lambda_0,\Lambda_1,$ are Lagrangian spheres intersecting transversally in two points, necessary and sufficient conditions are given for a family of imaginary special Lagrangian cylinders to arise as the cylindrical transform of a geodesic. As a consequence, it is shown that geodesics of such positive Lagrangian spheres persist under small perturbations of the endpoints. The cylindrical transform is naturally defined for geodesics of immersed Lagrangians that are smooth away from a finite number of points, where a certain type of conical singularity is allowed. We call such geodesics \emph{cone-smooth}. See Section~\ref{subsection:Lagrangians with cone points} for the precise definition. Conversely, geodesics constructed from the inverse cylindrical transform are in general cone-smooth. Below, unless otherwise mentioned, all geodesics are cone-smooth.

\subsection{Statement of results}
The present paper proves the a priori existence of families of imaginary special Lagrangian cylinders near intersection points of positive Lagrangian submanifolds of Maslov index $0$ or $n.$ As explained in Section~\ref{ssec:future}, this result is a first step toward the non-perturbative construction of geodesics of positive Lagrangian submanifolds in a general Calabi-Yau manifold. Moreover, we introduce a method for proving $C^{1,1}$ regularity of geodesics at the non-smooth locus. A geodesic is called $C^{1,1}$ if it admits a parameterization by a $C^{1,1}$ family of positive Lagrangian immersions.

Let $\Lambda_0,\Lambda_1 \subset X$ be positive Lagrangian submanifolds and let $N$ be a manifold of dimension $n-1$. We denote by $\mathcal{SLC}(N;\Lambda_0,\Lambda_1)$ the space of imaginary special Lagrangian submanifolds of $X,$ perhaps immersed, diffeomorphic to $N\times[0,1],$ such that the boundary corresponding to $N \times \{i\}$ is embedded in $\Lambda_i$ for $i = 0,1.$ Positive Lagrangian submanifolds are naturally graded in the sense of~\cite{Kon95,Sei00}, so the Maslov index of intersection points is defined absolutely. See Definition~\ref{definition:Maslov index}. In Definition~\ref{definition: interior regularity}, we recall the notion of regular convergence of a family \[
(Z_s)_{s\in(0,\epsilon)}\subset\mathcal{SLC}\left(S^{n-1};\Lambda_0,\Lambda_1\right)
\]
to an intersection point $q \in \Lambda_0\cap \Lambda_1.$ Roughly speaking, regular convergence is a necessary and sufficient condition for $(Z_s)_s$ to arise from the cylindrical transform of a geodesic of open positive Lagrangians in a neighborhood of $q.$ Here, open means not compact and without boundary. Although the metric $G$ is defined only on a Hamiltonian isotopy class of closed positive Lagrangians, the associated Levi-Civita connection and geodesic equation continue to be well-defined in the open setting as explained in Definition~\ref{definition:geodesic}.
	\begin{thm}
		\label{theorem:spider web}
Let $\Lambda_0,\Lambda_1\subset X$ be smoothly embedded positive Lagrangians intersecting transversally at a point $q$ with Maslov index $0.$
		\begin{enumerate}[label=(\alph*)]
			\item\label{regular family of cylinders} There exists a one-parameter family $(Z_s)_{s\in(0,\epsilon)}\subset\mathcal{SLC}\left(S^{n-1};\Lambda_0,\Lambda_1\right)$ converging regularly to $q.$
			\item\label{geodesic of open C1 Lags} There exist open neighborhoods, $q\in U_i\subset\Lambda_i,\;i=0,1,$ which are connected by a $C^{1,1}$ geodesic $(U_t)_{t\in[0,1]}$ of open positive Lagrangians.
		\end{enumerate}
	\end{thm}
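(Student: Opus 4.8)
The plan is to reduce the construction to a model problem in $\mathbb{C}^n$ and then perturb. First I would work in a Darboux–Weinstein chart around $q$ in which $\omega$ becomes the standard symplectic form and $\Omega$ becomes the standard holomorphic volume form $dz_1 \wedge \cdots \wedge dz_n$ up to higher-order error. Since $\Lambda_0, \Lambda_1$ meet transversally at $q$ with Maslov index $0$, after a unitary change of coordinates their tangent spaces at $q$ are positive Lagrangian linear subspaces of $\mathbb{C}^n$ whose relative phase is constrained by the Maslov index condition; by the a priori existence result for geodesics of positive Lagrangian linear subspaces in the Maslov index $0$ case (promised in the abstract and presumably established earlier in the paper), the pair of linear subspaces $T_q\Lambda_0, T_q\Lambda_1$ is joined by a geodesic of positive Lagrangian subspaces, and its cylindrical transform furnishes an explicit family of imaginary special Lagrangian cylinders in $\mathbb{C}^n$ — these are the model cylinders, diffeomorphic to $S^{n-1} \times [0,1]$, with the boundary sphere $S^{n-1} \times \{i\}$ lying in the linear subspace $T_q\Lambda_i$ and shrinking to $q$ as $s \to 0$.

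Next I would set up the perturbation problem: I seek genuine imaginary special Lagrangian cylinders $Z_s$ in $X$, with $\partial Z_s$ split between $\Lambda_0$ and $\Lambda_1$, as graphs of small sections over the model cylinders. The imaginary special Lagrangian condition is an elliptic first-order system (equivalently, the vanishing of $\real\Omega$ restricted to the submanifold, together with the Lagrangian condition), and the boundary conditions — one boundary sphere in $\Lambda_0$, the other in $\Lambda_1$ — form an elliptic boundary value problem because the intersection is transverse. The linearization at the model is the corresponding linearized operator on the model cylinder; I would show it is Fredholm of the expected index and, crucially, that after accounting for the symmetries (the $S^{n-1}$ rotational symmetry of the model and the scaling parameter $s$) its cokernel vanishes, so that an implicit function theorem argument in suitable weighted Hölder spaces adapted to the conical tip produces, for each small $s$, a unique nearby solution $Z_s \in \mathcal{SLC}(S^{n-1};\Lambda_0,\Lambda_1)$. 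Establishing that the family $(Z_s)_s$ converges regularly to $q$ in the sense of Definition~\ref{definition: interior regularity} then amounts to checking that the solutions converge to the model family in the appropriate rescaled sense, which follows from the uniformity of the implicit function theorem construction in $s$.

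Part~\ref{geodesic of open C1 Lags} follows by inverting the cylindrical transform: the family $(Z_s)_{s\in(0,\epsilon)}$ sweeps out, near $q$, a one-parameter family $(U_t)_{t\in[0,1]}$ of open positive Lagrangians solving the geodesic equation, with $U_0 \subset \Lambda_0$ and $U_1 \subset \Lambda_1$; regular convergence is exactly what guarantees this inverse transform is well-defined and cone-smooth. The remaining point is $C^{1,1}$ regularity at the cone tip, i.e.\ at the non-smooth locus of the geodesic. Here I would use the new method advertised in the introduction: the conical singularity of each $Z_s$ is modeled on the linear geodesic's cone, and one reads off the $C^{1,1}$ bound on the geodesic $(U_t)_t$ from uniform second-derivative bounds on the family $Z_s$ near the tip — these in turn come from the weighted Schauder estimates used in the perturbation argument, which control the solution to the same order as the model. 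I expect the main obstacle to be precisely this last step: controlling the regularity of the family uniformly up to and across the conical point, since the naive function spaces degenerate there and one must choose weights that are simultaneously compatible with Fredholm theory for the elliptic boundary value problem, with the scaling parameter $s$, and with the desired $C^{1,1}$ conclusion for the reconstructed geodesic.
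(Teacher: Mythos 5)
Your overall strategy coincides with the paper's: use Theorem~\ref{theorem:linear geodesic} to get a linear geodesic between $T_q\Lambda_0$ and $T_q\Lambda_1$, take its cylindrical transform as a model family of imaginary special Lagrangian cylinders, perturb to genuine cylinders in $X$, and invert the cylindrical transform to get the geodesic of open Lagrangians. But the two technical steps you flag as delicate are handled quite differently in the paper, and in both cases more softly than you propose. For the perturbation, the paper (Lemma~\ref{lemma:integration of tangent cone}) does not work in weighted H\"older spaces adapted to the conical tip at all. Instead it rescales: for each $s>0$ it pulls back the Calabi--Yau data by multiplication by $s$ in a Darboux chart, obtaining forms $\Omega_s = s^{-n}M_s^*\mathbf{X}^*\Omega$ that converge smoothly to a constant-coefficient form $\Omega_0$ as $s\searrow 0$. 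A \emph{single} model cylinder (one member of the $\R_{>0}$-orbit from Lemma~\ref{lemma:geodesic of cones}) is $\Omega_0$-imaginary special Lagrangian, and the equation $\mathcal{F}(u,s)=*j_u^*\real\Omega_s=0$ is solved by the ordinary implicit function theorem on the fixed compact cylinder $S^{n-1}\times[0,1]$ in unweighted $C^{2,\alpha}$; the linearization is the operator $\Delta_\rho$ of Lemma~\ref{lemma:Laplacian}, which is already an isomorphism $C^{2,\alpha}(L;\partial L)\to C^\alpha(L)$, so there is no cokernel, no Fredholm index computation, and no symmetry to quotient by. (Your instinct that one must ``account for the $S^{n-1}$ rotational symmetry of the model'' is off: the model cylinder has no such symmetry in general, and none is needed.) The honest family $(Z_s)_s$ is then recovered by scaling back, $Z_s=[\mathbf{X}\circ M_s\circ j_{k(s)}]$, and regular convergence is essentially automatic from smoothness of $k$ in $s$ together with Lemma~\ref{rem: easy regularity}.

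The step you identify as ``the main obstacle'' --- $C^{1,1}$ regularity of the reconstructed geodesic across the cone point --- is a genuine gap in your proposal as written: you defer it to uniform weighted Schauder estimates which you do not supply, and it is not clear such estimates alone would yield $C^{1,1}$ of the \emph{geodesic} (a statement about the family of Lagrangian immersions jointly in space and time) rather than of the individual cylinders. The paper avoids hard estimates entirely here. It first shows (Lemma~\ref{lemma:commute}) that the cylindrical transform commutes with blowup, so the tangent cones $TC_{\hat q_t}U_t$ of the constructed geodesic coincide with the linear geodesic $(\lambda_t)_t$; then Lemma~\ref{lemma:C11 regularity from blowup}, resting on the elementary Lemma~\ref{lemma:cone-smooth function is C1}, shows that a cone-smooth geodesic whose tangent cones at a critical point are all \emph{linear} is automatically $C^{1,1}$ there. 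The point is that linearity of the cone derivative, combined with cone-smoothness of the blowup, gives the Lipschitz bound on first derivatives by a Hadamard-type factorization argument, with no elliptic estimates needed. If you want to complete your route, you should either import this blowup lemma or make precise which weighted norms control the second derivatives of the reassembled family $\Psi_t$ uniformly in $t$ near the tip.
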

\begin{rem}\label{rem:tri}
For $\Lambda_0,\Lambda_1 \subset X$ positive Lagrangians and $q \in \Lambda_0 \cap\Lambda_1,$ the Maslov index $m(q;\Lambda_0,\Lambda_1)$ satisfies
\[
m(q;\Lambda_0,\Lambda_1) = n - m(q;\Lambda_1,\Lambda_0).
\]
Also, the time parameter of a geodesic from $\Lambda_0$ to $\Lambda_1$ can be reversed to obtain a geodesic from $\Lambda_1$ to $\Lambda_0.$
So, Theorem~\ref{theorem:spider web} holds for $q$ of Maslov index $n$ as well as $0.$
\end{rem}
The family of imaginary special Lagrangian cylinders $(Z_s)_{s}$ of Theorem~\ref{theorem:spider web} is depicted in Figure~\ref{fig:lw}. We call such a family special Lagrangian webbing.

\begin{figure}[ht]
\centering
\includegraphics[width=12cm]{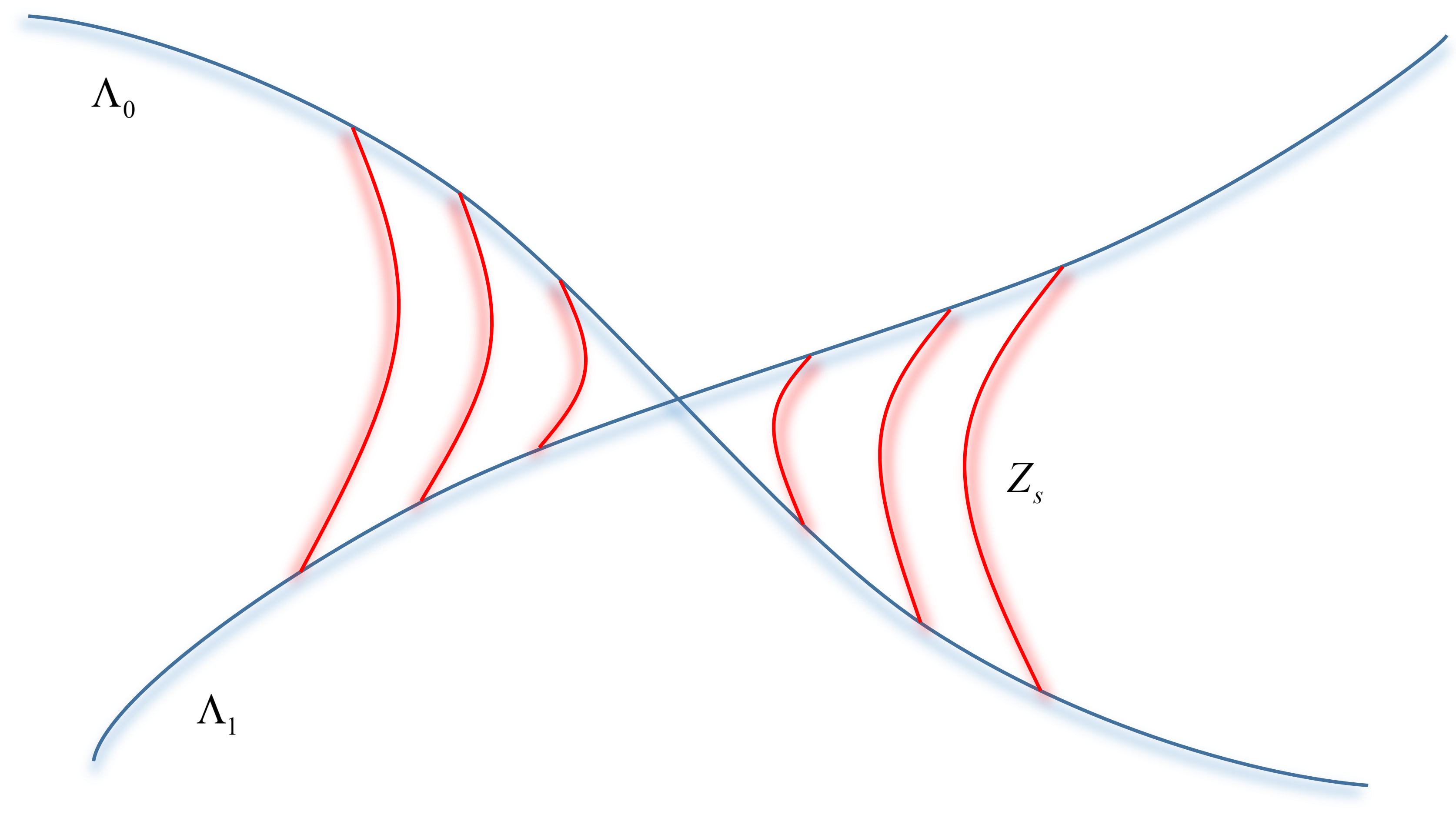}
\caption{A family of imaginary special Lagrangian cylinders $(Z_s)_s$ emanating from a Maslov zero intersection point of Lagrangian submanifolds $\Lambda_0,\Lambda_1,$ as in Theorem~\ref{theorem:spider web}.}
\label{fig:lw}
\end{figure}

	In~\cite[Theorem~1.6]{cylinders} it is shown that a cone-smooth geodesic $(\Lambda_t)_{t \in [0,1]}$ connecting positive Lagrangian spheres $\Lambda_0,\Lambda_1,$ that intersect transversally at two points persists under small perturbations of $\Lambda_0,\Lambda_1.$ The following theorem shows that if the geodesic $(\Lambda_t)_t$ is $C^{1}$ regular at the non-smooth locus, then it is in fact $C^{1,1},$ and moreover, this regularity persists under perturbations. Let $\mathcal{O}$ be a Hamiltonian isotopy class of smoothly embedded positive Lagrangian spheres and let $\mathfrak{G}_\mathcal{O}$ denote the space of geodesics $(\Lambda_t)_{t \in [0,1]}$ with $\Lambda_0,\Lambda_1 \in \mathcal{O}$ intersecting transversally at two points. We refer the reader to Definition~\ref{dfn:topology on geodesics} for the strong and weak $C^{1,\alpha}$ topologies on $\mathfrak{G}_\mathcal{O}.$

	\begin{thm}
		\label{theorem:perturbed C1 geodesic}
		Let $\Lambda_0,\Lambda_1\in\mathcal{O}$ intersect transversally at exactly two points. Suppose there exists a $C^1$ geodesic $(\Lambda_t)_{t\in[0,1]}$ between $\Lambda_0$ and $\Lambda_1.$ Let $\alpha\in(0,1).$ Then, there exists a $C^{2,\alpha}$-open neighborhood $\mathcal{Y}$ of $\Lambda_1$ in $\mathcal{O}$ and a weak $C^{1,\alpha}$-open neighborhood $\mathcal X$ of $(\Lambda_t)_{t \in [0,1]}$ in $\mathfrak{G}_\mathcal{O}$ such that for every $\Lambda\in\mathcal{Y}$ there exists a unique geodesic between $\Lambda_0$ and $\Lambda$ in $\mathcal{X}.$ This geodesic  has regularity $C^{1,1}$ and depends continuously on $\Lambda$ with respect to the $C^{2,\alpha}$ topology on $\mathcal{Y}$ and the strong $C^{1,\alpha}$ topology on $\mathcal{X}.$
	\end{thm}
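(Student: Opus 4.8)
The strategy is to pass to the cylindrical transform, obtain the perturbed geodesic from the persistence result of \cite{cylinders}, and then carry out a blow-up analysis at the conical singularity locus to upgrade regularity from $C^1$ to $C^{1,1}.$ Recall from \cite{cylinders} that a cone-smooth geodesic $(\Lambda_t)_{t\in[0,1]}$ between positive Lagrangian spheres meeting transversally at two points $p,q$ is the inverse cylindrical transform of a family $(Z_s)_s\subset\mathcal{SLC}(S^{n-1};\Lambda_0,\Lambda_1)$ converging regularly to $p$ and to $q.$ By \cite[Theorem~1.6]{cylinders} and the implicit function theorem argument underlying it, there is a $C^{2,\alpha}$-open neighborhood $\mathcal Y$ of $\Lambda_1$ in $\mathcal O$ and a weak $C^{1,\alpha}$-open neighborhood $\mathcal X$ of $(\Lambda_t)_t$ in $\mathfrak G_{\mathcal O}$ such that every $\Lambda\in\mathcal Y$ is joined to $\Lambda_0$ by a unique geodesic $(\Lambda^{\Lambda}_t)_t\in\mathcal X,$ again cone-smooth, arising from a family of cylinders $(Z^\Lambda_s)_s$ that depends continuously on $\Lambda$ in the weighted Banach space of the implicit function theorem; in particular $(\Lambda^\Lambda_t)_t$ converges weakly in $C^{1,\alpha}$ to $(\Lambda^{\Lambda'}_t)_t$ as $\Lambda\to\Lambda'.$ Taking $\Lambda=\Lambda_1$ and using uniqueness, $(\Lambda^{\Lambda_1}_t)_t=(\Lambda_t)_t,$ so existence and uniqueness are settled, and it remains to prove that every $(\Lambda^\Lambda_t)_t$ is $C^{1,1}$ with a bound uniform over $\mathcal Y$ and that $\Lambda\mapsto(\Lambda^\Lambda_t)_t$ is continuous for the strong $C^{1,\alpha}$ topology. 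The hypothesis that $(\Lambda_t)_t$ is $C^1$ enters only in the first of these.

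For the regularity statement I would work near the non-smooth locus, which consists of the finitely many conical singularities of $(\Lambda^\Lambda_t)_t$ and, as $t$ ranges over $[0,1],$ sweeps out arcs through $p$ and $q.$ By transversality and Remark~\ref{rem:tri}, the two intersection points have Maslov indices $0$ and $n,$ so the blow-up of the family $(Z^\Lambda_s)_s$ at a point of the singular locus is, to leading order, one of the linear geodesics of positive Lagrangian subspaces of $\C^n$ of Maslov index $0$ or $n$ whose a priori existence is established earlier in the paper, together with the corresponding imaginary special Lagrangian cone. Writing $(\Lambda^\Lambda_t)_t$ near the singular arc as a graph over this model cone by a function $u,$ the imaginary special Lagrangian condition becomes a nonlinear elliptic boundary value problem for $u$ that is singular along the arc and degenerate in the radial variable in the way prescribed by the linear model. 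The $C^1$ regularity of $(\Lambda_t)_t$ at the non-smooth locus is equivalent to $u$ and its first derivatives extending continuously up to the singular arc; using this as the base estimate, a weighted Schauder estimate for the linearization on the model cone — whose indicial roots are computed from the linear geodesic of Maslov index $0$ or $n$ — bootstraps it to $\|u\|_{C^{1,1}}\le C$ with $C$ controlled by the $C^{2,\alpha}$ norm of the defining data, hence uniform over $\mathcal Y$ after shrinking it. Patching this boundary estimate with the interior smoothness away from the arcs yields a $C^{1,1}$ parameterization of $(\Lambda^\Lambda_t)_t$ with a uniform bound. Since the embedding $C^{1,1}\hookrightarrow C^{1,\alpha}$ is compact, the uniform $C^{1,1}$ bound, the weak $C^{1,\alpha}$ convergence above, and the uniqueness in $\mathcal X$ together upgrade the convergence to the strong $C^{1,\alpha}$ topology, completing the proof.

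The main obstacle is the weighted elliptic estimate at the conical singularity, uniform under perturbation. The difficulties are that the singularity is an arc along which the tangent cone varies, so the estimate must be uniform along it; that the blow-up limit must be identified precisely with the Maslov $0$ (or $n$) linear geodesic and its indicial roots pinned down so that $C^{1,1},$ and nothing better or worse, is the natural output of the bootstrap; and that the degenerate ellipticity of the geodesic equation has to be reconciled with the genuine ellipticity of the cylinder equation under the cylindrical transform, so that an estimate for $u$ translates into a statement about $(\Lambda^\Lambda_t)_t.$ Granting this, uniformity over $\mathcal Y$ reduces to tracking the continuous dependence of the coefficients on $\Lambda,$ and the rest follows from \cite{cylinders} and the compact embedding.
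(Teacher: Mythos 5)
Your overall architecture --- inherit existence, uniqueness, and continuous dependence from \cite[Theorem~1.6]{cylinders}, then upgrade regularity at the singular locus by a blow-up analysis --- matches the paper's, and the first paragraph of your proposal is essentially the paper's first paragraph. The regularity argument, however, has a genuine gap. Your bootstrap takes as its ``base estimate'' that the graph function $u$ of the \emph{perturbed} geodesic $(\Lambda^\Lambda_t)_t$ and its first derivatives extend continuously to the singular arc, i.e.\ that the perturbed geodesic is already $C^1$ there with linear tangent cones. But the $C^1$ hypothesis is only given for the unperturbed geodesic $(\Lambda_t)_t$. A priori the tangent cones of $(\Lambda^\Lambda_t)_t$ at its critical points could be non-linear positive Lagrangian cones (this is exactly the issue raised in Question~\ref{qu:lagcones}), in which case there is no linear model over which to write $u$ and no $C^1$ base estimate to bootstrap from. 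Propagating linearity of the tangent cones from $r=0$ to all perturbed geodesics is the heart of the matter, and your proposal assumes it rather than proving it.

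The paper closes this gap with soft arguments that also let it dispense entirely with the weighted Schauder/indicial-root machinery you flag as your main obstacle. Lemma~\ref{lemma:geodesic of tangent cones} shows the tangent cones at a critical point themselves form a geodesic of positive Lagrangian cones; the $C^1$ hypothesis makes this geodesic linear at $r=0$; Proposition~\ref{proposition:geodesic is in fact linear} (an open--closed argument in $r$, whose openness step rests on Proposition~\ref{proposition:space of good Lags is open} and the uniqueness part of Lemma~\ref{lemma:perturbed geodesic of cones}, itself a consequence of the relative-flux normalization and the implicit function theorem for the cylinder equation) then forces linearity for all $r$. Finally, Lemma~\ref{lemma:C11 regularity from blowup}, via the purely calculus-level Lemma~\ref{lemma:cone-smooth function is C1}, converts ``cone-smooth family with linear cone derivatives'' directly into $C^{1,1}$, with no elliptic estimate at the cone point. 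If you want to salvage your route, you would need to first establish, for each perturbed geodesic, that its tangent cones at the two critical points are linear --- at which point the paper's Lemma~\ref{lemma:cone-smooth function is C1} already gives $C^{1,1}$ and the weighted analysis becomes unnecessary.
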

In~\cite{solomon-yuval}, there are examples of geodesics of positive Lagrangians of arbitrary dimension, many of which satisfy the hypothesis of Theorem~\ref{theorem:perturbed C1 geodesic}. However, they are all preserved by an isometric action of $O(n)$ on the ambient manifold $X.$ From Theorem~\ref{theorem:perturbed C1 geodesic} we obtain the following.
\begin{cor}
There exist $C^{1,1}$ geodesics of closed positive Lagrangians in arbitrary dimension that are not invariant under any isometries of the ambient manifold.
\end{cor}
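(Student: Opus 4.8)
The plan is to take one of the $O(n)$-symmetric geodesics produced in \cite{solomon-yuval} and to destroy its symmetry by a small perturbation of one endpoint, using Theorem~\ref{theorem:perturbed C1 geodesic} to guarantee that a $C^{1,1}$ geodesic survives the perturbation. Fix $n\geq 2$, and let $X$, the isometric $O(n)$-action on $X$, and the $O(n)$-invariant $C^{1}$ geodesic $(\Lambda_t)_{t\in[0,1]}$ of positive Lagrangian spheres be as supplied by \cite{solomon-yuval} and discussed above, so that $\Lambda_0,\Lambda_1$ lie in a common Hamiltonian isotopy class $\mathcal{O}$ and intersect transversally at exactly two points. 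Applying Theorem~\ref{theorem:perturbed C1 geodesic} with a fixed $\alpha\in(0,1)$ yields a $C^{2,\alpha}$-open neighbourhood $\mathcal{Y}$ of $\Lambda_1$ in $\mathcal{O}$ such that every $\Lambda\in\mathcal{Y}$ is joined to $\Lambda_0$ by a geodesic $\gamma^\Lambda=(\gamma^\Lambda_t)_{t\in[0,1]}$ of positive Lagrangians of regularity $C^{1,1}$. Since $n$ is arbitrary, the corollary follows once we exhibit a single $\Lambda\in\mathcal{Y}$ for which $\gamma^\Lambda$ is preserved by no nontrivial isometry of $X$.

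The reduction that makes this tractable is that an isometry $\phi$ of $X$ with $\phi(\gamma^\Lambda_t)=\gamma^\Lambda_t$ for all $t$ must, in particular, fix $\Lambda_0=\gamma^\Lambda_0$ and $\Lambda=\gamma^\Lambda_1$; (an isometry reversing the time parameter would have square fixing every $\gamma^\Lambda_t$ and is handled by the same considerations at the midpoint). Hence it suffices to find $\Lambda\in\mathcal{Y}$ whose stabiliser in $\Gamma:=\{\phi\in\mathrm{Isom}(X):\phi(\Lambda_0)=\Lambda_0\}$ is trivial. Now $\Gamma$ is a compact Lie group: an isometry is determined by its $1$-jet at a point, so stabilising the compact set $\Lambda_0$ realises $\Gamma$ as a closed subgroup of a compact group. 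Note that $\Gamma$ contains the given copy of $O(n)$, so $\Lambda_1$ itself has nontrivial $\Gamma$-stabiliser and a genuine perturbation is needed; the role of compactness of $\Gamma$ is precisely to guarantee that Lagrangians with nontrivial $\Gamma$-stabiliser form a non-generic set near $\Lambda_1$.

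To produce the required $\Lambda$, I would pass to a slice for $\Gamma$ at $\Lambda_1$. Since $\Gamma$ is compact, any $\phi\in\Gamma$ moving $\Lambda_1$ a definite amount also moves every sufficiently nearby Lagrangian, so only isometries close to the compact subgroup $\Gamma_1:=\{\phi\in\Gamma:\phi(\Lambda_1)=\Lambda_1\}\supseteq O(n)$ are relevant. Choosing a $\Gamma_1$-equivariant Weinstein (or Riemannian normal-bundle) chart at $\Lambda_1\cong S^{n}$ and using $H^1(S^{n})=0$, the positive Lagrangians in $\mathcal{O}$ near $\Lambda_1$ are parameterised by functions $f$ near $0$ in $C^\infty(S^{n})/\R$, and, for $f$ small, the $\Gamma$-stabiliser of the corresponding Lagrangian $\Lambda_f$ coincides with the stabiliser of $f$ under the induced action of $\Gamma_1$. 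It then remains to find $f$ arbitrarily close to $0$ with trivial $\Gamma_1$-stabiliser: take $f$ to be a generic Morse function, with pairwise distinct critical values and critical set $P$ large enough that no nontrivial element of $\Gamma_1$ fixes all of $P$ (possible because the induced action of the compact Lie group $\Gamma_1$ on the deformation space of $\Lambda_1$ is effective, so finitely many evaluation-type functionals separate $\Gamma_1$); any $\phi\in\Gamma_1$ fixing $f$ permutes $P$ preserving critical values, hence fixes $P$ pointwise, hence is the identity. Rescaling $f$ places $\Lambda_f$ in $\mathcal{Y}$, and $\gamma^{\Lambda_f}$ is then the desired $C^{1,1}$ geodesic.

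The step I expect to be the real obstacle is exactly this last one — excluding invariance under \emph{every} nontrivial isometry of $X$, not merely under the manifest $O(n)$ — because it forces one to control the entire, possibly positive-dimensional, group $\Gamma$ simultaneously; this is why compactness of the stabilisers of $\Lambda_0$ and $\Lambda_1$, together with an equivariant slice at $\Lambda_1$, are doing the essential work, and why a bare linear-algebra genericity statement does not suffice (the union of fixed-point sets of a compact group need not be thin), so that the Morse-critical-point structure of the perturbation is genuinely used. By contrast, the invocation of Theorem~\ref{theorem:perturbed C1 geodesic} and the reduction to symmetries of the two endpoints are immediate, and since the corollary asserts only existence, no uniformity in $n$ or in the size of the perturbation is required.
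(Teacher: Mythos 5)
Your proposal follows the same route as the paper: the corollary is obtained there in one line by applying Theorem~\ref{theorem:perturbed C1 geodesic} to the $O(n)$-invariant examples of~\cite{solomon-yuval} and perturbing the endpoint $\Lambda_1$ generically so as to destroy all symmetry while retaining a $C^{1,1}$ geodesic, which is exactly your strategy. The paper records no further details, so your genericity argument (compactness of the stabilizer $\Gamma$ of $\Lambda_0$, equivariant slice at $\Lambda_1$, Morse perturbation) is an elaboration of a step the authors leave implicit; the one point I would tighten is your claim that for small $f$ the $\Gamma$-stabilizer of $\Lambda_f$ equals the $\Gamma_1$-stabilizer of $f$ --- a closed subgroup of $\Gamma$ contained in a small neighborhood of $\Gamma_1$ is in general only \emph{conjugate} into $\Gamma_1$ (by an element near the identity), so the genericity must be arranged over the whole nearby $\Gamma$-orbit of $\Lambda_f$, not just over $\Gamma_1$ --- but this does not change the overall approach.
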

To prove Theorems~\ref{theorem:spider web} and~\ref{theorem:perturbed C1 geodesic}, we use a blow-up argument, which can be summarized as follows.
The critical locus of a geodesic of positive Lagrangians $(\Lambda_t)_t$ is defined by
\[
\crit((\Lambda_t)_t) = \bigcap_t \Lambda_t.
\]
To each point $p$ of each Lagrangian $\Lambda_t,$ using the definition of cone-smoothness, we associate a unique tangent cone $TC_p \Lambda_t.$ If $\Lambda_t$ is $C^1$ regular at $p,$ then $TC_p\Lambda_t = T_p\Lambda_t.$
For $q \in \crit((\Lambda_t)_t),$ Lemma~\ref{lemma:geodesic of tangent cones} shows that the family of tangent cones $TC_q\Lambda_t\subset T_qX$ is a geodesic from $T_q\Lambda_0$ to~$T_q\Lambda_1.$ Conversely, given a geodesic of positive Lagrangian cones in $T_qX$ from $T_q\Lambda_0$ to $T_q\Lambda_1,$ Lemma~\ref{lemma:integration of tangent cone} shows how to construct open neighborhoods, $q\in U_i\subset\Lambda_i,\;i=0,1,$ and a geodesic $(U_t)_{t\in[0,1]}$ of open positive Lagrangians connecting them. Non-smooth points of a cone-smooth geodesic are always critical points. Lemma~\ref{lemma:C11 regularity from blowup} shows that if the tangent cones at a critical point $q$ of a geodesic are all linear, then the geodesic is of regularity $C^{1,1}$ at $q.$ The final ingredient in Theorem~\ref{theorem:spider web} is the following a priori existence result for geodesics of positive Lagrangian linear subspaces. Let $\laggrass^+(n)$ denote the Grassmannian of positive Lagrangian linear subspaces in $\C^n$ with the standard Calabi-Yau structure. For $\Lambda \in \laggrass^+(n),$ the tangent space $T_\Lambda \laggrass^+(n)$ is canonically isomorphic to the space of quadratic forms on $\Lambda.$ For $\Lambda_0, \Lambda_1 \in \laggrass^+(n),$ we abbreviate $m(\Lambda_0,\Lambda_1)$ for the Maslov index of $0 \in \Lambda_0 \cap \Lambda_1.$
	\begin{thm}
		\label{theorem:linear geodesic}
		Let $\Lambda_0,\Lambda_1\in\laggrass^+(n)$ with $m(\Lambda_0,\Lambda_1)=0.$ Then there exists a geodesic in $\laggrass^+(n)$ between $\Lambda_0$ and $\Lambda_1$ with negative semi-definite derivative. If, in addition, $\Lambda_0$ and $\Lambda_1$ intersect transversally, the geodesic has negative definite derivative.
	\end{thm}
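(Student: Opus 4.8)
The plan is to reduce the problem to a scalar second-order boundary value problem via an explicit parameterization of $\laggrass^+(n)$, and then exploit the Maslov index hypothesis to construct a monotone solution. Recall that any positive Lagrangian subspace is the graph of a symmetric form over a fixed reference; more precisely, fixing $\Lambda_0$, every $\Lambda \in \laggrass^+(n)$ transverse to $i\Lambda_0$ can be written as $\Lambda = \{x + i A x : x \in \Lambda_0\}$ for a symmetric matrix $A$, and the positivity condition together with the grading translates into a condition on the eigenvalues of $A$, equivalently on the Lagrangian angle $\theta(\Lambda) = \sum \arctan(\lambda_j)$ where $\lambda_j$ are the eigenvalues. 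The Maslov index $m(\Lambda_0,\Lambda_1)=0$ means precisely that, after simultaneously diagonalizing the pair, we may choose the branches of these arctangents so that all the relevant angle differences lie in $(-\pi/2,\pi/2)$ — no eigenvalue "crosses infinity." This is the structural input that makes an a priori monotone geodesic possible; when the index is nonzero one expects obstructions.

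First I would write down the geodesic equation in $\laggrass^+(n)$ using the Levi-Civita connection of the metric $G$ restricted to the linear setting, identifying $T_\Lambda\laggrass^+(n)$ with quadratic forms on $\Lambda$ as in the statement. The key simplification is that a geodesic between $\Lambda_0$ and $\Lambda_1$ that respects the common eigenbasis should itself be diagonal in that basis, so the problem decouples into $n$ one-dimensional geodesic problems in $\laggrass^+(1)$, one for each eigendirection. In $\laggrass^+(1) \cong S^1$ minus a point (or an interval of angles), the metric $G$ becomes an explicit one-dimensional Riemannian metric — essentially $d\theta$ against a positive density coming from $\real\Omega$ — and its geodesics are reparameterized straight lines in the angle coordinate. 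I would solve each scalar problem explicitly, obtaining a path $\theta_j(t)$ from $\theta_j^{(0)}$ to $\theta_j^{(1)}$, and then reassemble the $\Lambda_t$ by putting the $\theta_j(t)$ back on the diagonal.

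The derivative of the geodesic at time $t$ is the quadratic form with diagonal entries $\dot\theta_j(t)$ (up to a positive factor), so negative semi-definiteness of the derivative amounts to showing $\dot\theta_j(t) \le 0$ for all $j$ and $t$, with strict inequality when $\Lambda_0 \pitchfork \Lambda_1$. Here is where the Maslov index enters decisively: $m(\Lambda_0,\Lambda_1)=0$ lets me orient the angle coordinate in each eigendirection so that $\theta_j^{(1)} \le \theta_j^{(0)}$, and since each scalar geodesic is monotone in its angle coordinate, $\dot\theta_j$ has a constant sign equal to that of $\theta_j^{(1)}-\theta_j^{(0)} \le 0$. Transversality forces every eigenvalue difference to be nonzero, hence $\theta_j^{(1)} < \theta_j^{(0)}$ strictly, giving the negative definite conclusion. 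One must also check that the path stays inside $\laggrass^+(n)$, i.e. that positivity is preserved along the way — this follows because monotonicity keeps each $\theta_j(t)$ in the interval between its endpoints, and that interval lies in the positive range by hypothesis.

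The main obstacle I anticipate is justifying the simultaneous-diagonalization reduction rigorously: one needs that the geodesic connecting two diagonal subspaces can be taken diagonal, which requires either a uniqueness statement for geodesics in $\laggrass^+(n)$ (not obviously available a priori, since that is part of what the paper is working toward) or a direct verification that the diagonal ansatz solves the full geodesic ODE system. I would handle this by the latter route: plug the diagonal ansatz into the geodesic equation and check that the off-diagonal components of the equation are automatically satisfied because the Christoffel symbols of $G$, computed in the graph coordinates, respect the block structure determined by the common eigenbasis. Verifying this compatibility — essentially that $G$ and its connection are equivariant under the torus of diagonal unitary matrices fixing both $\Lambda_0$ and $\Lambda_1$ — is the technical heart of the argument; the rest is the explicit one-dimensional computation and bookkeeping of angle branches.
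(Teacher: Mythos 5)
Your reduction to the common eigenbasis is sound and matches what the paper proves: the positive Lagrangian connection does respect the orthogonal decomposition of Lemma~\ref{lemma:spectral decomposition}, a horizontal lifting carries the eigenframe of $h_0$ to an orthogonal frame for all time, and the derivative stays diagonal with constant coefficients $a_1,\dots,a_n$ (Proposition~\ref{proposition:geodesic preserves orthogonal frame} and Corollary~\ref{corollary:geodesics and spectral decompositions}). The identification of $m(\Lambda_0,\Lambda_1)=0$ with the statement that all angle differences can be taken in $[0,\pi)$ without any eigenvalue ``crossing infinity'' is also essentially the content of Lemma~\ref{lemma:negative derivative implies Maslov zero}. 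The gap is in the next step: the system does \emph{not} decouple into $n$ independent problems in $\laggrass^+(1)$. The evolution equation for a horizontal lifting is $\deriv{t}\Psi_t=-J\nabla h_t-\tan\phase(\Lambda_t)\,\nabla h_t$, and $\phase(\Lambda_t)=\sum_j\theta_j(t)$ is the \emph{total} phase; consequently $\deriv{t}g_j=-4a_j\tan\phase(\Lambda_t)$ and $\deriv{t}\theta_j=-2a_jg^j$, so each partial phase is driven by the sum of all of them through the induced metric. There is no explicit scalar solution to write down, and the boundary value problem --- choose $(a_1,\dots,a_n)$ so that the coupled flow sends $(\theta_1,\dots,\theta_n)$ from $0$ to the prescribed $(\beta_1,\dots,\beta_n)$ --- is a genuinely $n$-dimensional nonlinear shooting problem that your proposal leaves unaddressed. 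This is precisely why the paper does not solve the ODE directly but runs a continuity method: the set of $\Lambda_1$ reachable by a geodesic with negative semi-definite derivative is shown to be non-empty (short-time exponential map), closed (an a priori bound on the coefficients $a_j$ in terms of the phases, plus Arzel\`a--Ascoli), and open (absence of conjugate points, proved via a Jacobi field argument transported to harmonic functions on the associated imaginary special Lagrangian cylinders), and then connectedness of $\zeromas(\Lambda_0)$ finishes the argument.

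Two smaller corrections. First, the connection here is not the Levi-Civita connection of the metric $G$: since $\R^n$ is not compact, $G$ is not defined on $\laggrass^+(n)$, and the paper notes explicitly that the positive Lagrangian connection in this setting is not a metric connection, so one cannot import variational or length-minimizing arguments. Second, even granting monotonicity of each $\theta_j$, your sign bookkeeping only shows that \emph{if} a geodesic with all $a_j\le 0$ exists then its terminal angles are the canonical representatives $\beta_j\in[0,\pi)$ (this is Lemma~\ref{lemma:negative derivative implies Maslov zero}); it does not produce the geodesic. The existence is the hard part, and it is where the Maslov index hypothesis is used not merely as branch bookkeeping but as the condition guaranteeing the continuity method can run inside the connected set $\overline{\zeromas(\Lambda_0)}$.
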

By Remark~\ref{rem:tri}, Theorem~\ref{theorem:linear geodesic} applies also when $m(\Lambda_0,\Lambda_1) = n,$ only the derivative of the geodesic is positive instead of negative. The final ingredient in the proof of Theorem~\ref{theorem:perturbed C1 geodesic} is Proposition~\ref{proposition:geodesic is in fact linear}, which asserts that geodesics of positive Lagrangian linear subspaces are stable under deformations in the space of geodesics of positive Lagrangian cones. The cylindrical transform plays a key role in several steps of the preceding summary. In particular, the ellipticity of the family of special Lagrangian boundary value problems produced by the cylindrical transform is a crucial ingredient in inverse function theorem arguments.

\subsection{Directions for future research}\label{ssec:future}
\subsubsection{From local to global}
Theorem~\ref{theorem:spider web} can be viewed as a special Lagrangian analog of Bishop's result~\cite{Bis65} on the existence of a family of holomorphic disks near a point of an $n$-dimensional real submanifold of $\C^n$ with tangent space containing a complex line. The extension of Bishop's local families of holomorphic disks to global families is the basis for foundational results in symplectic and contact geometry~\cite{Eli90,Hof93}.
A fundamental question for future research is the following.
\begin{qu}\label{qu:nl}
Under what conditions can one extend the family of imaginary special Lagrangian cylinders $(Z_s)_s$ of Theorem~\ref{theorem:spider web} to obtain the cylindrical transform of a geodesic of positive Lagrangians between $\Lambda_0$ and $\Lambda_1$?
\end{qu}
We outline briefly what is needed to address Question~\ref{qu:nl}. Theorem~1.2 of~\cite{cylinders} asserts that the space of imaginary special Lagrangian cylinders $\mathcal{SLC}(N;\Lambda_0,\Lambda_1)$ is a $1$-dimensional smooth manifold for $N$ an arbitrary closed connected $(n-1)$-manifold. So, the family of imaginary special Lagrangian cylinders $(Z_s)_s$ of Theorem~\ref{theorem:spider web} extends until a singularity develops. For example, a singularity can develop as the cylinders $Z_s$ approach an intersection point $q \in  \Lambda_0 \cap \Lambda_1.$ More precisely, a submanifold $Y_s \subset Z_s$ diffeomorphic to $S^{m-1} \times [0,1],$ where $m$ is the Maslov index $m(q;\Lambda_0,\Lambda_1),$ can contract to a point. When $m < n,$ a gluing argument should show that the family $(Z_s)_s$ can be continued through the singularity after performing a surgery that replaces a tubular neighborhood of $Y_s$ in $Z_s$ diffeomorphic to $S^{m-1} \times [0,1] \times D^{n-m}$ with a copy of $S^{n-m-1} \times [0,1] \times D^{m}.$ When $m = n,$ the family of cylinders $(Z_s)_s$ contracts to the point $q.$ If one could show that no other singularities develop, so that after a series of surgeries the family $(Z_s)_s$ must eventually contract to a point of Maslov index $n,$ it should follow that the family $(Z_s)_s$ arises as the cylindrical transform of a geodesic from $\Lambda_0$ to $\Lambda_1.$

We plan to treat this program in detail in future work.
In dimension $n = 2,$ special Lagrangian cylinders are equivalent to holomorphic annuli by hyperk\"ahler rotation, and Gromov's work~\cite{Gro85} gives a great deal of control over which singularites can appear. In higher dimensions, much less is known. However, there is an extensive literature on special Lagrangian singularities~\cite{BeR12,Has04a,Has04b,HaK07,HaK13,Joy02,Joy03,Joy04b,Joy04c,Joy05,Joy08}.

Beyond the existence problem for geodesics of special Lagrangians, progress on Question~\ref{qu:nl} could potentially lead to progress on the nearby Lagrangian conjecture of Arnol'd as we now explain. Let $M$ be a smooth closed manifold and let $\alpha$ denote the Liouville $1$-form on the cotangent bundle $T^*M.$ A Lagrangian submanifold $\Lambda \subset T^*M$ is \emph{exact} if $\alpha|_\Lambda$ is exact. The nearby Lagrangian conjecture is the following.
\begin{conj}[Arnol'd]\label{conj:nl}
An exact closed Lagrangian submanifold $\Lambda \subset T^*M$ is Hamiltonian isotopic to the zero section.
\end{conj}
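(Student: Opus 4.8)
The plan is to realize the Hamiltonian isotopy from $\Lambda$ to the zero section as a geodesic of positive Lagrangians, and to construct that geodesic from its cylindrical transform. The governing principle is that a cone-smooth geodesic $(\Lambda_t)_{t\in[0,1]}$ of positive Lagrangians is, away from its critical locus $\crit((\Lambda_t)_t)$, a path of Hamiltonian deformations; hence a geodesic from the zero section to $\Lambda$ would immediately exhibit the two as Hamiltonian isotopic. The difficulty is that the geodesic equation is a fully non-linear degenerate elliptic PDE, so one cannot expect to solve it directly. Instead, following the strategy of the present paper, one passes to the cylindrical transform, which converts the problem into an elliptic boundary value problem for a family of imaginary special Lagrangian cylinders with one boundary component on the zero section and the other on $\Lambda.$ The inverse cylindrical transform then returns a geodesic.

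First I would equip a neighborhood of the zero section in $T^*M$ with a Calabi-Yau, or at least almost Calabi-Yau, structure $(\omega,J,\Omega)$ for which the zero section is positive, and arrange, after a preliminary Hamiltonian isotopy, that $\Lambda$ is positive as well. Since $\Lambda$ is exact and closed, it must intersect the zero section, and after a generic perturbation within its Hamiltonian isotopy class the intersection may be taken transverse. At each intersection point of Maslov index $0$ or $n,$ Theorem~\ref{theorem:spider web} supplies a local family of imaginary special Lagrangian cylinders converging regularly to that point, that is, local special Lagrangian webbing. This is the exact analog of Bishop's local family of holomorphic disks, with the cylinders here playing the role of the disks.

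The heart of the argument is the passage from local to global, which is precisely Question~\ref{qu:nl}. I would attempt to continue the local families $(Z_s)_s$ in the parameter $s$ and to glue the families attached to distinct intersection points into a single global family of special Lagrangian cylinders spanning $\Lambda$ and the zero section. This requires a priori $C^{1,1}$ estimates for the elliptic boundary value problem, provided in part by Lemma~\ref{lemma:C11 regularity from blowup} and Theorem~\ref{theorem:perturbed C1 geodesic} at the critical locus, together with a compactness theorem ruling out degeneration of the cylinders, in the spirit of Gromov compactness for pseudoholomorphic curves. Once the global family exists, the inverse cylindrical transform yields a cone-smooth geodesic $(\Lambda_t)_t$ from the zero section to $\Lambda,$ and the associated Hamiltonian path proves the conjecture.

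The main obstacle is this global extension step. Two features make it hard. First, intersection points of arbitrary Maslov index, as well as non-transverse or non-clean intersections, must be handled, whereas Theorem~\ref{theorem:spider web} begins only with the transverse Maslov $0$ or $n$ case. Second, and more seriously, the degenerate elliptic nature of the geodesic equation means that the geodesic may develop conical singularities along its critical locus and need not remain embedded, so one must control these singularities and rule out collapse of the global family of cylinders. This compactness and regularity theory for special Lagrangian webbing is the special Lagrangian counterpart of the hard analysis underlying the extension of Bishop disks to the global families of Eliashberg and Hofer, and developing it is the crux of any proof along these lines.
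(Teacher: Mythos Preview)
The statement in question is Arnol'd's nearby Lagrangian conjecture, which the paper states as a \emph{conjecture}, not a theorem. The paper does not prove it; on the contrary, it presents the conjecture as motivation and explicitly says that progress on Question~\ref{qu:nl} would yield progress on Conjecture~\ref{conj:nl}. There is no ``paper's own proof'' to compare against.

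Your proposal is not a proof either, and to your credit you say so: you identify the global extension step (Question~\ref{qu:nl}) as the crux and describe exactly why it is currently out of reach. What you have written is a faithful summary of the research program the paper itself outlines in Section~\ref{ssec:future}, not an argument that establishes the conjecture. The genuine gaps you name---handling intersection points of arbitrary Maslov index, controlling degenerations of the cylinders, and developing the requisite compactness theory for special Lagrangian webbing---are precisely the open problems the paper flags (Questions~\ref{qu:nl}, \ref{qu:laggrass}, \ref{qu:lagcones}). Until those are resolved, there is no proof here; the conjecture remains open.
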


Recently, there has been dramatic progress on this conjecture~\cite{Abo12,AbK18,FSS08,Kra13,Nad09,NZ09}, culminating in the result that an exact closed Lagrangian $\Lambda \subset T^*M$ is simply homotopy equivalent to $M.$ The full conjecture has been proved for $M = S^2$ in~\cite{Hin12}. Nonetheless, in general, we are still far from being able to construct a Hamiltonian isotopy from an exact closed Lagrangian $\Lambda \subset T^*M$ to the zero section.

Suppose we are given a compatible complex structure and a holomorphic volume form on a neighborhood $U$ of the zero section of $T^*M$ such that the zero section is positive Lagrangian. These always exist by the Grauert tube construction~\cite{Gra58,GuS91,LeS91}.
Let $\Lambda_0$ be the zero section of $T^*M$ and let $\Lambda_1$ be an exact closed positive Lagrangian in $U \subset T^*M$ intersecting $\Lambda_0$ transversally. It follows from the above cited results on the nearby Lagrangian conjecture that there exists at least one intersection point of $\Lambda_0$ and $\Lambda_1$ of Maslov index zero. So, Theorem~\ref{theorem:spider web} applies. A geodesic of positive Lagrangians is in particular a Hamiltonian isotopy, so progress on Question~\ref{qu:nl} gives progress on Conjecture~\ref{conj:nl}.

\subsubsection{Local questions}
In our proof of Theorem~\ref{theorem:linear geodesic}, we use in an essential way the assumption that $m(\Lambda_0,\Lambda_1) = 0,n.$ Thus, the following seems fundamental.
\begin{qu}\label{qu:laggrass}
For general $\Lambda_0,\Lambda_1 \in \laggrass^+(n),$ does there exist a geodesic in $\laggrass^+(n)$ between $\Lambda_0$ and $\Lambda_1$?
\end{qu}
A positive answer to Question~\ref{qu:laggrass} would provide a model at the level of tangent cones for geodesics of positive Lagrangians near critical points of index strictly between $0$ and $n,$ completing the picture given by Theorem~\ref{theorem:linear geodesic}. However, even when $m(\Lambda_0,\Lambda_1) = 0,n,$ as in Theorem~\ref{theorem:linear geodesic}, it is not clear whether there are more complicated models for tangents cones at critical points.

\begin{qu}\label{qu:lagcones}
For $\Lambda_0,\Lambda_1 \in \laggrass^+(n),$ is it possible that there exists a geodesic from $\Lambda_0$ to $\Lambda_1$ of positive Lagrangian cones that are not linear subspaces?
\end{qu}

If the answer to Question~\ref{qu:lagcones} is the affirmative for $m(\Lambda_0,\Lambda_1) = 0,$ Lemma~\ref{lemma:integration of tangent cone} gives geodesics of open positive Lagrangians in an arbitrary Calabi-Yau manifold that are not $C^1.$

\subsubsection{Regularity limitations}
The limited regularity of geodesics of positive Lagrangian submanifolds is reminiscent of the limited regularity of geodesics in the space of K\"ahler metrics~\cite{chen-kaehlermetrics,chen-tian,CTW17,DaL12} and in the space of positive $(1,1)$-forms~\cite{chu2020space,collins2018moment}.
However, the above results on $C^{1,1}$ regularity are of a different nature than $C^{1,1}$ regularity in the K\"ahler metric or positive $(1,1)$-form context. On the one hand, as explained in~\cite{rubinstein-solomon}, a geodesic of positive Lagrangian submanifolds can be viewed locally as the graph of the gradient of a potential function analogous to the potential functions for K\"ahler metrics and positive $(1,1)$-forms. The $C^{1,1}$ regularity of Theorems~\ref{theorem:spider web} and~\ref{theorem:perturbed C1 geodesic} translates to $C^{2,1}$ regularity for the local potential function unlike the $C^{1,1}$ optimal result for potential functions of geodesics of K\"ahler metrics and positive $(1,1)$-forms. In fact, it may well be possible to obtain even higher regularity in the settings of Theorem~\ref{theorem:spider web} and~\ref{theorem:perturbed C1 geodesic}. On the other hand, we consider only the case that all critical points of geodesics are non-degenerate with Maslov index either $0$ or $n.$ In greater generality, it is likely that geodesics of positive Lagrangian submanifolds exhibit regularity limits similar to geodesics of K\"ahler metrics or positive $(1,1)$-forms.

\subsection{Outline}
In Section~\ref{sec:background} we recall relevant definitions and results from~\cite{cylinders}. In Section~\ref{section:the positive Lagrangian Grassmannian} we study geodesics of linear positive Lagrangians and prove Theorem~\ref{theorem:linear geodesic}. Section~\ref{section:geodesics of Lagrangian cones} shows that geodesics of positive Lagrangian linear subspaces are stable under deformations in the space of geodesics of positive Lagrangian cones. The precise statement is given in Proposition~\ref{proposition:geodesic is in fact linear}. In Section~\ref{sec:blowup} we develop a blowup procedure that relates the behavior of a geodesic of positive Lagrangians near a critical point and the associated geodesic of tangent cones. In particular, if the tangent cones are linear, the geodesic is $C^{1,1}$ regular. The section concludes with the proof of Theorem~\ref{theorem:perturbed C1 geodesic}. In Section~\ref{section:geodesics of small open Lagrangians} we show how to construct a special Lagrangian webbing from a geodesic of special Lagrangian cones and we prove Theorem~\ref{theorem:spider web}.

\subsection{Acknowledgements}
The authors would like to thank T. Collins, Y. Eliashberg, Y. Rubinstein, P. Seidel, and G. Tian, for helpful conversations.
The authors were partially supported by ERC starting grant 337560 and BSF grant 2016173. The first author was partially supported by BSF Grant 2024293 and the Miriam and Julius Vinik Chair in Mathematics. The second author was partially supported by the Adams Fellowship Program of the Israel Academy of Sciences and Humanities.

\section{Background}\label{sec:background}
	
	This article relies on the notation and results of~\cite{cylinders} summarized below.

\subsection{Immersed Lagrangians}
	\label{subsection:immersed lagrangians}

Let $N,M$ be smooth manifolds, $M$ perhaps with boundary. Denote by $\diff(M)$ the diffeomorphisms of
$M$	preserving each boundary component. That is, if $\varphi \in \diff(M)$ and $B \subset \partial M$ is a component, then $\varphi(B) = B.$
\begin{dfn}
		\label{definition: immersed submanifold}
		An \emph{immersed (resp. embedded) submanifold of $N$ of type $M$} is an equivalence class of immersions (resp. embeddings),
		\[
		K=[f:M\to N],
		\]
		where the equivalence is with respect to the $\diff(M)$-action: The immersions $f$ and $f'$ are equivalent if there exists $\varphi\in\diff(M)$ such that
		\[
		f'=f\circ\varphi.
		\]
We say that $K=[f]$ is \emph{free} if $f$ has trivial isotropy subgroup. We say that $K$ has boundary if $M$ does. In this case, to each boundary component of $M$ we associate a boundary component of $K,$ which is itself an immersed submanifold. A \emph{differential form} on $K$ is an equivalence class of pairs $\eta = [(f,\tau)]$ where $f$ is a representative of $K$ and $\tau \in \Omega^*(M).$ The pairs $(f,\tau)$ and $(f',\tau')$ are equivalent if there exists $\varphi \in \diff(M)$ such that $f' = f \circ \varphi$ and $\tau' = f^* \tau.$
	\end{dfn}	

	Let $(X,\omega)$ be a symplectic manifold of dimension $2n$ and let $L$ be a smooth manifold of dimension $n.$ An immersion $f:L\to X$ is \emph{Lagrangian} if it satisfies $f^*\omega=0.$ The diffeomorphism group $\diff(L)$ acts on Lagrangian immersions $L\to X$ by composition. An \emph{immersed Lagrangian submanifold} in $X$ of type $L$ is a submanifold of type $L$ that can be represented by a Lagrangian immersion and thus all representatives are Lagrangian.
	
	Suppose now that $L$ is closed. We let $\mathcal{L}(X,L)$ denote the space of free immersed Lagrangian submanifolds in $X$ of type $L.$ The space $\mathcal{L}(X,L)$ is a smooth Fr\'echet manifold. See~\cite{akveld-salamon} for a discussion in the case of embedded Lagrangians; the generalization to free immersed Lagrangians is performed in~\cite[Theorem 2.12]{cylinders} using ideas from~\cite[Theorem~1.5]{cervera-mascaro-michor}. For $\Lambda\in\mathcal{L}(X,L),$ contraction with $\omega$ yields the isomorphism
	\[
	T_\Lambda\mathcal{L}(X,L)\cong\left\{\left.\tau\in\Omega^1(\Lambda)\;\right|\;d\tau=0\right\}.
	\]
This is proved in~\cite{akveld-salamon} for embedded Lagrangians. The proof for the immersed case is essentially the same. A path $(\Lambda_t)_{t\in[0,1]}$ in $\mathcal{L}(X,L)$ is said to be \emph{exact} if its derivative, $\deriv{t}\Lambda_t,$ is exact for $t\in[0,1].$

	\subsection{Positive Lagrangians and geodesics}
	\label{subsection:positive Lagrangians and geodesics}
	
	In the literature, there are different, non-equivalent definitions of Calabi-Yau manifolds. Here is the definition used throughout this work.
	
	\begin{dfn}
		\label{definition:Calabi-Yau}
		A \emph{Calabi-Yau manifold} is a quadruple $(X,\omega,J,\Omega),$ where $(X,\omega)$ is a symplectic manifold, $J$ is an $\omega$-compatible integrable complex structure, and $\Omega$ is a non-vanishing holomorphic volume form on $X.$
	\end{dfn}

	Let $(X,\omega,J,\Omega)$ be Calabi-Yau and let $\Lambda\subset X$ be an oriented smooth Lagrangian submanifold perhaps immersed. Then $\Omega|_\Lambda$ is non-vanishing~\cite{harvey-lawson}. In fact, we have
	\[
	\Omega|_\Lambda=\rho e^{\ii\theta_\Lambda}\vol_\Lambda,
	\]
	where $\vol_\Lambda$ denotes the Riemannian volume form with respect to the K\"ahler metric, the positive function $\rho:X\to\mathbb{R}$ is determined by
	\begin{equation}
	\label{equation:rho}
	\frac{\rho^2\omega^n}{n!}=(-1)^{\frac{n(n-1)}{2}}\left(\frac{\ii}{2}\right)^n\Omega\wedge\overline{\Omega},
	\end{equation}
	and $\theta_\Lambda:\Lambda\to S^1$ is called the \emph{phase function} of $\Lambda.$ We say $\Lambda$ is \emph{positive} if $\real\Omega|_\Lambda$ is positive. In this case, the phase may be regarded as a real-valued function admitting values in the interval $\left(-\frac{\pi}{2},\frac{\pi}{2}\right).$ We say $\Lambda$ is \emph{special-Lagrangian} if $\theta_\Lambda\equiv0.$ If we have $\theta_\Lambda\equiv\frac{\pi}{2},$ we say $\Lambda$ is \emph{imaginary special-Lagrangian}.
	
Let $L$ be a closed manifold, let $\mathcal{O}$ be an exact isotopy class of positive Lagrangians in $X$ of type $L,$ and let $\Lambda\in\mathcal{O}.$ Positivity gives rise to the isomorphism
	\[
	T_\Lambda\mathcal{O}\cong\vmv(\Lambda):=\left\{h\in C^\infty(\Lambda)\;\left|\;\int_\Lambda h\real\Omega=0\right.\right\}.
	\]
Hence, we think of vectors tangent to $\mathcal{O}$ as functions.
	
	\begin{dfn}
		\label{definition:liftings and horizontal liftings}
		Let $\mathcal{O}$ be as above and let $(\Lambda_t)_{t\in[0,1]}$ be a smooth path in $\mathcal{O}.$ A \emph{lifting} of $(\Lambda_t)_t$ is a smooth family of Lagrangian immersions, $(\Psi_t:L\to X)_{t\in[0,1]},$ such that $\Psi_t$ represents $\Lambda_t$ for $t\in[0,1].$ A lifting $(\Psi_t)_t$ is \emph{horizontal} if it satisfies
		\[
		i_{\deriv{t}\Psi_t}\real\Omega=0,\quad t\in[0,1].
		\]
	\end{dfn}

It is shown in~\cite{solomon} that, given a smooth path $(\Lambda_t)_{t\in[0,1]}$ in $\mathcal{O},$ every representative $\Psi_0:L\to X$ of $\Lambda_0$ extends uniquely to a horizontal lifting of $(\Lambda_t)_t.$ We thus define a connection on $\mathcal{O}$ as follows. Let $(h_t)_{t\in[0,1]}$ be a  vector field along a path $(\Lambda_t)_t.$ Choose a horizontal lifting $(\Psi_t)_t.$ Then the covariant derivative of $(h_t)_t$ is given by
	\[
	\frac{D}{dt}h_t:=\left(\Psi_t\right)_*\left(\deriv{t}h_t\circ\Psi_t\right).
	\]
The definition is independent of the choice of $(\Psi_t)_t.$
	This connection, to which we refer as \emph{the positive Lagrangian connection}, is studied thoroughly in~\cite{solomon-curv}, where it is shown to be the Levi-Civita connection of the Riemannian metric on $\mathcal{O}$ defined by
	\begin{equation}
	\label{equation:Jake's Riemannian metric}
	(h,k):=\int_\Lambda hk\real\Omega, \qquad h,k \in  	T_\Lambda\mathcal{O}\cong\vmv(\Lambda).
	\end{equation}
The main objects of the present work are geodesics with respect to the positive Lagrangian connection. Below, we extend the notion of geodesics to isotopy classes of Lagrangians that may be open and/or non-smooth. See Definition~\ref{definition:geodesic}.

\subsection{Cone-smooth maps}\label{subsection:cone smooth maps}
Below we summarize definitions and results concerning cone-smooth maps from~\cite{cylinders} . Intuitively a cone-smooth map is smooth everywhere but a finite number of points, where it may not be differentiable, but it still admits a $1$-homogeneous first order approximation that is smooth away from zero. The cone-smooth generalization of an immersed submanifold has a finite number of points where it may not have a tangent space but only a tangent cone that is smoothly immersed away from its vertex. See Definition~\ref{dfn:cone-immersed submanifold terminology} below. The terminology ``cone-smooth" was chosen because cone-smooth maps can have conical singularities while otherwise behaving much like smooth maps. The following notations and definitions are taken from Section~3 of~\cite{cylinders}.

\begin{ntn}\label{ntn:blowup}
Let $M$ be a smooth manifold and let $S \subset M$ be a finite subset. We denote by $\pi : \widetilde M_S \to M$ the \emph{oriented blowup} of $M$ at $S.$ For $p \in S,$ we denote by $E_p=\pi^{-1}(p)$ the \emph{exceptional sphere over $p$.} For a detailed account of the definition of the oriented blowup, see~\cite[Definition 3.1]{cylinders}.
\end{ntn}

	\begin{dfn}
		\label{definition: cone differentiability}
		Let $M$ and $N$ be smooth manifolds, let $p\in M,$ and let $\Psi:M\to N$ be continuous. Let $\widetilde{M}_p,E_p$ and $\pi:\widetilde{M}_p\to M$ be as in Notation~\ref{ntn:blowup}.
		\begin{enumerate}
			\item The map $\Psi$ is said to be \emph{cone-smooth} at $p$ if there exists an open $E_p\subset\widetilde{U}\subset\widetilde{M}_p$ such that the composition $\Psi\circ\pi|_{\widetilde{U}}:\widetilde{U}\to N$ is smooth.
			\item Suppose $\Psi$ is cone-smooth at $p.$ The \emph{cone-derivative} of $\Psi$ at $p$ is the unique map
			\[
			d\Psi_p:T_pM\to T_{\Psi(p)}N
			\]
			satisfying the equality
			\[
			d(\Psi\circ\pi)_{\widetilde{p}}=d\Psi_p\circ d\pi_{\widetilde{p}}
			\]
			for $\widetilde{p}\in E_p.$ One verifies that the cone-derivative is well-defined and homogeneous of degree 1. Also, the restricted map $d\Psi_p|_{T_pM\setminus\{0\}}$ is smooth. Nevertheless, the cone-derivative is not linear in general.
			\item Suppose $\Psi$ is cone-smooth at $p.$ We say $\Psi$ is \emph{cone-immersive} at $p$ if the restricted map
			\[
			d\Psi_p|_{T_pM\setminus\{0\}}:T_pM\setminus\{0\}\to T_{\Psi(p)}N
			\]
			is a smooth immersion. In particular, in this case we have $d\Psi_p(v)\ne0$ for $0\ne v\in T_pM.$
		\end{enumerate}
	\end{dfn}

In the following definition and throughout the present work, we use the definition of manifold with corners from~\cite{Joy09}.

	\begin{dfn}\label{definition:cone immersed submanifold}
		Let $M$ and $N$ be smooth manifolds. Let $S\subset M$ be a finite subset and $\Psi:M\to N$ a continuous map.
		\begin{enumerate}[label=(\alph*)]
        \item
    We say the pair $(\Psi,S)$ is \emph{cone-smooth} if $\Psi$ is smooth away from $S$ and cone-smooth at every element of $S.$ That is, the composition $\Psi\circ \pi : \widetilde M_S \to N$ is smooth. For $\Theta$ a manifold with corners, a family of maps $((\Psi_t,S))_{t \in \Theta}$ is cone-smooth if the composition $\Psi_t \circ \pi : \widetilde M_S \to N$ gives a smooth map $\widetilde M_S \times \Theta \to N.$
			\item We say the pair $(\Psi, S)$ is a \emph{cone-immersion} from $(M,S)$ to $N$ if $\Psi$ is a smooth immersion away from $S$ and cone-immersive at every element of $S.$
    \item
    Suppose $N = M$ and $\Psi(p) = p$ for all $p \in S.$ The pair $(\Psi,S)$ is a \emph{cone-diffeomorphism} of $(M,S)$ if $\Psi$ is a smooth diffeomorphism away from $S$ and for $p \in S,$ the cone derivative $d\Psi_p|_{T_pM\setminus \{0\}} : T_pM\setminus \{0\} \to T_pM$ is a diffeomorphism onto $T_pM \setminus \{0\}.$ We let $\diff(M,S)$ denote the group of cone-diffeomorphisms of $(M,S)$ that act trivially on the set of connected components.
			\item Let the diffeomorphism group $\diff(M,S)$ act on cone-immersions from $(M,S)$ to $N$ by composition. A \emph{cone-immersed submanifold of $N$ of type $(M,S)$} is an orbit of the $\diff(M,S)$-action.
    \item\label{item:orientation} Suppose $M$ is orientable and let $\diff^+(M,S)\triangleleft \diff(M,S)$ denote the normal subgroup of orientation preserving cone-smooth diffeomorphisms. An \emph{orientation} on a cone-immersed submanifold $K$ of $N$ of type $(M,S)$ is an equivalence class of pairs $(O,C)$ where $O$ is an orientation of $M$ and $C$ is a $\diff^+(M,S)$ orbit inside the $\diff(M,S)$ orbit $K.$ There is a natural $\diff(M,S)/\diff^+(M,S)$ action on such pairs and this gives rise to the desired equivalence relation.
		\end{enumerate}
	\end{dfn}

\begin{dfn}\label{dfn:cone-immersed submanifold terminology}
	Let $K=[(\Psi:M\to N,S)]$ be a cone-immersed submanifold of type $(M,S).$ We let $\im(K)$ denote the image of $K$ in $M.$ That is,
\[
\im(K) = \Psi(M).
\]
A \emph{point} $p$ in $K$ is an equivalence class of pairs $((\chi,S),q),$ where $(\chi: M \to N,S)$ is a representative of $K$ and $q\in M.$ We let $\im(p)$ denote the image of $p$ in $N.$ That is, $\im(p) = \chi(q)$ for $((\chi,S),q)$ a representative of $p.$ The cone-immersed submanifold $K$ thus has a well-defined \emph{cone locus}
	\[
	K^C:=\{[((\Psi,S),c)]\;|\;c\in S\}.
	\]
A \emph{cone point} is an element of the cone locus. We define the \emph{tangent cone} of $K$ at a point $p = [((\Psi,S),q)]$ to be the cone-immersed submanifold
\[
TC_pK := [(d\Psi_c : T_cM \to T_{\im(p)}N,\{0\})]
\]
of $T_{\im(p)}N.$ The tangent cone $TC_pK$ is indeed a cone, that is, invariant under scalar multiplication. Moreover, it is independent of the choice of $\Psi.$ If $\Psi$ is smooth at $q,$ then $TC_pK$ is smoothly embedded and recovers the usual notion of tangent space. For $h : V \to W$ a homogeneous map of real vector spaces that does not vanish on $V \setminus\{0\},$  we denote by $
\orientedprojective(h) : \orientedprojective(V) \to \orientedprojective(W)$ the oriented projectivization.
We define the \emph{projective tangent cone} of $K$ at $p$ by
	\[
	\orientedprojective(TC_pK):=\left[\orientedprojective(d\Psi_c) :\orientedprojective(T_cM)\to \orientedprojective(T_{\im(p)}N)\right].
	\]
	This is a smooth immersed sphere in $\orientedprojective(T_{\im(p)}N).$

A \emph{function} on $K$ is an equivalence class of pairs $((\Psi,S),f),$ where $(\Psi,S)$ is a representative of $K$ and $f$ is a function on $M.$ We say the function $h=[((\Psi,S),f)]$ is \emph{cone-smooth} at the point $p=[((\Psi,S),q)]$ if $f$ is cone-smooth at $q.$ In this case $h$ has a well-defined \emph{cone-derivative} $dh_p : TC_pK \to \R,$ which is a degree-1 homogeneous function.
\end{dfn}

Let $(\Psi : M \to N, S)$ be a cone-smooth map and let $p \in S.$ Recall that the cone derivative $d\Psi_p : T_pM \to T_{\Psi(p)}N$ is homogeneous of degree $1$ and $d\Psi_p|_{T_pM \setminus\{0\}}$ is smooth. It follows that for $0 \neq v \in T_pM$ and $\lambda > 0,$ we have $d(d\Psi_p)_{v} = d(d\Psi_p)_{\lambda v}$ under the canonical identification $T_v T_pM \simeq T_pM \simeq T_{\lambda v}T_pM.$ Thus, for $\widetilde p = [v] \in \orientedprojective(T_pM),$ we define
\[
d(d\Psi_p)_{\widetilde p} : = d(d\Psi_p)_v : T_pM \to T_{\Psi(p)}N.
\]
The following is Lemma~3.16 from~\cite{cylinders}.
\begin{lemma}\label{lemma:blowupdifferential}
Let $(\Psi : M \to N, S)$ be a cone-smooth map. Consider the map $d\left(\Psi|_{M\setminus S}\right) : TM|_{M \setminus S} \to \Psi^* TN|_{M \setminus S}.$ Pulling back by $\pi$ gives a map
\[
\pi^* d\left(\Psi|_{M\setminus S}\right) : \pi^* TM|_{\widetilde M_S^\circ} \to \pi^* \Psi^* TN|_{\widetilde M_S^\circ}.
\]
This map extends uniquely to a map of bundles $\widetilde{d\Psi}:\pi^* TM \to \pi^* \Psi^* TN.$ Moreover, for $p \in S$ and $\widetilde p \in E_p,$ we have
\begin{equation}\label{equation:dderivatives}
\widetilde{d\Psi}_{\widetilde p} = d(d\Psi_p)_{\widetilde p}.
\end{equation}
In particular, if $\Psi$ is a cone-immersion, then $\widetilde{d\Psi}$ is an injective map of vector bundles.
\end{lemma}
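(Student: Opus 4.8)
The plan is to reduce everything to a local computation near a point $p\in S,$ since over $M\setminus S$ the bundle map is forced to equal $\pi^*d\!\left(\Psi|_{M\setminus S}\right)$ and the only content of the statement is the extension across each exceptional sphere $E_p.$ First I would choose coordinates identifying a neighborhood of $p$ in $M$ with a ball $B\subset\R^m$ ($m=\dim M,$ $p\leftrightarrow 0$) and a neighborhood of $\Psi(p)$ in $N$ with an open subset of $\R^k$ ($k=\dim N,$ $\Psi(p)\leftrightarrow 0$). In these coordinates the oriented blowup is the polar map $\pi(r,\phi)=r\phi$ on $[0,\epsilon)\times S^{m-1},$ with $E_p=\{0\}\times S^{m-1}$ and $d\pi_{(0,\phi)}$ sending $\partial_r\mapsto\phi$ and annihilating $T_\phi S^{m-1}.$ Set $F:=\Psi\circ\pi:[0,\epsilon)\times S^{m-1}\to\R^k;$ this is smooth by cone-smoothness of $(\Psi,S),$ and $F(0,\phi)=\Psi(p)=0$ for all $\phi.$ Evaluating the defining identity $d(\Psi\circ\pi)_{(0,\phi)}=d\Psi_p\circ d\pi_{(0,\phi)}$ on $\partial_r$ gives $\partial_r F(0,\phi)=d\Psi_p(\phi)$ for $|\phi|=1,$ so by degree-one homogeneity $d\Psi_p(v)=|v|\,\partial_r F\!\left(0,v/|v|\right)$ for $v\neq 0.$

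Next I would compute $d\Psi$ on $B\setminus\{0\}.$ Since $\Psi(x)=F\!\left(|x|,x/|x|\right),$ the chain rule gives, for $x=r\phi$ and $h\in\R^m,$
\[
d\Psi_{r\phi}(h)\;=\;\langle\phi,h\rangle\,\partial_r F(r,\phi)\;+\;\tfrac1r\,d_\phi F(r,\phi)\bigl(h-\langle\phi,h\rangle\phi\bigr),
\]
where $d_\phi F$ is the differential of $\phi\mapsto F(r,\phi)$ and $h-\langle\phi,h\rangle\phi\in T_\phi S^{m-1}.$ The first term is smooth up to $r=0.$ The crucial observation is that $F$ is constant along $E_p,$ so $d_\phi F(0,\cdot)\equiv 0;$ Hadamard's lemma then gives $d_\phi F(r,\phi)=r\,G(r,\phi)$ with $G$ smooth and, by equality of mixed partials, $G(0,\phi)=\partial_r d_\phi F(0,\phi)=d_\phi\bigl(\partial_r F(0,\cdot)\bigr)_\phi=d_\phi\bigl(d\Psi_p|_{S^{m-1}}\bigr)_\phi.$ The factor $r$ cancels the pole, so $(r,\phi)\mapsto d\Psi_{r\phi},$ regarded as the section $\pi^*d\!\left(\Psi|_{M\setminus S}\right)$ of $\operatorname{Hom}(\pi^*TM,\pi^*\Psi^*TN)$ over $\widetilde M_S^\circ,$ extends smoothly across $E_p;$ under the canonical identifications $(\pi^*TM)_{(0,\phi)}=T_pM$ and $(\pi^*\Psi^*TN)_{(0,\phi)}=T_{\Psi(p)}N,$ its boundary value is
\[
\widetilde{d\Psi}_{(0,\phi)}(h)\;=\;\langle\phi,h\rangle\,d\Psi_p(\phi)\;+\;d_\phi\bigl(d\Psi_p|_{S^{m-1}}\bigr)_\phi\bigl(h-\langle\phi,h\rangle\phi\bigr).
\]
This proves existence of the extension, and uniqueness is immediate from density of $\widetilde M_S^\circ$ and continuity.

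It then remains to identify this boundary value with $d(d\Psi_p)_{\widetilde p}.$ Writing $\widetilde p=[\phi]$ with $|\phi|=1$ and using $d\Psi_p(v)=|v|\cdot\bigl(d\Psi_p|_{S^{m-1}}\bigr)(v/|v|),$ the product rule applied at $\phi,$ together with the fact that the derivative of $v\mapsto v/|v|$ at $\phi$ is $h\mapsto h-\langle\phi,h\rangle\phi,$ reproduces exactly the displayed formula, giving~\eqref{equation:dderivatives}. Finally, if $\Psi$ is a cone-immersion then by Definition~\ref{definition: cone differentiability} the map $d\Psi_p|_{T_pM\setminus\{0\}}$ is a smooth immersion, so $d(d\Psi_p)_v$ is injective for every $v\neq 0;$ combined with the injectivity of $\pi^*d\!\left(\Psi|_{M\setminus S}\right)$ away from the exceptional spheres, this shows $\widetilde{d\Psi}$ is fiberwise injective, hence an injective map of vector bundles.

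I expect the only genuine obstacle to be the second step: checking that the apparent $1/r$ singularity in the angular part of $d\Psi$ is removable. This rests entirely on cone-smoothness forcing $\Psi\circ\pi$ to be constant along each exceptional sphere, so that its angular derivative vanishes to first order in $r;$ Hadamard's lemma then does all the bookkeeping. Everything else is a matter of matching two coordinate expressions via the homogeneity of $d\Psi_p$ and keeping the canonical identifications of the pullback fibers over $E_p$ straight.
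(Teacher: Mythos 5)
Your argument is correct. The paper itself does not reprove this statement --- it quotes it as Lemma~3.16 of~\cite{cylinders} --- but your proof is exactly the expected one and matches the style of the blowup computations the paper does carry out (e.g.\ in Lemma~\ref{lemma:geodesic of tangent cones}): work in the polar model $\pi(r,\phi)=r\phi$ of the oriented blowup, observe that $F=\Psi\circ\pi$ is constant on $E_p$ so its angular differential vanishes at $r=0$, and invoke the Hadamard-type Lemma~\ref{lemma: milnor lemma} together with equality of mixed partials to remove the $1/r$ singularity and identify the boundary value with $d(d\Psi_p)_{\widetilde p}$ via the homogeneity of the cone derivative. The uniqueness and injectivity steps are handled correctly as well.
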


\begin{dfn}\label{definition: many tangent spaces}
Let $S \subset M$ be a finite subset. The \emph{blowup tangent bundle} of $(M,S)$ is the bundle $\widetilde{TM}_S : = \pi^*TM \to \widetilde{M}_S.$ When clear from the context, we may omit the subscript $S.$ Let $(\Psi : M \to N, S)$ be a cone-smooth map. The \emph{blowup differential} of $\Psi$ is the map
\[
\widetilde{d \Psi} : \widetilde{TM}_S \to \pi^* \Psi^* TN
\]
given by Lemma~\ref{lemma:blowupdifferential}.
	\end{dfn}

Let $M$ be a smooth manifold and let $S \subset M$ be a finite subset and let $\pi : \widetilde{M}_S \to M$ denote the blowup projection. Given a differential form $\alpha$ on $M$ we can pull-back $\alpha$ as a section of $\Lambda^*(T^*M)$ to obtain a section of $\pi^*\Lambda^*\left(T^*M\right) \simeq \Lambda^*\left(\widetilde{TM}^*_S\right).$ We denote this pull-back by $\pi^{-1}\alpha.$ Observe that this pull-back is different from the pull-back of $\alpha$ as a differential form, $\pi^*\alpha,$ which would be a section of $\Lambda^*(T^*\widetilde{M}_S).$
The following are taken from Definition~3.18, Remark~3.19, Remark~3.27, Definition~3.28 and Definition~3.29 of~\cite{cylinders}.
\begin{dfn}\label{definition: cone-smooth differential form etc}
A \emph{cone-smooth differential form} on $(M,S)$ is a smooth differential form $\alpha$ on $M \setminus S$ such that $\pi^{-1} \alpha$ extends to a smooth section $\widetilde{\alpha}$ of $\Lambda^*\left(\widetilde{TM}_S^*\right).$ We call $\widetilde{\alpha}$ the \emph{blowup form}. We say that $\alpha$ and $\widetilde{\alpha}$ are \emph{closed} if $\alpha$ is closed as a differential form on $M\setminus S.$
Let $(\Psi:M \to N,S)$ be a cone-smooth map. A \emph{cone-smooth vector field along} $\Psi$ is a smooth section $\xi$ of $\left(\Psi|_{M\setminus S}\right)^*TN$ such that $\left(\pi|_{\widetilde{M}_S^\circ}\right)^*\xi$ extends to a smooth section $\widetilde \xi$ of $(\Psi \circ \pi)^*TN.$ We call $\widetilde \xi$ the \emph{blowup vector field.}
\end{dfn}

\begin{rem}\label{remark:differential of cone-smooth function}
Let $S \subset M$ be a finite subset and let $(f : M \to \R,S)$ be a cone-smooth function. Then, the blowup differential $\widetilde{df}$ is a smooth section of the blowup cotangent bundle $\widetilde{TM}_S^* \to \widetilde{M}_S,$ so $df$ is a cone-smooth 1-form. For $p \in S$ and $\widetilde p \in E_p,$ we use the notation $df_{\widetilde p} = \widetilde{df}|_{\widetilde p}.$
\end{rem}

\begin{rem}\label{remark: blowup differential lifting}
Let $\Psi \in \diff(M,S).$ Then Remark~3.10 of~\cite{cylinders} and the fact that
\[
\pi^*\Psi^* TM = \widetilde \Psi^*\pi^*TM = \widetilde \Psi^*\widetilde{TM}_S
\]
imply that the blowup differential gives an isomorphism of vector bundles
\[
\widetilde{d\Psi} : \widetilde{TM}_S \overset{\sim}{\longrightarrow} \widetilde{\Psi}^*\widetilde{TM}_S.
\]
In particular, cone-smooth diffeomorphisms act by pull-back on cone-smooth differential forms.
\end{rem}

\begin{dfn}\label{definition:immersed cone smooth differential form}
Let $K = [(\Psi: M \to N,S)]$ be a cone-immersed submanifold of type $(M,S).$ A \emph{cone-smooth differential form} on $K$ is an equivalence class $\tau = [((\chi,S),\alpha)]$ where $(\chi,S)$ represents $K$ and $\alpha$ is a cone-smooth differential form on $(M,S).$ Two pairs are equivalent if they belong to the same orbit of the $\diff(M,S)$ action given by Remark~\ref{remark: blowup differential lifting}. We may write $\alpha = \Psi^*\tau.$ Given a smooth form $\eta$ on $N,$ the \emph{restriction} to $K$ is the cone-smooth form given by
\[
\eta|_K : = [((\Psi,S),\Psi^*\eta)].
\]
We say that $\tau$ is \emph{closed} if $\alpha$ is. We say that $\tau$ \emph{vanishes at the cone locus} if $\widetilde{\alpha}$ vanishes on $\partial \widetilde{M}_S.$
\end{dfn}

\begin{dfn}
Let $K= [(\Psi:M \to N,S)]$ be a cone-immersed submanifold, let $p = [((\Psi,S),q)]$ be a cone point, and let $\widetilde{p} = \left[\left(\orientedprojective(d\Psi_q),\widetilde q\right)\right]\in \orientedprojective(TC_pK)$. The \emph{tangent space of $K$ at $\widetilde{p}$} is defined by
\[
T_{\widetilde p} K := \widetilde{d\Psi}_{\widetilde q}\left(\widetilde{TM}_{\widetilde q}\right)\subset T_{p_0}N,
\]
which is independent of the choice of representatives. At a smooth point $p = [((\Psi,S),q)]$ of $K,$ we define the tangent space $T_pK:=d\Psi_q(T_qM)\subset T_{\im(p)}N.$
\end{dfn}

The following are Definition~3.31, Remark~3.32 and Definition~3.33 from~\cite{cylinders}.
\begin{dfn}
		\label{definition: critical cone point}\mbox{}
\begin{enumerate}
\item\label{item:critical cone point intrinsic}
Let $(f : M \to \R,S)$ be a cone-smooth function, and let $p \in S.$ The point $p$ is said to be a \emph{critical point} of $f$ if the cone-derivative $df_p : T_pM \to \R$ vanishes identically.
\item\label{item:critical cone point submanifold}
Let $K = [(\Psi : M \to N,S)]$ be a cone-immersed submanifold, let $h = [((\Psi,S),f)]$ be a cone-smooth function on $K,$ and let $p  = [((\Psi,S),q)]$ be a cone point. The point $p$ is a \emph{critical point} of $h$ if $q$ is a critical point of $f.$
\end{enumerate}
	\end{dfn}

\begin{rem}\label{remark: critical cone point}
Let $S \subset M$ be a finite subset and let $(f : M \to \R,S)$ be a cone-smooth function.
It follows from Lemma~\ref{lemma:blowupdifferential} that in the situation of part~(\ref{item:critical cone point intrinsic}) of the preceding definition, if $p$ is a critical point of $f,$ then $df_{\widetilde p} = 0$ for all $\widetilde p \in E_p.$ The analogous statement holds in the situation of part~(\ref{item:critical cone point submanifold}).
\end{rem}

\begin{dfn}\label{dfn:cone-Hessian}
Let $(f : M \to \R,S)$ be a cone-smooth function, and let $p \in S$ be a critical point of $f.$
\begin{enumerate}
\item
The \emph{cone-Hessian} of $f$ at $p$ is the map
\[
\nabla df : T_pM \to T_pM^*,
\]
smooth away from $0$ and homogeneous of degree $1$, defined as follows.
By Remark~\ref{remark: critical cone point}, the blowup differential $\widetilde{df}$ vanishes on the exceptional sphere $E_p \subset \widetilde{M}_S.$ So, the restriction of the second covariant derivative $\nabla \widetilde{df} \in Hom\left(T\widetilde{M}_S, \widetilde{TM}_S^*\right)$ to $E_p$ is independent of the choice of connection. Moreover, $\nabla \widetilde{df}$ vanishes on $TE_p \subset T\widetilde{M}_S|_{E_p}.$ Recall that a vector $0 \neq v \in T_pM$ gives rise to a point $[v] \in \orientedprojective(T_pM) \simeq E_p.$ For $v \in T_pM,$ and $\widetilde v \in T_{[v]} \widetilde{M}_S$ such that $d \pi_{[v]}(\widetilde v) = v,$  we define
\[
\nabla_v df := \nabla_{\widetilde v} \widetilde{df} \in \left(\widetilde{TM}^*_S\right)_{[v]} = T_p^*M.
\]
\item
\label{definition: degenerate critical cone point}
The critical point $p$ is said to be \emph{degenerate} if there exists a tangent vector $0\ne v\in T_pM$ with $\nabla_vdf=0.$
\end{enumerate}
\end{dfn}
The following is Definition~3.3 from~\cite{cylinders}.
\begin{dfn}\label{definition:polar coordinates}
Let $M$ be a smooth manifold, let $p \in M.$ Let $\sigma$ be a smooth section of the quotient map $T_pM\setminus\{0\} \to \orientedprojective(T_pM) \cong S^{n-1}.$ A \emph{polar coordinate map} centered at $p$ associated with $\sigma$ is a smooth map
\[
\kappa : S^{n-1} \times [0,\epsilon) \to M
\]
satisfying the following.
\begin{enumerate}
\item
$\kappa|_{S^{n-1}\times\{0\}}$ is the constant map to $p.$
\item
$\kappa|_{S^{n-1}\times (0,\epsilon)}$ is an open embedding.
\item\label{item:sigma}
For $r \in S^{n-1},$
\[
\pderiv[\kappa]{s}(r,0) = \sigma(r).
\]
\end{enumerate}
An elementary argument shows that for every $\sigma$ there exist many polar coordinate maps. We may sometimes speak of a polar-coordinate map $\kappa$ without mentioning $\sigma$ explicitly. In this case, we refer to the section $\sigma$ determined by condition~(\ref{item:sigma}) as the section associated with $\kappa.$
\end{dfn}
The following is Lemma~3.37 from~\cite{cylinders}.
	\begin{lemma}
		\label{lemma: nice level sets}
		Let $M$ be a smooth manifold of dimension $m+1$ and let $p\in M.$ Let $h:M\to\mathbb{R}$ be cone-smooth at $p$ such that $p$ is a non-degenerate critical point and an extremum point of $h.$ Then there exist a positive $\epsilon$ and a polar coordinate map $\kappa:S^m\times[0,\epsilon)\to M$ centered at $p$ such that
for each $s\in(0,\epsilon)$ the restricted map $\kappa|_{S^m\times\{s\}}$ parameterizes a level set of $h.$
	\end{lemma}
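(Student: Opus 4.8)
The plan is to pass to the oriented blowup $\pi : \widetilde{M}_p \to M$ of Notation~\ref{ntn:blowup}. There the cone-smooth function $h$ becomes an honest smooth function $\widetilde h := h \circ \pi$ near the exceptional sphere $E_p = \pi^{-1}(p) \cong S^m$, and the claim is that the hypotheses say exactly that $E_p$ is a nondegenerate critical submanifold of $\widetilde h$ with definite, single-signed normal Hessian. Once this is established, the polar coordinate map is produced by a radial reparameterization.

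The first step is to translate the hypotheses into statements about $\widetilde h$. Since $E_p$ is collapsed to the point $p$, the function $\widetilde h$ is constant on $E_p$, equal to $c := h(p)$. Since $p$ is a critical point of $h$, Definition~\ref{definition: cone differentiability} gives $d\widetilde h_{\widetilde p} = dh_p \circ d\pi_{\widetilde p} = 0$ for $\widetilde p \in E_p$, so every point of $E_p$ is a critical point of $\widetilde h$, and moreover the blowup differential $\widetilde{df}$ vanishes on $E_p$ by Remark~\ref{remark: critical cone point}. Because $\partial_r \widetilde h$ vanishes identically along $E_p$ (where $r$ is the radial coordinate of the blowup), its derivative in directions tangent to $E_p$ also vanishes; hence, at $\widetilde p \in E_p$, the Hessian of $\widetilde h$ —which is well defined, $\widetilde p$ being critical— reduces to its purely normal part $a(\widetilde p) := \partial_r^2 \widetilde h(\widetilde p)$. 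A short computation in a coordinate chart around $p$ identifies the function $a$ on $E_p \cong \orientedprojective(T_pM)$ with the restriction to the unit sphere of the homogeneous-degree-two function $q(v) := (\nabla_v df)(v)$ on $T_pM$ determined by the cone-Hessian of Definition~\ref{dfn:cone-Hessian}; the same computation shows that, for $v$ on the unit sphere, pairing $\nabla_v df$ with $v$ returns $q(v)$ while pairing it with a vector tangent to the sphere returns one half of the corresponding derivative of $a$. In other words, $\nabla_v df = \tfrac12\, dq_v$.

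The crucial point is that $q$ is definite. Suppose $p$ is a local minimum of $h$; the maximum case is symmetric. Then $h \ge c$ near $p$, so $\widetilde h \ge c$ near $E_p$, and therefore $q = a \ge 0$. If $q(v_0) = 0$ for some $v_0 \neq 0$, then $v_0$ is a global minimum of the nonnegative function $q$, so $dq_{v_0} = 0$ and hence $\nabla_{v_0} df = \tfrac12\, dq_{v_0} = 0$; this contradicts the assumption that $p$ is a \emph{nondegenerate} critical point. Thus $a > 0$ everywhere on $E_p$. I expect this to be the heart of the argument — in particular the coordinate computation relating the normal $2$-jet of $\widetilde h$ along $E_p$ to the cone-Hessian, together with the sign argument; everything afterward is routine.

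To build $\kappa$, write $\widetilde h$ in blowup coordinates $(\theta, r) \in S^m \times [0, \epsilon)$. The difference $\widetilde h(\theta,r) - c$ vanishes to second order in $r$ along $E_p$, so by Hadamard's lemma $\widetilde h(\theta,r) - c = r^2 G(\theta,r)$ with $G$ smooth and $G(\theta, 0) = \tfrac12 a(\theta) > 0$; shrinking $\epsilon$, we may assume $G > 0$. Then $\Psi(\theta,r) := r\sqrt{G(\theta,r)}$ is smooth with $\partial_r \Psi(\theta, 0) = \sqrt{a(\theta)/2} > 0$, so by the inverse function theorem with parameter $\theta \in S^m$ it admits a smooth inverse $r = \varrho(\theta,s)$ on $S^m \times [0, \epsilon')$, with $\varrho(\theta,0) = 0$ and $\partial_s\varrho(\theta,0) > 0$, characterized by $\widetilde h(\theta, \varrho(\theta,s)) = c + s^2$. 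Set $\kappa(\theta, s) := \pi(\theta, \varrho(\theta,s))$. Then $\kappa$ is smooth, $\kappa|_{S^m \times \{0\}} \equiv p$, and $\kappa|_{S^m \times (0,\epsilon')}$ is an open embedding because $\pi$ is a diffeomorphism off $E_p$ and $\varrho(\theta,s) > 0$ for $s > 0$. For each $s \in (0,\epsilon')$ and each $x$ near $p$ with $x \neq p$ we have $x = \kappa\big(\theta, \sqrt{h(x) - c}\,\big)$, so $\kappa|_{S^m \times \{s\}}$ parameterizes the $(c + s^2)$-level set of $h$ in a punctured neighborhood of $p$. Finally $\partial_s\kappa(\theta, 0) = \partial_s\varrho(\theta,0)\, d\pi_{(\theta,0)}(\partial_r)$ is a nowhere-vanishing smooth section of $T_pM \setminus \{0\} \to \orientedprojective(T_pM)$, so $\kappa$, after relabeling $\epsilon'$ as $\epsilon$, is a polar coordinate map in the sense of Definition~\ref{definition:polar coordinates}, and the lemma follows.
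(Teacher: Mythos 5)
Your proof is correct: the blowup of $h$ to $\widetilde h=h\circ\pi$, the identification of $\partial_r^2\widetilde h|_{E_p}$ with the quadratic function $q(v)=\nabla_v df(v)$ of the cone-Hessian, the sign argument combining the extremum hypothesis (giving $q\ge 0$) with non-degeneracy and Euler's identity (upgrading to $q>0$), and the Hadamard-plus-inverse-function-theorem construction of $\varrho$ all check out against Definitions~\ref{definition: cone differentiability}, \ref{dfn:cone-Hessian} and~\ref{definition:polar coordinates}. The paper itself only imports this statement as Lemma~3.36 of~\cite{cylinders} without reproducing its proof, but your argument is the natural one in the framework the paper sets up and I see no gaps.
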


	\subsection{Lagrangians with cone points}
	\label{subsection:Lagrangians with cone points}
	
	Let $(X,\omega)$ be a symplectic manifold and let $L$ be a smooth manifold. A cone-immersed submanifold $\Lambda=[(\Psi:L\to X,S)]$ is said to be Lagrangian if it is Lagrangian in the smooth locus. It follows in this case that, for a cone point $p\in\Lambda$ and $\widetilde{p}\in\orientedprojective\left(TC_p\Lambda\right),$ the tangent space $T_{\widetilde{p}}\Lambda$ is also Lagrangian.

	We wish to study paths of cone-immersed Lagrangians with static cone locus. For a finite subset $C_0\subset X,$ we let $\mathcal{L}(X,L;S,C_0)$ denote the space of oriented cone-immersed Lagrangians in $X$ of type $(L,S)$ with cone locus image equal to $C_0.$ Let $\Theta$ be a manifold with corners. For a family $(\Lambda_t)_{t\in\Theta}$ in $\mathcal{L}(X,L;S,C_0),$ a lifting of $(\Lambda_t)_t$ is a family of cone-immersions, $((\Psi_t:L\to X,S))_{t\in\Theta},$ such that $(\Psi_t,S)$ represents $\Lambda_t$ for $t\in\Theta.$ The family $(\Lambda_t)_{t \in \Theta}$ is \emph{smooth} if it admits a smooth lifting, that is, if the family of maps $\Psi_t\circ\pi$ is smooth, where $\pi:\widetilde{L}_S\to L$ is the blowup projection.

For a path $(\Lambda_t)_t$ in $\mathcal{L}(X,L;S,C_0),$ the time derivative $\deriv{t}\Lambda_t$ is a closed cone-smooth 1-form on $\Lambda_t$ that vanishes at the cone locus. See Section~3.2 of~\cite{cylinders} for details of how the argument in the smooth case generalizes to the cone-smooth case. For a cone-smooth immersed submanifold $\Lambda$ of $X,$ let $C^\infty(\Lambda)$ denote the space of cone-smooth functions on $\Lambda.$ A path $(\Lambda_t)_{t\in[0,1]}$ in $\mathcal{L}(X,L;S,C_0)$ is said to be exact if, for $t\in[0,1],$ we have
	\[
	\deriv{t}\Lambda_t=dh_t
	\]
	for some $h_t\in C^\infty(\Lambda_t).$
	
	Let $(X,\omega,J,\Omega)$ be Calabi-Yau and let $L$ be a smooth manifold. A cone-immersed Lagrangian $\Lambda=[(\Psi:L\to X,S)]$ is said to be positive Lagrangian if it is positive Lagrangian in the smooth locus and, in addition, the tangent space $T_{\widetilde{p}}\Lambda\subset T_{\im(p)}X$ is positive Lagrangian for $p\in\Lambda^C$ and $\widetilde{p}\in\orientedprojective\left(TC_p\Lambda\right).$ Below is the definition of geodesics in its general form, which is taken from Definition~3.40 of~\cite{cylinders}.

	\begin{dfn}
		\label{definition:geodesic}
		Let $(X,\omega,J,\Omega)$ be a Calabi-Yau manifold, let $L$ be a connected smooth manifold, not necessarily closed, and let $S \subset L$ be a finite subset. Let $C_0\subset X$ be finite, let $\mathcal{O}\subset\mathcal{L}(X,L;S,C_0)$ be an exact isotopy class of cone-immersed Lagrangians, and let $(\Lambda_t)_{t\in[0,1]}$ be a path in $\mathcal{O}.$ A lifting $(\Psi_t:L\to X)_t$ of $(\Lambda_t)_t$ is said to be \emph{horizontal} if it satisfies
		\[
		i_{\deriv{t}\Psi_t}\real\Omega=0,\quad t\in[0,1],
		\]
in the smooth locus. The path $(\Lambda_t)_t$ is a \emph{geodesic} if it admits a horizontal lifting $((\Psi_t,S))_{t\in[0,1]}$ and a family of functions $h_t\in C^\infty(\Lambda_t)$ satisfying
		\[
		\deriv{t}\Lambda_t=dh_t,\quad\deriv{t}(h_t\circ\Psi_t)=0.
		\]
		We call the family $(h_t)_{t\in[0,1]}$ the \emph{Hamiltonian} or the \emph{derivative} of the geodesic. We also call the time independent function $h = h_t \circ \Psi_t : L \to \R$ the \emph{Hamiltonian} with respect to the horizontal lifting $(\Psi_t)_t.$ Observe that $h_t = [(\Psi_t,h)].$  If $L$ is not compact, the Hamiltonian is only well-defined up to a time-independent constant. If $C_0$ is empty, we say $(\Lambda_t)_t$ is a \emph{smooth geodesic} or \emph{geodesic of smooth Lagrangians}.

Recalling that $\im(\Lambda_t)$ denotes the image of $\Lambda_t$ in $X,$ we define the \emph{critical locus} of the geodesic $(\Lambda_t)_t$ by
\[
\crit((\Lambda_t)_t) = \bigcap_{t\in[0,1]}\im(\Lambda_t) \subset X.
\]
Observe that $C_0 \subset \crit((\Lambda_t)_t)$ but the opposite inclusion need not hold.
We say the cone-smooth geodesic $(\Lambda_t)_t$ is of \emph{class} $C^{k,\alpha}$ if it admits a lifting $(\Psi_t : L \to X)_t$ that belongs to the space $C^{k,\alpha}(L \times [0,1],X).$
	\end{dfn}

\begin{ntn}\label{ntn:critical point}
Suppose $(\Lambda_t)_{t \in [0,1]}$ is a geodesic with Hamiltonian $(h_t)_t$ and $\Lambda_0,\Lambda_1,$ are embedded. Let $(\Psi_t : L \to X,S)_t$ be a horizontal lifting of $(\Lambda_t)_t.$ Then, for each $q \in \crit((\Lambda_t)_t)$ there exists a unique $p \in L$ such that $\Psi_t(p) = q$ for $t \in [0,1].$ Thus, we let $\hat q_t$ denote the point of $\Lambda_t$ given by
\[
\hat q_t = [(\Psi_t,p)].
\]
\end{ntn}
\begin{rem}\label{rem:critical point}
In the setting of Notation~\ref{ntn:critical point}, it follows that $\hat q_t$ is a critical point of the Hamiltonian function $h_t.$
\end{rem}

The following is Lemma~5.11 of~\cite{cylinders}.
	\begin{lemma}
		\label{lemma: critical points are non-degenerate}
		Suppose $\Lambda_0$ and $\Lambda_1$ intersect transversally at $q \in C_0.$  Then, $\hat q_t$ is a non-degenerate critical point of $h_t,\, t \in [0,1].$
	\end{lemma}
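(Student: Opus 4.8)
The plan is to reduce the statement to a computation in the linear model, i.e.\ in $T_qX \cong \C^n$, using the fact (from Lemma~\ref{lemma:geodesic of tangent cones}, referenced in the introduction) that the family of tangent cones $TC_{\hat q_t}\Lambda_t$ is itself a geodesic of positive Lagrangian cones from $T_q\Lambda_0$ to $T_q\Lambda_1$. First I would choose a horizontal lifting $(\Psi_t : L\to X, S)_t$ and the associated time-independent Hamiltonian $h : L\to\R$ with $h_t = [(\Psi_t, h)]$, and recall from Remark~\ref{rem:critical point} that $p = \Psi_t^{-1}(q)$ is a critical point of $h_t$. By Definition~\ref{dfn:cone-Hessian}, non-degeneracy of $\hat q_t$ means that $\nabla_v dh_t \neq 0$ for every $0\neq v\in T_{\hat q_t}\Lambda_t$; concretely, after blowing up and passing to the exceptional sphere $E_p$, this is the condition that the blowup differential $\widetilde{dh_t}$, which vanishes on $E_p$, has nonvanishing normal derivative along every ray. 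The key point is that this normal derivative is computed entirely from the restriction of the geodesic to an arbitrarily small neighborhood of $q$, hence from the tangent cone geodesic $(TC_{\hat q_t}\Lambda_t)_t$ and its Hamiltonian.

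Next I would pass to polar coordinates centered at $p$. Since $\deriv{t}\Lambda_t = dh_t$ and the lifting is horizontal, $h$ is (up to a constant) a primitive for the tautological $1$-form pulled back to $L$; the geodesic equation $\deriv{t}(h\circ\Psi_t)=0$ says $h$ does not depend on $t$ along the lifting. The transversality hypothesis $T_q\Lambda_0 \pitchfork T_q\Lambda_1$ enters as follows: by Theorem~\ref{theorem:linear geodesic} (or rather its content at the level of the tangent-cone geodesic), a geodesic of positive Lagrangian linear subspaces joining transverse endpoints has \emph{negative definite} derivative. The derivative of that linear geodesic is exactly the Hessian-type quadratic form associated with $h$ at the critical point, viewed in the identification $T_\Lambda\laggrass^+(n)\cong \sym(\Lambda)$. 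Definiteness of this quadratic form is precisely the statement that $\nabla_v dh_t$ does not vanish for $0\neq v$. So the structure of the argument is: (1) identify the cone-Hessian $\nabla df$ of $h_t$ at $\hat q_t$ with the derivative of the tangent-cone geodesic at time $t$; (2) invoke transversality plus the linear a priori result to conclude this derivative is definite; (3) read off non-degeneracy.

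The main obstacle I expect is step~(1): carefully matching the intrinsic cone-Hessian of Definition~\ref{dfn:cone-Hessian} — defined via $\nabla\widetilde{df}$ restricted to $E_p$ and its behavior transverse to $E_p$ — with the second-derivative-in-the-cone-direction that arises when one writes the geodesic of positive Lagrangian cones as a path in $\laggrass^+(n)$ and differentiates. This requires unwinding the canonical isomorphism $T_\Lambda\laggrass^+(n)\cong\{\text{quadratic forms on }\Lambda\}$ and checking that the quadratic form obtained from $\deriv{t}TC_{\hat q_t}\Lambda_t$ agrees, under the blowup identifications, with $v\mapsto \nabla_v dh_t$; one must also confirm that the relevant normal derivative is independent of the auxiliary connection, which is guaranteed because $\widetilde{dh_t}$ vanishes identically on $E_p$ (Remark~\ref{remark: critical cone point}). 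Once this dictionary is in place, the conclusion is immediate from the already-established linear theory, and the argument is uniform in $t\in[0,1]$ since transversality of the endpoint tangent spaces is a $t$-independent hypothesis and the tangent-cone geodesic depends smoothly on $t$.
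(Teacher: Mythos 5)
First, note that the paper does not prove this statement at all: it is quoted verbatim as Lemma~5.10 of~\cite{cylinders}, so there is no internal proof to compare against. Your proposal must therefore stand on its own, and as written it has a genuine gap.

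The gap is in your step~(2). Lemma~\ref{lemma:geodesic of tangent cones} only tells you that $(TC_{\hat q_t}\Lambda_t)_t$ is a geodesic of positive Lagrangian \emph{cones}; only the endpoints $TC_{\hat q_0}\Lambda_0=T_q\Lambda_0$ and $TC_{\hat q_1}\Lambda_1=T_q\Lambda_1$ are known to be linear. Whether the intermediate tangent cones are linear subspaces is precisely what is left open in Question~\ref{qu:lagcones}, and in the proof of Theorem~\ref{theorem:perturbed C1 geodesic} the paper has to invoke Proposition~\ref{proposition:geodesic is in fact linear} under additional continuity and deformation hypotheses to get linearity. So the linear theory of Section~\ref{section:the positive Lagrangian Grassmannian} (in particular Proposition~\ref{proposition:geodesic preserves orthogonal frame}) does not apply to the tangent-cone geodesic, and the cone-Hessian $v\mapsto\nabla_v dh_t$ is only a degree-$1$ homogeneous map, not a linear one, so ``non-degenerate'' cannot be read off as invertibility of a matrix. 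Separately, you misquote Theorem~\ref{theorem:linear geodesic}: it asserts the \emph{existence} of a geodesic with negative definite derivative when $m(\Lambda_0,\Lambda_1)=0$ and the intersection is transverse; it does not say that every geodesic between transverse endpoints has definite derivative, and that stronger claim is false (a linear geodesic with derivative of mixed signature can perfectly well have transverse endpoints, and the present lemma assumes nothing about the Maslov index). The statement you actually need in the linear model is the weaker one implicit in the proof of Lemma~\ref{lemma:negative derivative implies Maslov zero}: if a coefficient $a_j$ vanishes, then the corresponding partial phase difference $\theta_j$ vanishes identically and the endpoints share a line, contradicting transversality --- this yields non-degeneracy, not definiteness. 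Even with that correction, you would still need an argument covering the possibility of non-linear intermediate tangent cones (for instance, using that the Hamiltonian $h^T=h_t^T\circ(d\Psi_t)_p$ of the tangent-cone geodesic is time-independent, so degeneracy at one time propagates to $t=0,1$ where the cones are linear), and none of that is supplied. Your step~(1), identifying the cone-Hessian with the derivative of the tangent-cone geodesic, is fine --- it is exactly the content of Lemma~\ref{lemma:geodesic of tangent cones} --- but the reduction to the linear a priori theory does not go through as stated.
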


	\subsection{Special Lagrangian cylinders and geodesics}
	\label{subsection:special Lagrangian cylinders and their relation to geodesics}

	In this section we recall results from~\cite{cylinders} concerning special Lagrangian cylinders, families thereof and their relation to geodesics of positive Lagrangians.
	
	\begin{dfn}
		\label{definition:Lagrangian cylinders}
		Let $(X,\omega,J,\Omega)$ be a Calabi-Yau manifold of complex dimension~$n,$ let $\Lambda_0,\Lambda_1\subset X$ be smoothly embedded positive Lagrangians, and let $N$ be a connected smooth manifold of dimension $n-1.$ Write $L:=N\times[0,1].$ A \emph{Lagrangian cylinder} of type $L$ between $\Lambda_0$ and $\Lambda_1$ is an immersed Lagrangian $Z=[\chi:L\to X]$ satisfying the following conditions.
		\begin{enumerate}
			\item The restricted immersion $\chi|_{N\times\{i\}}$ is an embedding of $N\times\{i\}$ into $\Lambda_i$ for $i = 0,1.$
			\item For $i=0,1$ and $p\in N\times\{i\}$ we have $d\chi_p(T_pL)\ne T_{\chi(p)}\Lambda_i.$
		\end{enumerate}
		We let $\mathcal{LC}(N;\Lambda_0,\Lambda_1)$ denote the space of Lagrangian cylinders of type $L$ between $\Lambda_0$ and $\Lambda_1$ and $\mathcal{SLC}(N;\Lambda_0,\Lambda_1)\subset\mathcal{LC}(N;\Lambda_0,\Lambda_1)$ the subspace consisting of imaginary special Lagrangian cylinders. We denote by $\mathcal{SLC}(\Lambda_0,\Lambda_1)$ the union of the spaces $\mathcal{SLC}(N;\Lambda_0,\Lambda_1)$ as $N$ varies.
	\end{dfn}

The requirement that $\chi|_{N\times\{i\}}$ is an embedding guarantees that $\chi$ is a free immersion by Lemma~2.5 of~\cite{cylinders}. Thus, $\mathcal{LC}(N;\Lambda_0,\Lambda_1)$ is a Fr\'echet manifold by Theorem~2.12 of~\cite{cylinders}. The requirement that $\chi|_{N\times\{i\}}$ is an embedding is automatically satisfied for the cylinders of $c$ level sets of Definition~\ref{dfn:cylinder} below because the boundaries are level sets of a function. These are the cylinders that give the cylindrical transform.

	When $N$ is closed, we describe the Fr\'echet structure of $\mathcal{LC}(N;\Lambda_0,\Lambda_1)$ via Lemmas~\ref{lemma:Weinstein neighborhood of cylinder} and~\ref{lemma:closed and annihilating is exact} below. Lemma~\ref{lemma:Weinstein neighborhood of cylinder} is an adaptation of the Weinstein neighborhood theorem to Lagrangian cylinders, which is a special case of Lemma~2.10 of~\cite{cylinders}. For a smooth manifold $M$ and a submanifold $Q\subset M,$ we let $\nu_Q\subset T^*M$ denote the conormal bundle of $Q.$

	\begin{lemma}
		\label{lemma:Weinstein neighborhood of cylinder}
		Let $X$ be Calabi-Yau and let $\Lambda_0,\Lambda_1\subset X$ be smoothly embedded positive Lagrangians intersecting transversally. Let $N$ be a closed connected smooth manifold, write $L:=N\times[0,1],$ and let $Z=[\chi:L\to X]\in\mathcal{LC}(N;\Lambda_0,\Lambda_1).$ Identify $L$ with the zero section in $T^*L.$ Then there exist an open neighborhood $L\subset V\subset T^*L$ and a local symplectomorphism $\varphi:V\to X$ with the following properties.
		\begin{enumerate}[label=(\alph*)]
			\item $\varphi|_L=\chi.$
			\item For $i=0,1,$ a point $p\in N\times\{i\}$ and a covector $\xi\in T_p^*L,$ we have
			\[
			\varphi(p,\xi)\in\Lambda_i\Leftrightarrow\xi\in\nu_{N\times\{i\}}.
			\]
		\end{enumerate}
		A pair $(V,\varphi)$ with the above properties is called a \emph{Weinstein neighborhood of $Z$ compatible with $\Lambda_0$ and $\Lambda_1$}.
	\end{lemma}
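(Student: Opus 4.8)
\subsection*{Proof proposal}

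The plan is to prove this by a relative version of the Weinstein Lagrangian neighborhood theorem: first straighten $\chi$ to the zero section by the usual construction, and then correct the outcome by a symplectomorphism of $T^*L$ that fixes the zero section, so that $\Lambda_0$ and $\Lambda_1$ are carried onto the conormal bundles $\nu_{N\times\{0\}}$ and $\nu_{N\times\{1\}}$. Throughout, $\lambda$ denotes the tautological $1$-form on $T^*L$ and $\omega_{\mathrm{can}}=d\lambda$.

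\emph{Step 1: the usual Weinstein neighborhood.} Using the K\"ahler metric on $X$ and a Lagrangian complement to $d\chi(TL)$ inside $\chi^*TX$, I would build an immersion $F_0:V_0\to X$ of a neighborhood $V_0$ of the zero section $L\subset T^*L$ with $F_0|_L=\chi$ and, after the usual Moser correction (which is intrinsic to $T^*L$ and so preserves the first two properties), $F_0^*\omega=\omega_{\mathrm{can}}$ on all of $V_0$. Since $\chi|_{N\times\{i\}}$ is an embedding of the compact set $N\times\{i\}$, after shrinking $F_0$ restricts to an embedding near $N\times\{i\}$; there $\Lambda_i':=F_0^{-1}(\Lambda_i)$ is a smooth $\omega_{\mathrm{can}}$-Lagrangian containing $N\times\{i\}$. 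Condition~(2) of Definition~\ref{definition:Lagrangian cylinders}, transported by $F_0$, reads $T_p\Lambda_i'\neq T_pL$ for $p\in N\times\{i\}$; as both are Lagrangian $n$-planes containing $T_p(N\times\{i\})$, this forces the clean intersection $T_p\Lambda_i'\cap T_pL=T_p(N\times\{i\})$, exactly as for the pair $(\nu_{N\times\{i\}},L)$.

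\emph{Step 2: local normal form at a boundary component.} Next I would work near $N\times\{0\}$ in canonical coordinates $(x,y,\xi,\eta)$ on $T^*L$, with $L=\{\xi=0,\eta=0\}$ and $\nu_{N\times\{0\}}=\{y=0,\xi=0\}$. The clean intersection from Step~1 together with the Lagrangian condition shows $T_p\Lambda_0'$ projects isomorphically onto $T_p\nu_{N\times\{0\}}$ for $p\in N\times\{0\}$, so near $N\times\{0\}$ the Lagrangian $\Lambda_0'$ is a graph over $\nu_{N\times\{0\}}$, say $\Lambda_0'=\{(x,Y(x,\eta),\Xi(x,\eta),\eta)\}$ with $Y(\cdot,0)=0$ and $\Xi(\cdot,0)=0$. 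The scaled family $\Lambda_0'(t):=\{(x,tY,t\Xi,\eta)\}$, $t\in[0,1]$, is then a Lagrangian isotopy from $\nu_{N\times\{0\}}$ to $\Lambda_0'$, because the pullback of $\lambda$ along its parametrization scales by $t$ and hence stays exact; writing $\iota^*\lambda=d\Phi$ for the parametrization $\iota$ of $\Lambda_0'$ (a global primitive exists since $\iota^*\lambda$ is closed and restricts to zero on the deformation retract $N\times\{0\}$ of its domain), the velocity of this isotopy is generated, via $\omega_{\mathrm{can}}$, by a time-independent function that depends only on $(x,\eta)$ --- explicitly $H:=\Phi-\eta Y$. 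Because $Y(\cdot,0)=0$ and $\Phi$ vanishes to second order at $\eta=0$, one has $H=O(\eta^2)$, so $H$ and $dH$ vanish along $\{\eta=0\}\supset L$; hence the Hamiltonian flow $\psi_t$ of a cutoff of $H$ (with cutoff $\equiv1$ near $N\times\{0\}$) fixes $L$ pointwise, and $G_0:=\psi_1$ carries $\nu_{N\times\{0\}}$ onto $\Lambda_0'$ near $N\times\{0\}$. Repeating near $N\times\{1\}$ and using that $N\times\{0\}$ and $N\times\{1\}$ admit disjoint neighborhoods in $T^*L$, I obtain a symplectomorphism $G$ of a neighborhood of $L$ with $G|_L=\mathrm{id}$ and $G(\nu_{N\times\{i\}})=\Lambda_i'$ near $N\times\{i\}$, $i=0,1$.

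\emph{Step 3 and the main obstacle.} Setting $\varphi:=F_0\circ G$ on a suitable neighborhood $V$ of $L$ finishes the proof: $\varphi|_L=\chi$ and $\varphi^*\omega=G^*\omega_{\mathrm{can}}=\omega_{\mathrm{can}}$, so $\varphi$ is a local symplectomorphism; for $p\in N\times\{i\}$ and small $\xi$, if $\xi\in\nu_{N\times\{i\}}$ then $\varphi(p,\xi)\in F_0(\Lambda_i')=\Lambda_i$, while conversely near $N\times\{i\}$ the set $\varphi^{-1}(\Lambda_i)=G^{-1}(\Lambda_i')$ is an $\omega_{\mathrm{can}}$-Lagrangian containing $\nu_{N\times\{i\}}$, hence equals it by equality of dimensions, giving the reverse implication; shrinking $V$ yields the asserted Weinstein neighborhood. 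I expect the crux to be Step~2: recognizing the cleanly intersecting Lagrangian $\Lambda_i'$ as the graph of a generating function that is $O(\eta^2)$ in the cotangent direction of the $T^*[0,1]$-factor, so that the isotopy straightening $\nu_{N\times\{i\}}$ onto it is generated by a Hamiltonian flat enough along the zero section to fix $L$ pointwise. The remaining bookkeeping --- the two boundary components, the cutoffs, and the shrinking of $V$ --- is routine once this normal form is in hand.
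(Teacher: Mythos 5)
First, a point of comparison: the paper does not actually prove this lemma --- it is quoted from Lemma~2.8 of~\cite{cylinders} --- so your argument has to stand on its own. Its overall architecture (a standard Weinstein chart $F_0$ for the immersed cylinder, followed by a correcting symplectomorphism of $T^*L$ that fixes the zero section and is generated by a Hamiltonian $H=\Phi-\eta Y$ vanishing to second order along $\{\eta=0\}$) is a reasonable relative Weinstein argument, and the Step~2 computations ($\iota_t^*\lambda=t\,\iota_1^*\lambda$, $dH=\Xi\,dx-Y\,d\eta$, $X_H=0$ along $\{\eta=0\}$) are correct as far as they go.

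The genuine gap is that you treat $T^*L$ as if $L=N\times[0,1]$ were boundaryless. The points $p\in N\times\{0\}$ lie on the boundary of $T^*L$, so the domain $V_0$ of $F_0$ is a manifold with boundary and $F_0(V_0)$ is only a ``half-neighborhood'' of $\chi(N\times\{0\})$ in $X$, bounded by the hypersurface $F_0(T^*L|_{N\times\{0\}})$. Consequently $\Lambda_0':=F_0^{-1}(\Lambda_0)$ records only the part of $\Lambda_0$ that happens to lie in that half-neighborhood; depending on the Lagrangian complement used to build $F_0$, this can be a half-graph over $\{\pm\eta\ge0\}\subset\nu_{N\times\{0\}}$, or even just $N\times\{0\}$ itself (already for $n=1$: choose the transversal $F_0(T_0^*L)$ tangent to $\Lambda_0$ with $\Lambda_0$ curving out of the image of $F_0$). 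Your tangent-space computation does show that $T_p\Lambda_0'\to T_p\nu_{N\times\{0\}}$ is an isomorphism, but the implicit function theorem upgrades this to a graph over a \emph{full} neighborhood of $N\times\{0\}$ in $\nu_{N\times\{0\}}$ only in a boundaryless ambient manifold; and the lemma genuinely needs the full graph, since $\varphi(\nu_{N\times\{0\}}\cap V)$ must be an open neighborhood of $\chi(N\times\{0\})$ in $\Lambda_0$. The same issue touches the Moser step of Step~1 (the Moser vector field need not be tangent to $T^*L|_{\partial L}$) and the final flow (whose $\partial_y$-component along $\{y=0\}$ is $Y(x,\eta)$, which must be nonnegative for the flow to stay inside $T^*L$). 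The standard repair is to first extend $\chi$ to a Lagrangian immersion of $N\times(-\delta,1+\delta)$, run the whole argument in $T^*(N\times(-\delta,1+\delta))$, and restrict to $T^*L$ at the end; that extension exists, but it is a nontrivial step that your write-up does not supply or even flag.
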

The following is Lemma 4.3 of~\cite{cylinders}.
	\begin{lemma}
		\label{lemma:closed and annihilating is exact}
		Let $N$ be a smooth manifold and let $\alpha$ be a closed 1-form on the cylinder $N\times[0,1]$ with pull-back to the boundary component $N\times\{0\}$ zero. Then $\alpha$ is exact.
	\end{lemma}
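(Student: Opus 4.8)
The plan is to produce a primitive of $\alpha$ by hand, exploiting the product structure of the cylinder. At a point $(x,t)\in N\times[0,1]$ we have $T_{(x,t)}(N\times[0,1])=T_xN\oplus T_t[0,1]$, so we may write $\alpha=\beta_t+g\,dt$ where $(\beta_t)_{t\in[0,1]}$ is a smooth one-parameter family of $1$-forms on $N$ and $g\in C^\infty(N\times[0,1])$. In local coordinates $(x^1,\dots,x^m,t)$ on $N\times[0,1]$ this reads $\alpha=\sum_i a_i(x,t)\,dx^i+g(x,t)\,dt$. A direct computation shows that $d\alpha=0$ is equivalent to the pair of conditions $d_N\beta_t=0$ for every $t$ and $\partial_t\beta_t=d_N g(\cdot,t)$; in coordinates the second condition is $\partial_t a_i=\partial_{x^i}g$ for all $i$. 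The hypothesis that $\alpha$ annihilates the boundary component $N\times\{0\}$ says exactly that the tangential part $\beta_0$ vanishes, i.e. $a_i(x,0)=0$ for all $i$.

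Now define $f:N\times[0,1]\to\R$ by $f(x,t)=\int_0^t g(x,s)\,ds$. Differentiating under the integral sign shows $f$ is smooth, with $\partial_t f=g$, and, using the closedness relation followed by the boundary condition,
\[
\partial_{x^i}f(x,t)=\int_0^t\partial_{x^i}g(x,s)\,ds=\int_0^t\partial_s a_i(x,s)\,ds=a_i(x,t)-a_i(x,0)=a_i(x,t).
\]
Hence $df=\sum_i a_i\,dx^i+g\,dt=\alpha$, so $\alpha$ is exact. (Connectedness of $N$ is not actually needed for existence of the primitive; it only ensures $f$ is unique up to an additive constant.)

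Conceptually, the same conclusion is the observation that the inclusion $i:N\times\{0\}\hookrightarrow N\times[0,1]$ is a deformation retract, hence a homotopy equivalence, so it induces an isomorphism $i^*:H^1_{\mathrm{dR}}(N\times[0,1])\xrightarrow{\ \sim\ }H^1_{\mathrm{dR}}(N)$; since $i^*\alpha=0$ by hypothesis, $[\alpha]=0$ and $\alpha$ is exact. I do not anticipate any real obstacle in either version of the argument: the only points meriting a word of care are the validity of de Rham theory and its homotopy invariance on the manifold-with-boundary $N\times[0,1]$, and — in the hands-on version — differentiation under the integral sign, both of which are entirely routine.
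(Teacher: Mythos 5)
Your proof is correct: the explicit primitive $f(x,t)=\int_0^t g(x,s)\,ds$ does satisfy $df=\alpha$, with the boundary hypothesis $a_i(x,0)=0$ used exactly where it is needed, and the cohomological one-liner via homotopy invariance of de Rham cohomology is an equally valid shortcut. Note that the present paper does not prove this statement at all — it is quoted verbatim as Lemma~4.3 of~\cite{cylinders} — so there is no argument here to compare against; your fiber-integration construction is the standard proof of this relative Poincar\'e lemma and is complete as written.
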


	We continue with the above notation. For a smooth manifold $M$ with boundary, let $\Omega_B^1(M)$ denote the Fr\'echet space consisting of closed 1-forms on $M$ that pull-back to zero on the boundary. Given a compact Lagrangian cylinder $Z\in\mathcal{LC}(N;\Lambda_0,\Lambda_1),$ Theorem~2.12 of~\cite{cylinders} asserts that any Weinstein neighborhood of $Z$ compatible with $\Lambda_0$ and $\Lambda_1$ identifies an open neighborhood $Z\in\mathcal{U}\subset\mathcal{LC}(N;\Lambda_0,\Lambda_1)$ with an open neighborhood $0\in\mathcal{V}\subset\Omega_B^1(Z).$ The space $\mathcal{LC}(N;\Lambda_0,\Lambda_1)$ is thus a smooth Fr\'echet manifold. For a cylinder $Z$ as above, let $C_i,\;i=0,1,$ denote the boundary components corresponding to $N \times \{i\}.$ Let $\cob{\infty}(Z)$ denote the space of smooth functions on $Z$ which vanish on $C_0$ and are constant on $C_1.$ By Lemma~\ref{lemma:closed and annihilating is exact}, an open neighborhood of $Z$ in $\mathcal{LC}(N;\Lambda_0,\Lambda_1)$ is diffeomorphic to a neighborhood of 0 in $\cob{\infty}(Z).$
The following is part of Lemma 4.4 of~\cite{cylinders}. We give the proof for the reader's convenience.
\begin{lemma}\label{lemma:TLC}
There is a canonical isomorphism
\begin{equation*}
T_Z\mathcal{LC}(N;\Lambda_0,\Lambda_1)\cong\cob{\infty}(Z)
\end{equation*}
\end{lemma}
\begin{proof}
Let $(Z_t)_{t\in(-\epsilon,\epsilon)}$ be a smooth path in $\mathcal{LC}(N;\Lambda_0,\Lambda_1)$ with $Z_0 = Z.$ Let $\Psi_t: N \times [0,1]\to X$ for $t\in(-\epsilon,\epsilon)$ be a smooth family of immersions such that $\Psi_t$ represents $Z_t.$ Then, recalling Definition~\ref{definition: immersed submanifold} concerning differential forms on immersed submanifolds, we identify the tangent vector $\left.\deriv{t}Z_t\right|_{t = 0}$ with the 1-form
$
\left[\left(\Psi_0,i_{\left.\deriv{t}\Psi_t\right|_{t = 0}}\omega\right)\right]
$
on $Z.$
Since $\Psi_t(N \times \{i\}) \subset \Lambda_i$ for $i =0,1,$ it follows that $\deriv{t}\Psi_t|_{N \times \{i\}}$ is tangent to $\Lambda_i.$ Since $\Lambda_i$ is Lagrangian and $\Psi_t(N \times \{i\}) \subset \Lambda_i$, it follows that
$
\left[\left(\Psi_0,i_{\left.\deriv{t}\Psi_t\right|_{t = 0}}\omega\right)\right] \in \Omega_B^1(Z).
$
Finally, by Lemma~\ref{lemma:closed and annihilating is exact}, we have $\Omega_B^1(Z) \cong \cob{\infty}(Z).$
\end{proof}
	
For an imaginary special Lagrangian cylinder $Z,$ define a differential operator
	\begin{equation}
	\label{equation:Laplacian}
	\Delta_\rho:C^\infty(Z)\to C^\infty(Z),\quad u\mapsto*d(\rho*du),
	\end{equation}
	where $*$ denotes the Hodge star operator of the K\"ahler metric and $\rho$ is the function given by~\eqref{equation:rho}. The following lemma is a restatement of Lemmas~4.5 and~4.6 of~\cite{cylinders}. Let $C^{\infty}(L;\partial L)$ denote the space of smooth functions on $L$ which vanish on the boundary.

	\begin{lemma}
		\label{lemma:Laplacian}
		Let $N$ be a closed connected manifold, let $Z = [f : L \to X]$ be an immersed imaginary special Lagrangian of type $L:=N\times[0,1],$ and let $\Delta_\rho$ be as in~\eqref{equation:Laplacian}.
		\begin{enumerate}[label=(\alph*)]
			\item\label{linearization is Laplacian} Let $f_t:L\to X,\;t\in(-\epsilon,\epsilon),$ be a smooth family of Lagrangian immersions with $f_0 = f.$ Write $v:=\left.\deriv{t}\right|_{t=0}f_t$ and suppose we have $i_v\omega=du$ for some $u\in C^\infty(L).$ Then
			\[
			\left.\deriv{t}\right|_{t=0}\left(*f_t^*\real\Omega\right)=\Delta_\rho u.
			\]
\item \label{elliptic}
The linear map
			\[
			\left.\Delta_\rho\right|_{C^\infty(L;\partial L)}:C^\infty(L;\partial L)\to C^\infty(L)
			\]
			is an isomorphism.
In particular,
the intersection $\ker\Delta_\rho\cap\cob{\infty}(L)$ is 1-dimensional.
		\end{enumerate}
	\end{lemma}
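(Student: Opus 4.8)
The plan is to handle the two parts in turn: part (a) is a first-variation computation in the style of Harvey--Lawson, and part (b) is the unique solvability of a Dirichlet problem for a uniformly elliptic divergence-form operator, from which the one-dimensionality of the kernel follows formally.

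For part (a), I would begin by noting that $\Omega$ is holomorphic, hence closed, so $d\real\Omega = 0$. Choosing any vector field $V$ on $X$ extending $v$, Cartan's formula gives
\[
\left.\deriv{t}\right|_{t=0}f_t^*\real\Omega = f_0^*\mathcal{L}_V\real\Omega = f_0^*\,d\,i_V\real\Omega = d\big((i_V\real\Omega)|_Z\big).
\]
Since $Z$ is imaginary special, $f_0^*\real\Omega = 0$, so when differentiating $*f_t^*\real\Omega$ the variation of the induced Hodge star drops out and it suffices to apply the induced $*$ to the $n$-form above. Next I would evaluate $(i_V\real\Omega)|_Z$ by splitting $V|_Z = V^\top + JW$ into components tangent and normal to the Lagrangian $Z$ (the normal bundle is $J(TZ)$); the tangential part contributes nothing because $\real\Omega|_Z = 0$. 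For the normal part, the $(n,0)$-type of $\Omega$ gives the pointwise identity $i_{JW}\Omega = \ii\, i_W\Omega$, and combining this with $\Omega|_Z = \ii\rho\vol_Z$ yields $(i_{JW}\real\Omega)|_Z = -\rho\, i_W\vol_Z$, up to an overall sign fixed by the phase convention. Finally, the hypothesis $i_v\omega = du$ together with the Lagrangian condition identifies $W$ with $-\nabla u$, the gradient of $u$ in the induced metric, so $(i_V\real\Omega)|_Z = \rho\, i_{\nabla u}\vol_Z = \rho*du$; applying $d$ and then $*$ gives $\left.\deriv{t}\right|_{t=0}(*f_t^*\real\Omega) = *d(\rho*du) = \Delta_\rho u$.

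For part (b), the first step is to recognize $\Delta_\rho u = *d(\rho*du) = \langle d\rho,du\rangle + \rho\,{*}d{*}du$ as $\operatorname{div}(\rho\nabla\,\cdot\,)$ up to sign: a second-order uniformly elliptic operator in divergence form with smooth positive coefficient $\rho$ on the compact manifold with boundary $L = N\times[0,1]$. Injectivity on $C^\infty(L;\partial L)$ follows from integration by parts, $\int_L u\,\Delta_\rho u\,\vol_L = -\int_L\rho\,|du|^2\,\vol_L$, which vanishes only for $u$ constant, hence $u = 0$ since $L$ is connected and $u|_{\partial L}=0$. Surjectivity is the classical Dirichlet problem: given $g\in C^\infty(L)$, I would produce a weak solution of $\Delta_\rho u = g$ in $H^1_0(L)$ by Lax--Milgram, using coercivity of $(u,w)\mapsto\int_L\rho\langle du,dw\rangle\,\vol_L$ via the Poincar\'e inequality, and then bootstrap to $u\in C^\infty(L;\partial L)$ by elliptic regularity up to the smooth boundary; continuity of the inverse comes from the elliptic estimates. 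This gives the asserted isomorphism.

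It remains to count the kernel. Let $w_0\in C^\infty(L)$ satisfy $w_0|_{N\times\{0\}}=0$ and $w_0|_{N\times\{1\}}\equiv 1$ (for instance $w_0(x,s)=s$), and set $w := w_0 + u_0$, where $u_0\in C^\infty(L;\partial L)$ is the unique solution of $\Delta_\rho u_0 = -\Delta_\rho w_0$ furnished by part (b). Then $w\in\cob{\infty}(L)$, $\Delta_\rho w = 0$, and $w|_{N\times\{1\}}\equiv 1$, so $w\neq 0$. Conversely, if $u\in\ker\Delta_\rho\cap\cob{\infty}(L)$ has $u|_{N\times\{1\}}\equiv c$, then $u - cw\in C^\infty(L;\partial L)$ lies in $\ker\Delta_\rho$, hence vanishes by injectivity, so $u = cw$. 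Thus $\ker\Delta_\rho\cap\cob{\infty}(L) = \R w$ is one-dimensional. The one genuinely delicate point is in part (a): producing and correctly orienting the identity $i_{JW}\Omega = \ii\, i_W\Omega$ and tracking the Hodge-star and phase conventions so that the linearization comes out as exactly $\Delta_\rho u$; part (b) and the kernel count are routine once $\Delta_\rho$ is placed in divergence form with Dirichlet data.
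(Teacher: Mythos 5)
Your proof is correct, and it follows what is essentially the intended argument: the paper itself gives no proof of this lemma but imports it as Lemmas~4.5 and~4.6 of~\cite{cylinders}, where the content is exactly the McLean-type first variation $\left.\deriv{t}\right|_{t=0}f_t^*\real\Omega = d\bigl((i_V\real\Omega)|_Z\bigr) = d(\rho * du)$ (with the variation of the Hodge star dropping out because $f_0^*\real\Omega = 0$), followed by the standard Dirichlet theory for the divergence-form operator $\operatorname{div}(\rho\nabla\,\cdot\,)$ and the construction of the fundamental harmonic $w$ to identify $\ker\Delta_\rho\cap\cob{\infty}(L)$ as $\R w$. The only points requiring care are the sign and phase conventions in the identity $(i_{JW}\Omega)|_Z = \ii\, i_W(\ii\rho\vol_Z) = -\rho\, i_W\vol_Z$ and in $W = -\nabla u$, which you flag explicitly and which indeed cancel to give $+\Delta_\rho u$.
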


\begin{dfn}\label{dfn:fundamental harmonic}
Let $Z = [f : N \times [0,1] \to X]$ be an immersed imaginary special Lagrangian cylinder. For $i = 0,1,$ let $C_i$ denote the boundary component of $Z$ corresponding to $N \times \{i\}.$
\begin{enumerate}[label = (\alph*)]
\item The \emph{fundamental harmonic} of $Z$ is the unique function $\sigma \in \cob{\infty}(Z)$ such that
    \[
    \Delta_\rho\sigma = 0, \qquad \sigma|_{C_1} \equiv 1.
    \]
\item We say that $Z$ has \emph{regular harmonics} if the fundamental harmonic of $Z$ has no critical points.
\item The immersion $f$ representing $Z$ is said to be \emph{adapted to the harmonics} of~$Z$ if
    \[
    \sigma \circ f(p,t) = t, \qquad (p,t) \in N\times [0,1].
    \]
In this case, it follows immediately that $Z$ has regular harmonics.
\end{enumerate}
\end{dfn}
	
	The following is Proposition~4.7 of~\cite{cylinders}, which is an adaptation of McLean's result on the deformation theory of closed special Lagrangians~\cite{mclean}.
	
	\begin{prop}
		\label{proposition:space of special Lagrangian cylinders}
		Let $\Lambda_0,\Lambda_1\subset X$ be smoothly embedded positive Lagrangians intersecting transversally, and let $N$ be a closed connected smooth manifold. Then the space $\mathcal{SLC}(N;\Lambda_0,\Lambda_1)$ is a smooth manifold of dimension 1. For an imaginary special Lagrangian cylinder $Z\in\mathcal{SLC}(N;\Lambda_0,\Lambda_1)$ we have
		\[
		T_Z\mathcal{SLC}(N;\Lambda_0,\Lambda_1)=\ker\Delta_\rho\cap\cob{\infty}(Z).
		\]
	\end{prop}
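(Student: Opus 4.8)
The plan is to follow McLean's deformation argument: realize a neighborhood of each $Z\in\mathcal{SLC}(N;\Lambda_0,\Lambda_1)$, inside the Fr\'echet manifold $\mathcal{LC}(N;\Lambda_0,\Lambda_1)$, as the zero set of a smooth map and apply the implicit function theorem. Fix $Z=[f:L\to X]$ with $L=N\times[0,1]$. By the discussion following Lemma~\ref{lemma:Weinstein neighborhood of cylinder}, a Weinstein neighborhood of $Z$ compatible with $\Lambda_0$ and $\Lambda_1$ identifies an open neighborhood $Z\in\mathcal U\subset\mathcal{LC}(N;\Lambda_0,\Lambda_1)$ with an open neighborhood $0\in\mathcal V\subset\cob{\infty}(Z)$: using Lemma~\ref{lemma:closed and annihilating is exact} to pass from closed $1$-forms annihilating the boundary to functions vanishing on $C_0$ and constant on $C_1$, an element $u\in\mathcal V$ corresponds to the Lagrangian cylinder represented by the immersion $f_u$ whose graph in $T^*L$ is $du$. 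The family $(f_u)_{u\in\mathcal V}$ is smooth, $f_0=f$, and the derivative $v_u$ of the path $(f_{tu})_t$ at $t=0$ satisfies $i_{v_u}\omega=du$.

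Next I would consider the map
\[
F:\mathcal V\to C^\infty(Z),\qquad F(u):=*f_u^*\real\Omega,
\]
where $*$ is the Hodge star of the fixed metric $f^*g$; this makes sense because $f_u^*\real\Omega$ is a top-degree form on the fixed manifold $L$. Since $Z$ is imaginary special we have $\real\Omega|_Z=0$, so $F(0)=0$, and a cylinder in $\mathcal U$ corresponding to $u$ belongs to $\mathcal{SLC}(N;\Lambda_0,\Lambda_1)$ exactly when $f_u^*\real\Omega=0$ and $\imaginary\Omega$ restricts to it as a positive volume form; the latter is an open condition, hence automatic after shrinking $\mathcal V$, so $\mathcal U\cap\mathcal{SLC}(N;\Lambda_0,\Lambda_1)$ is identified with $F^{-1}(0)$. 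By Lemma~\ref{lemma:Laplacian}\ref{linearization is Laplacian}, applied to the path $(f_{tu})_t$, we obtain $dF_0=\Delta_\rho:\cob{\infty}(Z)\to C^\infty(Z)$ (the $t$-variation of the Hodge star makes no contribution, since $f_0^*\real\Omega=0$). Now $C^\infty(L;\partial L)$ sits inside $\cob{\infty}(Z)$ as a subspace of codimension one---indeed $\cob{\infty}(Z)=C^\infty(L;\partial L)\oplus\R\phi$ for any fixed $\phi$ vanishing on $C_0$ and identically $1$ on $C_1$---and by Lemma~\ref{lemma:Laplacian}\ref{elliptic} the restriction of $\Delta_\rho$ to $C^\infty(L;\partial L)$ is an isomorphism onto $C^\infty(Z)$. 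Hence $dF_0$ is surjective with one-dimensional kernel $\ker\Delta_\rho\cap\cob{\infty}(Z)$, i.e.\ Fredholm of index one.

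It then remains to apply the implicit function theorem and conclude that near $0$ the set $F^{-1}(0)$ is a smooth one-dimensional submanifold with $T_0F^{-1}(0)=\ker dF_0=\ker\Delta_\rho\cap\cob{\infty}(Z)$; since $Z$ was arbitrary, this exhibits $\mathcal{SLC}(N;\Lambda_0,\Lambda_1)$ as a smooth $1$-manifold---an embedded submanifold of $\mathcal{LC}(N;\Lambda_0,\Lambda_1)$---with the asserted tangent space. The main obstacle is that $\cob{\infty}(Z)$ and $C^\infty(Z)$ are only Fr\'echet spaces, so the naive implicit function theorem is unavailable. I would handle this in the usual way: either pass to H\"older scales, run the Banach-space implicit function theorem using the bounded right inverse of $\Delta_\rho$ on $C^{k,\alpha}(L;\partial L)$ furnished by the ellipticity in Lemma~\ref{lemma:Laplacian}\ref{elliptic}, and then invoke elliptic regularity for the special Lagrangian boundary value problem to see that every solution of $F(u)=0$ is in fact smooth, so that the zero set and its smooth structure are independent of $(k,\alpha)$; or verify the tame-estimate hypotheses of the Nash--Moser theorem, the estimates again coming from Lemma~\ref{lemma:Laplacian}\ref{elliptic}, whose proof in turn relies on the transversality of $\Lambda_0$ and $\Lambda_1$. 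All remaining points---$F(0)=0$, the identification of $\mathcal U\cap\mathcal{SLC}(N;\Lambda_0,\Lambda_1)$ with $F^{-1}(0)$, and the index count---are immediate from the cited lemmas.
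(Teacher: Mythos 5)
Your proposal is correct and is essentially the intended argument: the paper does not prove this proposition itself but imports it as Proposition~4.7 of~\cite{cylinders}, described there as an adaptation of McLean's deformation theory, and your proof is precisely that adaptation. The chain you use --- Weinstein neighborhood compatible with $\Lambda_0,\Lambda_1$, identification of nearby cylinders with $\cob{\infty}(Z)$ via Lemma~\ref{lemma:closed and annihilating is exact}, linearization equal to $\Delta_\rho$ by Lemma~\ref{lemma:Laplacian}, surjectivity with one-dimensional kernel from the Dirichlet isomorphism, and the Banach implicit function theorem on H\"older scales followed by elliptic regularity --- is exactly the scheme the paper itself runs in the proofs of Lemma~\ref{lemma:perturbed geodesic of cones} and Lemma~\ref{lemma:integration of tangent cone}, so there is nothing to add.
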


	By Proposition~\ref{proposition:space of special Lagrangian cylinders}, imaginary special Lagrangian cylinders naturally appear in 1-parameter families. The following proposition shows that $1$-parameter families of imaginary special Lagrangian cylinders arise naturally from geodesics. The proposition is an immediate corollary of Lemmas 5.1 and 5.4 of~\cite{cylinders}.
	\begin{prop}
		\label{proposition:relevant results from previous paper}
		Let $(X,\omega,J,\Omega)$ be a Calabi-Yau manifold.
		\begin{enumerate}[label=(\alph*)]
			\item \label{item:cylinder} Let $(\Lambda_t)_{t\in[0,1]}$ be a geodesic of positive Lagrangians in $X$ with Hamiltonian~$(h_t)_t.$ Let $(\Psi_t:L\to X)_{t\in[0,1]}$ be a horizontal lifting of $(\Lambda_t)_t,$ and let $h:L\to\mathbb{R}$ denote the Hamiltonian of the geodesic with respect to $(\Psi_t)_t,$ that is, $h:=h_t\circ\Psi_t.$ For $c\in\mathbb{R},$ let
\[
L_c := (h^{-1}(c)\setminus \crit(h))\times[0,1].
\]
Then, the mapping
			\[
			\Phi_c:L_c\to X,\qquad(p,t)\mapsto\Psi_t(p),
			\]
			is an imaginary special Lagrangian immersion.
\item\label{item:harmonic} The mapping $\Phi_c$ is adapted to the harmonics of the cylinder $Z = [\Phi_c]$. In particular, $Z$ has regular harmonics.
		\end{enumerate}
	\end{prop}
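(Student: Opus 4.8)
The proposition is an immediate corollary of Propositions~5.1 and~5.3 of~\cite{cylinders}; the self-contained route I would take is as follows. Write $v_t := \deriv{t}\Psi_t$ for the horizontal velocity field along $\Psi_t$. The three facts I would use repeatedly are: (i) $\Psi_t^*\omega = 0,$ since each $\Psi_t$ is a Lagrangian immersion; (ii) $\Psi_t^*(i_{v_t}\omega)$ is the closed $1$-form $\deriv{t}\Lambda_t = dh_t,$ hence equals $dh$ on $L,$ where $h := h_t\circ\Psi_t$ is independent of $t$ by the geodesic equation; and (iii) $\Psi_t^*(i_{v_t}\real\Omega) = 0,$ by horizontality of the lifting. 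A tangent vector to $L_c$ at $(p,t)$ is a pair $(w,a)$ with $w\in T_p(h^{-1}(c)\setminus\crit(h))$ and $a\in\R,$ and differentiating $\Phi_c(p,t)=\Psi_t(p)$ gives $d\Phi_c(w,a) = d\Psi_t(w) + a\,v_t(p).$

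For part~\ref{item:cylinder}: since $p\notin\crit(h),$ fact (ii) and the Lagrangian property of $\Lambda_t$ show $v_t(p)\notin T_{\Psi_t(p)}\Lambda_t$ and in particular $v_t(p)\neq 0$; combined with injectivity of $d\Psi_t$ this gives that $\Phi_c$ is an immersion, and that its image is transverse to $\Lambda_i$ along the boundary, so $\Phi_c$ is a Lagrangian-cylinder-type map carrying $\{t=i\}$ into $\Lambda_i$ with embedded boundary (here $N = h^{-1}(c)\setminus\crit(h),$ or a connected component of it). Expanding $\Phi_c^*\omega$ on two pairs $(w,a),(w',a')$ and using (i) together with $dh(w)=dh(w')=0$ (as $w,w'$ are tangent to a level set of $h$) and (ii) shows all terms vanish, so $\Phi_c^*\omega=0.$ Expanding $\Phi_c^*\real\Omega$ on a basis $(w_1,0),\dots,(w_{n-1},0),(0,1)$ and moving $v_t$ to the front, the single component becomes $\pm\big(\Psi_t^*(i_{v_t}\real\Omega)\big)(w_1,\dots,w_{n-1}) = 0$ by (iii), so $\real\Omega|_Z = 0.$ Since $\Omega|_Z$ is nowhere vanishing~\cite{harvey-lawson}, $\imaginary\Omega|_Z$ is a volume form and the phase is identically $\pm\frac{\pi}{2}$; that is, $Z = [\Phi_c]$ is imaginary special.

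For part~\ref{item:harmonic}: I would produce an explicit deformation of $Z$ through imaginary special Lagrangian cylinders and plug it into Lemma~\ref{lemma:Laplacian}\ref{linearization is Laplacian}. Working near a point of $L_c,$ choose a vector field $V$ on $L$ with $dh(V)\equiv 1$ and let $\phi^V_s$ be its flow, so $\phi^V_s$ carries $h^{-1}(c)$ to $h^{-1}(c+s)$ for small $s.$ Put $\hat f_s(p,t):=\Psi_t(\phi^V_s(p));$ then $\hat f_s = \Phi_{c+s}\circ(\phi^V_s\times\id),$ so by part~\ref{item:cylinder} (applied at level $c+s$) $\hat f_s$ is an imaginary special Lagrangian immersion and $\hat f_s^*\real\Omega = 0,$ hence $*\hat f_s^*\real\Omega\equiv 0.$ Its variation field at $s=0$ is $v(p,t) = d\Psi_t(V(p)),$ and a short computation as in part~\ref{item:cylinder} — using (i) and (ii) — gives $i_v\omega = -dt$ on $L_c.$ Lemma~\ref{lemma:Laplacian}\ref{linearization is Laplacian} (whose content is the pointwise linearization formula, so the non-closedness of $N$ is harmless) then yields $0 = \left.\deriv{s}\right|_{s=0}\!\big(*\hat f_s^*\real\Omega\big) = \Delta_\rho(-t).$ Thus the function $\sigma$ on $Z$ with $\sigma\circ\Phi_c(p,t)=t$ satisfies $\Delta_\rho\sigma = 0$; since moreover $\sigma$ vanishes on $C_0$ and is identically $1$ on $C_1,$ it is the fundamental harmonic of $Z$ by the uniqueness in Lemma~\ref{lemma:Laplacian}\ref{elliptic} (and Definition~\ref{dfn:fundamental harmonic}). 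Hence $\Phi_c$ is adapted to the harmonics, and as $d(\sigma\circ\Phi_c) = dt$ is nowhere zero, $Z$ has regular harmonics.

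I expect the main obstacle to be part~\ref{item:harmonic}: one must exhibit a deformation that genuinely moves $Z$ — a time-reparametrization of the geodesic does not, since its variation field is tangent to $Z$ — and the natural candidate, varying the level $c,$ requires identifying domains by a flow and careful sign bookkeeping to land exactly on $i_v\omega = -dt$ rather than on an unrelated exact form. A secondary, purely technical point is that Lemma~\ref{lemma:Laplacian} and Definition~\ref{dfn:fundamental harmonic} are phrased for closed $N,$ whereas here $N=h^{-1}(c)\setminus\crit(h)$ may be open; one resolves this by invoking only the pointwise content of part~\ref{linearization is Laplacian} and, if desired, restricting to a relatively compact piece, which is routine.
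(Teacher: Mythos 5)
Your proposal is correct. The paper itself offers no proof of this proposition beyond the citation of Propositions~5.1 and~5.3 of~\cite{cylinders}, and your self-contained reconstruction — the pointwise computation of $\Phi_c^*\omega$ and $\Phi_c^*\real\Omega$ from horizontality and the geodesic equation for part~(a), and the level-shifting deformation with variation field satisfying $i_v\omega=-dt$ fed into the linearization formula for part~(b) — is exactly the standard argument behind those cited results, with the compactness caveat for the uniqueness of the fundamental harmonic appropriately flagged.
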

The following is an amalgamation of Definitions~5.2 and~1.4 of~\cite{cylinders}.
\begin{dfn}\label{dfn:cylinder}
Let $(X,\omega,J,\Omega)$ be a Calabi-Yau manifold and let $(\Lambda_t)_{t \in [0,1]}$ be a geodesic of positive Lagrangians in $X$ with derivative $(h_t)_t.$ Let $(\Psi_t:L\to X)_{t\in[0,1]}$ be a horizontal lifting of $(\Lambda_t)_t,$ and let $h:L\to\mathbb{R}$ denote the Hamiltonian of the geodesic with respect to $(\Psi_t)_t.$
\begin{enumerate}
\item\label{item: cylinder of level sets}
For $c \in \R,$ the \emph{cylinder of $c$ level sets} associated to the geodesic is the immersed imaginary special Lagrangian cylinder represented by the map $\Phi_c$ of Proposition~\ref{proposition:relevant results from previous paper}~\ref{item:cylinder}.
\item
\label{item: cylindrical transform}
		The \emph{cylindrical transform} of the geodesic $(\Lambda_t)_{t \in [0,1]}$ is the subset of the space of imaginary special Lagrangian cylinders $\mathcal{SLC}(\Lambda_0,\Lambda_1)$ parameterized by the family of imaginary special Lagrangian immersions $\Phi_c : L_c \to X$ from Proposition~\ref{proposition:relevant results from previous paper}~\ref{item:cylinder} for $c \in \R$ such that $L_c \neq \emptyset$.
\end{enumerate}
\end{dfn}

Let $\Lambda_0,\Lambda_1 \subset X$ be Lagrangian submanifolds and let $N$ be a connected closed manifold of dimension $n-1.$ Let $(Z_s)_{s \in [s_0,s_1]}$ be a family of Lagrangian cylinders in $\mathcal{LC}(N;\Lambda_0,\Lambda_1).$ Recalling Lemma~\ref{lemma:TLC}, for $s\in[s_0,s_1],$ write
\[
	h_s:=\deriv{s}Z_s\in\cob{\infty}(Z_s).
\]
	For $i = 0,1,$ let $C_{i,s}$ denote the boundary component of the cylinder $Z_s$ corresponding to $N \times \{i\}$ and let $A_s \in \R$ be the unique constant such that
	\[
	h_s|_{C_{1,s}} \equiv A_s.
	\]
The following is Definition~4.11 of~\cite{cylinders}.
\begin{dfn}\label{dfn: flux}
The \emph{relative Lagrangian flux} of the family $(Z_s)_{s \in [s_0,s_1]}$ is given by
\[
\operatorname{RelFlux}\left((Z_s)_{s \in [s_0,s_1]}\right) := -\int_{s_0}^{s_1}A_sds.
\]
More generally, if $I \subset \R$ is an interval, possibly open or half open, with endpoints $a < b,$ and $(Z_s)_{s \in I}$ is a path in $\mathcal{LC}(N;\Lambda_0,\Lambda_1),$ we write
\[
\operatorname{RelFlux}\left((Z_s)_{s\in I}\right) := \lim_{s_0 \to a}\lim_{s_1 \to b}\operatorname{RelFlux}\left((Z_s)_{s\in[s_0,s_1]}\right)
\]
whenever the limit exists.
\end{dfn}
\begin{rem}\label{rem: flux}\mbox{}
\begin{enumerate}
\item \label{it: flux symplectic invariant}
It is clear from the definition that $\operatorname{RelFlux}((Z_s)_{s \in I})$ is a symplectic invariant.
\item \label{it: flux homotopy invariant}
The relative Lagrangian flux $\operatorname{RelFlux}((Z_s)_{s \in [s_0,s_1]})$ depends only on the homotopy class of the path $(Z_s)_s$ relative to its endpoints. See Remark~4.12 of~\cite{cylinders}.
\end{enumerate}
\end{rem}
The following is a reformulation of Lemma~5.6 of~\cite{cylinders}. It asserts that the Hamiltonian of a geodesic can be recovered from the relative Lagrangian flux of the associated cylinders of c-level sets.
\begin{lemma}
		\label{lemma: flux as Hamiltonian}
		Let $(\Lambda_t)_t$ be a geodesic with Hamiltonian $(h_t)_t$ and assume the functions $h_t$ are proper. For $c$ in the image of $h_t,$ let $Z_c$ denote the associated cylinder of $c$ level sets. Since the functions $h_t$ are proper, the cylinder $Z_c$ is compact when $c$ is a regular value of $(h_t)_t.$ Let $c_0<c_1\in\mathbb{R}$ be such that the interval $(c_0,c_1)$ consists of regular values of $(h_t)_t.$
For $b_0<b_1\in(c_0,c_1)$ we have
			\[
			\operatorname{RelFlux}\left((Z_c)_{c\in[b_0,b_1]}\right)=b_1-b_0.
			\]
\end{lemma}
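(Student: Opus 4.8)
The plan is to evaluate $\operatorname{RelFlux}\left((Z_c)_{c\in[b_0,b_1]}\right)$ directly from Definition~\ref{dfn: flux}: for each regular value $c$ I identify the derivative $\frac{d}{dc}Z_c$ as an explicit element of $\cob{\infty}(Z_c)$ and read off the constant $A_c.$ Fix a horizontal lifting $(\Psi_t:L\to X)_t$ of the geodesic and let $h=h_t\circ\Psi_t:L\to\R$ be the associated (time-independent) Hamiltonian, so that $\Phi_c(p,t)=\Psi_t(p)$ on $L_c=h^{-1}(c)\times[0,1]$ represents $Z_c$ for $c\in(c_0,c_1)$ as in Proposition~\ref{proposition:relevant results from previous paper}. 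Since $h$ is proper and $[b_0,b_1]\subset(c_0,c_1)$ consists of regular values, $h^{-1}([b_0,b_1])$ is compact and $h$ restricts to a submersion on it; choosing a vector field $w$ on $h^{-1}([b_0,b_1])$ with $dh(w)\equiv1,$ its flow $\phi_\tau$ carries $h^{-1}(b_0)$ diffeomorphically onto $h^{-1}(b_0+\tau).$ Setting $N:=h^{-1}(b_0)$ and reparametrizing $Z_c$ by $(p,t)\mapsto\Psi_t(\phi_{c-b_0}(p))$ on $N\times[0,1]$ exhibits $(Z_c)_{c\in[b_0,b_1]}$ as a smooth path in $\mathcal{LC}(N;\Lambda_0,\Lambda_1).$

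Differentiating the reparametrized family in $c$ shows that $\frac{d}{dc}Z_c$ is represented by the vector field $v$ along $\Phi_c$ with $v_{(p,t)}=(d\Psi_t)_p(w_p).$ Under the Weinstein-neighborhood identification $T_{Z_c}\mathcal{LC}(N;\Lambda_0,\Lambda_1)\cong\cob{\infty}(Z_c),$ the corresponding function $h_c$ is the primitive of the closed $1$-form $\Phi_c^*(\iota_v\omega)$ on $L_c$ that vanishes on the boundary component over $N\times\{0\}.$ To compute $\Phi_c^*(\iota_v\omega),$ note that $d\Phi_c(\partial_t)=\frac{d}{dt}\Psi_t=:\xi_t,$ and the geodesic equation $\frac{d}{dt}\Lambda_t=dh_t$ yields $\Psi_t^*(\iota_{\xi_t}\omega)=dh$; pairing with $w$ and using $dh(w)=1$ gives $\Phi_c^*(\iota_v\omega)(\partial_t)=\omega(v,\xi_t)=-1,$ while on directions tangent to $h^{-1}(c)$ the form $\Phi_c^*(\iota_v\omega)$ equals $(\Psi_t^*\omega)(w_p,\cdot)=0$ since $\Psi_t$ is Lagrangian. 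Hence $\Phi_c^*(\iota_v\omega)=-dt,$ so $h_c=[(\Phi_c,-t)],$ which vanishes on $C_{0,c}$ and is constantly $-1$ on $C_{1,c}$; that is, $A_c\equiv-1.$ Therefore
\[
\operatorname{RelFlux}\left((Z_c)_{c\in[b_0,b_1]}\right)=-\int_{b_0}^{b_1}A_c\,dc=\int_{b_0}^{b_1}1\,dc=b_1-b_0.
\]

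The one substantive step is recognizing that deforming the level sets of $h$ corresponds, under the above identification, to the function $-t$; once $v$ is written down this reduces to the short pairing computation using the geodesic equation and the Lagrangian condition, and it is consistent with Proposition~\ref{proposition:relevant results from previous paper}\ref{item:harmonic}, which says $t=\sigma\circ\Phi_c$ for the fundamental harmonic $\sigma$ of $Z_c.$ The remaining points are routine: the smoothness of the reparametrized family in $c$; the independence of the choice of $w,$ since changing $w$ by a vector tangent to the level sets changes $v$ by a vector tangent to $Z_c$ and hence does not affect $\Phi_c^*(\iota_v\omega)$; that $\Phi_c$ has embedded boundary because $\Lambda_0,\Lambda_1$ do; and, if $h^{-1}(b_0)$ is disconnected, running the argument on each component separately.
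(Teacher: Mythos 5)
Your proof is correct. The paper itself offers no argument for this statement --- it is imported verbatim as ``a reformulation of Lemma~5.5 of~\cite{cylinders}'' --- so there is nothing internal to compare against, but your computation is the natural self-contained derivation: trivialize the family of level sets by the flow of a vector field $w$ with $dh(w)=1$, identify $\deriv{c}Z_c$ with the primitive of $\Phi_c^*(\iota_v\omega)$ vanishing on $C_{0,c}$, and use the pairing $\omega(\xi_t,v)=dh(w)=1$ together with the Lagrangian condition to get $\Phi_c^*(\iota_v\omega)=-dt$, hence $A_c\equiv-1$ and $\operatorname{RelFlux}=b_1-b_0$. The signs, which are the only delicate point, come out consistently with the minus sign built into Definition~\ref{dfn: flux} and with the fact that $\deriv{c}Z_c$ must be a multiple of the fundamental harmonic $\sigma=t$, so no issues.
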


Recall that the \emph{Euler vector field} on a real vector space is the radial vector field that integrates to rescaling by $e^t.$  The following is Lemma~4.15 from~\cite{cylinders}.
	\begin{lemma}
		\label{lemma: cylinder with regular harmonics in Euclidean space}
		Equip $\mathbb{C}^n$ with the standard Calabi-Yau structure, let $\Lambda_0,\Lambda_1\subset\mathbb{C}^n$ be positive Lagrangian linear subspaces, and let $Z\in\mathcal{SLC}\left(S^{n-1};\Lambda_0,\Lambda_1\right).$ Then $Z$ has regular harmonics if and only if $Z$ is nowhere tangent to the Euler vector field.
	\end{lemma}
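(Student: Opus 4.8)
The plan is to exploit the scaling symmetry of the standard Calabi--Yau structure on $\mathbb C^n.$ Since $\Lambda_0,\Lambda_1$ are linear subspaces, the radial rescalings $\phi_t(z)=e^tz$ preserve them; they also preserve the class of imaginary special Lagrangians, so $(\phi_t(Z))_t$ is a path in $\mathcal{SLC}(S^{n-1};\Lambda_0,\Lambda_1)$ whose variation field is the Euler vector field $E$ along $Z.$ I would show that this deformation corresponds, via Lemma~\ref{lemma:Laplacian}\ref{linearization is Laplacian}, to an element $u\in\ker\Delta_\rho\cap\cob{\infty}(Z)$ whose critical points are exactly the points where $Z$ is tangent to $E,$ and then use that $\ker\Delta_\rho\cap\cob{\infty}(Z)$ is one-dimensional to conclude that $u$ is a \emph{nonzero} multiple of the fundamental harmonic $\sigma.$

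Concretely: for the standard structure $\rho\equiv1,$ so $\Delta_\rho$ is the Laplace--Beltrami operator of the induced metric, and by Lemma~\ref{lemma:Laplacian}\ref{elliptic} the fundamental harmonic $\sigma$ of Definition~\ref{dfn:fundamental harmonic} spans $\ker\Delta_\rho\cap\cob{\infty}(Z).$ Consider the $1$-form $i_E\omega.$ Since $\mathcal L_E\omega=2\omega,$ Cartan's formula gives $d(i_E\omega)=2\omega,$ which vanishes on the Lagrangian $Z,$ so $i_E\omega|_Z$ is closed; and at $p\in C_i\subset\Lambda_i,$ linearity of $\Lambda_i$ gives $E_p=p\in T_p\Lambda_i,$ so the Lagrangian condition forces $\omega(E_p,\cdot)$ to annihilate $T_p\Lambda_i\supseteq T_pC_i.$ Thus $i_E\omega|_Z$ is closed and annihilates the boundary, so Lemma~\ref{lemma:closed and annihilating is exact} gives $i_E\omega|_Z=du$; normalizing $u|_{C_0}\equiv0,$ the annihilation at $C_1$ shows $u$ is constant there as well, so $u\in\cob{\infty}(Z).$

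To identify $u$ with $\sigma$ up to scale I would verify that $(\phi_t(Z))_t$ is a genuine smooth path in $\mathcal{SLC}(S^{n-1};\Lambda_0,\Lambda_1)$: the $\phi_t$ are conformal symplectomorphisms with $\phi_t^*\Omega=e^{nt}\Omega,$ hence preserve $\real\Omega|_Z=0$ and positivity of $\imaginary\Omega|_Z,$ fix $\Lambda_0,\Lambda_1,$ and preserve the transversality and embedded-boundary conditions. Writing $f_t=\phi_t\circ f$ for a representative $f$ of $Z,$ the variation $v=\frac{d}{dt}\big|_0 f_t$ is $E$ along $Z,$ so $i_v\omega=du,$ while $f_t^*\real\Omega=e^{nt}f^*\real\Omega\equiv0$; hence Lemma~\ref{lemma:Laplacian}\ref{linearization is Laplacian} gives $\Delta_\rho u=\frac{d}{dt}\big|_0(*f_t^*\real\Omega)=0,$ so $u=c\sigma$ for some $c\in\mathbb R.$ Now $u$ has a critical point at $p$ iff $i_{E_p}\omega$ annihilates $T_pZ,$ i.e. iff $E_p\in(T_pZ)^{\perp_\omega}=T_pZ,$ which is precisely tangency of $Z$ to $E$ at $p.$ To rule out $c=0$: if $u\equiv0$ then $E$ is everywhere tangent to $Z,$ so at each $p\in C_0,$ using $E_p=p\in T_p\Lambda_0$ together with $T_pZ\cap T_p\Lambda_0=T_pC_0$ (the intersection is $(n-1)$-dimensional because $T_pZ\neq T_p\Lambda_0$ by transversality), we get $E_p\in T_pC_0$ for all $p\in C_0$; but then the compact sphere $C_0\cong S^{n-1}$ is everywhere tangent to $E,$ impossible since integral curves of $E$ are unbounded rays. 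Hence $c\neq0,$ $\sigma=u/c,$ and $\sigma$ is free of critical points iff $Z$ is nowhere tangent to $E.$

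The step that should demand the most care is confirming that the rescaling path is a legitimate smooth family in $\mathcal{SLC}(S^{n-1};\Lambda_0,\Lambda_1)$ to which Lemma~\ref{lemma:Laplacian}\ref{linearization is Laplacian} applies, and in particular that the potential $u$ lies in $\cob{\infty}(Z)$ with the correct normalization. The one genuinely geometric input is the non-vanishing of $u$ — equivalently, the exclusion of the everywhere-tangent case — and it is precisely there that compactness of the link $S^{n-1}$ is used; the remainder is formal manipulation with the scaling symmetry.
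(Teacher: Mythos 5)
Your argument is correct: realizing the Euler vector field as the variation field of the rescaling family $\phi_t(z)=e^tz$, observing that its $\omega$-potential $u$ lies in $\ker\Delta_\rho\cap\cob{\infty}(Z)$ by Lemma~\ref{lemma:Laplacian}, and using one-dimensionality of that kernel together with the compactness of $C_0$ to see that $u$ is a nonzero multiple of the fundamental harmonic, is exactly the natural route, and every step checks out (the only hair to split is that at a point of $Z$ lying over the origin the Euler field vanishes, but there both ``tangency'' and ``critical point of $u$'' hold automatically, so the equivalence is unaffected). Note that the present paper does not prove this statement at all --- it is quoted verbatim as Lemma~4.15 of~\cite{cylinders} --- so there is no in-text proof to compare against; your argument is the one the cited source presumably gives, and I see no gap in it.
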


The following simplified notion of ends for $1$-dimensional manifolds is given in Section 4.3 of~\cite{cylinders}. Let $C$ be a connected non-compact $1$-dimensional manifold. That is, $C$ is a curve diffeomorphic to the real line. A \emph{ray} in $C$ is a connected open proper subset $U\subsetneq C$ with non-compact closure. Two rays, $U,V\subset C$ are said to be equivalent if $U\subset V$ or $V\subset U.$ Finally, an \emph{end} is an equivalence class of rays. Every curve $C$ as above has exactly two ends.
The following is taken from Definition~4.17 of~\cite{cylinders}.
	\begin{dfn}
		\label{definition: interior regularity}
		$\;$
		\begin{enumerate}
			\item\label{interior regularity first part} Let $U\subset\mathcal{SLC}\left(N;\Lambda_0,\Lambda_1\right)$ be open and connected. An \emph{interior-regular parameterization} of $U$ is a smooth immersion $\Phi:N\times[0,1]\times(a,b)\to X$ satisfying the following conditions:
			\begin{enumerate}[ref=(\alph*)]
				\item\label{item: boundary embedding} The restriction of $\Phi$ to a boundary component $\Phi|_{N\times\{i\}\times(a,b)}$ is an embedding for $i = 0,1$.
				\item\label{item: rep} For $s\in(a,b),$ the restricted immersion $\Phi_s:=\Phi|_{N\times[0,1]\times\{s\}}$ represents an element of $U$.
				\item\label{item: diffeo s} The map $\chi:(a,b)\to U,\quad s\mapsto[\Phi_s],$ is a diffeomorphism.
			\end{enumerate}
			The subset $U$ is said to be \emph{interior-regular} if it admits an interior-regular parameterization.
			\item\label{regular convergence to intersection point} Let $\mathcal{Z}\subset\mathcal{SLC}\left(S^{n-1};\Lambda_0,\Lambda_1\right)$ be a connected component, let $E$ be an end of $\mathcal{Z},$ and let $q\in\Lambda_0\cap\Lambda_1.$ A \emph{regular parameterization} of $E$ about $q$ is a smooth map $\Phi:S^{n-1}\times[0,1]\times[0,\epsilon)\to X$ satisfying the following conditions.
			\begin{enumerate}[ref=(\alph*)]
				\item\label{critical point} For $(p,t)\in S^{n-1}\times[0,1]$ we have $\Phi(p,t,0)=q.$
				\item\label{item:interior regular} The restricted map $\Phi|_{S^{n-1}\times[0,1]\times(0,\epsilon)}$ is an interior-regular parameterization of $U,$ for some ray $U\subset\mathcal{Z}$ representing $E.$
				\item \label{item:derivative Phi immersion} The derivative
				\[
				\left.\pderiv{s}\right|_{s=0}\Phi(\cdot,\cdot,s):S^{n-1}\times[0,1]\to T_qX
				\]
				is an immersion and the restriction $\left.\left.\pderiv{s}\right|_{s=0}\Phi(\cdot,\cdot,s)\right|_{S^{n-1}\times\{i\}}$ is an embedding for $i = 0,1.$
				\item \label{item:nowhere tangent Euler} The Euler vector field on $T_qX$ is nowhere tangent to the immersion $\left.\pderiv{s}\right|_{s=0}\Phi(\cdot,\cdot,s).$
			\end{enumerate}
In this case, we also say that $\Phi$ is a regular parameterization of $U$ about~$q.$ We say the end $E$ or the ray $U$ \emph{converges regularly} to the intersection point~$q$ if it admits a regular parameterization about $q.$ We may use a half-open interval with arbitrary endpoints, open either from below or above, in place of the half-open interval $[0,\epsilon).$
		\end{enumerate}
	\end{dfn}
The following is Corollary~4.20 from~\cite{cylinders}.
\begin{lemma}
\label{rem: easy regularity}
If $\Phi:S^{n-1}\times[0,1]\times[0,\epsilon)\to X$ satisfies conditions~\eqref{regular convergence to intersection point}\ref{critical point}, \eqref{regular convergence to intersection point}\ref{item:derivative Phi immersion}, \eqref{regular convergence to intersection point}\ref{item:nowhere tangent Euler} and~\eqref{interior regularity first part}\ref{item: rep} of Definition~\ref{definition: interior regularity}, then possibly after diminishing $\epsilon,$ it also satisfies condition~\eqref{regular convergence to intersection point}\ref{item:interior regular} and thus it is a regular parameterization.
\end{lemma}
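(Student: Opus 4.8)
The plan is to diminish $\epsilon$ and verify, for $\Phi$ restricted to $S^{n-1}\times[0,1]\times(0,\epsilon),$ the three conditions of Definition~\ref{definition: interior regularity}\eqref{interior regularity first part} not already assumed: that it is a smooth immersion, that its restriction to each boundary face $S^{n-1}\times\{i\}\times(0,\epsilon)$ is an embedding, and that $s\mapsto[\Phi_s]$ is a diffeomorphism onto a ray of its connected component in $\mathcal{SLC}\left(S^{n-1};\Lambda_0,\Lambda_1\right)$ — condition~\eqref{interior regularity first part}\ref{item: rep} being a hypothesis. Once these hold, all four conditions of Definition~\ref{definition: interior regularity}\eqref{regular convergence to intersection point} hold, as required. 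Fix a chart identifying a neighborhood of $q$ in $X$ with a neighborhood of $0$ in $T_qX.$ Since $\Phi(\cdot,\cdot,0)\equiv q,$ Taylor's theorem gives $\Phi(p,t,s)=s\,v(p,t)+O(s^2)$ uniformly in $(p,t),$ where $v:=\left.\partial_s\right|_{s=0}\Phi(\cdot,\cdot,s).$

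The first step is to show $\Phi|_{S^{n-1}\times[0,1]\times(0,\epsilon)}$ is an immersion after diminishing $\epsilon,$ by lifting through the oriented blowup $\pi:\widetilde X_q\to X$ at $q.$ By condition~\eqref{regular convergence to intersection point}\ref{item:nowhere tangent Euler}, the Euler (radial) vector field on $T_qX$ is nowhere tangent to the immersion $v;$ in particular $v(p,t)\ne0$ for all $(p,t),$ since the Euler field vanishes at $0.$ Writing $\Phi-q=s\cdot(v+O(s)),$ one checks that $\Phi$ lifts to a smooth map $\widetilde\Phi:S^{n-1}\times[0,1]\times[0,\epsilon)\to\widetilde X_q$ with $\widetilde\Phi(p,t,0)=\left[v(p,t)\right]\in E_q\cong\orientedprojective(T_qX),$ and that $\partial_s\widetilde\Phi(p,t,0)$ is transverse to $E_q$ because $\partial_s\Phi(p,t,0)=v(p,t)\ne0.$ The differential at $s=0$ in the $S^{n-1}\times[0,1]$ directions is the differential of $(p,t)\mapsto[v(p,t)],$ which is injective precisely because the Euler field is nowhere tangent to $v;$ together with the transverse vector $\partial_s\widetilde\Phi(p,t,0)$ this spans an $(n+1)$-dimensional subspace, so $\widetilde\Phi$ is an immersion along $E_q.$ By compactness of $S^{n-1}\times[0,1]$ it is then an immersion on all of $S^{n-1}\times[0,1]\times[0,\epsilon)$ after diminishing $\epsilon,$ and since $\pi$ is a diffeomorphism off $E_q,$ the restriction $\Phi|_{S^{n-1}\times[0,1]\times(0,\epsilon)}$ is an immersion; in particular $\partial_s\Phi_s$ is nowhere tangent to the immersion $\Phi_s.$

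The second step is the boundary faces. Fix $i\in\{0,1\}.$ Since $\Phi_s$ represents an element of $\mathcal{SLC}\left(S^{n-1};\Lambda_0,\Lambda_1\right),$ it carries $S^{n-1}\times\{i\}$ into $\Lambda_i$ for every $s\in(0,\epsilon),$ so $\Phi$ restricts to a smooth map $S^{n-1}\times[0,\epsilon)\to\Lambda_i$ collapsing $S^{n-1}\times\{0\}$ to $q$ with $s$-derivative there equal to $v_i:=v(\cdot,i),$ which by condition~\eqref{regular convergence to intersection point}\ref{item:derivative Phi immersion} is an embedding of $S^{n-1}$ into $T_q\Lambda_i\setminus\{0\}.$ Lifting through the oriented blowup of $\Lambda_i$ at $q$ as above, the restriction to $S^{n-1}\times\{0\}$ is the map $p\mapsto[v_i(p)]\in\orientedprojective(T_q\Lambda_i)\cong S^{n-1},$ which is an immersion, hence a covering map of $S^{n-1}.$ For $n\ge3$ this is a diffeomorphism, as $S^{n-1}$ is simply connected; for $n=2$ its degree equals the winding number about $0$ of the embedded loop $v_i(S^1),$ which is $\pm1,$ so it is again a diffeomorphism. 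Thus the lift of $\Phi$ restricts to an embedding on $S^{n-1}\times\{0\};$ since the lift is also an immersion along $S^{n-1}\times\{0\}$ by the argument of the first step, a standard collar argument shows it is an embedding on $S^{n-1}\times[0,\epsilon)$ after diminishing $\epsilon,$ and composing with the blowdown, which is injective off the exceptional sphere, shows $\Phi|_{S^{n-1}\times\{i\}\times(0,\epsilon)}$ is an embedding.

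The last step concerns the curve $\chi:s\mapsto[\Phi_s]$ in the $1$-manifold $\mathcal{SLC}\left(S^{n-1};\Lambda_0,\Lambda_1\right)$ of Proposition~\ref{proposition:space of special Lagrangian cylinders}. The vector $\chi'(s)$ vanishes if and only if $\partial_s\Phi_s$ is everywhere tangent to the immersion $\Phi_s,$ which by the first step never happens, so $\chi$ is an immersion. As $s\to0$ the images $\im(\Phi_s)$ converge to $\{q\}$ in Hausdorff distance, so $[\Phi_s]$ has no limit point in $\mathcal{SLC}$ and leaves every compact subset of the connected component $\mathcal Z$ of $\mathcal{SLC}\left(S^{n-1};\Lambda_0,\Lambda_1\right)$ containing its image; hence $\mathcal Z$ is non-compact, so $\mathcal Z\cong\R.$ Regarding $\chi$ as $\R$-valued, $\chi'$ is continuous and nowhere zero on the interval $(0,\epsilon),$ so it has constant sign and $\chi$ is strictly monotone, hence an open embedding onto an open subinterval $U;$ diminishing $\epsilon$ once more, $U$ is a proper subset of $\mathcal Z,$ and since $[\Phi_s]$ escapes every compact set as $s\to0,$ it is unbounded on that side, hence a ray representing the end $E$ to which $\chi$ converges. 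Then $\chi:(0,\epsilon)\to U$ is a diffeomorphism, and all conditions of Definition~\ref{definition: interior regularity}\eqref{interior regularity first part} hold, so $\Phi$ is a regular parameterization about $q.$ I expect the second step to be the main obstacle: extracting the degree-one property of $p\mapsto[v_i(p)]$ from the Euler-transversality hypothesis, and more generally passing from immersivity along the exceptional spheres to genuine embeddings on a one-sided neighborhood.
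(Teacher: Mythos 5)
This statement is quoted in the paper as Corollary~4.20 of~\cite{cylinders}, so the paper itself contains no proof to compare against; judged on its own terms, your argument is correct and uses exactly the toolkit that this paper deploys for analogous statements (the Hadamard-type factorization $\Phi = s\cdot\Psi$ of Lemma~\ref{lemma: milnor lemma} together with the oriented blowup at $q$, as in the proofs of Lemma~\ref{lemma: blowup cylinder} and Lemma~\ref{lemma:integration of tangent cone}). The three steps line up with the three missing clauses of Definition~\ref{definition: interior regularity}~\eqref{interior regularity first part}: the lift $\widetilde\Phi$ to the blowup is an immersion along the exceptional sphere precisely because condition~\eqref{regular convergence to intersection point}\ref{item:nowhere tangent Euler} makes $(p,t)\mapsto[v(p,t)]$ injective on tangent vectors; the boundary faces are handled by the same lift inside $\Lambda_i$; and the identification of $\chi'(s)=0$ with $\pderiv[\Phi]{s}$ being everywhere tangent to $\Phi_s$ correctly reduces condition~\eqref{interior regularity first part}\ref{item: diffeo s} to the immersion statement of your first step. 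The two points you flag as delicate do go through: the degree of $p\mapsto[v_i(p)]$ is handled by simple connectivity for $n\ge 3$ and by the Jordan-curve winding-number argument for $n=2$ (an embedded loop in the plane has winding number $0$ or $\pm1$ about any point off the loop, and a covering has nonzero degree); and the passage from an injective immersion of $S^{n-1}\times[0,\epsilon')$ to an embedding of $S^{n-1}\times(0,\epsilon'')$ is secured by restricting to $\epsilon''<\epsilon'$ so that the closure is compact. One could streamline the final step by noting that $\mathcal Z\cong\R$ follows already from the escape-to-infinity of $[\Phi_s]$ as $s\to 0$, which is how you argue; no gap there.
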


The following are Definition~5.7 and Lemma~5.9 from~\cite{cylinders}.
	\begin{dfn}
		\label{definition: swept submanifold}
		Let $\Lambda_0,\Lambda_1\subset X$ be smooth Lagrangians and let $N$ be a closed connected smooth manifold of dimension $n-1.$ Let $U\subset\mathcal{SLC}(N;\Lambda_0,\Lambda_1)$ be open, connected and interior-regular. Let $\Phi:N\times[0,1]\times(0,1)\to X$ be an interior regular parameterization of $U.$ For $i=0,1,$ the submanifold of $\Lambda_i$ \emph{swept by $U$} is the image of the embedding $\Phi|_{N\times\{i\}\times(0,1)}.$ This is independent of $\Phi.$ Similarly, suppose $U\subset\mathcal{SLC}\left(S^{n-1};\Lambda_0,\Lambda_1\right)$ is a ray and $\Phi : S^{n-1}\times [0,1]\times [0,\epsilon) \to X$ is a regular parameterization of $U$ about an intersection point $q \in \Lambda_0 \cap \Lambda_1.$ For $i = 0,1,$ the \emph{unpunctured} submanifold of $\Lambda_i$ swept by $U$ is the image of the restricted map~$\Phi|_{S^{n-1}\times\{i\}\times[0,\epsilon)}.$
	\end{dfn}
	\begin{lemma}
		\label{lemma: geodesic of small Lagrangians}
		Let $\Lambda_0,\Lambda_1\subset X$ be smoothly embedded positive Lagrangians intersecting transversally at a point $q.$ Suppose there exists a connected component $\mathcal{Z}\subset\mathcal{SLC}\left(S^{n-1};\Lambda_0,\Lambda_1\right)$ with an end $E$ converging regularly to $q.$ Let $U \subset \mathcal{Z}$ be a ray representing $E$ admitting a regular parameterization about $q.$ For $i = 0,1,$ let $\Lambda_i^U$ denote the unpunctured submanifold of $\Lambda_i$ swept by $U$. Then, there exists a geodesic of positive Lagrangians between $\Lambda_0^U$ and $\Lambda_1^U$ with cylindrical transform~$U.$ This geodesic is unique up to reparameterization and has critical locus $\{q\}.$
	\end{lemma}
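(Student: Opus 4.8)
The plan is to take a regular parameterization $\Phi:S^{n-1}\times[0,1]\times[0,\epsilon)\to X$ of $U$ about $q$, bring it into a normal form by reparameterizing each cylinder, and then read off the geodesic from the family of transverse slices $\Psi_t:=\Phi(\cdot,t,\cdot)$. Write $Z_s:=[\Phi|_{S^{n-1}\times[0,1]\times\{s\}}]$ for the imaginary special Lagrangian cylinder represented by the $s$-slice, so $(Z_s)_{s\in(0,\epsilon)}$ is a smooth path parameterizing $U$ and $\deriv{s}Z_s$ is a well-defined nonzero element of $T_{Z_s}\mathcal{SLC}(S^{n-1};\Lambda_0,\Lambda_1)$. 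Let $L$ be the cone on $S^{n-1}$ with cone point $o$. Since $\Phi(\cdot,\cdot,0)\equiv q$ and $\Phi\circ\pi$ is smooth on the oriented blow-up, each $\Psi_t$ descends to a cone-smooth map $L\to X$ carrying $o$ to $q$, and $(\Psi_t)_{t\in[0,1]}$ lifts a smooth path $(\Lambda_t)_t$ of cone-immersed submanifolds of type $(L,\{o\})$ with cone-locus image $\{q\}$; its endpoints are the unpunctured submanifolds $\Lambda_0^U,\Lambda_1^U$ swept by $U$.

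The first step is to reparameterize $\Phi$ in the $S^{n-1}$ and $[0,1]$ variables so that it becomes adapted to the harmonics of every $Z_s$, i.e.\ $\sigma_s\circ\Phi(p,t,s)=t$ with $\sigma_s$ the fundamental harmonic of $Z_s$. For $s>0$ this is done by transporting the embedded sphere $\Phi(S^{n-1}\times\{0\}\times\{s\})\subset\Lambda_0$ along a vector field on $Z_s$ on which $d\sigma_s$ is identically $1$; such a field exists because $Z_s$ has regular harmonics, which is built into the hypothesis of regular convergence. This is a reparameterization fixing each slice as a cylinder and fixing $\Phi(\cdot,\cdot,0)\equiv q$, so the new map still satisfies conditions \eqref{regular convergence to intersection point}\ref{critical point}, \eqref{regular convergence to intersection point}\ref{item:derivative Phi immersion}, \eqref{regular convergence to intersection point}\ref{item:nowhere tangent Euler} and \eqref{interior regularity first part}\ref{item: rep} of Definition~\ref{definition: interior regularity}, hence by Lemma~\ref{rem: easy regularity} is again a regular parameterization after shrinking $\epsilon$. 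The one delicate point is that this reparameterization must extend cone-smoothly across $s=0$, which rests on the fundamental harmonics varying smoothly up to the conical limit $s\to0$; I expect this, with the accompanying bookkeeping at the cone point, to be the main technical obstacle.

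Next I would verify that, in normal form, $(\Lambda_t)_t$ is a geodesic of positive Lagrangians. In local coordinates $(x^i,t,s)$ the Lagrangian condition on the $Z_s$ forces $\Phi^*\omega=\sum_i a_i\,dx^i\wedge ds+b\,dt\wedge ds$, and the restriction to the $s$-slice of $i_{\partial_s}(\Phi^*\omega)$ equals $d\widetilde h_s$ where $\widetilde h_s:=h_s\circ\Phi_s$, $h_s:=\deriv{s}Z_s\in\cob{\infty}(Z_s)$. Because $(Z_s)_s$ is a path in $\mathcal{SLC}(S^{n-1};\Lambda_0,\Lambda_1)$, which by Proposition~\ref{proposition:space of special Lagrangian cylinders} and Lemma~\ref{lemma:Laplacian} is one-dimensional with tangent space spanned by $\sigma_s$, we have $h_s=\mu(s)\sigma_s$ for a smooth $\mu$, and being adapted to the harmonics gives $\widetilde h_s(p,t)=\mu(s)t$; hence $a_i=-\partial_{x^i}\widetilde h_s=0$ and $b=-\partial_t\widetilde h_s=-\mu(s)$, so $\Psi_t^*\omega=0$. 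For positivity, at a point $\Phi(p,t,s)$ with $s>0$ the Lagrangian planes $T\Lambda_t=\operatorname{span}(\partial_{x^i}\Phi,\partial_s\Phi)$ and $TZ_s=\operatorname{span}(\partial_{x^i}\Phi,\partial_t\Phi)$ share the isotropic hyperplane $W=\operatorname{span}(\partial_{x^i}\Phi)$ and are distinct since $\partial_s\Phi\notin TZ_s$; the map $[v]\mapsto\Omega(v,\partial_{x^1}\Phi,\dots,\partial_{x^{n-1}}\Phi)$ is a real-linear isomorphism $W^\omega/W\to\C$ ($\Omega$ being nonzero on Lagrangians), it sends $[\partial_t\Phi]$ onto the imaginary axis since $Z_s$ is imaginary special, and $[\partial_s\Phi]$ is independent of $[\partial_t\Phi]$, so $\real\Omega(\partial_s\Phi,\partial_{x^1}\Phi,\dots,\partial_{x^{n-1}}\Phi)\neq0$; as this is positive when $t=0$ ($\Lambda_0$ being positive) it stays positive, so $\Lambda_t$ is positive off $q$. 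The tangent cone $TC_q\Lambda_t$ is positive by the same argument applied to the imaginary special Lagrangian submanifold of $T_qX$ obtained by rescaling the $Z_s$ as $s\to0$, which has regular harmonics by conditions \eqref{regular convergence to intersection point}\ref{item:derivative Phi immersion}, \eqref{regular convergence to intersection point}\ref{item:nowhere tangent Euler} and Lemma~\ref{lemma: cylinder with regular harmonics in Euclidean space}.

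It remains to produce the horizontal lifting, check Definition~\ref{definition:geodesic}, and identify the critical locus, cylindrical transform and uniqueness. Using $a_i=0$, the derivative $\deriv{t}\Lambda_t$ corresponds under contraction with $\omega$ to $b\,ds=-\mu(s)\,ds=dH$ with $H(p,s):=g(s):=-\int_0^s\mu(s')\,ds'$, a cone-smooth function on $L$; since $\deriv{t}\Lambda_t$ vanishes at the cone locus, $o$ is a critical point of $H$, non-degenerate by Lemma~\ref{lemma: critical points are non-degenerate}. A further reparameterization in the $S^{n-1}$-directions alone, preserving all of the above, makes $(\Psi_t)_t$ horizontal: the needed correction to $\partial_t\Phi$ lies in $W$, the obstruction to solving for it is $\real\Omega(\partial_t\Phi,\partial_{x^1}\Phi,\dots,\partial_{x^{n-1}}\Phi)$, which vanishes because $\real\Omega$ vanishes on $TZ_s$, and the remaining equations are uniquely solvable because $\real\Omega(\partial_{x^1}\Phi,\dots,\partial_{x^{n-1}}\Phi,\partial_s\Phi)\neq0$ by positivity; as this reparameterization preserves $s$, the Hamiltonian with respect to the lifting is still $H$, which is time-independent, so $(\Lambda_t)_t$ is a geodesic. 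Its critical locus is $\{q\}$: $q=\Psi_t(o)\in\im(\Lambda_t)$ for every $t$, while $\bigcap_t\im(\Lambda_t)\subset\Lambda_0^U\cap\Lambda_1^U=\{q\}$ once $\epsilon$ is small enough that $q$ is the only intersection point of $\Lambda_0^U$ and $\Lambda_1^U$. For a regular value $c$ of $H$, $H^{-1}(c)=S^{n-1}\times\{g^{-1}(c)\}$, so the cylinder of $c$-level sets is $[\Phi(\cdot,\cdot,g^{-1}(c))]=Z_{g^{-1}(c)}$; these exhaust the cylinders of $U$ as $c$ varies, so the cylindrical transform is $U$. Finally, any geodesic between $\Lambda_0^U$ and $\Lambda_1^U$ with cylindrical transform $U$ has, by Lemma~\ref{lemma: flux as Hamiltonian}, a Hamiltonian determined by the relative Lagrangian flux of $U$ up to an additive constant, hence a horizontal lifting and thus a path of Lagrangians determined by $U$ up to reparameterization; so the geodesic is unique up to reparameterization.
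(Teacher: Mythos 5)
First, a point of comparison: this lemma is not proved in the present paper at all --- it is quoted verbatim as Lemma~5.8 of~\cite{cylinders} --- so there is no in-paper argument to measure your proposal against. That said, your strategy (normalize a regular parameterization $\Phi$ of $U$ so that each slice is adapted to its harmonics, observe via Proposition~\ref{proposition:space of special Lagrangian cylinders} that $\deriv{s}Z_s$ is a multiple $\mu(s)\sigma_s$ of the fundamental harmonic, and then read off that the transverse slices $\Psi_t$ are Lagrangian with Hamiltonian $H(s)=-\int_0^s\mu$) is exactly the inverse cylindrical transform that the imported machinery is built around, and your pointwise computations --- the vanishing of the $dx^i\wedge ds$ components of $\Phi^*\omega$, the positivity argument via the isomorphism $W^\omega/W\to\C$, the solvability of the horizontality correction inside $W$ --- are all correct.

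The genuine gap is the one you flag yourself, and it is not a formality: everything in the conclusion that distinguishes this lemma from a statement about smooth geodesics away from $q$ lives at $s=0$. For $(\Lambda_t)_t$ to be a geodesic in the sense of Definition~\ref{definition:geodesic} it must be a smooth path in $\mathcal{L}(X,S^{n-1}\times[0,\epsilon)/\!\sim,\{o\};\{q\})$, i.e.\ the lifting $(\Psi_t\circ\pi)_t$ must be smooth on the oriented blowup uniformly in $t$; only then is $\deriv{t}\Lambda_t$ a closed cone-smooth $1$-form vanishing at the cone locus (which you use to conclude $\mu(0)=0$ and hence that $H$ is cone-smooth with $o$ critical), and only then does the Hamiltonian make sense as an element of $C^\infty(\Lambda_t)$ in the cone-smooth sense. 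Your two reparameterizations --- the one adapting $\Phi$ to the harmonics $\sigma_s$ and the later one achieving horizontality --- are each defined slice-by-slice for $s>0$, and their cone-smooth extension across $s=0$ requires showing that $\sigma_s\circ\Phi_s$ and the correction vector fields converge smoothly, after the rescaling $M_s^{-1}$, to the corresponding objects on the tangent cylinder $\left.\pderiv[\Phi]{s}\right|_{s=0}$; this is precisely where conditions~\eqref{regular convergence to intersection point}\ref{item:derivative Phi immersion} and~\eqref{regular convergence to intersection point}\ref{item:nowhere tangent Euler} of Definition~\ref{definition: interior regularity}, Lemma~\ref{lemma: cylinder with regular harmonics in Euclidean space}, and uniform elliptic estimates for $\Delta_\rho$ on the degenerating family must enter, and none of that is supplied. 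A secondary, smaller gap: your identification of the critical locus rests on $\Lambda_0^U\cap\Lambda_1^U=\{q\}$, which you obtain by shrinking $\epsilon$, but the lemma asserts the conclusion for the given ray $U$; you need an argument (e.g.\ that a point of $\crit((\Lambda_t)_t)$ forces a critical point of $H$, of which there are none away from $o$ since $\mu(s)\neq 0$ for $s>0$) that does not require shrinking $U.$
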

Recall Notation~\ref{ntn:critical point}.
\begin{set}\label{set:rocking}
Let $(\Lambda_t)_{t\in[0,1]}$ be a geodesic of positive Lagrangians in $X$ with Hamiltonian $(h_t)_t.$ Suppose the endpoints $\Lambda_0$ and $\Lambda_1$ are smoothly embedded and let $q\in\crit((\Lambda_t)_t)$ be a transverse intersection point of $\Lambda_0$ and $\Lambda_1.$ Moreover, for $t \in [0,1]$ assume $\hat q_t$ is an absolute minimum or maximum of $h_t.$ Let $(\Psi_t : L \to X,S)_t$ be a horizontal lifting. Let $h = h_t \circ \Psi_t$ denote the Hamiltonian with respect to $(\Psi_t).$ Let $p \in L$ be such that $\Psi_t(p) = q$ for $t \in [0,1].$ By Lemma~\ref{lemma: critical points are non-degenerate}, $p$ is a non-degenerate critical point of $h.$ By Lemma~\ref{lemma: nice level sets},
choose a positive $\epsilon$ and a polar coordinate map $\kappa:S^{n-1}\times[0,\epsilon)\to L$ centered at $p$ such that
for each $s\in(0,\epsilon)$ the restricted map $\kappa|_{S^{n-1}\times\{s\}}$ parameterizes a level set of $h.$
\end{set}
The following is Lemma~5.12 from~\cite{cylinders}.
\begin{lemma}\label{lemma:rocking lemma}
In Setting~\ref{set:rocking}, let $\mathcal{Z}$ denote the cylindrical transform of $(\Lambda_t)_t.$ Then, one end of $\mathcal{Z}$ converges regularly to $q.$ In fact, the map
		\[
		\Phi:S^{n-1}\times[0,1]\times[0,\epsilon)\to X,\qquad(c,t,s)\mapsto\Psi_t(\kappa(c,s))
		\]
is a regular parameterization of $\mathcal{Z}$ about $q.$
\end{lemma}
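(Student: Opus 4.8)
The plan is to apply Lemma~\ref{rem: easy regularity}: it suffices to check that the map $\Phi(c,t,s)=\Psi_t(\kappa(c,s))$ satisfies conditions \eqref{regular convergence to intersection point}\ref{critical point}, \eqref{interior regularity first part}\ref{item: rep}, \eqref{regular convergence to intersection point}\ref{item:derivative Phi immersion} and \eqref{regular convergence to intersection point}\ref{item:nowhere tangent Euler} of Definition~\ref{definition: interior regularity}; the lemma then yields, after shrinking $\epsilon$, that $\Phi$ is a regular parameterization about $q$ of the ray $U:=\{[\Phi_s]\mid s\in(0,\epsilon)\}$ in $\mathcal Z$, which is the assertion. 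First I would note that $\Phi$ is smooth: when $p\in S$ one factors the polar map as $\kappa=\pi\circ\widetilde\kappa$ through the oriented blowup $\pi:\widetilde L_S\to L$, with $\widetilde\kappa\bigl(S^{n-1}\times\{0\}\bigr)\subset E_p$, so $\Phi_t=(\Psi_t\circ\pi)\circ\widetilde\kappa$ is smooth because $(\Psi_t,S)$ is cone-smooth, and the same works jointly in $(t,s)$. Since $\kappa(c,0)=p$ and $\Psi_t(p)=q$ we get $\Phi(\cdot,\cdot,0)\equiv q$, which is condition~\ref{critical point}.

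For condition~\ref{item: rep}, fix $s\in(0,\epsilon)$. As recorded in Setting~\ref{set:rocking}, $p$ is a non-degenerate critical point of $h$ and an absolute extremum of each time slice, so $\kappa\bigl(S^{n-1}\times\{s\}\bigr)$ is a single component of a regular level set $h^{-1}(c_s)$ disjoint from $\crit(h)$, where $c_s\to h(p)$ as $s\to0$ and $c_s$ is strictly monotone in $s$ near $0$ by non-degeneracy (possibly after shrinking $\epsilon$). By Proposition~\ref{proposition:relevant results from previous paper} the map $(p',t)\mapsto\Psi_t(p')$ on this component is an imaginary special Lagrangian immersion representing the cylinder of $c_s$-level sets, an element of $\mathcal Z\subset\mathcal{SLC}\bigl(S^{n-1};\Lambda_0,\Lambda_1\bigr)$ by Definition~\ref{dfn:cylinder}; precomposing with the embedding $\kappa|_{S^{n-1}\times\{s\}}\times\id_{[0,1]}$ shows $[\Phi_s]\in\mathcal Z$. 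Distinct $s$ give distinct $[\Phi_s]$, since the relative Lagrangian flux along this subfamily equals $c_{s'}-c_s\neq0$ by Lemma~\ref{lemma: flux as Hamiltonian} and is a symplectic invariant; and as $s\to0$ the cylinders $[\Phi_s]$ shrink to $q$, hence leave every compact subset of $\mathcal{SLC}\bigl(S^{n-1};\Lambda_0,\Lambda_1\bigr)$. Thus the component of $\mathcal Z$ through the $[\Phi_s]$ is non-compact and $U$ is a ray in it representing one end.

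The core of the argument is conditions~\ref{item:derivative Phi immersion} and~\ref{item:nowhere tangent Euler}. Differentiating $\Phi$ at $s=0$ by the chain rule, using the cone-derivative $d(\Psi_t)_p$ (Definition~\ref{definition: cone differentiability}) and the relation $\partial_s\kappa(c,0)=\sigma(c)$ for $\sigma$ the section associated with $\kappa$, I get
\[
V(c,t):=\left.\pderiv{s}\right|_{s=0}\Phi(\cdot,\cdot,s)=d(\Psi_t)_p(\sigma(c))=\widetilde{d\Psi_t}_{\widetilde p}\bigl(\sigma(c)\bigr),\qquad\widetilde p:=[\sigma(c)]\in E_p,
\]
the last equality by Euler's identity for the degree-one homogeneous map $d(\Psi_t)_p$ together with Lemma~\ref{lemma:blowupdifferential}. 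The key claim is that for all $(c,t)$,
\[
\partial_tV(c,t)\notin T_{\widetilde p}\Lambda_t:=\widetilde{d\Psi_t}_{\widetilde p}\bigl(T_pL\bigr),
\]
the Lagrangian tangent space of the cone $TC_q\Lambda_t$ at the smooth point $V(c,t)$. To prove it, I would differentiate the identity $i_{\partial_t\Psi_t}\omega=dh$ (which encodes $\partial_t\Lambda_t=dh_t$ and $h=h_t\circ\Psi_t$) in the spatial direction $\sigma(c)\in T_pL$ at the point $p$, working on $\widetilde L_S$ when $p\in S$: since $\partial_t\Psi_t$ and $dh$ both vanish at $p$ (on $E_p$ in the cone case, as $\partial_t\Lambda_t$ vanishes at the cone locus and $p$ is critical), all cross terms drop and one is left with
\[
\omega\bigl(\partial_tV(c,t),\,\widetilde{d\Psi_t}_{\widetilde p}(w)\bigr)=\nabla_{\sigma(c)}df(w),\qquad w\in T_pL,
\]
where $\nabla df$ is the cone-Hessian of $h$ at $p$ (Definition~\ref{dfn:cone-Hessian}). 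If $\partial_tV(c,t)$ lay in $T_{\widetilde p}\Lambda_t$ the left side would vanish for every $w$ by the Lagrangian condition, forcing $\nabla_{\sigma(c)}df=0$ and contradicting the non-degeneracy of $p$, since $\sigma(c)\neq0$.

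Granting the claim, both remaining conditions follow by linear algebra. Put $A:=dV_{(c,t)}\bigl(T_cS^{n-1}\times\{0\}\bigr)=\widetilde{d\Psi_t}_{\widetilde p}\bigl(T_{\sigma(c)}\sigma(S^{n-1})\bigr)$; by injectivity of $\widetilde{d\Psi_t}_{\widetilde p}$ (Lemma~\ref{lemma:blowupdifferential}, $\Psi_t$ being a cone-immersion), $A$ has dimension $n-1$, $A\subset T_{\widetilde p}\Lambda_t$, and $V(c,t)\notin A$ since $\sigma(S^{n-1})$ meets the ray $\R\,\sigma(c)$ transversally. If $dV_{(c,t)}(\xi,\tau)=0$, then $\tau\,\partial_tV(c,t)\in A\subset T_{\widetilde p}\Lambda_t$ forces $\tau=0$ by the claim, and $dV_{(c,t)}(\xi,0)=0$ then forces $\xi=0$ as $\sigma$ is an embedding; so $V$ is an immersion, and $V|_{S^{n-1}\times\{i\}}=d(\Psi_i)_p\circ\sigma$ is an embedding because $\Lambda_i$ is smoothly embedded, giving condition~\ref{item:derivative Phi immersion}. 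The Euler vector field at $V(c,t)$ is $V(c,t)$ itself; were it in $\im dV_{(c,t)}=A+\R\,\partial_tV(c,t)$, say $V(c,t)=a+\mu\,\partial_tV(c,t)$ with $a\in A$, then $\mu\neq0$ (else $V(c,t)\in A$), whence $\partial_tV(c,t)\in\operatorname{span}\bigl(V(c,t),A\bigr)\subset T_{\widetilde p}\Lambda_t$ (using $V(c,t)\in T_{\widetilde p}\Lambda_t$), contradicting the claim; this is condition~\ref{item:nowhere tangent Euler}. Lemma~\ref{rem: easy regularity} then completes the proof. I expect the one genuinely delicate step to be the second displayed identity above: carrying out the differentiation of $i_{\partial_t\Psi_t}\omega=dh$ on the oriented blowup, checking that the vanishing of $\partial_t\Psi_t$ and $dh$ on $E_p$ really renders the omitted terms irrelevant, and identifying the resulting $w$-linear expression with the cone-Hessian of Definition~\ref{dfn:cone-Hessian}; everything downstream is bookkeeping with cone-derivatives and the splitting $T_pL=T_{\sigma(c)}\sigma(S^{n-1})\oplus\R\,\sigma(c)$.
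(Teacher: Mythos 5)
The paper does not actually prove this lemma---it is imported verbatim as Lemma~5.11 of~\cite{cylinders}---so there is no in-paper argument to compare against line by line. Judged on its own, your proof is correct and is built entirely from the tools this paper assembles for exactly this purpose: the reduction to conditions (a), (c), (d) and the representation condition is precisely what Lemma~\ref{rem: easy regularity} is quoted for, and your computation $\left.\pderiv{s}\right|_{s=0}\Phi(c,t,s)=(d\Psi_t)_p(\sigma(c))=\widetilde{d\Psi_t}_{\widetilde p}(\sigma(c))$ appears verbatim in the paper's proof of Lemma~\ref{lemma:commute}. The step you flag as delicate does go through: writing the geodesic relation as the identity $\omega\bigl(\partial_t(\Psi_t\circ\pi)(\widetilde x),\widetilde{d\Psi_t}_{\widetilde x}(w)\bigr)=\widetilde{dh}_{\widetilde x}(w)$ of smooth objects on $\widetilde L_S$ (Lemma~\ref{lemma:blowupdifferential} and Definition~\ref{definition: cone-smooth differential form etc}), one may differentiate at $\widetilde p\in E_p$ in a direction $\widetilde v$ with $d\pi_{\widetilde p}(\widetilde v)=\sigma(c)$; all terms containing the undifferentiated $\partial_t(\Psi_t\circ\pi)$ or $\widetilde{dh}$ vanish on $E_p$, the left side becomes $\omega(\partial_tV(c,t),\widetilde{d\Psi_t}_{\widetilde p}(w))$ by symmetry of second derivatives (legitimate since $\Psi_t\circ\pi$ is constant in $t$ along $E_p$), and the right side is by definition the cone-Hessian of Definition~\ref{dfn:cone-Hessian}, which is nonzero by Lemma~\ref{lemma: critical points are non-degenerate}. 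Two cosmetic remarks: the distinctness-of-cylinders and non-compactness discussion in your verification of the representation condition is not needed, since Lemma~\ref{rem: easy regularity} itself supplies the ray structure as part of condition (2)(b); and for the boundary embedding claim one should note that smooth embeddedness of $\Lambda_i$ forces $d(\Psi_i)_p|_{T_pL\setminus\{0\}}$ to be injective even when $p$ lies in the cone locus of the lifting, so that $c\mapsto d(\Psi_i)_p(\sigma(c))$ is an injective immersion of a compact manifold, hence an embedding.
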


The following are Definitions~5.14 and~6.6 from~\cite{cylinders}.
	
\begin{dfn}
	\label{definition: regularity}
Let $\Lambda_0,\Lambda_1 \subset X$ be smooth positive Lagrangian submanifolds intersecting at two points $q_0$ and $q_1.$ Let $\mathcal{Z}\subset\mathcal{SLC}\left(S^{n-1};\Lambda_0,\Lambda_1\right)$ be a connected component. A \emph{regular parameterization} of $\mathcal{Z}$ is a smooth map $\Phi:S^{n-1}\times[0,1]\times[0,1]\to X$ satisfying the following conditions:
\begin{enumerate}
\item The restricted map $\Phi|_{S^{n-1}\times[0,1]\times(0,1)}$ is an interior-regular parameterization of $Z.$
\item The restricted maps $\Phi|_{S^{n-1}\times[0,1]\times[0,1/2)}$ and $\Phi|_{S^{n-1}\times[0,1]\times(1/2,1]}$ are regular parameterizations of the two ends of $\mathcal{Z}$ about the intersection points $q_0$ and $q_1,$ respectively.
\end{enumerate}
We say $\mathcal{Z}$ is \emph{regular} if it admits a regular parameterization.
\end{dfn}

\begin{dfn}\label{dfn:topology on geodesics}
Let $\mathcal{O}$ be a Hamiltonian isotopy class of positive Lagrangian spheres. For $\Lambda_0,\Lambda_1 \in \mathcal{O},$ we write $\Lambda_0 \pitchfork_2 \Lambda_1$ if $\Lambda_0$ and $\Lambda_1$ intersect transversally at exactly two points.  Let
\[
\mathfrak{Z}_\mathcal{O} := \left\{(\Lambda_0,\Lambda_1,\mathcal{Z})\left |
\begin{matrix}
\Lambda_i \in \mathcal{O},\; i = 0,1,\quad \Lambda_0 \pitchfork_2 \Lambda_1, \; \\
\mathcal{Z} \subset \mathcal{SLC}(\Lambda_0,\Lambda_1) \text{ a regular component}
\end{matrix}
\right.\right\}.
\]
We define the strong and weak $C^{k,\alpha}$ topologies on $\mathfrak{Z}_\mathcal{O}$ as follows.
For
\[
\mathcal{V} \subset C^\infty(S^{n-1}\times [0,1],X), \qquad \mathcal{U} \subset C^\infty(S^{n-1}\times [0,1],TX),
\]
open subsets in the $C^{k,\alpha}$ topology, write
\[
\mathcal{T}_{\mathcal{U},\mathcal{V}} := \left \{(\Lambda_0,\Lambda_1,\mathcal{Z}) \in \mathfrak{Z}_\mathcal{O} \left|
\begin{matrix}
\forall Z \in \mathcal{Z}, \; \exists f : S^{n-1}\times [0,1] \to X \text{ representing } Z \\
\text{such that } f \in \mathcal{V},\\
\forall E \text{ an end of }\mathcal{Z},\; \exists \Phi : [0,\epsilon) \to X \text{ a regular}\\
\text{parameterization of $E$ such that }\left.\pderiv[\Phi]{s}\right|_{s = 0} \in \mathcal{U}
\end{matrix}
\right.\right\}
\]
and
\[
\mathcal{X}_{\mathcal{V}} = \left \{(\Lambda_0,\Lambda_1,\mathcal{Z}) \in \mathfrak{Z}_\mathcal{O} \left|
\begin{matrix}
\exists Z \in \mathcal{Z}, \; \exists f : S^{n-1}\times [0,1] \to X \text{ representing } Z \\
\text{such that } f \in \mathcal{V}
\end{matrix}
\right.\right\}.
\]
Then, a basis for the strong $C^{k,\alpha}$ topology on $\mathfrak{Z}_\mathcal{O}$ is given by sets of the form $\mathcal{T}_{\mathcal{U},\mathcal{V}}$ and a sub-basis for the weak $C^{k,\alpha}$ topology on $\mathfrak{Z}_\mathcal{O}$ is given by sets of the form $\mathcal{X}_\mathcal{V}.$  Let
\[
\mathfrak{G}_\mathcal{O} : = \{(\Lambda_t)_{t \in [0,1]} | (\Lambda_t)_{t \in [0,1]} \text{ is a geodesic with } \Lambda_0,\Lambda_1 \in \mathcal{O}, \quad \Lambda_0 \pitchfork_2 \Lambda_1\}
\]
denote the space of geodesics with endpoints in $\mathcal{O}$ intersecting transversally at two points.
By Theorem~1.5 of~\cite{cylinders}, the cylindrical transform gives a bijection
\[
\mathfrak{G}_\mathcal{O} \simeq \mathfrak{Z}_\mathcal{O}.
\]
So, the strong and weak $C^{k,\alpha}$ topologies on $\mathfrak{Z}_\mathcal{O}$ give rise to topologies on $\mathfrak{G}_\mathcal{O},$ which we also call the strong and weak $C^{k,\alpha}$ topologies respectively.
\end{dfn}

	\section{The positive Lagrangian Grassmannian}
	\label{section:the positive Lagrangian Grassmannian}
\subsection{The linear positive Lagrangian connection}	
	Let $\left(\mathbb{C}^n,\omega,J,\Omega\right)$ denote the standard Calabi-Yau structure on $\mathbb{C}^n.$ Let $\laggrass(n)$ denote the Lagrangian Grassmannian,
	\[
	\laggrass(n):=\left\{\Lambda\in\Gr_\mathbb{R}\left(n,\mathbb{C}^n\right)\;|\;\omega|_\Lambda=0\right\},
	\]
	where $\Gr_\mathbb{R}(n,\mathbb{C}^n)$ denotes the space of real $n$-dimensional linear subspaces of $\mathbb{C}^n.$ Then $\laggrass(n)$ is a smooth manifold of dimension $\frac{n(n+1)}{2}.$ The following well-known lemma provides a convenient description of the tangent bundle $T\laggrass(n).$
	
	\begin{lemma}
		\label{lemma:tangent space of Lagrangian Grassmannian}
		For $\Lambda\in\laggrass(n)$ there is a canonical isomorphism $T_\Lambda\laggrass(n)\cong Q(\Lambda),$ the space of quadratic forms on $\Lambda.$
	\end{lemma}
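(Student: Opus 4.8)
The plan is to realize a neighborhood of $\Lambda$ in $\laggrass(n)$ as a space of graphs, and then identify the graph data with quadratic forms on $\Lambda.$ First I would use the fact that $\C^n$ carries a compatible complex structure $J$ with $J\Lambda$ a Lagrangian complement of $\Lambda,$ so that $\C^n = \Lambda \oplus J\Lambda$ as a direct sum of Lagrangian subspaces. Any Lagrangian subspace $\Lambda'$ sufficiently close to $\Lambda$ is transverse to $J\Lambda$ and hence the graph of a unique linear map $A : \Lambda \to J\Lambda.$ Composing with the isomorphism $J^{-1} = -J : J\Lambda \to \Lambda,$ one gets a linear endomorphism $B = -JA$ of $\Lambda.$ The standard computation — using that $\omega(v, Jw) = g(v,w)$ is the (symmetric, positive-definite) real inner product on $\Lambda$ — shows that the graph of $A$ is Lagrangian if and only if $B$ is self-adjoint with respect to $g|_\Lambda,$ equivalently if and only if the bilinear form $(v,w) \mapsto \omega(v, Aw) = g(v, Bw)$ on $\Lambda$ is symmetric. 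Thus the chart $\Lambda' \mapsto A \mapsto$ (the quadratic form $q_{\Lambda'}(v) = \omega(v, Av) = g(v,Bv)$) identifies a neighborhood of $\Lambda$ with an open neighborhood of $0$ in $Q(\Lambda),$ which is a linear space of dimension $\binom{n+1}{2} = \dim \laggrass(n).$

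Differentiating this chart at $\Lambda$ (where $A = 0$) gives a linear isomorphism $T_\Lambda\laggrass(n) \to Q(\Lambda).$ Concretely, given a smooth path $(\Lambda_t)$ with $\Lambda_0 = \Lambda,$ write $\Lambda_t$ as the graph of $A_t$ with $A_0 = 0$ and assign to $\dot\Lambda_0$ the quadratic form $v \mapsto \omega(v, \dot A_0 v).$ The remaining point is \emph{canonicity}: this must not depend on the choice of Lagrangian complement used to form the graph. I would verify this by checking that if one uses a second complement, the two maps $A_t$ differ by terms that are $O(t^2)$ — more precisely, a change of complement alters $A_t$ by post-composition with a fixed isomorphism and addition of a term quadratic in $A_t,$ so the derivative at $t = 0$ of the associated quadratic form is unchanged. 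Equivalently and more invariantly, one observes that $T_\Lambda\laggrass(n)$ is the space of $1$-jets of Lagrangian deformations of $\Lambda,$ and the symmetric bilinear form $(v,w) \mapsto \omega(v, \dot A_0 w)$ is intrinsically the obstruction to the deformation fixing $\Lambda$ to first order — it depends only on $\dot\Lambda_0 \in T_\Lambda \laggrass(n).$

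I expect the main obstacle to be precisely this last independence/naturality check: producing the isomorphism from any one complement is routine linear algebra, but exhibiting it as canonical requires either a careful comparison of two graph charts or an intrinsic description of $T_\Lambda\laggrass(n)$ (e.g.\ via the identification $T_\Lambda\laggrass(n) \cong \operatorname{Hom}(\Lambda, \C^n/\Lambda)_{\mathrm{sym}}$ coming from the tangent bundle of the full real Grassmannian cut down by the Lagrangian condition, together with the symplectic pairing $\C^n/\Lambda \cong \Lambda^*$). Either route is short; the content is just to write it so that the word "canonical" is justified. Since this is a well-known lemma, I would keep the exposition brief and favor the intrinsic description, recording the explicit formula $q(v) = \omega(v, \dot A_0 v)$ for later use in the geodesic computations.
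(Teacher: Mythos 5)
Your proof is correct, but it takes a genuinely different route from the paper's. The paper never writes $\Lambda'$ as a graph over a Lagrangian complement: it uses the standard identification of a tangent vector to a space of Lagrangians with a closed $1$-form via contraction with $\omega.$ Given a path $(\Lambda_t)$ and a \emph{linear} lifting $(\Psi_t),$ it sets $\sigma=(\Psi_0)_*\bigl(i_{\deriv{t}\Psi_t}\omega\bigr)\in\Omega^1(\Lambda_0),$ integrates the closed form $\sigma$ to the unique primitive $h$ with $h(0)=0,$ and observes that linearity of the lifting makes the functional $\sigma_x$ depend linearly on $x,$ which forces $h$ to be a quadratic form. This is manifestly independent of any choice of complement, so the canonicity check that you correctly single out as the main burden of the graph approach simply does not arise; what it uses instead is the (standard) fact that $\sigma$ depends only on the path and not on the lifting. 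Your approach buys more than the lemma asks for, namely an explicit chart of $\laggrass(n)$ near $\Lambda$ by an open subset of $Q(\Lambda),$ and it makes the dimension count transparent. One caveat if you record the explicit formula for later use: with the paper's convention $dh=i_{\dot\Psi}\omega,$ $h(0)=0,$ and the lifting $\Psi_t(x)=x+A_tx,$ one finds $h(v)=\tfrac{1}{2}\,\omega(\dot A_0v,v),$ which differs from your $q(v)=\omega(v,\dot A_0v)$ by a universal constant (a sign and a factor of $\tfrac{1}{2}$). This normalization matters downstream, e.g.\ in Proposition~\ref{proposition:geodesic preserves orthogonal frame}, where the coefficient $2a_j$ in $\nabla h_t$ is used, so the formula should be pinned to the convention $dh=i_{\dot\Psi}\omega$ rather than to the graph chart.
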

	
	\begin{proof}
		As always when working with spaces of Lagrangian submanifolds, the desired isomorphism is obtained via contraction with $\omega.$ Let $(\Lambda_t)_{t\in(-\epsilon,\epsilon)}$ be a smooth path in $\laggrass(n)$ and let $\Psi_t:\mathbb{R}^n\to\mathbb{C}^n,\;t\in(-\epsilon,\epsilon),$ be a smooth linear lifting. As a 1-form, the time-derivative of $(\Lambda_t)_t$ is given by
		\[
		\sigma=\left.\deriv{t}\right|_{t=0}\Lambda_t:=(\Psi_0)_*\left(i_{\left.\deriv{t}\Psi_t\right|_{t = 0}}\omega\right)\in\Omega^1(\Lambda_0).
		\]
		As the path $(\Lambda_t)_t$ is Lagrangian, $\sigma$ is closed and thus equal to the derivative of a unique function, $h\in C^\infty(\Lambda_0),$ satisfying $h(0)=0.$ As the lifting $(\Psi_t)_t$ is linear, the linear functional $\sigma_x: T_x \Lambda_t \to \R$ depends linearly on $x$ under the canonical isomorphism $T_x \Lambda_t \simeq \Lambda_t.$ It follows that $h$ is a quadratic form.
	\end{proof}

	As every Lagrangian $\Lambda\in\laggrass(n)$ is in particular totally real and $\Omega$ is of type $(n,0),$ we have $\Omega|_\Lambda\ne0$ (see~\cite{harvey-lawson}). Let $\phase:\laggrass(n)\to\mathbb{R}/\pi\mathbb{Z}$ denote the Lagrangian phase,
	\[
	\phase(\Lambda):=\arg\left(\Omega|_\Lambda\right),\quad\Lambda\in\laggrass(n).
	\]
The phase is only well-defined in $\mathbb{R}/\pi\mathbb{Z},$ rather than $\mathbb{R}/2\pi\mathbb{Z},$ as the elements of $\laggrass(n)$ are not oriented. We define the \emph{positive Lagrangian Grassmannian} to be the set of positive Lagrangian linear subspaces,
	\[
	\laggrass^+(n):=\left\{\Lambda\in\laggrass(n)\;\left|\;\phase(\Lambda)\not \equiv \frac{\pi}{2} \pmod{\pi \mathbb{Z}}\right.\right\}.
	\]
	We equip each $\Lambda\in\laggrass^+(n)$ with the orientation making $\real\Omega|_\Lambda$ a positive volume form. So, the phase lifts to a well-defined real-valued function on $\laggrass^+(n),$ which by abuse of notation we continue to denote by
	\[
	\phase:\laggrass^+(n)\to\left(-\frac{\pi}{2},\frac{\pi}{2}\right).
	\]

	As shown in~\cite{solomon}, every path of \emph{closed} positive Lagrangians in a Calabi-Yau admits horizontal liftings. We show the same holds true for paths in $\laggrass^+(n).$

	\begin{lemma}
		\label{lemma:there is always a horizontal lifting}
		Let $(\Lambda_t)_{t\in[0,1]}$ be a smooth path in $\laggrass^+(n)$ and let $\Psi_0:\mathbb{R}^n\to \Lambda_0$ be an isomorphism of real vector spaces. Then $\Psi_0$ extends uniquely to a horizontal lifting, $\Psi_t:\mathbb{R}^n\to\mathbb{C}^n,\;t\in[0,1],$ of the path $(\Lambda_t)_t.$ Moreover, the unique lifting $(\Psi_t)_t$ is linear.
	\end{lemma}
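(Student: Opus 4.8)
The plan is to reduce the horizontality condition to a first-order \emph{linear} ODE, from which existence, uniqueness and linearity all follow simultaneously. As a first step I would fix an auxiliary smooth linear lifting $\Phi_t\colon\R^n\to\C^n$ with $\Phi_0=\Psi_0$; one exists because the tautological bundle over $\laggrass^+(n),$ pulled back along $t\mapsto\Lambda_t,$ is a vector bundle over $[0,1]$ and hence trivial, and the trivialization can be normalized at $t=0.$ Since every lifting of $(\Lambda_t)_t$ has the form $\Psi_t=\Phi_t\circ\phi_t$ with $\phi_0=\id$ and $\phi_t\in\diff(\R^n),$ the problem becomes that of choosing $\phi_t$ so that $\Psi_t$ is horizontal.

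Next I would unwind horizontality pointwise. Fixing $t$ and $x,$ set $y=\Psi_t(x)\in\Lambda_t$ and, using the splitting $T_y\C^n=\Lambda_t\oplus J\Lambda_t,$ write $\deriv{t}\Psi_t(x)=u+Jw$ with $u,w\in\Lambda_t.$ The K\"ahler relation between $g,\omega,J$ and the fact that $\Omega$ is of type $(n,0),$ so that $\Omega(J\,\cdot\,,\cdots)=\ii\,\Omega(\,\cdot\,,\cdots),$ give upon restriction to $\Lambda_t$
\[
\Psi_t^*\!\left(i_{\deriv{t}\Psi_t}\omega\right)=-\,g(w,\,\cdot\,)|_{\Lambda_t},
\qquad
\Psi_t^*\!\left(i_{\deriv{t}\Psi_t}\real\Omega\right)=i_u\!\left(\real\Omega|_{\Lambda_t}\right)-i_w\!\left(\imaginary\Omega|_{\Lambda_t}\right)
\]
(signs depending on conventions). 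Since $\deriv{t}\Lambda_t$ is prescribed, the first identity determines $w$ pointwise; since $\real\Omega|_{\Lambda_t}$ is a volume form, the contraction $u\mapsto i_u(\real\Omega|_{\Lambda_t})$ is an isomorphism, so the horizontality equation $i_u(\real\Omega|_{\Lambda_t})=i_w(\imaginary\Omega|_{\Lambda_t})$ then determines $u$ from $w.$ Hence the velocity of a horizontal lifting is a uniquely determined vector field $\mathcal{V}_t$ along $\Lambda_t,$ and $\deriv{t}\Psi_t(x)=\mathcal{V}_t(\Psi_t(x))$ rewrites, through $\Psi_t=\Phi_t\circ\phi_t,$ as an ODE $\deriv{t}\phi_t=X_t\circ\phi_t$ on $\R^n$ with $\phi_0=\id,$ where $X_t$ is assembled from $\Phi_t,$ $\deriv{t}\Phi_t,$ the tangential projection onto $\Lambda_t$ and $\mathcal{V}_t.$ Standard ODE theory yields a unique solution for as long as it exists, and applying the same computation to the difference of two horizontal liftings forces it to be the trivial solution, giving uniqueness of the horizontal lifting.

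Finally I would check that $X_t$ is a \emph{linear} vector field, which is where the vector-space structure enters. The maps $\Phi_t$ and $\deriv{t}\Phi_t$ are linear in the point; the tangential projection $\C^n\to\Lambda_t$ is a fixed linear map; by Lemma~\ref{lemma:tangent space of Lagrangian Grassmannian} the $1$-form $\deriv{t}\Lambda_t$ is the differential of a quadratic form on $\Lambda_t,$ so $(\deriv{t}\Lambda_t)_y$ depends linearly on $y;$ and the contractions $w\mapsto g(w,\,\cdot\,),$ $u\mapsto i_u(\real\Omega|_{\Lambda_t}),$ $w\mapsto i_w(\imaginary\Omega|_{\Lambda_t})$ are linear with constant coefficients on the vector space $\Lambda_t.$ Composing these, $\mathcal{V}_t$ is linear in the point, hence so is $X_t,$ and the equation for $\phi_t$ becomes the linear matrix ODE $\deriv{t}\phi_t=B_t\phi_t,$ $\phi_0=I,$ with $B_t\in\mathfrak{gl}(n,\R)$ smooth. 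Its fundamental solution exists on all of $[0,1]$ and lies in $GL(n,\R),$ so $\Psi_t=\Phi_t\circ\phi_t$ is a linear isomorphism for every $t$ and is the unique horizontal lifting.

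I expect the main obstacle to be the pointwise linear-algebra step of the second paragraph: getting the decomposition $\deriv{t}\Psi_t=u+Jw$ and the two resulting identities exactly right, and observing that $\real\Omega|_{\Lambda_t}$ being a volume form is precisely what makes $u\mapsto i_u(\real\Omega|_{\Lambda_t})$ invertible. Everything downstream---the ODE, uniqueness and the linearity bookkeeping---is then mechanical. Alternatively, the existence and uniqueness of the horizontal lifting could simply be quoted from~\cite{solomon}, whose argument makes no use of infinite-dimensionality; in that case only the linearity assertion requires the computation above, completing the passage from the closed case to $\laggrass^+(n).$
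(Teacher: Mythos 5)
Your proposal is correct and follows essentially the same route as the paper: both start from an auxiliary linear lifting and correct it by the flow of a time-dependent tangential vector field that is uniquely determined because contraction with the positive volume form $\real\Omega|_{\Lambda_t}$ is invertible, and that is linear in the point, so its flow is linear and the resulting horizontal lifting is linear. The paper's version is merely terser --- it defines the correction $w_t$ on $\mathbb{R}^n$ directly by $i_{w_t}\widetilde{\Psi}_t^*\real\Omega=-i_{\deriv{t}\widetilde{\Psi}_t}\real\Omega$ rather than passing through the $\Lambda_t\oplus J\Lambda_t$ splitting, and, as you anticipate, it cites~\cite{solomon} for the existence-and-uniqueness mechanics.
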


	\begin{proof}
		The argument is similar to the one presented in~\cite{solomon}. Let $\left(\widetilde{\Psi}_t\right)_{t\in[0,1]}$ be a linear lifting of $(\Lambda_t)_t$ with $\widetilde{\Psi}_0=\Psi_0.$ For $t\in[0,1],$ let $w_t$ be the vector field on $\mathbb{R}^n$ given by
		\[
		i_{w_t}\left(\widetilde{\Psi}_t\right)^*\real\Omega=-i_{\deriv{t}\widetilde{\Psi}_t}\real\Omega.
		\]
		Then $w_t$ is a linear time-dependent vector field and hence integrable. Its flow, denoted by $\varphi_t,$ is also linear. The desired lifting is given by
		\[
		\Psi_t:=\widetilde{\Psi}_t\circ\varphi_t.
		\]
	\end{proof}

	\begin{rem}
		\label{remark:every horizontal lifting is assumed linear}
		By virtue of Lemma~\ref{lemma:there is always a horizontal lifting}, all horizontal liftings of paths in $\laggrass^+(n)$ are henceforth assumed to be linear.
	\end{rem}

	\begin{lemma}
		\label{lemma:covariant derivative well-defined}
		Let $(\Lambda_t)_{t\in[0,1]}$ be a smooth path in $\laggrass^+(n)$ and let $(\Psi_t)_{t\in[0,1]}$ be a linear lifting. Let $(h_t)_{t\in[0,1]}$ be a vector field along the path $(\Lambda_t)_t.$ That is, $h_t$ is a quadratic form on $\Lambda_t$ for $t\in[0,1].$ Then for $t_0\in[0,1],$ the function
		\[
\left.\deriv{t}\right|_{t=t_0}(h_t\circ\Psi_t) : \R^n \to \R
		\]
		is a quadratic form.
	\end{lemma}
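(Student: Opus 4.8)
The plan is to reduce the statement to differentiability of a single path in a finite-dimensional vector space. Fix $t_0\in[0,1].$ Since $\Psi_t:\R^n\to\C^n$ is linear with image $\Lambda_t,$ the composition $q_t:=h_t\circ\Psi_t:\R^n\to\R$ is, for each fixed $t,$ a quadratic form on $\R^n$---namely the pullback by $\Psi_t$ of the quadratic form $h_t$ on $\Lambda_t.$ The space $Q(\R^n)$ of quadratic forms on $\R^n$ is a finite-dimensional real vector space, and I would first observe that it suffices to show $t\mapsto q_t$ is differentiable at $t_0$ as a path in $Q(\R^n):$ its derivative would then lie in $Q(\R^n),$ being a limit of difference quotients of elements of $Q(\R^n),$ and for each fixed $x\in\R^n$ this derivative, evaluated at $x,$ agrees with $\left.\deriv{t}\right|_{t=t_0}q_t(x),$ so the function $x\mapsto\left.\deriv{t}\right|_{t=t_0}(h_t\circ\Psi_t)(x)$ appearing in the statement is exactly that derivative, a quadratic form.

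Next I would establish the differentiability of $t\mapsto q_t$ from the smoothness of the vector field $(h_t)_t$ along $(\Lambda_t)_t$ together with the smoothness of the linear lifting $(\Psi_t)_t.$ Write $\gamma$ for the path $t\mapsto\Lambda_t.$ By Lemma~\ref{lemma:tangent space of Lagrangian Grassmannian}, the canonical isomorphism identifies $T\laggrass^+(n),$ as a smooth vector bundle, with $\sym^2 E^*,$ the bundle of quadratic forms on the fibers of the tautological bundle $E\to\laggrass^+(n)$ whose fiber over $\Lambda$ is $\Lambda$ itself. By definition, a smooth vector field $(h_t)_t$ along $\gamma$ is a smooth section of $\gamma^*T\laggrass^+(n)\cong\sym^2((\gamma^*E)^*)$ over $[0,1].$ The lifting $(\Psi_t)_t$ is a smooth bundle isomorphism from the trivial bundle $[0,1]\times\R^n$ onto $\gamma^*E,$ since each $\Psi_t$ is a linear isomorphism onto $\Lambda_t$ and the family is smooth; it therefore induces a smooth bundle isomorphism on symmetric squares of the dual bundles, $\sym^2((\gamma^*E)^*)\to[0,1]\times Q(\R^n),$ whose fiber over $t$ is the precomposition map $Q(\Lambda_t)\to Q(\R^n),\ q\mapsto q\circ\Psi_t.$ Transporting the section $(h_t)_t$ through this trivialization gives precisely the path $t\mapsto h_t\circ\Psi_t=q_t$ in $Q(\R^n),$ which is thus smooth, in particular differentiable at $t_0;$ combined with the reduction above, this proves the lemma.

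I expect the only real work to be the bookkeeping of the second step: checking that the canonical identification of Lemma~\ref{lemma:tangent space of Lagrangian Grassmannian} is a smooth bundle isomorphism and that the trivialization induced by the lifting acts by precomposition with $\Psi_t.$ There is no analytic difficulty here---it is pure multilinear algebra. Alternatively, one can argue in coordinates: in the standard affine chart of $\laggrass^+(n)$ around $\Lambda_{t_0},$ consisting of Lagrangians transverse to a fixed Lagrangian complement of $\Lambda_{t_0}$ and parameterized by $Q(\Lambda_{t_0}),$ both the path $(\Lambda_t)_t$ and the canonical trivialization of $T\laggrass^+(n)$ along $\gamma$ are manifestly smooth near $t_0,$ so composing with the smooth family $(\Psi_t)_t$ gives the smoothness of $t\mapsto q_t.$
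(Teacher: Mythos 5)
Your proposal is correct and rests on the same key observation as the paper's (two-line) proof: each $h_t\circ\Psi_t$ lies in $Q(\R^n)$, which is a closed linear subspace, so the derivative of the path---being a limit of difference quotients of quadratic forms---is again a quadratic form. The extra bundle-theoretic bookkeeping you supply for the smoothness of $t\mapsto h_t\circ\Psi_t$ is sound but is exactly what the paper leaves implicit.
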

	
	\begin{proof}
		For $t\in[0,1],$ the composition $h_t\circ\Psi_t$ is a quadratic form on $\mathbb{R}^n.$ The space $Q\left(\mathbb{R}^n\right)\subset\C^\infty\left(\mathbb{R}^n\right)$ is a closed linear subspace. The lemma follows.
	\end{proof}
	
	By Lemmas~\ref{lemma:there is always a horizontal lifting} and~\ref{lemma:covariant derivative well-defined}, we can define the positive Lagrangian connection on $\laggrass^+(n)$ just as in Definition~\ref{definition:liftings and horizontal liftings}. On the other hand, since $\mathbb{R}^n$ is not compact, the Riemannian metric~\eqref{equation:Jake's Riemannian metric} makes no sense in $\laggrass^+(n).$ In fact, we have the following lemma.
\begin{lemma}
The positive Lagrangian connection on $\laggrass^+(n)$ is not a metric connection.
\end{lemma}
\begin{proof}
We show that the curvature of the connection can have a non-zero real eigenvalue and thus cannot be anti-self-adjoint with respect to any metric. We use the formula for the curvature of the positive Lagrangian connection derived in Theorem~6.1 of~\cite{solomon-curv} for general Lagrangian submanifolds. Although~\cite{solomon-curv} considers compactly supported Hamiltonians, the proof of Theorem 6.1 goes through without change for non-compactly supported Hamiltonians as well and thus it applies to the Lagrangian Grassmannian. Let $\Lambda \in \laggrass^+(n)$ and let $\Psi : \R^n \to \Lambda$ be a linear parameterization. By Lemma~\ref{lemma:tangent space of Lagrangian Grassmannian} the tangent space $T_\Lambda\laggrass^+(n)$ is canonically identified with the space of quadratic forms on $\Lambda.$ Composing such a quadratic form with $\Psi$ gives a quadratic form on $\R^n,$ which we represent by a symmetric $n \times n$ matrix. Let $A,B,C$ be symmetric $n \times n$ matrices. Abbreviate $\{A,B\} = AB + BA$ and $[A,B] = AB - BA.$ The formula of Theorem~6.1 in~\cite{solomon-curv} for the Riemannian curvature of the positive Lagrangian connection simplifies to
\[
R(A,B)C = 4 \sec^2(\phase(\Lambda)) \left(\{(\tr A)B - (\tr B)A,C\} + [[A,B],C] \right).
\]
Consider the matrices
\[
A = \frac{1}{n}\id,
\qquad
B = \begin{pmatrix}
0 & 1 & 1 &\cdots & 1 \\
1 & 0 & 1 & \cdots & 1 \\
\vdots &\ddots &\ddots &\ddots & \vdots \\
1 & \cdots & 1 & 0 & 1 \\
1 & \cdots & 1 & 1 & 0
\end{pmatrix},
\qquad
C =
\begin{pmatrix}
1 & \cdots & 1 \\
\vdots & & \vdots \\
1 & \cdots & 1
\end{pmatrix}.
\]
Then, $(\tr A)B - (\tr B)A = B$ and $[A,B] = 0,$ so
\[
R(A,B)C = 4 \sec^2(\phase(\Lambda))\{B,C\} = 4(n-1) \sec^2(\phase(\Lambda)) C.
\]
So, $R(A,B)$ has a non-zero real eigenvalue and thus cannot be anti-self-adjoint with respect to any inner product. On the other hand, if the positive Lagrangian connection were a metric connection, $R(A,B)$ would be anti-self-adjoint with respect to the metric.
\end{proof}
In what follows, all covariant derivatives, geodesics, and exponential maps in $\laggrass^+(n)$ are taken with respect to the positive Lagrangian connection.

\subsection{Linear geodesics}	
	
	In this section we study geodesics in $\laggrass^+(n)$ and prove Theorem~\ref{theorem:linear geodesic}. We equip $\mathbb{R}^n$ with the standard inner product and $\mathbb{C}^n$ with the standard \emph{real} inner product. Every element in $\laggrass^+(n)$ thus has an induced inner product.  We start by stating the following well-known observation.
	
	\begin{lemma}
		\label{lemma:spectral decomposition}
		Let $\Lambda_0,\Lambda_1\in\laggrass(n).$ Then, for some $k\le n,$ there exist distinct elements $\alpha_1,\ldots,\alpha_k\in\mathbb{R}/\pi\mathbb{Z}$ and orthogonal decompositions,
		\[
		\Lambda_i=\bigoplus_{j=1}^kV_{i,j},\quad i=0,1,
		\]
		satisfying
		\[
		e^{\alpha_j\ii}V_{0,j}=V_{1,j},\quad j=1,\ldots,k.
		\]
		The arguments $\alpha_j,\;j=1,\ldots,k,$ and decompositions are unique up to order.
	\end{lemma}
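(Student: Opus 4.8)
The plan is to reduce to the case $\Lambda_0=\mathbb{R}^n$ and to encode the relative position of the two Lagrangians in a single unitary matrix, whose spectral decomposition will furnish the decompositions of $\Lambda_0$ and $\Lambda_1$ \emph{simultaneously}. Since $U(n)$ acts transitively on $\laggrass(n),$ commutes with multiplication by $e^{\alpha\ii},$ and preserves the standard real inner product on $\mathbb{C}^n,$ we may apply a unitary transformation and assume $\Lambda_0=\mathbb{R}^n.$ Write $\Lambda_1=A\mathbb{R}^n$ with $A\in U(n)$ and set $T:=A\bar A^{-1}\in U(n).$ Replacing $A$ by $AO,\;O\in O(n),$ leaves $A\bar A^{-1}$ unchanged, so $T$ depends only on $\Lambda_1,$ and a direct computation gives $\bar T=T^{-1}.$ Thus $T$ is an invariant of the pair $(\Lambda_0,\Lambda_1).$

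Next I would analyze the spectrum of $T.$ If $Tv=\mu v,$ then conjugating and using $\bar T=T^{-1}$ together with $|\mu|=1$ yields $T\bar v=\mu\bar v,$ so every eigenspace $E_\mu:=\ker(T-\mu)$ is stable under complex conjugation; hence $E_\mu=(E_\mu\cap\mathbb{R}^n)\otimes_{\mathbb{R}}\mathbb{C}$ and $\dim_{\mathbb{R}}(E_\mu\cap\mathbb{R}^n)=\dim_{\mathbb{C}}E_\mu.$ Let $\mu_1,\dots,\mu_k,\;k\le n,$ be the distinct eigenvalues, and write $\mu_j=e^{2\alpha_j\ii}$ with $\alpha_j\in\mathbb{R}/\pi\mathbb{Z};$ the $\alpha_j$ are distinct because $\alpha\mapsto e^{2\alpha\ii}$ is a bijection $\mathbb{R}/\pi\mathbb{Z}\to S^1.$ Set $V_{0,j}:=E_{\mu_j}\cap\mathbb{R}^n$ and $V_{1,j}:=e^{\alpha_j\ii}V_{0,j},$ which is well defined although $e^{\alpha_j\ii}$ is determined only up to sign, since the real subspace $V_{0,j}$ is invariant under multiplication by $-1.$ Orthogonality of the decompositions is immediate from orthogonality of distinct eigenspaces of the unitary operator $T,$ and $\mathbb{R}^n=\bigoplus_jV_{0,j}$ follows from conjugation-stability of the $E_{\mu_j}.$ The remaining point, that $\bigoplus_jV_{1,j}=\Lambda_1,$ is an elementary manipulation: for $r\in V_{0,j},$ the relation $Tr=\mu_j r$ reads $\overline{A^{-1}r}=\mu_jA^{-1}r,$ which says precisely that $e^{\alpha_j\ii}A^{-1}r\in\mathbb{R}^n,$ i.e.\ $e^{\alpha_j\ii}r\in A\mathbb{R}^n=\Lambda_1;$ conversely, if $w\in\Lambda_1\cap E_{\mu_j}$ then $w=\mu_j\bar w,$ so $e^{-\alpha_j\ii}w$ is real and lies in $E_{\mu_j}\cap\mathbb{R}^n=V_{0,j}.$ Since the antilinear involution $z\mapsto A\overline{A^{-1}z}$ fixes $\Lambda_1$ and preserves each $E_{\mu_j},$ this yields $\Lambda_1=\bigoplus_jV_{1,j},$ proving existence.

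For uniqueness, suppose $\Lambda_i=\bigoplus_jW_{i,j}$ is any decomposition as in the statement, with distinct $\beta_j\in\mathbb{R}/\pi\mathbb{Z}$ and $W_{1,j}=e^{\beta_j\ii}W_{0,j}.$ Let $F_j$ be the complex span of $W_{0,j},$ equivalently of $W_{1,j}.$ Then $\mathbb{C}^n=\bigoplus_jF_j$ is an orthogonal sum of conjugation-stable subspaces, and a short calculation---for $r\in W_{0,j}$ one has $e^{\beta_j\ii}r\in\Lambda_1=A\mathbb{R}^n,$ hence $\overline{A^{-1}r}=e^{2\beta_j\ii}A^{-1}r,$ so $Tr=A\overline{A^{-1}r}=e^{2\beta_j\ii}r$---shows that $T$ acts on $F_j$ as the scalar $e^{2\beta_j\ii}.$ As the $e^{2\beta_j\ii}$ are distinct, $\{F_j\}$ is forced to be the eigenspace decomposition of $T;$ therefore $\{\beta_j\}=\{\alpha_j\},$ and after matching indices $W_{0,j}=F_j\cap\Lambda_0=V_{0,j}$ and $W_{1,j}=F_j\cap\Lambda_1=V_{1,j}.$

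The step I expect to be the main obstacle is the \emph{simultaneous compatibility}: checking that a single decomposition---the eigenspaces of $T$---is adapted to $\Lambda_0$ and $\Lambda_1$ at once, together with getting the half-angle bookkeeping exactly right, namely that the sign ambiguity in $e^{\alpha_j\ii}$ is harmless and that the passage from eigenvalues of $T$ (well defined modulo $2\pi$) to the arguments $\alpha_j$ (well defined modulo $\pi$) is a clean bijection. Everything else is routine linear symplectic algebra.
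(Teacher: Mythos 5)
Your proof is correct. Note that the paper itself offers no argument here: Lemma~\ref{lemma:spectral decomposition} is introduced as ``the following well-known observation'' and stated without proof, so there is no in-paper argument to compare against. Your route is the standard one: after normalizing $\Lambda_0=\R^n$ by a unitary map (legitimate, since $U(n)$ acts transitively on $\laggrass(n)$, is $\C$-linear, and preserves the real inner product), the unitary $T=A\bar A^{-1}$ is exactly the classical invariant of the pair of Lagrangians; its conjugation-stable eigenspaces give the $V_{0,j}$, the half-angle relation $\mu_j=e^{2\alpha_j\ii}$ is handled correctly (the sign ambiguity in $e^{\alpha_j\ii}$ is absorbed by $-1\in O(V_{0,j})$), and the identification $\Lambda_1\cap E_{\mu_j}=e^{\alpha_j\ii}V_{0,j}$ together with the fixed-point characterization of $\Lambda_1$ under the antiholomorphic involution $z\mapsto T\bar z$ gives both the decomposition of $\Lambda_1$ and, via the computation $Tr=e^{2\beta_j\ii}r$ for $r\in W_{0,j}$, the uniqueness statement. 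I see no gaps; the only point worth recording explicitly in a write-up is the dimension count $\dim_\R(E_\mu\cap\R^n)=\dim_\C E_\mu$, which you do state and which is what makes the real direct sums exhaust $\Lambda_0$ and $\Lambda_1$.
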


	Proposition~\ref{proposition:geodesic preserves orthogonal frame} below is key in the understanding of geodesics in $\laggrass^+(n).$ We use the following notion.
	
	\begin{dfn}
		\label{definition:compatible lifting}
		Let $(\Lambda_t)_{t\in[0,1]}$ be a geodesic in $\laggrass^+(n)$ with derivative $h_t\in Q(\Lambda_t),\;t\in[0,1].$ A horizontal lifting $(\Psi_t)_{t\in[0,1]}$ of $(\Lambda_t)_t$ is said to be \emph{compatible with $h_0$} if $\Psi_0:\mathbb{R}^n\to\Lambda_0$ is an inner product preserving isomorphism such that for some $a_1,\ldots,a_n\in\mathbb{R},$ we have
		\begin{equation}
		\label{equation:h does not change}
		h_t\circ\Psi_t(x)=\sum_ja_jx_j^2,\quad x\in\mathbb{R}^n,\;t\in[0,1].
		\end{equation}
		The real numbers $a_1,\ldots,a_n,$ which are independent of $(\Psi_t)_t$ up to order, are called the \emph{corresponding coefficients.}
	\end{dfn}
Below, we do not use the Einstein summation convention to avoid ambiguity.
	\begin{prop}
		\label{proposition:geodesic preserves orthogonal frame}
		Let $(\Lambda_t)_{t\in[0,1]}$ be a geodesic in $\laggrass^+(n)$ with derivative $(h_t)_t.$ Let $e_1,\ldots,e_n\in\mathbb{R}^n$ denote the standard basis. Let $(\Psi_t)_{t\in[0,1]}$ be a horizontal lifting of $(\Lambda_t)_t$ compatible with $h_0,$ and let $a_1,\ldots,a_n,$ denote the corresponding coefficients. For $j=1,\ldots,n$ and $t\in[0,1],$ write $g_j(t):=\langle\Psi_t(e_j),\Psi_t(e_j)\rangle$ and $g^j(t):=\left(g_j(t)\right)^{-1}.$ Then we have the following.
		\begin{enumerate}[label=(\alph*)]
			\item \label{expression for metric}
			\[
			\deriv{t}g_j(t)=-4a_j\tan\phase(\Lambda_t),\quad j=1,\ldots,n,\;t\in[0,1].
			\]
			\item \label{metric is diagonal}
			\[
			\langle\Psi_t(e_j),\Psi_t(e_k)\rangle=0,\quad j,k=1,\ldots,n,\;j\ne k,\;t\in[0,1].
			\]
			\item \label{basis element remains in complex line} For $j=1,\ldots,n,$ there exists a unique smooth function $\theta_j:[0,1]\to\mathbb{R}$ such that
			\[
			\theta_j(0)=0,\quad\Psi_t(e_j)\in e^{\theta_j(t)\ii}\mathbb{R}\langle\Psi_0(e_j)\rangle,\;t\in[0,1].
			\]
			The function $\theta_j$ satisfies
			\[
			\deriv{t}\theta_j(t)=-2a_jg^j(t).
			\]
		\end{enumerate}
	\end{prop}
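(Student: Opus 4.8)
The plan is to differentiate the quantities $g_j, g^j, \theta_j$ along the geodesic and use the geodesic equation together with the horizontality of the lifting. Recall that the geodesic equation says that if $(\Psi_t)_t$ is a horizontal lifting then $h := h_t \circ \Psi_t$ is time-independent; this is exactly the content of~\eqref{equation:h does not change}, so by construction $h(x) = \sum_j a_j x_j^2$ for all $t$. Horizontality of $(\Psi_t)_t$ means $i_{\frac{d}{dt}\Psi_t}\real\Omega = 0$, i.e.\ the velocity vector field $w_t := \frac{d}{dt}\Psi_t \circ \Psi_t^{-1}$ (a linear vector field on $\C^n$) lies in the kernel of $\real\Omega|_{\Lambda_t}$. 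On the other hand, the defining relation $i_{\frac{d}{dt}\Psi_t}\omega = dh_t$ pulled back by $\Psi_t$ gives $i_{w_t}\omega|_{\Lambda_t} = d(h_t)$, and since $d(h_t\circ\Psi_t)(x) = 2\sum_j a_j x_j\, dx_j$ is the differential of the quadratic form, this pins down the tangential part of $w_t$. Combining the tangential information from $\omega$ with the normal information from $\real\Omega = 0$ and the fact that $\Omega|_{\Lambda_t} = \rho e^{i\,\phase(\Lambda_t)}\vol_{\Lambda_t}$, one should be able to write down $w_t(\Psi_t(e_j))$ explicitly as a combination of $\Psi_t(e_j)$ and $J\Psi_t(e_j)$.

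First I would compute $\frac{d}{dt}\langle \Psi_t(e_j), \Psi_t(e_k)\rangle = 2\langle w_t(\Psi_t(e_j)), \Psi_t(e_k)\rangle$ (symmetrizing), using the standard real inner product and the Kähler identity $\langle\cdot,\cdot\rangle = \omega(\cdot, J\cdot)$. Writing $w_t(\Psi_t(e_j)) = \lambda_j \Psi_t(e_j) + \mu_j J\Psi_t(e_j)$ for scalars $\lambda_j,\mu_j$ (I expect the cross terms to vanish, which is where part~\ref{metric is diagonal} enters), the condition $i_{w_t}\omega|_{\Lambda_t} = 2\sum_j a_j x_j dx_j$ forces the $\mu_j$ in terms of the $a_j$ and the metric coefficients $g^j$, because $\omega(J\Psi_t(e_j), \Psi_t(e_k)) = -\langle \Psi_t(e_j),\Psi_t(e_k)\rangle$. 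The condition $i_{w_t}\real\Omega|_{\Lambda_t} = 0$ then produces the tangent part $\lambda_j$ and, crucially, a factor of $\tan\phase(\Lambda_t)$, since $\real\Omega = \cos(\phase)\rho\,\vol$ while $\imaginary\Omega = \sin(\phase)\rho\,\vol$ and contracting $J\Psi_t(e_j)$ against $\Omega$ rotates real into imaginary. Assembling these gives~\ref{expression for metric}. For~\ref{metric is diagonal} I would set up the matrix $M_{jk}(t) = \langle\Psi_t(e_j),\Psi_t(e_k)\rangle$, show $\frac{d}{dt}M$ is diagonal whenever $M$ is diagonal (a consequence of the same computation, since the off-diagonal evolution is driven only by off-diagonal terms), and invoke $M(0)$ diagonal — which holds because $\Psi_0$ is inner-product preserving and $h_0\circ\Psi_0$ is diagonalized in the standard basis by compatibility. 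For~\ref{basis element remains in complex line} I would observe that $w_t(\Psi_t(e_j))$ is a real combination of $\Psi_t(e_j)$ and $J\Psi_t(e_j)$, so the curve $t\mapsto \Psi_t(e_j)$ stays in the complex line $\C\langle\Psi_0(e_j)\rangle$; defining $\theta_j$ as the accumulated argument, $\frac{d}{dt}\theta_j$ is exactly the coefficient $\mu_j$ of the $J$-component normalized by $\|\Psi_t(e_j)\|^2 = g_j(t)$, which from the $\omega$-computation equals $-2a_j g^j(t)$.

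The main obstacle I anticipate is bookkeeping the precise constants and the interplay between $\real\Omega$ and $\imaginary\Omega$ that yields the $\tan\phase(\Lambda_t)$ factor: one must carefully use that on the oriented Lagrangian $\Lambda_t$, contracting the Euler-type direction $J\Psi_t(e_j)$ against the complex volume form $\Omega$ exchanges its real and imaginary parts up to the metric normalization $\rho$, and that the normalization constant $\rho$ drops out of the ratio. It is also a little delicate that the decomposition $w_t(\Psi_t(e_j)) = \lambda_j\Psi_t(e_j)+\mu_j J\Psi_t(e_j)$ with no other components requires simultaneously knowing part~\ref{metric is diagonal}; I would handle this by proving~\ref{metric is diagonal} first via the Gronwall-type uniqueness argument for the ODE satisfied by $M(t)$, or equivalently prove~\ref{metric is diagonal} and~\ref{expression for metric} together by a bootstrap. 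Once the explicit form of $w_t$ is in hand, parts~\ref{expression for metric} and~\ref{basis element remains in complex line} are immediate differentiations, and smoothness and uniqueness of $\theta_j$ follow from smoothness of the lifting and $\theta_j(0)=0$.
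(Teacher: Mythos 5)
Your proposal is correct and follows essentially the same route as the paper: the paper simply cites the identity $\deriv{t}\Psi_t(x)=-J\nabla h_t(\Psi_t(x))-\tan\phase(\Lambda_t)\nabla h_t(\Psi_t(x))$ from Remark~5.6 of~\cite{solomon} (which is exactly the formula you propose to re-derive from $i_{\deriv{t}\Psi_t}\omega=dh_t$ together with $i_{\deriv{t}\Psi_t}\real\Omega=0$ and the rotation $i_{Ju}\Omega=\ii\, i_u\Omega$), and then differentiates $g_{jk}$ by the Leibniz rule exactly as you describe. The only stylistic difference is that the paper avoids your bootstrap for part~\ref{metric is diagonal} by writing the velocity in terms of $\nabla h_t=2a_jg^{jk}\pderiv{x^k}$ with the full inverse metric, so that the single computation $\deriv{t}g_{jk}=-2\tan\phase(\Lambda_t)\left(a_kg_{jl}g^{kl}+a_jg_{kl}g^{jl}\right)=-4a_j\tan\phase(\Lambda_t)\delta_{jk}$ yields~\ref{expression for metric} and~\ref{metric is diagonal} simultaneously, with no circularity to resolve.
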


	\begin{proof}
		Recall the following identity, which is proved in~\cite[Remark 5.6]{solomon}:
		\begin{equation}
		\label{equation:time-derivative of horizontal lifting}
		\deriv{t}\Psi_t(x)=-J\nabla h_t(\Psi_t(x))-\tan\phase(\Lambda_t)\nabla h_t(\Psi_t(x)),\quad x\in\mathbb{R}^n.
		\end{equation}
		Set $g_{jk}(t):=\langle\Psi_t(e_j),\Psi_t(e_k)\rangle,\;j,k=1,\ldots,n,\;t\in[0,1],$ and let $\left(g^{jk}\right)_{j,k}(t)$ denote the inverse matrix of $\left(g_{jk}\right)_{j,k}(t).$ Let $\pderiv{x^1},\ldots,\pderiv{x^n}$ denote the tangent frames of $\Lambda_t,\;t\in[0,1],$ induced by the parameterizations $\Psi_t.$ By~\eqref{equation:h does not change} we have
		\[
		\nabla h_t(\Psi_t(e_j))=2a_j \sum_{k} g^{jk}(t)\pderiv{x^k},\quad j=1,\ldots,n.
		\]
		Equation~\eqref{equation:time-derivative of horizontal lifting} can thus be rewritten as
		\begin{equation}
		\label{equation:another time-derivative of horizontal lifting}
		\deriv{t}\Psi_t(e_j)=-2a_j\sum_l g^{jl}(t)J\pderiv{x^l}-2a_j\tan\phase(\Lambda_t)\sum_{l}g^{jl}(t)\pderiv{x^l},
		\end{equation}
		for $j=1,\ldots,n.$ From the Leibniz rule we deduce
		\begin{align}
		\label{equation:time-derivative of induced metric}
		\deriv{t}g_{jk}(t)
		&=-2\tan\phase(\Lambda_t)\left(a_k \sum_l g_{jl}g^{kl}+a_j \sum_l g_{kl}g^{jl}\right)\\
		&=-4a_j\tan\phase(\Lambda_t)\delta_{jk},\nonumber
		\end{align}
		establishing parts~\ref{expression for metric} and~\ref{metric is diagonal}. Equation~\eqref{equation:another time-derivative of horizontal lifting} can now be rewritten as
		\begin{equation*}
		\label{equation:short time-derivative of horizontal lifting}
		\deriv{t}\Psi_t(e_j)=-2a_jg^j(t)J\pderiv{x^j}-2a_j\tan\phase(\Lambda_t)g^j(t)\pderiv{x^j},\quad j=1,\ldots,n.
		\end{equation*}
		Part~\ref{basis element remains in complex line} follows.
	\end{proof}

	\begin{dfn}
		\label{definition:partial phase differences}
		For a geodesic $(\Lambda_t)_t$ in $\laggrass^+(n),$ we call the functions $\theta_j$ of Proposition~\ref{proposition:geodesic preserves orthogonal frame} \ref{basis element remains in complex line} the \emph{partial phase differences}.
	\end{dfn}
\begin{rem}\label{remark: sum partial phases}
It follows from the complex linearity of $\Omega$ that if $(\Lambda_t)_t$ is a geodesic in $\laggrass^+(n)$ and $\theta_j$ are the partial phase differences, then
\[
\deriv{t}\phase(\Lambda_t) = \sum_{j = 1}^n \deriv[\theta_j]{t}(t).
\]
In particular,
\[
\phase(\Lambda_1) - \phase(\Lambda_0) = \sum_{j = 1}^n \theta_j(1).
\]
\end{rem}
The following is an immediate consequence of Proposition~\ref{proposition:geodesic preserves orthogonal frame}.
	\begin{cor}
		\label{corollary:geodesics and spectral decompositions}
		Let $(\Lambda_t)_{t\in[0,1]}$ be a geodesic in $\laggrass^+(n)$ with derivative $(h_t)_{t\in[0,1]}.$ Then the orthogonal decompositions of $\Lambda_i,\;i=0,1,$ associated to the quadratic forms $h_i$ and the induced inner product coincide with the decompositions of Lem\-ma~\ref{lemma:spectral decomposition}. Moreover, letting $(\Psi_t)_t$ be a horizontal lifting of $(\Lambda_t)_t,$ the isomorphism $\Psi_1\circ\Psi_0^{-1}:\Lambda_0\to\Lambda_1$ preserves these decompositions.
	\end{cor}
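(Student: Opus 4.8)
The strategy is to produce one horizontal lifting that diagonalizes all the relevant structures simultaneously and then read off the three decompositions from it. Throughout, the isomorphism $\Psi_1\circ\Psi_0^{-1}:\Lambda_0\to\Lambda_1$ is independent of the chosen horizontal lifting (two liftings differ by precomposition with a fixed $B\in GL(\mathbb{R}^n)$, which cancels), so it suffices to work with the particular lifting constructed below.

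First I would choose the lifting. Applying the spectral theorem to the symmetric endomorphism of $\Lambda_0$ representing $h_0$ with respect to the induced inner product, pick an inner-product-preserving isomorphism $\Psi_0:\mathbb{R}^n\to\Lambda_0$ with $h_0\circ\Psi_0(x)=\sum_j a_jx_j^2$, where $a_1,\dots,a_n$ are the eigenvalues of that endomorphism. Extend $\Psi_0$ to a horizontal lifting $(\Psi_t)_{t\in[0,1]}$ by Lemma~\ref{lemma:there is always a horizontal lifting}. Since $(\Lambda_t)_t$ is a geodesic, the covariant derivative of its velocity vanishes, which by the definition of the positive Lagrangian connection means $\deriv{t}(h_t\circ\Psi_t)=0$; hence $h_t\circ\Psi_t(x)=\sum_j a_jx_j^2$ for all $t$, so $(\Psi_t)_t$ is compatible with $h_0$ in the sense of Definition~\ref{definition:compatible lifting}, with corresponding coefficients $a_1,\dots,a_n$.

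Next I would apply Proposition~\ref{proposition:geodesic preserves orthogonal frame}. Part~\ref{metric is diagonal} shows that $\Psi_t(e_1),\dots,\Psi_t(e_n)$ is an orthogonal basis of $\Lambda_t$ for each $t$, and part~\ref{basis element remains in complex line} shows that each $\Psi_t(e_j)$ stays inside the fixed complex line $e^{\theta_j(t)\ii}\mathbb{R}\langle\Psi_0(e_j)\rangle$. The key bookkeeping comes from part~\ref{expression for metric} together with the normalization $g_j(0)=|\Psi_0(e_j)|^2=1$: since $\deriv{t}g_j=-4a_j\tan\phase(\Lambda_t)$, any two indices $j,k$ with $a_j=a_k$ satisfy $g_j\equiv g_k$, and then $\deriv{t}\theta_j=-2a_jg^j=-2a_kg^k=\deriv{t}\theta_k$ with $\theta_j(0)=\theta_k(0)=0$ forces $\theta_j\equiv\theta_k$; conversely $\theta_j(1)=-2a_j\int_0^1 g^j(t)\,dt$ depends strictly monotonically on $a_j$ through $g_j$, so distinct coefficients produce distinct angles. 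Hence grouping the vectors $\Psi_0(e_j)$ by the value of $a_j$ is the same as grouping them by the complex lines $e^{\theta_j(1)\ii}\mathbb{R}\langle\Psi_0(e_j)\rangle$. Writing $\alpha_1,\dots,\alpha_k\in\mathbb{R}/\pi\mathbb{Z}$ for the distinct classes that occur and $V_{i,l}:=\operatorname{span}\{\Psi_i(e_j)\mid\theta_j(1)\equiv\alpha_l\}$, one checks directly that $\Lambda_i=\bigoplus_l V_{i,l}$ orthogonally and $e^{\alpha_l\ii}V_{0,l}=V_{1,l}$, so by the uniqueness clause of Lemma~\ref{lemma:spectral decomposition} this is exactly its spectral decomposition. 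On the other hand, since $\Psi_0$ is inner-product preserving with $h_0\circ\Psi_0$ diagonal, $V_{0,l}$ is precisely the $a_l$-eigenspace of $h_0$; and since $\Psi_1(e_j)$ is an $h_1$-eigenvector of eigenvalue $a_j/g_j(1)=a_j/(1-4a_j\int_0^1\tan\phase(\Lambda_t)\,dt)$, which is a Möbius function of the coefficient and hence injective in it, $V_{1,l}$ is likewise an eigenspace of $h_1$. Thus all three decompositions coincide, and since $\Psi_1\circ\Psi_0^{-1}$ sends $\Psi_0(e_j)$ to $\Psi_1(e_j)$ it carries $V_{0,l}$ onto $V_{1,l}$, which is the final assertion.

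Essentially all of the content sits in Proposition~\ref{proposition:geodesic preserves orthogonal frame}; the one point that deserves care is that the partition of $\{1,\dots,n\}$ by the eigenvalue $a_j$ coincides with the partition by the rotation class $\theta_j(1)\bmod\pi$ rather than merely refining it. The forward direction is the "equal coefficients force equal metric and phase evolution" observation above, and the reverse is the monotone dependence of $\theta_j(1)$ on $a_j$ (one computes $\tfrac{d}{dx}\big(-2x\int_0^1\tfrac{dt}{1-4xT(t)}\big)=-2\int_0^1\tfrac{dt}{(1-4xT(t))^2}<0$, where $T(t)=\int_0^t\tan\phase(\Lambda_s)\,ds$, so the angle is strictly monotone in the coefficient); verifying that this monotonicity still separates the coefficients after reducing modulo $\pi$ is the only place where one must be slightly careful.
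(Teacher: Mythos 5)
Your setup, your application of Proposition~\ref{proposition:geodesic preserves orthogonal frame}, and your computations of the metric coefficients, the partial phase differences, and the $h_1$-eigenvalues are all correct; the paper offers no argument here at all (it presents the corollary as an immediate consequence of Proposition~\ref{proposition:geodesic preserves orthogonal frame}), so the only thing to check is whether your elaboration actually closes the statement. It does not, and the problem is exactly the step you single out at the end. You prove that equal coefficients force equal angles (so the $h_0$-eigenspace decomposition \emph{refines} the decomposition of Lemma~\ref{lemma:spectral decomposition}) and that distinct coefficients force distinct real numbers $\theta_j(1)$; but Lemma~\ref{lemma:spectral decomposition} groups indices by $\theta_j(1)$ \emph{modulo} $\pi$, and distinct real angles can be congruent mod $\pi$. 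You flag this as ``the only place where one must be slightly careful'' and then never address it. It cannot be addressed for a general geodesic: in $\C^2$ the path $\Lambda_t=e^{\ii\pi t/2}\R e_1\oplus e^{-\ii\pi t/2}\R e_2$ has constant phase $0$, the lifting $\Psi_t(e_1)=e^{\ii\pi t/2}e_1,$ $\Psi_t(e_2)=e^{-\ii\pi t/2}e_2$ is horizontal, and $h_t\circ\Psi_t(x)=-\tfrac{\pi}{4}x_1^2+\tfrac{\pi}{4}x_2^2$ is time-independent, so this is a geodesic in $\laggrass^+(2)$ from $\R^2$ to $\ii\R^2$. Here $\theta_1(1)=\pi/2$ and $\theta_2(1)=-\pi/2$ are distinct but congruent mod $\pi$: the eigenspace decomposition of $h_0$ has two summands, while the decomposition of Lemma~\ref{lemma:spectral decomposition} for the pair $\left(\R^2,\ii\R^2\right)$ is the trivial one-summand decomposition. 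So the two partitions genuinely need not coincide, and your sentence ``distinct coefficients produce distinct angles, hence grouping by $a_j$ is the same as grouping by the complex lines'' is a non sequitur.

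What survives, and what the paper actually uses in the discussion preceding Definition~\ref{definition:Maslov index}, is precisely the half you did prove: each $h_i$-eigenspace lies in a single summand $V_{i,j}$ of Lemma~\ref{lemma:spectral decomposition}, so $h_i$ preserves that decomposition and is diagonalized by an orthonormal basis compatible with it, and $\Psi_1\circ\Psi_0^{-1}$ carries eigenspaces to eigenspaces and summands to summands. If one adds the hypothesis that the derivative is negative (or positive) semi-definite --- the situation in every subsequent application --- then all $\theta_j(1)$ have the same sign, and by Remark~\ref{remark: sum partial phases} together with $\phase(\Lambda_0),\phase(\Lambda_1)\in\left(-\tfrac{\pi}{2},\tfrac{\pi}{2}\right)$ each lies in $[0,\pi)$ up to a global sign; congruence mod $\pi$ then does imply equality, and your strict monotonicity of $\theta_j(1)$ in $a_j$ finishes the proof of literal coincidence (this is in effect the computation in Lemma~\ref{lemma:negative derivative implies Maslov zero}). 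You should either add that hypothesis or weaken ``coincide with'' to ``refine''.
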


In light of Remark~\ref{remark: sum partial phases},	Proposition~\ref{proposition:geodesic preserves orthogonal frame} \ref{basis element remains in complex line} also provides a simple proof of the following statement, which is proved in greater generality in~\cite{solomon-curv} and~\cite{thomas-yau}.

	\begin{cor}
		\label{corollary:time-derivative of phase}
		Let $(\Lambda_t)_t$ be a geodesic in $\laggrass^+(n)$ with derivative $(h_t)_t.$ Then we have
		\[
		\deriv{t}\phase(\Lambda_t)= \Delta h_t,
		\]
		where $\Delta$ denotes the geometer's Laplacian with respect to the induced metric on $\Lambda_t.$ In particular, if the quadratic forms $h_t$ are positive/negative semi-definite, then $\phase(\Lambda_t)$ is a monotone function of $t.$
	\end{cor}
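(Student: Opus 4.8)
The plan is to read the identity off Proposition~\ref{proposition:geodesic preserves orthogonal frame} and Remark~\ref{remark: sum partial phases}, which between them already encode all the geometry we need. Fix a geodesic $(\Lambda_t)_t$ in $\laggrass^+(n)$ with derivative $(h_t)_t$, choose a horizontal lifting $(\Psi_t)_{t\in[0,1]}$ compatible with $h_0$, let $a_1,\dots,a_n$ be the corresponding coefficients, and set $g_j(t)=\langle\Psi_t(e_j),\Psi_t(e_j)\rangle$ and $g^j(t)=g_j(t)^{-1}$ as in the Proposition. Since both sides of the claimed identity are intrinsic, it is enough to verify it after transporting everything to $\mathbb{R}^n$ via this particular lifting.

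For the left-hand side, Remark~\ref{remark: sum partial phases} together with Proposition~\ref{proposition:geodesic preserves orthogonal frame}\ref{basis element remains in complex line} gives at once
\[
\deriv{t}\phase(\Lambda_t)=\sum_{j=1}^n\deriv[\theta_j]{t}(t)=-2\sum_{j=1}^n a_j\,g^j(t).
\]
For the right-hand side, I would observe that, because $\Psi_t$ is linear, the pullback of the ambient real inner product is the constant metric on $\mathbb{R}^n$ with matrix $\bigl(\langle\Psi_t(e_j),\Psi_t(e_k)\rangle\bigr)_{j,k}$, which by Proposition~\ref{proposition:geodesic preserves orthogonal frame}\ref{metric is diagonal} equals $\operatorname{diag}\bigl(g_1(t),\dots,g_n(t)\bigr)$, while $h_t\circ\Psi_t(x)=\sum_j a_j x_j^2$ by compatibility. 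For a constant diagonal metric the Riemannian volume density is constant in $x$, so the geometer's Laplacian collapses to $\Delta f=-\sum_j g^j(t)\,\partial_{x_j}^2 f$; applied to $\sum_j a_j x_j^2$ this yields $\Delta h_t=-2\sum_j a_j\,g^j(t)$, matching the previous display and proving $\deriv{t}\phase(\Lambda_t)=\Delta h_t$. For the final assertion, note that $\Psi_t$ is a linear isomorphism $\mathbb{R}^n\to\Lambda_t$, so $h_t$ is positive (resp.\ negative) semi-definite precisely when $a_j\ge 0$ (resp.\ $a_j\le 0$) for all $j$; since each $g^j(t)$ is positive, in that case $\deriv{t}\phase(\Lambda_t)$ has a fixed sign for all $t$, whence $\phase(\Lambda_t)$ is monotone.

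I do not expect a serious obstacle: once Proposition~\ref{proposition:geodesic preserves orthogonal frame} is in hand the whole argument is bookkeeping. The one point that genuinely requires care is the sign convention for the Laplacian — the statement uses the geometer's (positive) Laplacian $\Delta=d^{\ast}d$, whose Euclidean expression is $-\sum_j\partial_{x_j}^2$, and it is exactly this choice of sign that reconciles the two computations above; a one-line remark in the text recording the convention would suffice.
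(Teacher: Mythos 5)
Your proof is correct and follows exactly the route the paper intends: the paper derives this corollary directly from Remark~\ref{remark: sum partial phases} and Proposition~\ref{proposition:geodesic preserves orthogonal frame}~\ref{basis element remains in complex line}, and your computation $\Delta h_t=-2\sum_j a_j g^j(t)$ is the same identity the paper records later in equation~\eqref{equation:explicit time-derivative of phase}, with the same sign convention for the geometer's Laplacian. No gaps.
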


	Let $\Lambda_0,\Lambda_1\in\laggrass^+(n)$ and suppose we wish to find a geodesic between $\Lambda_0$ and $\Lambda_1.$ In other words, we wish to find a tangent vector $h_0\in T_{\Lambda_0}\laggrass(n)\cong Q(\Lambda_0)$ with $\exp_{\Lambda_0}(h_0)=\Lambda_1.$ In view of Proposition~\ref{proposition:geodesic preserves orthogonal frame} and Corollary~\ref{corollary:geodesics and spectral decompositions}, the following is a natural approach. Identify $\Lambda_0$ with $\mathbb{R}^n$ via an orthonormal basis compatible with the orthogonal decomposition
	\[
	\Lambda_0=\bigoplus_jV_{0,j}
	\]
	of Lemma~\ref{lemma:spectral decomposition}. With respect to this identification, the desired $h_0,$ if it exists, has to be given by
	\[
	h_0(x)=\sum_ja_jx_j^2
	\]
	for some $a_1,\ldots,a_n\in\mathbb{R}.$ The coefficients $a_j,\;j=1,\ldots,n,$ determine the time-evolution of the partial phase differences, $\theta_1,\ldots,\theta_n,$ and thus need to be chosen so that $\theta_j(1)$ represents the argument $\alpha_j$ of Lemma~\ref{lemma:spectral decomposition} for $j=1,\ldots,n.$ In particular, each of the functions $\theta_j$ is monotone, where the sign of $a_j$ determines whether it is increasing or decreasing. It thus makes sense that the Maslov index defined below is related to geodesics in $\laggrass^+(n).$ We recall the definition given in~\cite{solomon-verbitsky}.
	
	\begin{dfn}
		\label{definition:Maslov index}
		Let $\Lambda_0,\Lambda_1\in\laggrass^+(n).$ Let $\alpha_1,\ldots,\alpha_n\in\mathbb{R}/\pi\mathbb{Z}$ be the arguments of Lemma~\ref{lemma:spectral decomposition} with the right multiplicities. For $j=1,\ldots,n,$ let $\beta_j\in[0,\pi)$ be the unique representative of $\alpha_j.$ The \emph{Maslov index} of the pair $(\Lambda_0,\Lambda_1)$ is the integer given by
		\[
		m(\Lambda_0,\Lambda_1):=\frac{1}{\pi}\left(\sum_j\beta_j+\phase(\Lambda_0)-\phase(\Lambda_1)\right).
		\]
		For a given $\Lambda_0\in\laggrass^+(n),$ we set
		\[
		\overline{\zeromas(\Lambda_0)}:=\left\{\left.\Lambda_1\in\laggrass^+(n)\;\right|\;m(\Lambda_0,\Lambda_1)=0\right\}
		\]
		and
		\[
		\zeromas(\Lambda_0):=\left\{\left.\Lambda_1\in\overline{\zeromas(\Lambda_0)}\;\right|\;\Lambda_0\cap\Lambda_1=\{0\}\right\}.
		\]
		As implied by the notation, $\overline{\zeromas(\Lambda_0)}$ is indeed the closure of $\zeromas(\Lambda_0).$ We remark that both spaces are connected. We let $\geo(\Lambda_0)\subset\zeromas(\Lambda_0)$ and $\overline{\geo(\Lambda_0)}\subset\overline{\zeromas(\Lambda_0)}$ denote the subsets consisting of Lagrangians that can be connected with $\Lambda_0$ by a geodesic with negative semi-definite derivative.

More generally, if $(X,\omega,J,\Omega)$ is a Calabi-Yau manifold and $\Lambda_0,\Lambda_1 \subset X$ are positive Lagrangian submanifolds, the Maslov index of an intersection point $q \in \Lambda_0,\Lambda_1$ is given by
\[
m(q;\Lambda_0,\Lambda_1): = m(T_q\Lambda_0,T_q\Lambda_1).
\]
	\end{dfn}

	We divide the proof of Theorem~\ref{theorem:linear geodesic} into three steps. Fix $\Lambda_0\in\laggrass^+(n).$ In Corollary~\ref{corollary:space of good Lags is non-empty} we show that $\geo(\Lambda_0)$ is non-empty. In Proposition~\ref{proposition:space of good Lags is closed} we show that $\overline{\geo(\Lambda_0)}$ is a closed subset of $\overline{\zeromas(\Lambda_0)}.$ In Proposition~\ref{proposition:space of good Lags is open} we show that $\geo(\Lambda_0)$ is an open subset of $\zeromas(\Lambda_0).$
	
	\begin{lemma}
		\label{lemma:negative derivative implies Maslov zero}
		Let $(\Lambda_t)_{t\in[0,1]}$ be a geodesic in $\laggrass^+(n)$ with derivative $(h_t)_t.$ Suppose the quadratic forms $h_t$ are negative semi-definite.
\begin{enumerate}[label = (\alph*)]
\item \label{item:Maslov}
We have $m(\Lambda_0,\Lambda_1)=0.$
\item \label{item:negative definite}
If $\Lambda_0 \cap \Lambda_1 = \{0\},$ then the quadratic forms $h_t$ are negative definite.
\end{enumerate}
	\end{lemma}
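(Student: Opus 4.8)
The plan is to use Proposition~\ref{proposition:geodesic preserves orthogonal frame} to reduce both claims to elementary properties of the partial phase differences $\theta_1,\dots,\theta_n$ of Definition~\ref{definition:partial phase differences}. First I would fix a horizontal lifting $(\Psi_t)_{t\in[0,1]}$ of $(\Lambda_t)_t$ compatible with $h_0$ and let $a_1,\dots,a_n$ be the corresponding coefficients, so that $h_t\circ\Psi_t(x)=\sum_j a_jx_j^2$ for all $t\in[0,1]$; since each $\Psi_t$ is a linear isomorphism onto $\Lambda_t,$ the hypothesis that $h_t$ is negative semi-definite is equivalent to $a_j\le 0$ for all $j,$ and negative definiteness to $a_j<0$ for all $j.$ By Proposition~\ref{proposition:geodesic preserves orthogonal frame}\ref{basis element remains in complex line} we have $\deriv{t}\theta_j(t)=-2a_jg^j(t)$ with $g^j(t)>0,$ so each $\theta_j$ is non-decreasing with $\theta_j(0)=0,$ and $\theta_j\equiv 0$ if and only if $a_j=0;$ moreover, by part~\ref{metric is diagonal}, the lines $\ell_j:=\R\langle\Psi_0(e_j)\rangle$ give an orthogonal decomposition of $\Lambda_0$ and the lines $\ell_j':=\R\langle\Psi_1(e_j)\rangle=e^{\theta_j(1)\ii}\ell_j$ give an orthogonal decomposition of $\Lambda_1.$

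The key estimate is that $0\le\theta_j(1)<\pi$ for every $j.$ By Remark~\ref{remark: sum partial phases}, $\deriv{t}\phase(\Lambda_t)=\sum_{k=1}^n\deriv{t}\theta_k(t),$ and since every summand is non-negative this is at least $\deriv{t}\theta_j(t)\ge 0.$ Integrating over $[0,1]$ gives $\theta_j(1)\le\phase(\Lambda_1)-\phase(\Lambda_0),$ and the right-hand side is strictly less than $\pi$ because the phase takes values in $\left(-\tfrac{\pi}{2},\tfrac{\pi}{2}\right).$ I expect this to be the crux of the argument: it is exactly here that the sign hypothesis on $h_t$, combined with the positivity-induced bound on the phase, prevents the partial phases from winding past $\pi,$ which is what will make the Maslov index vanish rather than merely be non-positive.

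For part~\ref{item:Maslov}, I would feed the two orthogonal decompositions $\Lambda_0=\bigoplus_j\ell_j$ and $\Lambda_1=\bigoplus_j\ell_j'$ together with the rotation angles $\theta_j(1)$ into Lemma~\ref{lemma:spectral decomposition}: grouping the indices $j$ according to the value of $\theta_j(1)\bmod\pi$ produces a decomposition of the form given there, so by the uniqueness in that lemma (equivalently, by Corollary~\ref{corollary:geodesics and spectral decompositions}) the arguments $\alpha_l,$ listed with multiplicities $\dim V_{0,l},$ form the same multiset as $\{\theta_j(1)\bmod\pi\}_{j=1}^n.$ Because $\theta_j(1)\in[0,\pi),$ the representatives $\beta_j\in[0,\pi)$ appearing in Definition~\ref{definition:Maslov index} satisfy $\sum_j\beta_j=\sum_j\theta_j(1)=\phase(\Lambda_1)-\phase(\Lambda_0),$ the last equality again by Remark~\ref{remark: sum partial phases}. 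Substituting into the definition of $m(\Lambda_0,\Lambda_1)$ yields $m(\Lambda_0,\Lambda_1)=0.$

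For part~\ref{item:negative definite}, suppose toward a contradiction that $a_j=0$ for some $j.$ Then $\theta_j\equiv 0,$ so $\ell_j'=\ell_j,$ and hence $\{0\}\neq\ell_j\subset\Lambda_0\cap\Lambda_1,$ contradicting $\Lambda_0\cap\Lambda_1=\{0\}.$ Thus $a_j<0$ for all $j,$ and since $h_t\circ\Psi_t(x)=\sum_j a_jx_j^2$ with $\Psi_t$ a linear isomorphism, each $h_t$ is negative definite.
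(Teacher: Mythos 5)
Your proposal is correct and follows essentially the same route as the paper: a horizontal lifting compatible with $h_0,$ monotonicity of the partial phase differences from Proposition~\ref{proposition:geodesic preserves orthogonal frame}~\ref{basis element remains in complex line}, the bound $0\le\theta_j(1)<\pi$ obtained from Remark~\ref{remark: sum partial phases} and the range of the phase, and the identification $\beta_j=\theta_j(1)$ in Definition~\ref{definition:Maslov index}. Your treatment of part~\ref{item:negative definite} via $\theta_j\equiv 0\Rightarrow\ell_j\subset\Lambda_0\cap\Lambda_1$ is likewise exactly the paper's argument.
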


	\begin{proof}
		Choose a horizontal lifting of $(\Lambda_t)_t$ compatible with $h_0,$ let $a_1,\ldots,a_n$ denote the corresponding coefficients and let $\theta_1,\ldots,\theta_n$ denote the partial phase differences. By assumption, the coefficients $a_j,\;j=1,\ldots,n,$ are all non-positive. It thus follows from Proposition~\ref{proposition:geodesic preserves orthogonal frame} \ref{basis element remains in complex line} that
		\begin{equation}\label{equation:theta positive}
		\theta_j(1)\ge0,\quad j=1,\ldots,n,
		\end{equation}
with equality only when $a_j = 0.$ If $\theta_j = 0$ for some $j,$ then $\Lambda_0 \cap \Lambda_1 \neq \{0\}.$ So, part~\ref{item:negative definite} of the lemma follows.

		As $\phase(\Lambda_0),\phase(\Lambda_1)\in\left(-\frac{\pi}{2},\frac{\pi}{2}\right),$
Remark~\ref{remark: sum partial phases} and inequality~\eqref{equation:theta positive} imply that
		\[
		\theta_j(1)<\pi,\quad j=1,\ldots,n.
		\]
		The partial phase differences $\theta_j(1),\;j=1,\ldots,n,$ thus coincide with the representatives $\beta_j,\;j=1,\ldots,n,$ of Definition~\ref{definition:Maslov index}, which implies part~\ref{item:Maslov} of the lemma.
	\end{proof}
	
	\begin{cor}
		\label{corollary:space of good Lags is non-empty}
		For $\Lambda_0\in\laggrass^+(n),$ the space $\geo(\Lambda_0)$ is non-empty.
	\end{cor}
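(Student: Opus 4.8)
The plan is to produce one geodesic starting at $\Lambda_0$ whose derivative is negative definite and which is defined on all of $[0,1]$; the corollary then reads off from Lemma~\ref{lemma:negative derivative implies Maslov zero}. First I would fix a linear isomorphism $\Psi_0:\R^n\to\Lambda_0$ and pick a negative definite quadratic form $h_0\in Q(\Lambda_0)\cong T_{\Lambda_0}\laggrass^+(n)$, for instance the one equal to $-\sum_j x_j^2$ under $\Psi_0.$ Since $\laggrass^+(n)$ is finite-dimensional and the positive Lagrangian connection is an affine connection, the geodesic equation is a smooth second-order ODE; concretely, in a linear lifting it is given by \eqref{equation:time-derivative of horizontal lifting} together with the conservation law that $h_t\circ\Psi_t$ is constant in $t$ (Definition~\ref{definition:geodesic}), an ODE with smooth right-hand side on an open subset of $\mathrm{Hom}(\R^n,\C^n).$ Hence there are an $\epsilon>0$ and a geodesic $(\gamma_s)_{s\in[0,\epsilon]}$ in $\laggrass^+(n)$ with $\gamma_0=\Lambda_0$ and derivative $(k_s)_s$ satisfying $k_0=h_0.$

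Rescaling time, $(\Lambda_t)_{t\in[0,1]}:=(\gamma_{\epsilon t})_t$ is again a geodesic, from $\Lambda_0$ to $\Lambda_1:=\gamma_\epsilon,$ with derivative $(\epsilon k_{\epsilon t})_t.$ To see this derivative is negative definite, take a horizontal lifting $(\Phi_s)_{s\in[0,\epsilon]}$ of $(\gamma_s)_s$ with $\Phi_0=\Psi_0$; by Definition~\ref{definition:geodesic} the quadratic form $k_s\circ\Phi_s$ is independent of $s,$ hence equals $k_0\circ\Phi_0=h_0\circ\Psi_0,$ which is negative definite. As $(\Phi_{\epsilon t})_t$ is a horizontal lifting of $(\Lambda_t)_t,$ the derivative $\epsilon k_{\epsilon t}$ pulls back under $\Phi_{\epsilon t}$ to $\epsilon\,h_0\circ\Psi_0,$ which is negative definite for every $t.$

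Now I would apply Lemma~\ref{lemma:negative derivative implies Maslov zero} to $(\Lambda_t)_t.$ Part~\ref{item:Maslov} gives $m(\Lambda_0,\Lambda_1)=0,$ so $\Lambda_1\in\overline{\zeromas(\Lambda_0)}.$ Moreover, the proof of that lemma shows, via Proposition~\ref{proposition:geodesic preserves orthogonal frame}~\ref{basis element remains in complex line} and Remark~\ref{remark: sum partial phases}, that a geodesic with negative definite derivative has all partial phase differences in $(0,\pi)$; combined with a horizontal lifting compatible with the derivative (Definition~\ref{definition:compatible lifting}) and the orthogonal spectral decomposition of Lemma~\ref{lemma:spectral decomposition}, this forces $\Lambda_0\cap\Lambda_1=\{0\},$ that is, $\Lambda_1\in\zeromas(\Lambda_0).$ Since $(\Lambda_t)_t$ has negative definite, in particular negative semi-definite, derivative, it exhibits $\Lambda_1$ as an element of $\geo(\Lambda_0),$ proving non-emptiness. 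I do not expect a genuine obstacle: the only input needing (routine) justification is the short-time existence of the geodesic, and the mildly delicate point, the transversality of $\Lambda_0$ and $\Lambda_1,$ is already essentially contained in the proof of Lemma~\ref{lemma:negative derivative implies Maslov zero}.
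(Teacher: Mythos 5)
Your proposal is correct and follows essentially the same route as the paper: the paper phrases the short-time existence step as the exponential map $\exp_{\Lambda_0}$ being defined near $0$ in the finite-dimensional manifold $\laggrass^+(n)$ (which is your ODE argument plus time rescaling), then applies Lemma~\ref{lemma:negative derivative implies Maslov zero}~\ref{item:Maslov} for the Maslov index and the strict monotonicity of the partial phase differences from Proposition~\ref{proposition:geodesic preserves orthogonal frame}~\ref{basis element remains in complex line} for transversality. The only difference is that you spell out details the paper leaves implicit, such as the conservation of $h_t\circ\Psi_t$ guaranteeing the derivative stays negative definite.
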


	\begin{proof}
		As $\laggrass^+(n)$ is finite-dimensional, the exponential map $\exp_{\Lambda_0}$ is well-defined on an open neighborhood $0\in U\subset T_{\Lambda_0}\laggrass(n)=Q(\Lambda_0).$ Let $h\in U$ be negative definite. By Lemma~\ref{lemma:negative derivative implies Maslov zero}~\ref{item:Maslov} we have $\exp_{\Lambda_0}(h)\in\overline{\zeromas(\Lambda_0)}.$ By Proposition~\ref{proposition:geodesic preserves orthogonal frame} \ref{basis element remains in complex line}, the partial phase differences are strictly increasing, implying that
\[
\Lambda_0\cap\exp_{\Lambda_0}(h)=\{0\}.
\]
It follows that $\exp_{\Lambda_0}(h)\in\geo(\Lambda_0).$
	\end{proof}

	The main ingredient in the proof of Proposition~\ref{proposition:space of good Lags is closed} is the following a priori estimate.
	
	\begin{lemma}
		\label{lemma:negative definite geodesic has bounded derivative}
		Let $-\frac{\pi}{2}<\alpha_0<\alpha_1<\frac{\pi}{2}.$
		\begin{enumerate}[label=(\alph*)]
			\item\label{metric is uniformly bounded} There exists a positive constant $N=N(\alpha_1)$ with the following property. Let $(\Lambda_t)_{t\in[0,1]}$ be a geodesic with negative semi-definite derivative $(h_t)_t,$ such that $\phase(\Lambda_0),\phase(\Lambda_1)\in[\alpha_0,\alpha_1].$ Let $(\Psi_t)_t$ be a horizontal lifting of $(\Lambda_t)_t$ compatible with $h_0$ and let $a_1,\ldots,a_n$ denote the corresponding coefficients. Let $e_1,\ldots,e_n$ denote the standard basis of $\mathbb{R}^n$ and write
			\[
			g_j(t):=\langle\Psi_t(e_j),\Psi_t(e_j)\rangle,\;g^j(t):=\left(g_j(t)\right)^{-1},\quad j=1,\ldots,n,\quad t\in[0,1].
			\]
			Then we have
			\[
			g_j(t)\le N,\quad j=1,\ldots,n,\;t\in[0,1].
			\]
			\item\label{coefficients are uniformly bounded} There exists a positive constant $M=M(\alpha_0,\alpha_1)$ such that, in the setting of part~\ref{metric is uniformly bounded}, we have
			\[
			a_j>-M,\quad j=1,\ldots,n.
			\]
		\end{enumerate}
	\end{lemma}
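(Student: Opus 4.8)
The plan is to exploit the explicit ODE system provided by Proposition~\ref{proposition:geodesic preserves orthogonal frame}, which governs $g_j(t)$ and the partial phase differences $\theta_j(t)$, together with the global phase constraint coming from Remark~\ref{remark: sum partial phases} and positivity. First I would record the sign structure: since the derivative is negative semi-definite, all $a_j\le 0$; by Corollary~\ref{corollary:time-derivative of phase} (or directly from Proposition~\ref{proposition:geodesic preserves orthogonal frame}\ref{basis element remains in complex line}), each $\theta_j$ is monotone non-decreasing and the total phase $\phase(\Lambda_t)$ is non-decreasing in $t$, hence $\phase(\Lambda_t)\in[\phase(\Lambda_0),\phase(\Lambda_1)]\subset[\alpha_0,\alpha_1]$ for all $t$. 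In particular $\tan\phase(\Lambda_t)\le\tan\alpha_1$ throughout, and $\tan\phase(\Lambda_t)$ is bounded below by $\tan\alpha_0$ as well.

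For part~\ref{metric is uniformly bounded}, I would integrate the formula $\deriv{t}g_j(t)=-4a_j\tan\phase(\Lambda_t)$ from Proposition~\ref{proposition:geodesic preserves orthogonal frame}\ref{expression for metric}. Because $-a_j\ge 0$, the sign of the right-hand side is that of $\tan\phase(\Lambda_t)$, which need not be one-signed if $\alpha_0<0<\alpha_1$; but we only need an upper bound, so we estimate $g_j(t)\le g_j(0)+4(-a_j)\,t\,\tan\alpha_1\le g_j(0)+4(-a_j)\tan\alpha_1$ (taking $\tan\alpha_1>0$; if $\alpha_1\le 0$ the increment is non-positive and the bound is immediate). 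Thus it remains to bound $g_j(0)$ and $-a_j$ uniformly. The normalization in Definition~\ref{definition:compatible lifting} fixes $\Psi_0$ to be inner-product-preserving, so $g_j(0)=1$ for all $j$; hence part~\ref{metric is uniformly bounded} reduces to part~\ref{coefficients are uniformly bounded}, and the two parts are really one estimate. I would therefore prove part~\ref{coefficients are uniformly bounded} first and deduce part~\ref{metric is uniformly bounded} with $N=1+4M\max(0,\tan\alpha_1)$.

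The crux is part~\ref{coefficients are uniformly bounded}, i.e. a uniform lower bound $a_j>-M(\alpha_0,\alpha_1)$. Here I would use Proposition~\ref{proposition:geodesic preserves orthogonal frame}\ref{basis element remains in complex line}: $\deriv{t}\theta_j(t)=-2a_jg^j(t)$ with $\theta_j(0)=0$, so $\theta_j(1)=\int_0^1(-2a_j)g^j(t)\,dt$. Since $g_j(t)=1+\int_0^t(-4a_j)\tan\phase(\Lambda_s)\,ds$, if $-a_j$ were very large then on an initial interval $g_j(t)$ would grow proportionally to $-a_j$ (this is where the lower bound $\tan\phase\ge\tan\alpha_0$ matters: one needs $\tan\phase(\Lambda_s)$ to stay bounded away from $-\infty$, which $\alpha_0>-\pi/2$ guarantees). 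A Grönwall-type or direct comparison argument then shows $\theta_j(1)=\int_0^1 2(-a_j)g^j(t)\,dt$ stays bounded — roughly, $\theta_j(1)\le\tfrac12\log\!\big(1+\text{const}\cdot(-a_j)\big)$ when $\tan\alpha_0>0$, or a similar logarithmic bound in general — so $\theta_j(1)\to\infty$ as $-a_j\to\infty$. But by Remark~\ref{remark: sum partial phases} and positivity, $\sum_j\theta_j(1)=\phase(\Lambda_1)-\phase(\Lambda_0)<\pi$, and each $\theta_j(1)\ge 0$, forcing $\theta_j(1)<\pi$ for every $j$. Combining the two, $\theta_j(1)<\pi$ gives an explicit ceiling that translates back into $-a_j<M(\alpha_0,\alpha_1)$. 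The main obstacle is making the comparison between $-a_j$ and $\theta_j(1)$ quantitative and uniform: one must integrate the coupled pair $(g_j,\theta_j)$ carefully, using the two-sided bound $\tan\alpha_0\le\tan\phase(\Lambda_t)\le\tan\alpha_1$, and check that the resulting lower bound on $\theta_j(1)$ in terms of $-a_j$ is monotone and unbounded. Once that is in hand, the constant $M$ depends only on $\alpha_0,\alpha_1$ as claimed, and part~\ref{metric is uniformly bounded} follows immediately with $N=N(\alpha_1)$ as above.
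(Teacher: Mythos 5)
Your plan is correct and completable, but it runs the paper's argument in the opposite order, and the comparison is instructive. The paper proves part~\ref{metric is uniformly bounded} first and \emph{without} any bound on the $a_j$: it inverts the strictly increasing function $t\mapsto\phase(\Lambda_t)$ and turns Proposition~\ref{proposition:geodesic preserves orthogonal frame}~\ref{expression for metric} into the differential inequality $\deriv[g_j]{\phase(\Lambda_t)}\le 2\tan\phase(\Lambda_t)\,g_j$ in the phase variable (using $\sum_kg^ka_k\le g^ja_j<0$ to cancel the unknown coefficient $a_j$); since the phase traverses an interval of length less than $\pi,$ Gr\"onwall gives $g_j\le e^{\pi\tan\alpha_1},$ and part~\ref{coefficients are uniformly bounded} then follows by integrating $\deriv{t}\phase(\Lambda_t)=-2g^ja_j$ against $g^j\ge1/N.$ You instead prove part~\ref{coefficients are uniformly bounded} first, from the constraint $\theta_j(1)<\pi$ of Remark~\ref{remark: sum partial phases} combined with the elementary estimate $g_j(t)\le1+4(-a_j)\max(0,\tan\alpha_1)\,t,$ which yields
\[
\theta_j(1)=\int_0^1\frac{2(-a_j)}{g_j(t)}\,dt\;\ge\;\frac{1}{2\tan\alpha_1}\log\bigl(1+4(-a_j)\tan\alpha_1\bigr)
\]
when $\tan\alpha_1>0$ (and $\theta_j(1)\ge2(-a_j)$ otherwise), forcing $-a_j<M;$ part~\ref{metric is uniformly bounded} then follows with $N=1+4M\max(0,\tan\alpha_1).$ This route works and is arguably more elementary: it avoids inverting $t\mapsto\phase(\Lambda_t)$ and needs no separate treatment of the case $h_t\equiv0.$ Two corrections to your write-up, neither fatal. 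First, the key inequality must be a \emph{lower} bound on $\theta_j(1)$ in terms of $-a_j,$ unbounded as $-a_j\to\infty$; you wrote ``$\theta_j(1)\le\frac12\log(1+\mathrm{const}\cdot(-a_j))$,'' which points the wrong way, although your closing sentence shows you intend the lower bound, and the display above confirms it holds. Second, the bound $\tan\phase(\Lambda_t)\ge\tan\alpha_0$ plays no role: if $\tan\phase$ is very negative then $g_j$ shrinks, which only increases $g^j$ and hence $\theta_j(1),$ strengthening the contradiction. Only the upper bound $\tan\phase\le\tan\alpha_1$ is used, which is also why your constants can be taken to depend on $\alpha_1$ alone, recovering the claimed $N=N(\alpha_1).$
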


	\begin{proof}
		Let $(\Lambda_t)_t,(h_t)_t,(\Psi_t)_t,$ $a_j,j=1,\ldots,n,$ and $g_j,j=1,\ldots,n,$ be as in~\ref{metric is uniformly bounded}. We assume $h_t\ne0,$ otherwise there is nothing to prove. By Corollary~\ref{corollary:time-derivative of phase}, $\phase(\Lambda_t)$ is strictly increasing with time-derivative
		\begin{equation}
		\label{equation:explicit time-derivative of phase}
		\deriv{t}\phase(\Lambda_t)=\Delta h_t=-2\sum_j g^j(t)a_j.
		\end{equation}
		In particular, we have
		\[
		\phase(\Lambda_t)\in[\alpha_0,\alpha_1],\quad t\in[0,1].
		\]
		Switching the roles of $\phase(\Lambda_t)$ and $t,$ we write
		\begin{equation}
		\label{equation:derivative of time by phase}
		\deriv[t]{\phase(\Lambda_t)}=-\frac{1}{2\sum_j g^j(t)a_j}.
		\end{equation}
		Let $j\in\{1,\ldots,n\}.$ If we have $a_j=0,$ it follows from Proposition~\ref{proposition:geodesic preserves orthogonal frame} \ref{expression for metric} that $g_j(t)\equiv1.$ Hence, we assume $a_j<0.$ By Proposition~\ref{proposition:geodesic preserves orthogonal frame} \ref{expression for metric} and~\eqref{equation:derivative of time by phase} we have
		\begin{equation}
		\label{equation:derivative of g by phase}
		\deriv[g_j]{\phase(\Lambda_t)}=2\frac{a_j\tan\phase(\Lambda_t)}{\sum_k g^k a_k}.
		\end{equation}
		If $\phase(\Lambda_t)\le0,$ it follows that
		\begin{equation}
		\label{equation:g is decreasing}
		\deriv[g_j]{\phase(\Lambda_t)}\le0.
		\end{equation}
		If $\phase(\Lambda_t)>0,$ equation~\eqref{equation:derivative of g by phase} implies
		\begin{align}
		\label{equation:derivative of g by phase is bounded}
		\deriv[g_j]{\phase(\Lambda_t)}
		&\le2\frac{a_j\tan\phase(\Lambda_t)}{g^j a_j}\\
		&=2\tan\phase(\Lambda_t)g_j\nonumber\\
		&\le2\tan\alpha_1g_j.\nonumber
		\end{align}
		The differential inequalities~\eqref{equation:g is decreasing} and~\eqref{equation:derivative of g by phase is bounded} imply that, if $\alpha_1\le0,$ we have
		\[
		g_j(t)\le1,\quad t\in[0,1],
		\]
		whereas if $\alpha_1>0,$ we have
		\[
		g_j(t)\le e^{\pi\tan\alpha_1},\quad t\in[0,1],
		\]
		establishing part~\ref{metric is uniformly bounded}.
		
		By~\ref{metric is uniformly bounded} we have
		\[
		g^j(t)\ge\frac{1}{N},\quad j=1,\ldots,n,\;t\in[0,1].
		\]
		Integrating equation~\eqref{equation:explicit time-derivative of phase}, we obtain
		\begin{align*}
		\alpha_1-\alpha_0
		&=-2\sum_j a_j\int_0^1g^j(t)dt\\
		&\ge-\frac{2}{N}\sum_ja_j.
		\end{align*}
		Part~\ref{coefficients are uniformly bounded} follows.
	\end{proof}

	\begin{prop}
		\label{proposition:space of good Lags is closed}
		Let $\Lambda_0\in\laggrass^+(n).$ Then $\overline{\geo(\Lambda_0)}$ is a closed subset of $\overline{\zeromas(\Lambda_0)}.$
	\end{prop}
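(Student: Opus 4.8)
The plan is to combine the a priori estimate of Lemma~\ref{lemma:negative definite geodesic has bounded derivative} with an elementary compactness argument for ODEs. Let $\Lambda_1^{(k)}\in\overline{\geo(\Lambda_0)}$ with $\Lambda_1^{(k)}\to\Lambda_1\in\overline{\zeromas(\Lambda_0)}$; we must exhibit a geodesic from $\Lambda_0$ to $\Lambda_1$ with negative semi-definite derivative. For each $k,$ fix such a geodesic $(\Lambda^{(k)}_t)_{t\in[0,1]}$ from $\Lambda_0$ to $\Lambda_1^{(k)},$ with derivative $(h^{(k)}_t)_t,$ so $h_0^{(k)}\in Q(\Lambda_0)=T_{\Lambda_0}\laggrass^+(n)$ is negative semi-definite. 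Since $\phase(\Lambda_1^{(k)})\to\phase(\Lambda_1),$ we may fix $-\tfrac{\pi}{2}<\alpha_0<\alpha_1<\tfrac{\pi}{2}$ with $\phase(\Lambda_0),\phase(\Lambda_1^{(k)})\in[\alpha_0,\alpha_1]$ for all $k.$ By Lemma~\ref{lemma:negative definite geodesic has bounded derivative}~\ref{coefficients are uniformly bounded}, the eigenvalues of $h_0^{(k)}$ lie in $(-M,0]$ for $M=M(\alpha_0,\alpha_1);$ hence the forms $h_0^{(k)}$ range over a compact subset of $Q(\Lambda_0),$ and after passing to a subsequence $h_0^{(k)}\to h_0$ for some negative semi-definite $h_0\in Q(\Lambda_0).$

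The crux is to show that the geodesic $(\Lambda_t)_t$ issuing from $\Lambda_0$ with initial derivative $h_0$ is defined on all of $[0,1].$ By Proposition~\ref{proposition:geodesic preserves orthogonal frame} together with Remark~\ref{remark: sum partial phases}, each geodesic $(\Lambda^{(k)}_t)_t,$ described through the quantities $g^{(k)}_j(t)$ and the partial phase differences $\theta_j^{(k)}(t)$ relative to a lifting compatible with $h_0^{(k)},$ is encoded by the solution of an autonomous ODE system whose parameters are the eigenvalues $a_j^{(k)}\in(-M,0]$ of $h_0^{(k)}$ and whose initial data are $g_j^{(k)}(0)=1,$ $\theta_j^{(k)}(0)=0.$ On $[0,1]$ these solutions stay in one compact region: $g_j^{(k)}\le N(\alpha_1)$ by Lemma~\ref{lemma:negative definite geodesic has bounded derivative}~\ref{metric is uniformly bounded}; keeping track of the lower bound as well in the analysis of~\eqref{equation:derivative of g by phase} underlying that lemma gives $g_j^{(k)}\ge c(\alpha_0,\alpha_1)>0;$ and $0\le\theta_j^{(k)}\le\phase(\Lambda_1^{(k)})-\phase(\Lambda_0)<\pi.$ Since the ODE vector field depends continuously on the parameters and $a_j^{(k)}\to a_j,$ a standard argument --- Arzel\`a--Ascoli applied to the resulting uniformly bounded, equi-Lipschitz family, followed by passage to the limit in the integrated equations --- produces a solution $(g_j,\theta_j)$ on $[0,1]$ of the limiting system, again confined to that compact region. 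On its maximal interval of definition, the geodesic $(\Lambda_t)_t$ has, by Proposition~\ref{proposition:geodesic preserves orthogonal frame}, frame data solving this same limiting system with the same initial data, so by uniqueness it coincides with $(g_j,\theta_j);$ consequently its state $(\Lambda_t,h_t)$ remains in a fixed compact subset of $T\laggrass^+(n),$ the phase lying in $[\alpha_0,\alpha_1]$ and the two-sided bounds on the $g_j$ keeping $h_t$ bounded in the fibers. A geodesic cannot have finite maximal time while remaining in such a compact set, so $(\Lambda_t)_t$ is defined on all of $[0,1].$

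It remains to conclude. Continuity of the exponential map now gives $\exp_{\Lambda_0}(h_0)=\lim_k\exp_{\Lambda_0}(h_0^{(k)})=\lim_k\Lambda_1^{(k)}=\Lambda_1,$ and since a lifting compatible with $h_0$ keeps $h_t\circ\Psi_t\equiv h_0\circ\Psi_0$ by~\eqref{equation:h does not change}, the derivative $(h_t)_t$ is negative semi-definite for all $t.$ Hence $\Lambda_1\in\overline{\geo(\Lambda_0)},$ as required. The one genuinely delicate point is the existence claim of the previous paragraph: the positive Lagrangian connection on $\laggrass^+(n)$ is incomplete, since along a geodesic with negative semi-definite derivative the phase increases and may reach $\tfrac{\pi}{2}$ in finite time. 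It is therefore essential both that convergence of the endpoints $\Lambda_1^{(k)}$ confines all the relevant phases to a compact subinterval of $\left(-\tfrac{\pi}{2},\tfrac{\pi}{2}\right),$ and that the lower bound on the $g_j^{(k)}$ --- implicit in the proof of Lemma~\ref{lemma:negative definite geodesic has bounded derivative} --- prevents the induced metric from degenerating; granted these two-sided bounds, the remaining ODE compactness is routine.
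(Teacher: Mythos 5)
Your argument is correct, and it rests on the same two pillars as the paper's proof --- the a priori coefficient bound of Lemma~\ref{lemma:negative definite geodesic has bounded derivative} and an Arzel\`a--Ascoli compactness argument --- but it applies the compactness to different objects. The paper applies Arzel\`a--Ascoli directly to the sequence of geodesic \emph{paths}: the phase bound confines their images to the compact set $\laggrass^+_{\alpha_0,\alpha_1}(n),$ part~\ref{coefficients are uniformly bounded} of the lemma confines the initial velocities to a compact set, and a subsequence of the paths converges in $C^2$ to a limit path, which is then automatically a geodesic on all of $[0,1]$ from $\Lambda_0$ to the limit endpoint; no separate long-time-existence step is needed. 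You instead compactify the \emph{initial velocities}, extract $h_0^{(k)}\to h_0,$ prove long-time existence of the geodesic issuing from $h_0$ via the explicit $(g_j,\theta_j)$ ODE system and an escape-to-infinity argument, and finish with continuity of $\exp_{\Lambda_0}.$ Your route makes the incompleteness of the connection, and exactly what prevents the phase from reaching $\tfrac{\pi}{2}$ or the induced metric from degenerating, fully explicit; the price is the extra ingredient you correctly flag, namely the lower bound $g_j\ge c(\alpha_0,\alpha_1)>0,$ which is \emph{not} part of the statement of Lemma~\ref{lemma:negative definite geodesic has bounded derivative} and must be re-extracted from equation~\eqref{equation:derivative of g by phase}. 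That bound does hold: for $\phase(\Lambda_t)\le0$ one has
\[
\deriv[g_j]{\phase(\Lambda_t)}\;=\;2\,\frac{a_j\tan\phase(\Lambda_t)}{g^ka_k}\;\ge\;2\,\frac{a_j\tan\phase(\Lambda_t)}{g^ja_j}\;=\;2g_j\tan\phase(\Lambda_t)\;\ge\;-2g_j\left|\tan\alpha_0\right|,
\]
since the numerator is non-negative and enlarging the (negative) denominator in absolute value only decreases the magnitude of the quotient; integrating over a phase range of length at most $\tfrac{\pi}{2}$ gives $g_j\ge e^{-\pi|\tan\alpha_0|},$ while for $\phase(\Lambda_t)>0$ the quantity $g_j$ is non-decreasing in the phase. (A bound of this type is in fact also implicitly needed in the paper's own proof, to upgrade the bound on the initial velocities to the uniform $C^1$ bound on the paths that Arzel\`a--Ascoli requires.) Two minor points to tighten: pass to a further subsequence so that the orthonormal frames diagonalizing $h_0^{(k)}$ converge before speaking of the limiting ODE system, and note that the identity $\exp_{\Lambda_0}\bigl(h_0^{(k)}\bigr)=\Lambda_1^{(k)}$ uses uniqueness of geodesics with prescribed initial data, which holds because the positive Lagrangian connection is a smooth affine connection on the finite-dimensional manifold $\laggrass^+(n).$
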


	\begin{proof}
		Let $(\Lambda_{1,k})_{k\in\mathbb{N}}$ be a sequence in $\overline{\geo(\Lambda_0)}$ converging to $\Lambda_{1,\infty}\in\overline{\zeromas(\Lambda_0)}.$ For $k\in\mathbb{N},$ let $(\Lambda_{t,k})_{t\in[0,1]}$ be a geodesic between $\Lambda_0$ and $\Lambda_{1,k}$ with negative semi-definite derivative. Write $\phase(\Lambda_0)=:\alpha_0.$ As the sequence $(\Lambda_{1,k})_k$ is convergent, there exists some $\alpha_1\in\left(-\frac{\pi}{2},\frac{\pi}{2}\right)$ with
		\[
		\phase(\Lambda_{1,k})<\alpha_1,\quad k\in\mathbb{N}.
		\]
		It follows that all the geodesics $(\Lambda_{t,k})_t$ have images contained in the compact set
		\[
		\laggrass_{\alpha_0,\alpha_1}^+(n):=\left\{\left.\Lambda\in\laggrass^+(n)\;\right|\;\phase(\Lambda)\in[\alpha_0,\alpha_1]\right\}.
		\]
		Also, by Lemma~\ref{lemma:negative definite geodesic has bounded derivative} \ref{coefficients are uniformly bounded}, there exists a compact set $K\subset T_{\Lambda_0}\laggrass(n)$ containing all the time-derivatives $\left.\deriv{t}\right|_{t=0}\Lambda_{t,k}.$ It follows that for $l\in\mathbb{N},$ the derivatives of all geodesics in question up to order $l$ are uniformly bounded. By the Arzel\`a-Ascoli theorem, there exists a subsequence of $((\Lambda_{t,k})_t)_k$ converging in the $C^2$-sense to a path $(\Lambda_{t,\infty})_{t\in[0,1]},$ which is a geodesic between $\Lambda_0$ and $\Lambda_{1,\infty}.$ As the space of negative semi-definite quadratic forms is a closed subset in the space of general quadratic forms, the geodesic $(\Lambda_{t,\infty})_{t\in[0,1]}$ has negative semi-definite derivative.
	\end{proof}

	The proof of Proposition~\ref{proposition:space of good Lags is open} below is based on the relation between geodesics and imaginary special Lagrangian cylinders, as summarized in Section~\ref{subsection:special Lagrangian cylinders and their relation to geodesics}. In addition, it relies on the following elementary observation concerning geodesics and Jacobi fields with respect to arbitrary connections.
	
	\begin{lemma}
		\label{lemma:tangent Jacobi field}
		Let $M$ be a smooth finite-dimensional manifold equipped with an affine connection. Let $\gamma:[0,1]\to M$ be a geodesic, and let $J\in\Gamma\left([0,1],\gamma^*TM\right)$ be a Jacobi field tangent to $\gamma.$ That is, we have $J=f\dot{\gamma}$ for some $f:[0,1]\to\mathbb{R}.$ If we have $J(0)=0$ and $J(1)=0,$ then $J$ is the trivial Jacobi field.
	\end{lemma}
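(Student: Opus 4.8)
The plan is to reduce the Jacobi equation, restricted to vector fields tangent to $\gamma,$ to a scalar ordinary differential equation for the coefficient $f,$ and then use the two endpoint conditions to force $f\equiv 0.$

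First I would dispose of the degenerate case: if $\dot\gamma(t_0)=0$ for some $t_0\in[0,1],$ then by uniqueness for the geodesic equation $\gamma$ is a constant path, so $\dot\gamma\equiv 0$ and hence $J=f\dot\gamma\equiv 0$ is already the trivial Jacobi field. So I may assume $\dot\gamma(t)\neq 0$ for every $t\in[0,1].$

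The key step is the observation that a field of the form $J=f\dot\gamma$ solves the Jacobi equation if and only if $\ddot f\,\dot\gamma\equiv 0.$ Since $\gamma$ is a geodesic, $\frac{D}{dt}(f\dot\gamma)=\dot f\,\dot\gamma$ and $\frac{D^2}{dt^2}(f\dot\gamma)=\ddot f\,\dot\gamma,$ while the zeroth-order term of the Jacobi operator applied to $f\dot\gamma$ vanishes by antisymmetry of the curvature --- and of the torsion, should the connection have any --- in its first argument. Concretely I would verify this in a local chart by linearizing the geodesic equation $\ddot\gamma^k=-\Gamma^k_{ij}\dot\gamma^i\dot\gamma^j$ along $\gamma,$ substituting $v^k=f\dot\gamma^k$ into the resulting linear ODE for $v,$ and checking that, after using the geodesic equation and its first $t$-derivative, all terms containing $\Gamma^k_{ij}$ or its derivatives cancel, leaving $\ddot f\,\dot\gamma^k=0.$ This computation does not require the connection to be torsion-free, since only the part of $\Gamma^k_{ij}$ symmetric in $i,j$ survives and the antisymmetric part cancels throughout. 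As $\dot\gamma$ is nowhere zero on $[0,1],$ it follows that $\ddot f\equiv 0,$ so $f(t)=a+bt$ for some constants $a,b\in\R.$

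Finally I would impose the endpoint conditions: $J(0)=f(0)\dot\gamma(0)=0$ together with $\dot\gamma(0)\neq 0$ forces $f(0)=0,$ i.e.\ $a=0,$ and $J(1)=f(1)\dot\gamma(1)=0$ together with $\dot\gamma(1)\neq 0$ forces $f(1)=0,$ i.e.\ $a+b=0,$ hence $b=0.$ Therefore $f\equiv 0$ and $J\equiv 0,$ as claimed. I expect the only point requiring genuine care to be the coordinate linearization of the geodesic equation and the bookkeeping that produces the cancellation $\ddot f\,\dot\gamma=0$ for a general affine connection; the remaining steps are routine.
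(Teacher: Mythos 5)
Your proof is correct. The paper states this lemma without proof, labelling it an ``elementary observation,'' so there is no argument to compare against; your verification is exactly the standard one and it is complete. The two essential points are both handled properly: (i) the reduction of the linearized geodesic equation, evaluated on $J=f\dot\gamma,$ to $\ddot f\,\dot\gamma\equiv 0$ --- your coordinate computation does cancel as claimed, since the terms $f\,\partial_l\Gamma^k_{ij}\dot\gamma^l\dot\gamma^i\dot\gamma^j$ and $2f\,\Gamma^k_{ij}\ddot\gamma^i\dot\gamma^j$ coming from $f\dddot\gamma^k$ exactly match the corresponding terms of the Jacobi operator, and the surviving $2\dot f\bigl(\ddot\gamma^k+\Gamma^k_{ij}\dot\gamma^i\dot\gamma^j\bigr)$ vanishes by the geodesic equation; and (ii) the degenerate case $\dot\gamma(t_0)=0,$ where uniqueness for the geodesic ODE forces $\gamma$ to be constant. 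Your remark that only the symmetrization of $\Gamma^k_{ij}$ enters, so torsion is irrelevant, is also right, and is the appropriate level of generality since the positive Lagrangian connection is not assumed torsion-free here. Nothing further is needed.
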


For $\Lambda_0\in\laggrass^+(n),$ let $T_{\Lambda_0}^-\laggrass^+(n) \subset T_{\Lambda_0}\laggrass^+(n)$ denote the cone consisting of negative definite quadratic forms.

	\begin{prop}
		\label{proposition:space of good Lags is open}
Let $(\Lambda_t)_{t\in [0,1]}$ be a geodesic segment in $\laggrass^+(n)$ with negative semi-definite derivative and $\Lambda_0 \cap \Lambda_1 = \{0\}.$ Then, there exist open neighborhoods
\[
\Lambda_i \in U_i \subset \laggrass^+(n), \qquad  i = 0,1,
\]
and a smooth map
\[
\Gamma : [0,1] \times U_0 \times U_1 \to \laggrass^+(n),
\]
such that for $(u_0,u_1) \in U_0\times U_1$ the path
\[
t \mapsto \Gamma(t,u_0,u_1)
\]
is a geodesic from $u_0$ to $u_1$ and
\[
 \Gamma(t,\Lambda_0,\Lambda_1) = \Lambda_t,\qquad t \in [0,1].
\]
In particular, the space $\geo(\Lambda_0)$ is an open subset of $\zeromas(\Lambda_0).$
	\end{prop}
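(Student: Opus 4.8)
\medskip

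The plan is to deduce the proposition from the inverse function theorem applied to the exponential map of the (affine) positive Lagrangian connection on the finite-dimensional manifold $\laggrass^+(n).$ Set $h_0:=\left.\deriv{t}\right|_{t=0}\Lambda_t\in T_{\Lambda_0}\laggrass^+(n)\cong Q(\Lambda_0);$ by Lemma~\ref{lemma:negative derivative implies Maslov zero} \ref{item:negative definite}, the hypothesis $\Lambda_0\cap\Lambda_1=\{0\}$ forces $h_0$ to be negative definite, so $h_0\in T_{\Lambda_0}^-\laggrass^+(n).$ Since the connection is affine, $\exp_{\Lambda_0}$ is defined and smooth near $h_0,$ the curve $t\mapsto\exp_{\Lambda_0}(tv)$ is the geodesic with initial velocity $v,$ and $d(\exp_{\Lambda_0})_{h_0}$ sends $w\in Q(\Lambda_0)$ to $J_w(1),$ where $J_w$ is the Jacobi field along $(\Lambda_t)_t$ vanishing at $t=0$ with initial covariant derivative $w.$ Because $\dim Q(\Lambda_0)=\frac{n(n+1)}{2}=\dim\laggrass^+(n),$ the differential $d(\exp_{\Lambda_0})_{h_0}$ is an isomorphism as soon as no nontrivial Jacobi field along $(\Lambda_t)_t$ vanishes at both $t=0$ and $t=1,$ that is, as soon as $\Lambda_0$ and $\Lambda_1$ are not conjugate along the geodesic.

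Granting nonconjugacy, the rest is formal. The map $\mathcal{E}\colon(p,v)\mapsto(p,\exp_pv)$ has invertible differential at $(\Lambda_0,h_0)$---it is block triangular with diagonal blocks $\id$ and $d(\exp_{\Lambda_0})_{h_0}$---so by the inverse function theorem it restricts to a diffeomorphism onto a neighborhood of $(\Lambda_0,\Lambda_1),$ which, after shrinking, we take to be $U_0\times U_1$ with $\Lambda_i\in U_i.$ Writing the inverse as $(u_0,u_1)\mapsto(u_0,V(u_0,u_1))$ with $V$ smooth and $V(\Lambda_0,\Lambda_1)=h_0,$ set $\Gamma(t,u_0,u_1):=\exp_{u_0}\!\big(t\,V(u_0,u_1)\big).$ After shrinking $U_0,U_1$ further if needed (using compactness of $\{\Lambda_t\}_{t\in[0,1]}$ and openness of $\laggrass^+(n)\subset\laggrass(n)$), the map $\Gamma$ is $\laggrass^+(n)$-valued on $[0,1]\times U_0\times U_1,$ is smooth, is the geodesic from $u_0$ to $u_1$ for each $(u_0,u_1),$ and satisfies $\Gamma(t,\Lambda_0,\Lambda_1)=\Lambda_t.$ Finally, the time-$0$ derivative of $\Gamma(\cdot,\Lambda_0,u_1)$ is $V(\Lambda_0,u_1),$ which tends to $h_0$ as $u_1\to\Lambda_1;$ negative definiteness being an open condition, this derivative is negative definite for all $u_1$ in a possibly smaller $U_1,$ so $U_1\cap\zeromas(\Lambda_0)\subset\geo(\Lambda_0).$ Since every element of $\geo(\Lambda_0)$ lies in $\zeromas(\Lambda_0)$ and hence is the endpoint of a geodesic satisfying the hypotheses of the proposition, this shows $\geo(\Lambda_0)$ is open in $\zeromas(\Lambda_0).$

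It remains to rule out conjugate points, and here the relation with imaginary special Lagrangian cylinders is essential---this is the main obstacle. Fix a horizontal lifting $(\Psi_t)_t$ compatible with $h_0;$ by Lemma~\ref{lemma:negative derivative implies Maslov zero} \ref{item:negative definite} the time-independent Hamiltonian $h:=h_t\circ\Psi_t$ is a negative definite quadratic form on $\R^n,$ so $0$ is a non-degenerate absolute maximum of $h.$ Arguing as in Setting~\ref{set:rocking} and Lemma~\ref{lemma:rocking lemma} (with $X=\C^n,$ $L=\R^n,$ $q=0,$ the cone locus empty, and the critical point $0$ non-degenerate because $h$ is a negative definite quadratic form): for a polar coordinate map $\kappa\colon S^{n-1}\times[0,\epsilon)\to\R^n$ whose slices parameterize level sets of $h,$ the map $\Phi(c,t,s)=\Psi_t(\kappa(c,s))$ is a regular parameterization about $0$ of the cylindrical transform $\mathcal{Z}$ of $(\Lambda_t)_t$ (note the associated cylinders of level sets are compact even though $\R^n$ is not). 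Now let $J$ be a Jacobi field along $(\Lambda_t)_t$ with $J(0)=J(1)=0.$ Via the cylindrical transform, $J$ induces an infinitesimal deformation of $\mathcal{Z};$ as $J(0)=J(1)=0,$ the deformation fixes $\Lambda_0$ and $\Lambda_1$ and therefore lies inside the \emph{fixed} space $\mathcal{SLC}(S^{n-1};\Lambda_0,\Lambda_1),$ which by Proposition~\ref{proposition:space of special Lagrangian cylinders} and Lemma~\ref{lemma:Laplacian} \ref{elliptic} is a $1$-manifold. Since $\mathcal{Z}$ is open in this $1$-manifold, its normal bundle there vanishes, so the deformation is trivial; hence $J$ lies in the kernel of the linearized cylindrical transform, which is spanned by the tangential Jacobi fields $f\dot\Lambda_t.$ As $J(0)=J(1)=0,$ Lemma~\ref{lemma:tangent Jacobi field} yields $J\equiv0.$ Making this last argument precise---setting up the cylindrical transform, the rocking lemma, and the identification of the kernel just used in the present linear setting with a single, non-compact intersection datum, all of which rests on the ellipticity of $\Delta_\rho$---is the technical heart; the finite-dimensional bookkeeping above is routine.
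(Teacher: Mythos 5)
Your overall architecture matches the paper's: reduce the statement to the assertion that $\Lambda_0$ and $\Lambda_1$ are not conjugate along $(\Lambda_t)_t$ (your inverse-function-theorem bookkeeping for $\Gamma$, and the passage from non-conjugacy to openness of $\geo(\Lambda_0)$ via Lemma~\ref{lemma:negative derivative implies Maslov zero}, are fine and are left implicit in the paper), and then kill any Jacobi field $J$ with $J_0=J_1=0$ by showing it is tangent to the geodesic and invoking Lemma~\ref{lemma:tangent Jacobi field}. You also correctly identify the cylindrical transform and the ellipticity of $\Delta_\rho$ as the tools for the tangency claim.

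The gap is exactly where you yourself place ``the technical heart,'' and the sketch you give does not close it. From $J_0=J_1=0$ you correctly conclude that the induced first-order deformation of the cylinder of $-1$-level sets is tangent to the one-dimensional manifold $\mathcal{SLC}\left(S^{n-1};\Lambda_0,\Lambda_1\right)$; but tangent to a one-manifold is not ``trivial'' --- the deformation can be a nonzero multiple of the fundamental harmonic --- and the subsequent assertion that the kernel of the linearized cylindrical transform is spanned by the tangential Jacobi fields $f\dot\Lambda_t$ is precisely the statement that needs proof, not something that can be quoted. (Nor can one appeal to the set-level uniqueness of Lemma~\ref{lemma: geodesic of small Lagrangians}: for $s\neq0$ the varied geodesics end at $\Lambda_{1,s}\neq\Lambda_1$, so their cylinders do not lie in $\mathcal{SLC}\left(S^{n-1};\Lambda_0,\Lambda_1\right)$; only the $s$-derivative at $0$ is tangent to it.) The paper closes this step as follows: realize $J$ by a variation of geodesics fixing $\Lambda_0$, parameterize the associated cylinders of $-1$-level sets by maps $\Upsilon_s$ with $\Upsilon_0$ adapted to the harmonics (Proposition~\ref{proposition:relevant results from previous paper}~\ref{item:harmonic}), set $v=\left.\deriv{s}\right|_{s=0}\Upsilon_s$, and write $i_v\omega=du$ with $u\in\cob{\infty}(L)$ using Lemma~\ref{lemma:closed and annihilating is exact}; then $\Delta_\rho u=0$ and Lemma~\ref{lemma:Laplacian}~\ref{elliptic} force $u$ to be a multiple of the fundamental harmonic, hence constant on each slice $S^{n-1}\times\{t\}$; finally the computation $0=du_q(\xi)=\omega\left(v(q),d(\Upsilon_0)_q\xi\right)=d\left(J_t\circ\Upsilon_0|_{S^{n-1}\times\{t\}}\right)_q(\xi)$ shows the quadratic form $J_t$ is constant on a level set of $h_t$ and therefore proportional to $h_t$. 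That identity --- linking the harmonic primitive on the cylinder to the restriction of $J_t$ to the level sets of the Hamiltonian, which is why the parameterization must be adapted to the harmonics --- is the missing idea; without it your argument does not establish tangency of $J$.
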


	\begin{proof}
		It suffices to show that $\Lambda_1$ is not a conjugate point of $\Lambda_0$ along the geodesic~$(\Lambda_t)_t.$ Thus, keeping in mind Lemma~\ref{lemma:negative derivative implies Maslov zero}~\ref{item:negative definite}, it suffices to prove the following.

\vspace{6pt}
\noindent\textbf{Claim:}
Let $(\Lambda_t)_{t\in[0,1]}$ be a geodesic with negative definite derivative, and let $J=(J_t)_{t\in[0,1]}$ be a Jacobi field along $(\Lambda_t)_t$ satisfying
		\[
		J_0=0,\quad J_1=0.
		\]
		Then we have $J_t=0$ for $t\in[0,1].$
\vspace{6pt}
	
		In view of Lemma~\ref{lemma:tangent Jacobi field}, it suffices to prove that, under the assumptions of the above claim, the Jacobi field $J$ is tangent to the geodesic $(\Lambda_t)_t.$ Let the two parameter family $(\Lambda_{t,s})_{(t,s)\in[0,1]\times(-\epsilon,\epsilon)}$ be a variation realizing the Jacobi field $J.$ In other words, the path $(\Lambda_{t,s})_{t\in[0,1]}$ is a geodesic with $\Lambda_{0,s}=\Lambda_0$ for $s\in(-\epsilon,\epsilon),$ and we have
		\[
		\left.\pderiv{s}\right|_{s=0}\Lambda_{t,s}=J_t,\quad t\in[0,1].
		\]
		For $(t,s)\in[0,1]\times(-\epsilon,\epsilon),$ write
		\[
		h_{t,s}:=\pderiv{t}\Lambda_{t,s}\in Q(\Lambda_{t,s}).
		\]
		By assumption, diminishing $\epsilon$ if necessary, all quadratic forms $h_{t,s}$ are negative definite and thus have regular level sets diffeomorphic to $S^{n-1}.$
Recalling Definition~\ref{dfn:cylinder}, for $s\in(-\epsilon,\epsilon),$ let $Z_s \in\mathcal{SLC}\left(S^{n-1};\Lambda_0,\Lambda_{1,s}\right)$ be the cylinder of $-1$-level sets associated to the geodesic $(\Lambda_{t,s})_t.$ Abbreviate $L = S^{n-1} \times [0,1].$

We construct a smooth map
\[
\Upsilon: L\times(-\epsilon,\epsilon)\to\mathbb{C}^n
\]
such that for $s\in(-\epsilon,\epsilon)$ the restricted map $\Upsilon_s:=\Upsilon|_{L\times\{s\}}$ is an immersion representing the cylinder $Z_s$ and adapted to its harmonics. The construction is carried out as follows.
Choose a linear parameterization $\Psi_0: \R^n \to \Lambda_0$ and for $s \in (-\epsilon,\epsilon),$ let $(\Psi_{t,s})_{t \in [0,1]}$ be the horizontal lifting of the geodesic $(\Lambda_{t,s})_{t}$ with $\Psi_{0,s} = \Psi_0.$ Let
\[
h^{s} := h_{0,s}\circ \Psi_0 : \R^n \to \R
\]
and let
\[
\Theta : S^{n-1}\times (-\epsilon,\epsilon) \to \R^n
\]
be a smooth map such that $\Theta_s = \Theta|_{S^{n-1}\times\{s\}}$ carries the sphere $S^{n-1} \times \{s\}$ diffeomorphically onto the level set $(h^{s})^{-1}(-1)$. Take
\begin{equation}\label{equation:Upsilon}
\Upsilon(p,t,s) := \Psi_{t,s}(\Theta_s(p)).
\end{equation}
By definition, $\Upsilon_s$ represents the cylinder of $-1$ level sets $Z_s.$
Proposition~\ref{proposition:relevant results from previous paper}~\ref{item:harmonic} asserts that $\Upsilon _s$ is adapted to the harmonics of $Z_s.$

Define a map $v:L\to\mathbb{C}^n$ by
		\[
		v(p,t):=\left.\deriv{s}\right|_{s=0}\Upsilon(p,t,s),\qquad(p,t)\in S^{n-1}\times[0,1] = L.
		\]
As all the cylinders $Z_s$ are Lagrangian, the 1-form $\tau:=i_v\omega\in\Omega^1(L)$ is closed. For $i = 0,1,$ since $J_i=0,$ the vector $v(q)$ is tangent to $\Lambda_i$ for $q\in S^{n-1}\times \{i\}.$ It follows that the 1-form $\tau$ annihilates the boundary component $S^{n-1}\times \{i\} \subset L.$ By Lemma~\ref{lemma:closed and annihilating is exact} the $1$-form $\tau$ is exact. Let $u\in \cob{\infty}(L)$ be the unique function satisfying
		\[
		du=\tau.
		\]
As the cylinders $Z_s$ are imaginary special Lagrangian, Lemma~\ref{lemma:Laplacian}~\ref{linearization is Laplacian} implies that $\Delta u=0.$
Since $\Upsilon_0$ is adapted to the harmonics of $Z_0,$  Lemma~\ref{lemma:Laplacian}~\ref{elliptic} implies that $u|_{S^{n-1}\times \{t\}}$ is constant for $t \in [0,1].$

		For $t\in[0,1],$ the tangent vector $J_t$ is a quadratic form on $\Lambda_t.$  It follows from equation~\eqref{equation:Upsilon} that $\Upsilon_0$ carries $S^{n-1}\times \{t\}$ into $\Lambda_{t,0}.$
		By construction, for $t\in[0,1],$ a point $q\in S^{n-1}\times\{t\}$ and a tangent vector
\[
\xi\in T_q(S^{n-1} \times \{t\}),
\]
we have
		\begin{align*}
		0
		&=du_q(\xi)\\
		&=\omega(v(q),d(\Upsilon_0)_{q}(\xi))\\
		&=d(J_t)_{\Upsilon_0(q)}(d(\Upsilon_0)_{q}(\xi))\\
		&=d\left(J_t \circ \Upsilon_0|_{S^{n-1}\times \{t\}}\right)_q(\xi).
		\end{align*}
Thus, for $t \in [0,1]$	the function $J_t \circ \Upsilon_0|_{S^{n-1}\times \{t\}}$ is constant. By the definition of the cylinder of $-1$ level sets,
\[
h_{t,0} \circ \Upsilon_0|_{S^{n-1}\times \{t\}} = -1.
\]
Since $J_t$ and $h_{t,0}$ are both quadratic forms on $\Lambda_{t,0}$, and they share the level set $\Upsilon_0(S^{n-1}\times \{t\}),$ it follows that they agree up to a constant multiple possibly depending on $t.$		
Thus, the Jacobi field $J$ is tangent to the geodesic $(\Lambda_t)_t.$ The claim follows.
	\end{proof}

	\begin{proof}[Proof of Theorem~\ref{theorem:linear geodesic}]
Since $m_0(\Lambda_0)$ is connected, the theorem follows from Corollary~\ref{corollary:space of good Lags is non-empty}, Proposition~\ref{proposition:space of good Lags is closed} and Proposition~\ref{proposition:space of good Lags is open}.
	\end{proof}

\begin{rem}
Let $(\Lambda_t)_{t \in [0,1]}$ be a geodesic in $\laggrass^+(n)$ with $m(\Lambda_0,\Lambda_1) = 0.$ For $c < 0,$ the cylinder of $c$-level sets associated to $(\Lambda_t)_t$ coincides with one of the special Lagrangian cylinders constructed by Lawlor~\cite[Prop. 9]{Law89}. See also~\cite[Ch.~7]{Har90} and~\cite[Sec. 5]{Joy01}. Lawlor used these cylinders to prove the angle criterion, which determines when the union of two $n$-planes in $\R^{2n}$ is area  minimizing. To deduce the coincidence of the cylinders of $c$-level sets with the cylinders of Lawlor, observe that both contain an ellipsoid in a Lagrangian linear subspace, which is a real analytic isotropic submanifold of $\C^n$ of dimension $n-1,$ so they must coincide by Theorem III.5.5 of~\cite{harvey-lawson}.

In fact, one can deduce the result of~\cite[Prop. 9]{Law89} from Theorem~\ref{theorem:linear geodesic}. The angle criterion translates to the Maslov zero condition. Conversely, an alternative route to the proof of Theorem~\ref{theorem:linear geodesic} starts with the special Lagrangian cylinders of~\cite[Prop.~9]{Law89} and applies the inverse cylindrical transform. To apply the inverse cylindrical transform, one must analyze harmonic functions on the special Lagrangian cylinders. We believe the independent proof of Theorem~\ref{theorem:linear geodesic} given here is nonetheless valuable as a simple example of the positive Lagrangian geodesic equation that can be solved explicitly and to provide a different perspective on the work of~\cite{Har90, Joy01,Law89}
\end{rem}

	\section{Geodesics of Lagrangian cones}
	\label{section:geodesics of Lagrangian cones}
	
A \emph{smooth immersed Lagrangian cone} in $\C^n$ is a cone-smooth Lagrangian submanifold of $\C^n$ that can be represented by a cone-immersion $(f : \R^n \to \C^n, 0)$ that is  $1$-homogeneous. Let $\lagcones(n)$ denote the space of smooth immersed Lagrangian cones in $\mathbb{C}^n$ and $\lagcones^+(n)\subset\lagcones(n)$ the open subspace consisting of positive Lagrangian cones. Then $\lagcones^+(n)$ is a smooth Fr\'echet manifold containing $\laggrass^+(n).$ The goal of this section, accomplished in Proposition~\ref{proposition:geodesic is in fact linear}, is to show that, under some assumptions, geodesics of positive Lagrangian cones are in fact linear.
The following description of the tangent bundle $T\lagcones(n)$ is analogous to that of Lemma~\ref{lemma:tangent space of Lagrangian Grassmannian}.
	
	\begin{lemma}
		\label{lemma:tangent space at a Lag cone}
		For $\Lambda\in\lagcones(n)$ there is a canonical isomorphism $T_\Lambda\lagcones(n)\cong\mathcal{H}(\Lambda),$ the space of cone-smooth degree-2 homogeneous functions on $\Lambda.$
	\end{lemma}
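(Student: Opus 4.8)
The plan is to mimic the proof of Lemma~\ref{lemma:tangent space of Lagrangian Grassmannian}, replacing linear liftings by $1$-homogeneous cone-immersion liftings and quadratic forms by cone-smooth degree-$2$ homogeneous functions, while keeping track of the cone point at $0.$ First I would note that every smooth path $(\Lambda_t)_{t\in(-\epsilon,\epsilon)}$ in $\lagcones(n)$ with $\Lambda_0=\Lambda$ admits a lifting by $1$-homogeneous cone-immersions $(\Psi_t:\R^n\to\C^n,0)$: since a $1$-homogeneous cone-immersion is determined by its restriction to the unit sphere $S^{n-1},$ and is recovered from it by $\Psi_t(x)=|x|\,\Psi_t(x/|x|),$ the existence of such a lifting reduces to the existence of smooth liftings for paths of maps out of $S^{n-1},$ exactly as for $\mathcal{L}(X,L;S,C_0)$ in Section~\ref{subsection:Lagrangians with cone points}.

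Given such a lifting, I would set $v_t:=\deriv{t}\Psi_t,$ a cone-smooth vector field along $\Psi_t,$ and let $\tau_t$ be the $1$-form on $\Lambda_t$ determined by $\Psi_t^*\tau_t=i_{v_t}\omega.$ As recalled in Section~\ref{subsection:Lagrangians with cone points} (see Section~3.2 of~\cite{cylinders}), $\tau_t$ is a closed cone-smooth $1$-form that vanishes at the cone locus, and, as in the smooth case, the induced element of $T_{\Lambda_t}\lagcones(n)$ is independent of the choice of lifting. Since the components of $\R^n\setminus\{0\}$ are simply connected, $\tau_t$ is exact; let $h_t$ be its primitive. Because $\Psi_t$ is $1$-homogeneous, $v_t$ is $1$-homogeneous and $d\Psi_t$ is $0$-homogeneous, so the coefficient functions of $\Psi_t^*\tau_t$ are $1$-homogeneous. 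A short Cartan-formula computation then gives $h_t=\tfrac12\,i_E\tau_t,$ where $E$ is the Euler vector field, which is manifestly degree-$2$ homogeneous; and $h_t$ is cone-smooth because, on the oriented blowup $\pi:\widetilde{\R^n}_0\to\R^n,$ one has $h_t\circ\pi=\tfrac12\,i_{\pi^*E}\,\pi^{-1}\tau_t,$ a smooth function, $\pi^*E$ being a smooth section of the blowup tangent bundle. This defines a linear map $T_\Lambda\lagcones(n)\to\mathcal{H}(\Lambda).$

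It remains to check bijectivity, which proceeds exactly as in the smooth case. For injectivity, if $h_0=0$ then $\tau_0=0,$ so $v_0$ is everywhere $\omega$-orthogonal to $\im(d\Psi_0),$ hence tangent to it since $\Lambda$ is Lagrangian; thus the path is constant to first order in $\lagcones(n),$ which is a quotient by $\diff(\R^n,0).$ For surjectivity, given $h\in\mathcal{H}(\Lambda),$ the equation $i_v\omega=\Psi_0^*(dh)$ determines a $1$-homogeneous cone-smooth vector field $v$ along $\Psi_0$ --- cone-smooth at $0$ because $dh$ is cone-smooth and the pointwise linear algebra extends over the exceptional sphere --- and integrating the corresponding deformation of cones produces a path realizing $h.$ I expect the only genuinely new work relative to Lemma~\ref{lemma:tangent space of Lagrangian Grassmannian} to be the cone-point bookkeeping in the second paragraph: verifying that the primitive $h_t$ is cone-smooth in the sense of Section~\ref{subsection:cone smooth maps} and that vanishing at the cone locus is preserved; away from $0$ the argument is a verbatim repetition of the $\laggrass^+(n)$ case.
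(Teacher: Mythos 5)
Your proposal is correct and is exactly the argument the paper intends: the paper states this lemma without proof, remarking only that it is analogous to Lemma~\ref{lemma:tangent space of Lagrangian Grassmannian}, and your write-up supplies precisely that analogy, with the Cartan-formula primitive $h_t=\tfrac12 i_E\tau_t$ and the blowup check of cone-smoothness being the right way to handle the cone point. One harmless slip: the components of $\R^n\setminus\{0\}$ are not simply connected when $n=2$, but this remark is redundant since your explicit formula $\tau_t=d\left(\tfrac12 i_E\tau_t\right)$ already proves exactness in all dimensions.
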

	
The positive Lagrangian connection on $\laggrass^+(n)$ extends naturally to $\lagcones^+(n).$  Indeed, we have the following analog of Lemma~\ref{lemma:there is always a horizontal lifting}.
	\begin{lemma}
		\label{lemma:cone horizontal lifting}
		Let $(\Lambda_t)_{t\in[0,1]}$ be a smooth path in $\lagcones^+(n)$. Let $(\Psi_0:\mathbb{R}^n\to \C^n,0)$ be a cone-smooth $1$-homogeneous immersion representing $\Lambda_0.$ Then $\Psi_0$ extends uniquely to a cone-smooth horizontal lifting, $(\Psi_t:\mathbb{R}^n\to\mathbb{C}^n,0)_{t\in[0,1]},$ of the path $(\Lambda_t)_t.$ Moreover, the unique lifting $(\Psi_t)_t$ is $1$-homogeneous.
	\end{lemma}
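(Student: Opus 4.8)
The plan is to follow the proof of Lemma~\ref{lemma:there is always a horizontal lifting} verbatim, replacing the word ``linear'' throughout by ``cone-smooth and $1$-homogeneous''. Two features of the linear argument must be re-examined: that the auxiliary volume form used to build the reparametrisation is nowhere vanishing, and that the reparametrising vector field is again of the required type; I claim both survive in the cone-immersed positive setting. First I would use the Fr\'echet-manifold structure of $\lagcones^+(n)$, together with the fact that every point of $\lagcones^+(n)$ has a $1$-homogeneous cone-immersive representative, to produce \emph{some} cone-smooth $1$-homogeneous lifting $(\widetilde\Psi_t:\R^n\to\C^n,0)_{t\in[0,1]}$ of $(\Lambda_t)_t$ with $\widetilde\Psi_0=\Psi_0$ (if the local lifting furnished by the Fr\'echet structure starts at a different representative of $\Lambda_0$, precompose with a fixed cone-diffeomorphism).

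Next, set
\[
\mu_t:=\widetilde\Psi_t^*\real\Omega,\qquad \nu_t:=i_{\frac{d}{dt}\widetilde\Psi_t}\real\Omega,
\]
cone-smooth forms on $(\R^n,0)$ depending smoothly on $t$, and let $w_t$ be the time-dependent vector field on $\R^n\setminus\{0\}$ determined by $i_{w_t}\mu_t=-\nu_t$. Denoting by $(\varphi_t)_t$ the flow of $(w_t)_t$, the family $\Psi_t:=\widetilde\Psi_t\circ\varphi_t$ is a cone-immersion representing $\Lambda_t$ (a cone-immersion precomposed with a cone-diffeomorphism), and the chain rule gives, on $\R^n\setminus\{0\}$,
\[
i_{\frac{d}{dt}\Psi_t}\real\Omega=\varphi_t^*\!\left(\nu_t+i_{w_t}\mu_t\right)=0,
\]
so $(\Psi_t)_t$ is horizontal, exactly as in the linear case.

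Two points remain. \textbf{(a) $w_t$ is cone-smooth, hence $\varphi_t$ is a cone-diffeomorphism of $(\R^n,0)$ with $\varphi_0=\id$.} Since $(\widetilde\Psi_t,0)$ is a cone-immersion, Lemma~\ref{lemma:blowupdifferential} gives an injective bundle map $\widetilde{d\widetilde\Psi_t}:\widetilde{T\R^n}_0\to\pi^*\widetilde\Psi_t^*T\C^n$, and by the definition of a positive Lagrangian cone it maps the fibre of $\widetilde{T\R^n}_0$ over each $\widetilde p\in E_0$ onto the positive Lagrangian $n$-plane $T_{\widetilde p}\Lambda_t$, on which $\real\Omega$ restricts to a positive volume form. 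Hence the blowup form $\widetilde\mu_t$ is a nowhere-vanishing section of $\Lambda^n\!\left(\widetilde{T\R^n}_0{}^*\right)$, including over $E_0$, so contraction against $\widetilde\mu_t$ is a bundle isomorphism onto $\Lambda^{n-1}\!\left(\widetilde{T\R^n}_0{}^*\right)$, and $\widetilde w_t:=\left(i_\bullet\widetilde\mu_t\right)^{-1}(-\widetilde\nu_t)$ is a smooth $t$-family of sections of $\widetilde{T\R^n}_0$ restricting to $w_t$ away from $E_0$; this is precisely the statement that $w_t$ is cone-smooth, and a short check shows its flow preserves $E_0$ and descends to a cone-diffeomorphism. \textbf{(b) $w_t$ is $1$-homogeneous.} Writing $m_\lambda$ for dilation by $\lambda>0$, the $1$-homogeneity of $\widetilde\Psi_t$ forces $m_\lambda^*\mu_t=\lambda^n\mu_t$ and $m_\lambda^*\nu_t=\lambda^n\nu_t$, whence $m_\lambda^*w_t=w_t$, i.e.\ $w_t(\lambda x)=\lambda w_t(x)$; in particular $w_t(0)=0$, so $\varphi_t$ fixes $0$, preserves rays, is $1$-homogeneous, and $\Psi_t=\widetilde\Psi_t\circ\varphi_t$ is a composition of $1$-homogeneous maps, hence $1$-homogeneous.

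Uniqueness is obtained as in~\cite{solomon}: two cone-smooth horizontal liftings of $(\Lambda_t)_t$ agreeing at $t=0$ differ by a smooth path $(\rho_t)_t$ in $\diff(\R^n,\{0\})$ with $\rho_0=\id$, and subtracting the two horizontality identities and contracting against the nowhere-vanishing form $\widetilde\mu_t$ forces $\dot\rho_t\equiv0$, so $\rho_t\equiv\id$. I expect the only real work to be point~(a): in the linear case the equation $i_{w_t}\mu_t=-\nu_t$ is solved by inverting a constant-coefficient exterior multiplication, whereas here one must verify that the solution extends smoothly across the exceptional sphere --- which is exactly where positivity of the tangent cone at the cone point enters --- together with the routine but slightly fiddly fact that the flow of a $1$-homogeneous cone-smooth vector field vanishing at $0$ is a cone-diffeomorphism (concretely, lift $w_t$ to an honest vector field on $\widetilde{\R^n}_0$ tangent to $E_0$, take its flow there, and push down).
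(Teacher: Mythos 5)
Your proposal is correct and follows exactly the route the paper intends: the paper offers no separate proof of this lemma, presenting it only as the analog of Lemma~\ref{lemma:there is always a horizontal lifting}, and your argument is precisely that proof with ``linear'' replaced by ``cone-smooth and $1$-homogeneous.'' The two points you isolate --- invertibility of contraction against the blowup volume form across the exceptional sphere (via positivity of the tangent planes $T_{\widetilde p}\Lambda_t$) and $1$-homogeneity of $w_t$ forcing its flow to be a $1$-homogeneous cone-diffeomorphism --- are exactly the verifications needed to make the analogy rigorous.
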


This lemma enables us to parallel transport homogeneous functions along paths in $\lagcones^+(n).$ Hence, the notion of geodesics in $\lagcones^+(n)$ makes sense. The following lemma describes the relation between geodesics in $\lagcones^+(n)$ and special Lagrangian cylinders. For the purposes of this work, we may restrict the discussion to geodesics with linear endpoints. For a Lagrangian cone $\Lambda$ and a homogeneous function $h\in\mathcal{H}(\Lambda),$ we say $h$ is \emph{positive} (\emph{negative}) if we have $h(p)>0\;(h(p)<0)$ for $p\in\Lambda\setminus\{0\}.$ For $\Lambda_0, \Lambda_1 \in \laggrass^+(n),$ the positive real numbers $\R_{>0}$ act on the space of Lagrangian cylinders $\mathcal{SLC}\left(S^{n-1};\Lambda_0,\Lambda_1\right)$ by rescaling $[\chi] \mapsto [s\chi], \; s \in \R_{>0}.$
	
	\begin{lemma}
		\label{lemma:geodesic of cones}
		Let $\Lambda_0,\Lambda_1\in\laggrass^+(n).$ Then the cylindrical transform gives a bijection from geodesics in $\lagcones^+(n)$ between $\Lambda_0$ and $\Lambda_1$ with positive/negative derivative to $\R_{>0}$ orbits in $\mathcal{SLC}\left(S^{n-1};\Lambda_0,\Lambda_1\right)$ consisting of imaginary special Lagrangian cylinders nowhere tangent to the Euler vector field.
	\end{lemma}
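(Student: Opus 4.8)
The plan is to exploit the dictionary already established in Section~\ref{subsection:special Lagrangian cylinders and their relation to geodesics} between geodesics of positive Lagrangians and $1$-parameter families of imaginary special Lagrangian cylinders, and to check that in the cone setting this dictionary respects the $\R_{>0}$-scaling action and the ``nowhere tangent to the Euler field'' condition. First I would set up the forward map. Given a geodesic $(\Lambda_t)_{t\in[0,1]}$ in $\lagcones^+(n)$ from $\Lambda_0$ to $\Lambda_1$ with, say, negative derivative $(h_t)_t,$ choose a $1$-homogeneous horizontal lifting $(\Psi_t:\R^n\to\C^n,0)_t$ via Lemma~\ref{lemma:cone horizontal lifting}, and let $h=h_t\circ\Psi_t:\R^n\to\R$ be the time-independent Hamiltonian. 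Since $h$ is negative and $2$-homogeneous, every level set $h^{-1}(c)$ with $c<0$ is diffeomorphic to $S^{n-1},$ and rescaling $\R^n$ by $\lambda>0$ carries $h^{-1}(c)$ to $h^{-1}(\lambda^2 c).$ Applying Proposition~\ref{proposition:relevant results from previous paper} in the cone-smooth setting, $\Phi_c(p,t):=\Psi_t(p),$ $(p,t)\in (h^{-1}(c)\setminus\{0\})\times[0,1],$ is an imaginary special Lagrangian immersion representing a cylinder $Z_c\in\mathcal{SLC}(S^{n-1};\Lambda_0,\Lambda_1)$ with regular harmonics; by Lemma~\ref{lemma: cylinder with regular harmonics in Euclidean space} this is equivalent to $Z_c$ being nowhere tangent to the Euler vector field. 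The family $(Z_c)_{c<0}$ is exactly an $\R_{>0}$-orbit: the homogeneity of $\Psi_t$ shows $Z_{\lambda^2 c}=[s\cdot(\text{rep of }Z_c)]$ for the appropriate $s,$ so the cylindrical transform of $(\Lambda_t)_t$ is a single $\R_{>0}$-orbit, and a different level $c$ gives a cylinder in the same orbit. This defines the map and shows its image lies in the asserted set.

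Next I would construct the inverse. Start with an $\R_{>0}$-orbit $\mathcal{O}_Z$ of cylinders in $\mathcal{SLC}(S^{n-1};\Lambda_0,\Lambda_1),$ each nowhere tangent to the Euler field, hence each with regular harmonics by Lemma~\ref{lemma: cylinder with regular harmonics in Euclidean space}. Here I would invoke the inverse cylindrical transform supplied by Lemma~\ref{lemma: geodesic of small Lagrangians} (or its underlying mechanism): since $\Lambda_0,\Lambda_1$ are linear they intersect transversally at $0,$ and for each $Z\in\mathcal{O}_Z$ the parameter $s$ along the orbit plays the role of the interior-regular parameter; the associated family of cylinders sweeps out neighborhoods of $0$ in $\Lambda_0$ and $\Lambda_1,$ and produces a geodesic of open positive Lagrangians with critical locus $\{0\}.$ The point I must verify is that this geodesic is genuinely a geodesic of \emph{cones}, i.e. each $\Lambda_t$ is $\R_{>0}$-invariant. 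This follows because the $\R_{>0}$-action on the orbit $\mathcal{O}_Z$ corresponds, under the inverse transform, to the rescaling action on $\C^n$: rescaling all the cylinders in the orbit simultaneously rescales the swept Lagrangians, and since the orbit is rescaling-invariant, the swept Lagrangians must be rescaling-invariant too, hence cones. A short argument with uniqueness of horizontal liftings (Lemma~\ref{lemma:cone horizontal lifting}) and uniqueness in Lemma~\ref{lemma: geodesic of small Lagrangians} then shows the two constructions are mutually inverse, and that the sign of the Hamiltonian (positive versus negative) is preserved, matching ``positive/negative derivative'' on the one side with the single distinguished end/orientation on the cylinder side.

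Finally I would check well-definedness and bijectivity carefully: that the forward map does not depend on the choice of level $c$ or of horizontal lifting (both choices move $Z_c$ within its $\R_{>0}$-orbit, using homogeneity of $\Psi_t$ and the fact that changing $\Psi_0$ by a linear automorphism of $\Lambda_0$ does not change the cone $Z_c$ represents), and that a geodesic with positive rather than negative derivative lands in the same space of orbits but is recovered by reversing the time parameter as in Remark~\ref{rem:tri}. Injectivity is the statement that a geodesic of cones is determined by its cylindrical transform, which reduces to: $(h_t)_t$ and hence $(\Psi_t)_t$ and hence $(\Lambda_t)_t$ are recovered from the family $(Z_c)_c$ — the Hamiltonian $h$ is recovered from the cylinders up to the scaling ambiguity exactly as the relative Lagrangian flux recovers the Hamiltonian in Lemma~\ref{lemma: flux as Hamiltonian}, adapted to the homogeneous setting. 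Surjectivity onto orbits of Euler-transverse cylinders is the content of the inverse construction above.

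\medskip

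The main obstacle I anticipate is verifying the cone-invariance in the inverse direction — that the geodesic of open Lagrangians produced by the inverse cylindrical transform from an $\R_{>0}$-invariant family of cylinders is actually a geodesic of \emph{cones} rather than merely of germs of Lagrangians near $0.$ This requires matching the abstract $\R_{>0}$-action on $\mathcal{SLC}(S^{n-1};\Lambda_0,\Lambda_1)$ with the geometric rescaling of $\C^n,$ and arguing that invariance of the whole family forces $\R_{>0}$-invariance of each individual swept Lagrangian $\Lambda_t$; the homogeneity bookkeeping for the horizontal lifting (degree $1$ for $\Psi_t,$ degree $2$ for $h$) and the compatibility of the inverse-transform construction of Lemma~\ref{lemma: geodesic of small Lagrangians} with this scaling is where the real work lies. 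Everything else is a matter of transcribing the smooth-case correspondence of Section~\ref{subsection:special Lagrangian cylinders and their relation to geodesics} into the cone-smooth, scale-equivariant language.
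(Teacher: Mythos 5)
Your proposal is correct and follows essentially the same route as the paper: the forward direction is the cylinder of $c$-level sets via Proposition~\ref{proposition:relevant results from previous paper} together with Lemma~\ref{lemma: cylinder with regular harmonics in Euclidean space}, with homogeneity of the lifting identifying the different levels as a single $\R_{>0}$-orbit; the inverse direction builds the regular parameterization $\Phi(p,t,s)=s\chi(p,t)$, applies Lemma~\ref{lemma: geodesic of small Lagrangians}, and deduces cone-invariance of the resulting geodesic from the uniqueness clause of that lemma, exactly as you anticipate in your ``main obstacle'' paragraph. The only point treated slightly more explicitly in the paper is why the recovered derivative $h_t$ is definite (its level sets are the compact spheres $\Phi(S^{n-1}\times\{t\}\times\{s\})$, so the $2$-homogeneous $h_t$ is everywhere positive or everywhere negative), but this is a minor gloss on an otherwise matching argument.
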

	
	\begin{proof}
		Let $(\Lambda_t)_{t\in[0,1]}$ be a geodesic in $\lagcones^+(n)$ with positive derivative, denoted by $(h_t)_t,$ between $\Lambda_0$ and $\Lambda_1.$ As the homogeneous functions $h_t$ are positive, they admit the regular value $c$ for each $c > 0.$ Moreover, the level set $h^{-1}(c)$ is diffeomorphic to $S^{n-1}.$ Let $Z$ be the cylinder of $c$-level sets. By Proposition~\ref{proposition:relevant results from previous paper}, the cylinder $Z$ is imaginary special Lagrangian with regular harmonics. By Lemma~\ref{lemma: cylinder with regular harmonics in Euclidean space}, the cylinder $Z$ is nowhere tangent to the Euler vector field. By the homogeneity of $(h_t)_t,$ the cylinders of $c$-level sets for different values of $c$ are related by rescaling.
		
		Conversely, let $Z\in\mathcal{SLC}\left(S^{n-1};\Lambda_0,\Lambda_1\right)$ be a cylinder nowhere tangent to the Euler vector field. Let $\chi:S^{n-1}\times[0,1]\to\mathbb{C}^n$ be an immersion representing $Z$ adapted to the harmonics of $Z.$ For $s \in (0,\infty),$ let
\[
Z_s := [s\chi]
\]
be the rescaling of $Z$ by $s.$ Let
\[
U = \{Z_s\}_{s \in (0,\infty)} \subset \mathcal{SLC}\left(S^{n-1};\Lambda_0,\Lambda_1\right).
\]
Define
\[
\Phi : S^{n-1} \times [0,1] \times [0,\infty) \to \C^n
\]
by
\[
\Phi(p,t,s) = s\chi(p,t).
\]
Recalling Definition~\ref{definition: interior regularity}, since $Z$ is nowhere tangent to the Euler vector field, it follows that $\Phi$ is a regular parameterization of $U$ about $0 \in \C^n.$ So, Lemma~\ref{lemma: geodesic of small Lagrangians} gives us a unique geodesic $(\Lambda_t)_t$ of positive Lagrangians from $\Lambda_0$ to $\Lambda_1$ with cylindrical transform~$U.$ By uniqueness, this geodesic must be invariant under rescaling by $s \in (0,\infty),$ so it lies in $\lagcones^+(n).$ Let $h_t : \Lambda_t \to \R$ be given by $h_t = \deriv[\Lambda_t]{t}.$ By definition of the cylindrical transform, the level sets of $h_t$ are spheres of the form
$\Phi(S^{n-1}\times \{t\}\times \{s\})$ for $s \in (0,\infty),$ and in particular, compact. Hence, $h_t$ is either positive or negative.
	\end{proof}

The following definition is a simplified version of Definition~\ref{dfn:topology on geodesics} for the space of geodesics in $\lagcones^+(n)$ with linear endpoints.
\begin{dfn}
Let
\[
\mathfrak{Z}_{\laggrass^+(n)} := \left\{(\Lambda_0,\Lambda_1,\mathcal{Z})\left |
\begin{matrix}
\Lambda_i \in \laggrass^+(n),\; i = 0,1,\quad \Lambda_0 \cap \Lambda_1 = \{0\}, \; \\
\mathcal{Z} \subset \mathcal{SLC}(\Lambda_0,\Lambda_1) \text{ an $\R_{>0}$ orbit}
\end{matrix}
\right.\right\}.
\]
We define the $C^{k,\alpha}$ topology on $\mathfrak{Z}_{\laggrass^+(n)}$ as follows.
For
\[
\mathcal{V} \subset C^\infty(S^{n-1}\times [0,1],\C^n),
\]
an open subset in the $C^{k,\alpha}$ topology, write
\[
\mathcal{X}_{\mathcal{V}} = \left \{(\Lambda_0,\Lambda_1,\mathcal{Z}) \in \mathfrak{Z}_{\laggrass^+(n)} \left|
\begin{matrix}
\exists Z \in \mathcal{Z}, \; \exists f : S^{n-1}\times [0,1] \to \C^n \text{ representing } Z \\
\text{such that } f \in \mathcal{V}
\end{matrix}
\right.\right\}.
\]
Then, a basis for the $C^{k,\alpha}$ topology on $\mathfrak{Z}_{\laggrass^+(n)}$ is given by sets of the form $\mathcal{X}_\mathcal{V}.$  Let
\[
\mathfrak{G}_{\laggrass^+(n)} : = \left\{(\Lambda_t)_{t \in [0,1]} \left|
\begin{matrix}
(\Lambda_t)_{t \in [0,1]} \text{ is a geodesic in } \lagcones^+(n)\\ \text{ with positive/negative derivative},\\
\Lambda_0, \Lambda_1 \in \laggrass^+(n), \quad \Lambda_0 \cap \Lambda_1 = \{0\}
\end{matrix}
\right. \right\}
\]
denote the space of geodesics in $\lagcones^+(n)$ with linear endpoints intersecting transversally.
By Lemma~\ref{lemma:geodesic of cones}, the cylindrical transform gives a bijection
\[
\mathfrak{G}_{\laggrass^+(n)} \simeq \mathfrak{Z}_{\laggrass^+(n)}.
\]
So, the $C^{k,\alpha}$ topology on $\mathfrak{Z}_{\laggrass^+(n)}$ gives rise to a topology on $\mathfrak{G}_{\laggrass^+(n)},$ which we also call the $C^{k,\alpha}$ topology.
\end{dfn}

The following setting will be referred to in Lemmas~\ref{lemma: Dirichlet} and~\ref{lemma:perturbed geodesic of cones} below.
\begin{set}\label{set: one}
		Let $(\Lambda_{0,r})_{r\in[0,1]}$ and $(\Lambda_{1,r})_{r\in[0,1]}$ be smooth paths in $\laggrass^+(n)$ with
\[
\Lambda_{0,r}\cap\Lambda_{1,r}=\{0\},\;r\in[0,1].
\]
Let $(\Lambda_{t,0})_{t\in[0,1]}$ be a geodesic in $\lagcones^+(n)$ with negative derivative between $\Lambda_{0,0}$ and $\Lambda_{1,0}.$
\end{set}
\begin{ntn}\label{ntn: Weinstein}
In Setting~\ref{set: one}, abbreviate $L : = S^{n-1}\times [0,1].$	Let
\[
Z_0 = [f : L \to \C^n]\in\mathcal{SLC}\left(S^{n-1};\Lambda_{0,0},\Lambda_{1,0}\right)
\]
denote the cylinder of $-1$-level sets associated to $(\Lambda_{t,0})_t.$ Recalling Lemma~\ref{lemma:Weinstein neighborhood of cylinder}, choose a Weinstein neighborhood $(V,\psi)$ of $Z_0$ compatible with $\Lambda_{0,0}$ and $\Lambda_{1,0},$ where $V \subset T^*L$ and $\psi:V \to \C^n$ with $\psi|_L = f.$ Let $\pi_L : T^*L \to L$ denote the projection.
Let $\alpha\in(0,1).$ For $u\in\cob{2,\alpha}(L),$ let $\operatorname{Graph}(du) \subset T^*L$ denote the graph. For $u$ small enough that $\operatorname{Graph}(du) \subset V,$ let $j_u : L \to \C^n$ be given by
\[
j_u = \psi \circ \left( \pi_L|_{\operatorname{Graph}(du)}\right)^{-1}.
\]
Let $\left(\varphi_r:\mathbb{C}^n\to\mathbb{C}^n\right)_{r\in[0,1]}$ be a smooth Hamiltonian isotopy such that $\varphi_r$ carries $\Lambda_{0,0}$ to $\Lambda_{0,r}$ and $\Lambda_{1,0}$ to $\Lambda_{1,r}.$
\end{ntn}

\begin{lemma}\label{lemma: Dirichlet}
In Setting~\ref{set: one}, suppose the geodesic of positive Lagrangian cones $(\Lambda_{t,0})_{t \in [0,1]}$ extends to a family  $(\Lambda_{t,r})_{(t,r)\in[0,1]\times[0,\epsilon)}$ satisfying the following:
\begin{itemize}
\item
For $r\in[0,\epsilon),$ the path $(\Lambda_{t,r})_{t\in[0,1]}$ is a geodesic in $\lagcones^+(n)$ with negative derivative between $\Lambda_{0,r}$ and $\Lambda_{1,r}.$
\item
The geodesic $(\Lambda_{t,r})_{t\in[0,1]}$ depends continuously on $r$ with respect to the $C^{1,\alpha}$ topology on $\mathfrak{G}_{\laggrass^+(n)}.$
\end{itemize}
For $r \in [0,\epsilon)$ let $Z_r$ denote the cylinder of $-1$-level sets associated to the geodesic $(\Lambda_{t,r})_{t \in [0,1]}.$ Then, after possibly diminishing $\epsilon,$ for each $r \in [0,\epsilon),$ there exists a unique function $u_r \in C^{\infty}(L;\partial L)$ such that $\operatorname{Graph}(du) \subset V$ and
\[
\varphi_r \circ j_{u_r} : L \to \C^n
\]
represents the imaginary special Lagrangian cylinder $Z_r.$
\end{lemma}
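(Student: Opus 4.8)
The plan is to realize $\varphi_r^{-1}(Z_r)$ as a Lagrangian cylinder between $\Lambda_{0,0}$ and $\Lambda_{1,0}$ that lies, for $r$ small, in the range of the chart $u\mapsto[j_u]$ determined by the Weinstein neighborhood $(V,\psi)$, to extract a candidate $u_r$, and then to upgrade its description to the asserted regularity and boundary behavior. First I would make the convergence $Z_r\to Z_0$ explicit. Using Lemma~\ref{lemma:cone horizontal lifting}, fix a linear parameterization $\Psi_0:\R^n\to\Lambda_{0,0}$, let $(\Psi_{t,r})_{t\in[0,1]}$ be the horizontal lifting of $(\Lambda_{t,r})_t$ extending $\Psi_0$, and let $h^r:\R^n\to\R$ be the associated Hamiltonian --- a negative definite quadratic form depending continuously on $r$ in $C^{1,\alpha}$ by the continuity hypothesis. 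By Proposition~\ref{proposition:relevant results from previous paper}, the map $(p,t)\mapsto\Psi_{t,r}(p)$ on $(h^r)^{-1}(-1)\times[0,1]$ represents $Z_r$ and is adapted to its harmonics; composing with a continuously varying family of diffeomorphisms $S^{n-1}\xrightarrow{\sim}(h^r)^{-1}(-1)$ yields representatives $f_r$ of $Z_r$ with $f_0=f$ and $f_r\to f$ in $C^{1,\alpha}$. Since $(\varphi_r)_r$ is a smooth Hamiltonian isotopy with $\varphi_0=\id$ that carries $\Lambda_{i,0}$ to $\Lambda_{i,r}$, the maps $\varphi_r^{-1}\circ f_r$ also converge to $f$ in $C^{1,\alpha}$ and represent cylinders in $\mathcal{LC}(S^{n-1};\Lambda_{0,0},\Lambda_{1,0})$. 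By Lemma~\ref{lemma:Weinstein neighborhood of cylinder} and the discussion following it, the chart $\psi$ identifies a $C^{1,\alpha}$-neighborhood of $Z_0=[f]$ in $\mathcal{LC}(S^{n-1};\Lambda_{0,0},\Lambda_{1,0})$ with a neighborhood of $0$ in $\Omega_B^1(L)$; applying this, after diminishing $\epsilon$, to $\varphi_r^{-1}(Z_r)$ produces a closed $1$-form $\alpha_r\in\Omega_B^1(L)$ with $\operatorname{Graph}(\alpha_r)\subset V$ such that $\psi\circ(\pi_L|_{\operatorname{Graph}(\alpha_r)})^{-1}$ represents $\varphi_r^{-1}(Z_r)$. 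By Lemma~\ref{lemma:closed and annihilating is exact}, $\alpha_r=du_r$ for a primitive $u_r$ normalized by $u_r|_{C_0}=0$; then $u_r|_{C_1}$ is constant, $\operatorname{Graph}(du_r)\subset V$, and $\varphi_r\circ j_{u_r}$ represents $Z_r$. Uniqueness within $\cob{\infty}(L)$ is immediate, since two such functions with equal value on $C_0$ and equal differential coincide.

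It then remains to show that $u_r$ is smooth and vanishes on all of $\partial L$, so that $u_r\in C^\infty(L;\partial L)$. Smoothness is elliptic regularity: $Z_r$ is imaginary special Lagrangian, so $\varphi_r\circ j_{u_r}$ solves the second-order equation $*(\varphi_r\circ j_{u_r})^*\real\Omega=0$, whose linearization at any solution is the elliptic operator $\Delta_\rho$ by Lemma~\ref{lemma:Laplacian}~\ref{linearization is Laplacian}; bootstrapping therefore upgrades the a priori $C^{2,\alpha}$ regularity of $u_r$ to $C^\infty$. For the vanishing $u_r|_{C_1}=0$ I would argue via the relative Lagrangian flux. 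Tracking the primitive along the path $s\mapsto\varphi_{sr}^{-1}(Z_{sr})$ in $\mathcal{LC}(S^{n-1};\Lambda_{0,0},\Lambda_{1,0})$ --- which for $r$ small stays in the Weinstein chart and runs from $Z_0$ to $\varphi_r^{-1}(Z_r)$ --- identifies $u_r|_{C_1}$ with the negative of the relative Lagrangian flux of this path. Writing $Z_{r,c}$ for the cylinder of $c$-level sets of $(\Lambda_{t,r})_t$, so $Z_r=Z_{r,-1}$, the symplectic invariance of the flux (Remark~\ref{rem: flux}) and Lemma~\ref{lemma: flux as Hamiltonian} give $\operatorname{RelFlux}((\varphi_r^{-1}(Z_{r,c}))_{c\in[b_0,b_1]})=b_1-b_0=\operatorname{RelFlux}((Z_{0,c})_{c\in[b_0,b_1]})$ for $b_0<b_1<0$; combined with additivity of the flux under concatenation, this shows that for $-1<c<0$ the flux of a path from $Z_{0,c}$ to $\varphi_r^{-1}(Z_{r,c})$ is independent of $c$ and equals the flux from $Z_0$ to $\varphi_r^{-1}(Z_r)$. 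As $c\to0^-$, the homogeneity of $h^0$ and $h^r$ and the identity $\varphi_r(0)=0$ --- which holds because $\varphi_r(\Lambda_{0,0})\cap\varphi_r(\Lambda_{1,0})=\Lambda_{0,r}\cap\Lambda_{1,r}=\{0\}$ --- confine both cylinders to a ball of radius $O(\sqrt{|c|})$ about the origin, so a short connecting path has flux $O(|c|)\to0$; being constant, this flux is $0$, and hence $u_r|_{C_1}=0$.

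The convergence estimates, the passage through the Weinstein chart, and the elliptic bootstrap are routine. The step I expect to be the main obstacle is the last one: identifying the boundary constant $u_r|_{C_1}$ with a relative Lagrangian flux, using the scaling structure of geodesics of cones through Lemma~\ref{lemma: flux as Hamiltonian} to see that this flux does not depend on the chosen level, and then degenerating the level to the cone vertex to conclude that it vanishes.
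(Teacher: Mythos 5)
Your proposal is correct and follows essentially the same route as the paper: pass to the Weinstein chart to extract $u_r\in\cob{\infty}(L)$ with $du_r$ annihilating the boundary, then use symplectic and homotopy invariance of the relative Lagrangian flux together with the scaling structure of the cylinders of $c$-level sets (Lemma~\ref{lemma: flux as Hamiltonian}) to show the boundary constant on $C_1$ vanishes. The only cosmetic difference is that the paper runs the two-parameter homotopy all the way to the degenerate point-cylinder at the cone vertex, where the flux of the $r$-path is manifestly zero, whereas you stop at level $c<0$ and close with an explicit $O(|c|)$ estimate as $c\to 0^-$; both hinge on the same facts, namely $\varphi_r(0)=0$ and the $2$-homogeneity of the Hamiltonians.
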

\begin{proof}
We use Notation~\ref{ntn: Weinstein}. After possibly shrinking $V,$ we may assume that each point of $L$ has a neighborhood $U$ such that $\psi|_{V \cap \pi_L^{-1}(U)}$ is a diffeomorphism onto its image.

Let $(f_r : L \to \C^n)_{r \in [0,\epsilon)}$ be a $C^{1,\alpha}$ continuous family of smooth imaginary special Lagrangian immersions representing $Z_r.$ After possibly shrinking $\epsilon,$ we claim that
\[
\varphi_r^{-1}\circ f_r(L) \subset \psi(V).
\]
Here, the set $\psi(V)$ is not open in $\C^n$ because $V$ is a manifold with boundary. Indeed, $V$ is an open neighborhood of the zero section of $T^*L$ and $L$ is a manifold with boundary. Nonetheless, for $i = 0,1,$ we have $\varphi_r^{-1}\circ f_r(S^{n-1}\times \{i\}) \subset \Lambda_{i,0}.$ Moreover, $\varphi_r^{-1} \circ f_r$ is close in the $C^{1}$ topology to $f_0$ and $f_0(L) \subset \psi(V)$ and is not tangent to the boundary. So, the claim follows.

For $U \subset L,$ abbreviate $\widetilde U = V \cap \pi_L^{-1}(U).$ Since each point of $L$ has a neighborhood $U$ such that $\psi|_{\widetilde U}$ is a diffeomorphism onto its image, and $L$ is compact, we can choose $\{U_j\}_{j = 1}^N$ an open cover of $L$ such that $\psi|_{\widetilde U_j}$ is a diffeomorphism onto its image. Let $\{T_j\}_j$ be an open cover of $L$ such that $\overline T_j \subset U_j.$ After possibly shrinking $\epsilon,$ we may assume that $f_r(\overline T_j) \subset \psi(\widetilde U_j).$
For $r \in [0,\epsilon),$ define $\kappa_r : L \to L$ by
\[
\kappa_r|_{T_j} = \pi_L \circ (\psi|_{\widetilde U_j})^{-1} \circ \varphi_r^{-1} \circ f_r|_{T_j}.
\]
Since $\kappa_0 = \id_L,$ after possibly shrinking $\epsilon,$ we may assume that $\kappa_r$ is a diffeomorphism for $r \in [0,\epsilon).$ By Lemma~\ref{lemma:closed and annihilating is exact}, there exists a unique $u_r \in \cob{\infty}(L)$ such that
\[
j_{u_r} = \varphi_r^{-1} \circ f_r \circ \kappa_r^{-1}.
\]

We claim that for $r \in [0,\epsilon)$ we have $u_r \in C^\infty(L;\partial L).$ Indeed, let $f_{s,r}: L \to \C^n$ be given by
\[
f_{s,r}(p) := s f_r(p), \qquad s \in [0,1],\quad r \in [0,\epsilon),\quad p \in L.
\]
Since for $r \in [0,\epsilon)$ the Hamiltonian of the geodesic $(\Lambda_{t,r})_t$ is $2$-homogeneous, it follows that for $s \in (0,1],$
\[
Z_{s,r} := [f_{s,r}] \in\mathcal{SLC}\left(S^{n-1};\Lambda_{0,r},\Lambda_{1,r}\right)
\]
is the cylinder of $-s^2$ level sets of the geodesic $(\Lambda_{t,r})_t.$ By Lemma~\ref{lemma: flux as Hamiltonian}, we have
\begin{equation}\label{eq:flux-1}
\operatorname{RelFlux}\left((Z_{s,r})_{s \in (0,1]}\right) = -1.
\end{equation}
Consider the two-parameter family of Lagrangian cylinders,
\[
Z'_{s,r} := [\varphi_r^{-1}\circ f_{s,r}] \in \mathcal{LC}\left(S^{n-1};\Lambda_{0,0},\Lambda_{1,0}\right), \qquad (s,r) \in (0,1]\times [0,\epsilon).
\]
By Remark~\ref{rem: flux} \eqref{it: flux symplectic invariant} and equation~\eqref{eq:flux-1}, we have
\[
\operatorname{RelFlux}\left((Z'_{s,r})_{s \in (0,1]}\right) = -1, \qquad r \in [0,\epsilon).
\]
Since $f_{0,r} = 0$ for $r \in [0,\epsilon),$ it follows from Remark~\ref{rem: flux} \eqref{it: flux homotopy invariant} that
\[
\operatorname{RelFlux}\left((Z'_{1,r})_{r \in [0,r_1]}\right) = 0, \qquad r_1 \in [0,\epsilon).
\]
Let $A_r \in \R$ be the unique constant such that $\deriv[u_r]{r} |_{S^{n-1}\times\{1\}} \equiv A_r.$ By Definition~\ref{dfn: flux}, we have
\[
\operatorname{RelFlux}\left((Z'_{1,r})_{r \in [0,r_1]}\right) =  \int_0^{r_1}A_r.
\]
Combining the preceding two equations, we obtain
\[
\int_0^{r_1} A_r = 0, \qquad r_1 \in [0,\epsilon).
\]
Differentiating with respect to $r_1,$ it follows from the fundamental theorem of calculus that $A_r = 0$ for all $r \in [0,\epsilon).$ Since $u_0 = 0,$ we obtain $u_r \in C^{\infty}(L;\partial L)$ as desired.
\end{proof}
	
The following lemma is a simple variant of~\cite[Theorem~1.6]{cylinders}.
	
	\begin{lemma}
		\label{lemma:perturbed geodesic of cones}
In Setting~\ref{set: one}, there exists an $\epsilon>0$ such that the geodesic $(\Lambda_{t,0})_t$ extends to a smooth family $(\Lambda_{t,r})_{(t,r)\in[0,1]\times[0,\epsilon)}$ such that for $r\in[0,\epsilon),$ the path $(\Lambda_{t,r})_{t\in[0,1]}$ is a geodesic in $\lagcones^+(n)$ with negative derivative between $\Lambda_{0,r}$ and $\Lambda_{1,r}.$ Moreover, if $(\Lambda'_{t,r})_{(t,r) \in [0,1]\times [0,\epsilon)}$ is another such family of geodesics, but with the dependence on $r$ being a priori only continuous with respect to the $C^{1,\alpha}$ topology on $\mathfrak{G}_{\laggrass^+(n)},$ there exists $\epsilon' > 0$ such that $\Lambda'_{t,r} = \Lambda_{t,r}$ for $(t,r) \in [0,1]\times [0,\epsilon').$
	\end{lemma}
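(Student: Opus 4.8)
The plan is to transport the problem to imaginary special Lagrangian cylinders via the cylindrical transform and then solve an elliptic boundary value problem by the implicit function theorem. By Lemma~\ref{lemma:geodesic of cones} together with Lemma~\ref{lemma: cylinder with regular harmonics in Euclidean space}, constructing a geodesic in $\lagcones^+(n)$ with negative derivative between $\Lambda_{0,r}$ and $\Lambda_{1,r}$ is equivalent to constructing an imaginary special Lagrangian cylinder in $\mathcal{SLC}(S^{n-1};\Lambda_{0,r},\Lambda_{1,r})$ nowhere tangent to the Euler vector field. Using the Weinstein coordinates of Notation~\ref{ntn: Weinstein}, I would fix $\alpha\in(0,1)$ and define, for small $u$,
\[
F(u,r):=*\left(\varphi_r\circ j_u\right)^*\real\Omega,
\]
regarded as a smooth map $F:C^{2,\alpha}(L;\partial L)\times[0,\epsilon_0)\to C^{0,\alpha}(L)$, where $C^{2,\alpha}(L;\partial L)$ denotes the $C^{2,\alpha}$ functions on $L=S^{n-1}\times[0,1]$ vanishing on $\partial L$. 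For such $u$ the map $\varphi_r\circ j_u$ represents a Lagrangian cylinder in $\mathcal{LC}(S^{n-1};\Lambda_{0,r},\Lambda_{1,r})$, and it is imaginary special Lagrangian precisely when $F(u,r)=0$. Since $\varphi_0\circ j_0$ coincides with $f$, which represents the imaginary special Lagrangian cylinder $Z_0$ of Notation~\ref{ntn: Weinstein}, we have $F(0,0)=0$; and by the construction of the Weinstein neighborhood and Lemma~\ref{lemma:Laplacian}~\ref{linearization is Laplacian}, the partial derivative $D_uF(0,0)$ is the operator $\Delta_\rho$, which by Lemma~\ref{lemma:Laplacian}~\ref{elliptic} and Schauder theory is an isomorphism $C^{2,\alpha}(L;\partial L)\to C^{0,\alpha}(L)$.

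The implicit function theorem in Banach spaces then yields an $\epsilon>0$ and a smooth family $r\mapsto u_r\in C^{2,\alpha}(L;\partial L)$ with $u_0=0$ and $F(u_r,r)=0$, unique among solutions near $0$; interior and boundary Schauder estimates upgrade this to joint smoothness of $(p,t,r)\mapsto\varphi_r\circ j_{u_r}(p,t)$ and smoothness of $u_r$. Set $Z_r:=[\varphi_r\circ j_{u_r}]\in\mathcal{SLC}(S^{n-1};\Lambda_{0,r},\Lambda_{1,r})$. The cylinder $Z_0$ is the cylinder of $-1$-level sets of $(\Lambda_{t,0})_t$, so it has regular harmonics by Proposition~\ref{proposition:relevant results from previous paper}~\ref{item:harmonic}, hence is nowhere tangent to the Euler field by Lemma~\ref{lemma: cylinder with regular harmonics in Euclidean space}, and in particular its image avoids $0$. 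Since non-tangency to the Euler field, away from $0$, is a $C^1$-open condition and $Z_r\to Z_0$ in $C^1$ as $r\to0$, after shrinking $\epsilon$ every $Z_r$ is nowhere tangent to the Euler field. By Lemma~\ref{lemma:geodesic of cones}, the $\R_{>0}$-orbit of $Z_r$ is the cylindrical transform of a geodesic $(\Lambda_{t,r})_t$ in $\lagcones^+(n)$ between $\Lambda_{0,r}$ and $\Lambda_{1,r}$ with positive or negative derivative; as the derivative varies continuously with $r$ and is negative at $r=0$, shrinking $\epsilon$ once more makes it negative throughout. Smoothness of $(\Lambda_{t,r})$ in $(t,r)$ follows by tracing through the reconstruction of a geodesic from its cylindrical transform (the proof of Lemma~\ref{lemma:geodesic of cones} via Lemma~\ref{lemma: geodesic of small Lagrangians}, together with the smooth dependence of the fundamental harmonic on $r$), and since $u_0=0$ recovers $Z_0$, the uniqueness clause of Lemma~\ref{lemma:geodesic of cones} identifies the geodesic at $r=0$ with $(\Lambda_{t,0})_t$.

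For the rigidity statement, let $(\Lambda'_{t,r})_{(t,r)\in[0,1]\times[0,\epsilon)}$ be another such family, with $r$-dependence only $C^{1,\alpha}$-continuous. Then $(\Lambda'_{t,r})$ satisfies the hypotheses of Lemma~\ref{lemma: Dirichlet}, so its cylinders of $-1$-level sets $Z'_r$ are represented by $\varphi_r\circ j_{u'_r}$ with $u'_r\in C^\infty(L;\partial L)$, and $u'_0=0$ since $Z'_0=Z_0$. As $\varphi_r\circ j_{u'_r}$ is imaginary special Lagrangian, $F(u'_r,r)=0$; by uniform Schauder estimates and the $C^1$-continuity in $r$, one has $u'_r\to0$ in $C^{2,\alpha}$ as $r\to0$, so for $r$ small $u'_r$ lies in the neighborhood of $0$ on which the implicit-function solution is unique, forcing $u'_r=u_r$. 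Hence $Z'_r=Z_r$ for small $r$, so $(\Lambda'_{t,r})_t$ and $(\Lambda_{t,r})_t$ have the same cylindrical transform and therefore, by the bijection of Lemma~\ref{lemma:geodesic of cones}, coincide; that is, $\Lambda'_{t,r}=\Lambda_{t,r}$ on $[0,1]\times[0,\epsilon')$ for some $\epsilon'>0$.

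The main obstacle is the choice of function space. On the full space $\cob{2,\alpha}(L)$ of boundary-compatible deformations, the linearization $\Delta_\rho$ has a one-dimensional kernel (the fundamental harmonic), so the implicit function theorem does not apply directly; imposing the Dirichlet condition $u|_{\partial L}=0$ restores invertibility via Lemma~\ref{lemma:Laplacian}~\ref{elliptic}, but one must then know that this restriction discards no relevant solution --- precisely, that the cylinders of $-1$-level sets of the sought geodesics are represented by $j_u$ with $u$ Dirichlet. This is exactly the content of Lemma~\ref{lemma: Dirichlet}, whose proof rests on the relative Lagrangian flux computation and is where the essential work lies. The remaining steps here --- the implicit function theorem and the persistence of the open conditions (non-tangency to the Euler field and negativity of the derivative) --- are comparatively routine.
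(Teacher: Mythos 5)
Your proposal is correct and follows essentially the same route as the paper: the same implicit function theorem argument for $\mathcal{F}(u,r)=*j_u^*\varphi_r^*\real\Omega$ on the Dirichlet space $C^{2,\alpha}(L;\partial L)$, elliptic regularity, persistence of non-tangency to the Euler field, and Lemma~\ref{lemma: Dirichlet} to place the competing family's cylinders in the form $j_{u'_r}$ with $u'_r$ Dirichlet for the uniqueness clause. Your added remark that $u'_r\to 0$ in $C^{2,\alpha}$ (so that $u'_r$ lands in the neighborhood where the implicit-function solution is unique) is a point the paper leaves implicit, and your diagnosis of Lemma~\ref{lemma: Dirichlet} as the place where the essential work lies is accurate.
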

	
	\begin{proof}
Recall Notation~\ref{ntn: Weinstein}. Let $0\in\mathcal{W}\subset C^{2,\alpha}(L;\partial L),$ be an open neighborhood such that for $u \in \mathcal{W}$ we have $\operatorname{Graph}(du) \subset V.$ Define the differential operator
		\[
		\mathcal{F}:\mathcal{W}\times[0,1]\to C^\alpha(L),\qquad (u,r)\mapsto*j_u^*\varphi_r^*\real\Omega.
		\]
		Then $\mathcal{F}$ is smooth with $\mathcal{F}(0,0)=0$ and $\mathcal{F}(u,r) = 0$ if and only if the immersion $\varphi_r \circ j_u : L \to \C^n$ is imaginary special Lagrangian. By Lemma~\ref{lemma:Laplacian}, the linearization of $\mathcal{F}$ at $(0,0)$ in the directions of $\mathcal{W}$ is equal to the Riemannian Laplacian, which is an isomorphism $C^{2,\alpha}(L;\partial L)\to C^\alpha(L).$ By the implicit function theorem, shrinking $\mathcal{W}$ if necessary, for some $\epsilon>0,$ there exists a smooth map $k:[0,\epsilon)\to\mathcal{W}$ such that, for $r\in[0,\epsilon),$ the function $k(r)$ is the unique element in $\mathcal{W}$ satisfying $\mathcal{F}(k(r),r)=0.$ By elliptic regularity (e.g.\ \cite[Chapter 17]{gilbarg-trudinger}), all the functions $k(r)$ are smooth. For $r\in[0,\epsilon),$ we have $Z_r:=[\varphi_r \circ j_{k(r)} : L \to \C^n]\in\mathcal{SLC}\left(S^{n-1};\Lambda_{0,r},\Lambda_{1,r}\right).$ Diminishing $\epsilon$ if necessary, the cylinders $(Z_r)_{r \in [0,\epsilon)}$ are nowhere tangent to the Euler vector field. For $r\in[0,\epsilon),$ let $(\Lambda_{t,r})_{t\in[0,1]}$ denote the geodesic associated to the cylinder $Z_r$ by Lemma~\ref{lemma:geodesic of cones}. Then $(\Lambda_{t,r})_{(t,r)\in[0,1]\times[0,\epsilon)}$ is the desired family.

To prove the uniqueness claim, let $u_r \in C^{\infty}(L;\partial L), r \in [0,\epsilon'),$ be the family of functions associated $\Lambda'_{t,r}$ as in Lemma~\ref{lemma: Dirichlet}. Then $\mathcal{F}(u_r,r) = 0,$ so by the uniqueness of $k(r),$ we have $u_r = k(r).$
	\end{proof}
	
	\begin{prop}
		\label{proposition:geodesic is in fact linear}
		Let $(\Lambda_{0,r})_{r\in[0,1]}$ and $(\Lambda_{1,r})_{r\in[0,1]}$ be smooth paths in $\laggrass^+(n)$ with $\Lambda_{0,r}\cap\Lambda_{1,r}=\{0\},\;r\in[0,1].$ Let $(\Lambda_{t,r})_{(t,r)\in[0,1]\times[0,1]}$ be a family in $\lagcones^+(n)$ such that for $r\in[0,1]$ the path $(\Lambda_{t,r})_{t\in[0,1]}$ is a geodesic with negative derivative between $\Lambda_{0,r}$ and $\Lambda_{1,r}$ depending continuously on $r$ with respect to the $C^{1,\alpha}$ topology on $\mathfrak{G}_{\laggrass^+(n)}.$ Suppose the initial geodesic $(\Lambda_{t,0})_t$ lies in $\laggrass^+(n).$ Then the entire given family lies in $\laggrass^+(n).$
	\end{prop}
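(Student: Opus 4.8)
The plan is to run a connectedness argument in the deformation parameter $r.$ Put
\[
I := \{\, r \in [0,1] \mid (\Lambda_{t,r})_{t \in [0,1]} \text{ lies in } \laggrass^+(n) \,\},
\]
so $0 \in I$ by hypothesis, and since $[0,1]$ is connected it suffices to prove that $I$ is closed and open. Closedness holds because, for each fixed $t,$ the cone $\Lambda_{t,r}$ depends continuously on $r$ --- this is part of the $C^{1,\alpha}$-continuity of $r \mapsto (\Lambda_{t,r})_t$ in $\mathfrak{G}_{\laggrass^+(n)},$ which locally supplies representing immersions varying continuously in $C^{1,\alpha}$ --- and because $\laggrass^+(n)$ is a closed subset of $\lagcones^+(n),$ linearity of a cone being preserved under $C^1$ limits. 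Thus a limit of linear geodesics is again linear.

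For openness, fix $r_0 \in I.$ The geodesic $(\Lambda_{t,r_0})_t$ then lies in $\laggrass^+(n);$ it has negative derivative by hypothesis, which for a linear geodesic means that the derivative is a negative definite quadratic form, and $\Lambda_{0,r_0} \cap \Lambda_{1,r_0} = \{0\}.$ Hence Proposition~\ref{proposition:space of good Lags is open} applies and yields open neighborhoods $\Lambda_{i,r_0} \in U_i \subset \laggrass^+(n)$ and a smooth map $\Gamma : [0,1] \times U_0 \times U_1 \to \laggrass^+(n)$ such that $(\Gamma(t,u_0,u_1))_t$ is a geodesic from $u_0$ to $u_1$ and $\Gamma(\cdot,\Lambda_{0,r_0},\Lambda_{1,r_0}) = (\Lambda_{t,r_0})_t.$ After shrinking $U_0$ and $U_1,$ all these geodesics still have negative definite derivative, since the derivative depends continuously on $(u_0,u_1,t),$ negative definiteness is an open condition, and $[0,1]$ is compact. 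Consequently, for $r$ near $r_0$ with $\Lambda_{i,r} \in U_i$ the family
\[
\Lambda^{\mathrm{lin}}_{t,r} := \Gamma(t,\Lambda_{0,r},\Lambda_{1,r})
\]
consists of linear geodesics --- in particular geodesics in $\lagcones^+(n)$ --- with negative derivative between $\Lambda_{0,r}$ and $\Lambda_{1,r},$ it agrees with $(\Lambda_{t,r_0})_t$ at $r = r_0,$ and it depends smoothly, hence $C^{1,\alpha}$-continuously, on $r.$

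Now apply Lemma~\ref{lemma:perturbed geodesic of cones} in Setting~\ref{set: one} with base point relabeled to $r_0$: take the endpoint paths $r \mapsto (\Lambda_{0,r_0+r},\Lambda_{1,r_0+r})$ --- and, when $r_0 > 0,$ also the reversed paths $r \mapsto (\Lambda_{0,r_0-r},\Lambda_{1,r_0-r})$ --- together with the central geodesic $(\Lambda_{t,r_0})_t.$ The existence part of the lemma produces a smooth family of geodesics of positive Lagrangian cones extending $(\Lambda_{t,r_0})_t,$ and the uniqueness part asserts that any $C^{1,\alpha}$-continuous family of such geodesics with negative derivative and the prescribed linear endpoints, extending $(\Lambda_{t,r_0})_t,$ coincides with it for $r$ near $r_0.$ Both the given family $(\Lambda_{t,r})_t$ and the family $(\Lambda^{\mathrm{lin}}_{t,r})_t$ are competitors of this kind, so they coincide on a neighborhood of $r_0$ in $[0,1].$ In particular $(\Lambda_{t,r})_t = (\Lambda^{\mathrm{lin}}_{t,r})_t$ lies in $\laggrass^+(n)$ there, so $I$ is open, and therefore $I = [0,1],$ which is the assertion of the proposition.

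I expect the main obstacle to be the verification that the family $(\Lambda^{\mathrm{lin}}_{t,r})_t$ produced by Proposition~\ref{proposition:space of good Lags is open} is an admissible competitor in the uniqueness clause of Lemma~\ref{lemma:perturbed geodesic of cones}: that it consists of geodesics of cones (immediate, since linear subspaces are cones), that its derivative stays negative (from the openness of negative definiteness together with compactness in $t$), and --- most delicately --- that it varies $C^{1,\alpha}$-continuously with $r$ in the sense of $\mathfrak{G}_{\laggrass^+(n)},$ which requires tracing the finite-dimensional smoothness of $\Gamma$ through the passage to the cylinder of $-1$-level sets and a representing immersion. Once this is in place, the uniqueness clause of Lemma~\ref{lemma:perturbed geodesic of cones} does all the work.
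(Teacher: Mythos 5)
Your proposal is correct and follows essentially the same route as the paper: the same connectedness argument in $r$, with closedness from $\laggrass^+(n)$ being closed in $\lagcones^+(n)$, and openness from Proposition~\ref{proposition:space of good Lags is open} combined with the uniqueness clause of Lemma~\ref{lemma:perturbed geodesic of cones}. The paper's proof is just a terser version of yours (it also notes via Lemma~\ref{lemma:negative derivative implies Maslov zero} that the Maslov index is $0$ for all $r$, which your direct application of Proposition~\ref{proposition:space of good Lags is open} to the linear geodesic at $r_0$ renders unnecessary).
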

	
	\begin{proof}
		Let $A\subset[0,1]$ denote the set consisting of values of $r$ such that the geodesic $(\Lambda_{t,r})_t$ lies in $\laggrass^+(n).$ By assumption, $A$ is non-empty. As $\laggrass^+(n)\subset\lagcones^+(n)$ is a closed subset, $A$ is closed in $[0,1].$ By Lemma~\ref{lemma:negative derivative implies Maslov zero}~\ref{item:Maslov} we have $m(\Lambda_{0,0},\Lambda_{1,0})=0,$ and it follows that $m(\Lambda_{0,r},\Lambda_{1,r})=0$ for $r\in[0,1].$ By Proposition~\ref{proposition:space of good Lags is open} and the uniqueness part of Lemma~\ref{lemma:perturbed geodesic of cones}, the set $A$ is open in $[0,1].$
	\end{proof}

\section{Blowing up at a critical point of a geodesic}\label{sec:blowup}
Throughout this section, let $(X,\omega,J,\Omega)$ denote a Calabi-Yau manifold as in Definition~\ref{definition:Calabi-Yau}. We give a blowup procedure that is used to establish $C^{1,1}$ regularity of geodesics of positive Lagrangians in $X$.

\subsection{Geodesic of tangent cones}
	The following lemma, needed for the proof of Theorem~\ref{theorem:perturbed C1 geodesic}, is the original observation behind this article. We make use of Notation~\ref{ntn:critical point}.
	
	\begin{lemma}
		\label{lemma:geodesic of tangent cones}
		Let $(\Lambda_t)_{t\in[0,1]}$ be a geodesic of positive Lagrangians in $X$ with derivative $(h_t)_t.$ Suppose the endpoints $\Lambda_0$ and $\Lambda_1$ are smoothly embedded and let $q\in\crit((\Lambda_t)_t).$ Then, the path $(TC_{\hat q_t}\Lambda_t)_{t\in[0,1]}$ is a geodesic of positive Lagrangian cones in $T_qX$ with derivative given by
		\[
		\left(\deriv{t}TC_{\hat q_t}\Lambda_t\right)(v)=\frac{1}{2}\nabla_vdh_t(v),\quad t\in[0,1],\;v\in TC_{\hat q_t}\Lambda_t.
		\]
Moreover, if $(\Psi_t : L \to X,S)_t$ is a horizontal lifting of $(\Lambda_t)_t,$ and $p \in L$ with $\Psi_t(p) = q,$ then the family of cone derivatives
\[
(d \Psi_t)_p : T_p L \to T_qX, \qquad t \in [0,1],
\]
is a horizontal lifting of $(TC_{\hat q_t}\Lambda_t)_{t\in[0,1]}.$
	\end{lemma}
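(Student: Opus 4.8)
The plan is to pick a horizontal lifting $(\Psi_t:L\to X,S)_t$ of $(\Lambda_t)_t$ and use it to produce an explicit horizontal lifting of $(TC_{\hat q_t}\Lambda_t)_t.$ By Notation~\ref{ntn:critical point} there is a unique $p\in L$ with $\Psi_t(p)=q$ for all $t,$ and by Remark~\ref{rem:critical point} the point $p$ is a critical point of the time-independent Hamiltonian $h:=h_t\circ\Psi_t:L\to\R.$ Set $\Phi_t:=d(\Psi_t)_p:T_pL\to T_qX,$ the cone derivative, so that $TC_{\hat q_t}\Lambda_t=[(\Phi_t,\{0\})]$ by definition. Since the family $(\Psi_t\circ\pi)_t$ is jointly smooth on $\widetilde L_S$ and cone-immersive along $E_p,$ Lemma~\ref{lemma:blowupdifferential} shows that $(\Phi_t)_t$ is a smooth family of cone-immersions of $T_pL$ into $T_qX,$ and because $\Lambda_t$ is a positive Lagrangian cone-immersed submanifold, each $TC_{\hat q_t}\Lambda_t$ is a positive Lagrangian cone in $T_qX$ with its standard Calabi-Yau structure; thus $(TC_{\hat q_t}\Lambda_t)_t$ is a smooth path of positive Lagrangian cones. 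The assertion then reduces to two points: (i) $(\Phi_t)_t$ is a horizontal lifting, and (ii) the derivative of $(TC_{\hat q_t}\Lambda_t)_t$ is the stated $k_t(v)=\tfrac12\nabla_v dh_t(v).$ Given these, $k_t\circ\Phi_t$ is time-independent because $h$ is, which is exactly the second half of the geodesic equation, so $(TC_{\hat q_t}\Lambda_t)_t$ is a geodesic with lifting $(\Phi_t)_t$ and the claimed derivative; the last sentence of the lemma is then merely a restatement.

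To reduce everything to ordinary calculus I would fix a polar coordinate map $\kappa:S^{n-1}\times[0,\epsilon)\to L$ centered at $p$ with associated section $\sigma$ of $T_pL\setminus\{0\}\to\orientedprojective(T_pL),$ shrinking $\epsilon$ so $\kappa(S^{n-1}\times(0,\epsilon))$ avoids $S\setminus\{p\}.$ Since $\kappa$ lifts through the blowup, $\Upsilon_t:=\Psi_t\circ\kappa$ is smooth jointly in $t,$ with $\Upsilon_t(r,0)=q,$ $\pderiv{s}\Upsilon_t(r,0)=\Phi_t(\sigma(r)),$ and, crucially, $\deriv{t}\Upsilon_t(r,0)=0$ since $\Upsilon_t(r,0)$ does not depend on $t.$ Writing $G_t(r,s):=s\,\Phi_t(\sigma(r)),$ a polar parameterization of $TC_{\hat q_t}\Lambda_t,$ one gets in any chart at $q$ the first-order agreement $\Upsilon_t=G_t+O(s^2),$ $\pderiv{s}\Upsilon_t=\pderiv{s}G_t+O(s),$ $\pderiv{r^j}\Upsilon_t=\pderiv{r^j}G_t+O(s^2),$ $\deriv{t}\Upsilon_t=\deriv{t}G_t+O(s^2),$ the last three because $\Upsilon_t$ and all its $r$- and $t$-derivatives vanish at $s=0.$ Pulling back the horizontality identity $i_{\deriv{t}\Psi_t}\real\Omega=0$ of $(\Psi_t)_t$ by $\kappa$ gives $i_{\deriv{t}\Upsilon_t}\real\Omega\equiv0$ on $S^{n-1}\times(0,\epsilon).$ Evaluating this $(n-1)$-form on the $n$ coordinate frames, each component equals the corresponding component of $i_{\deriv{t}G_t}\real\Omega_q$ up to an error of higher order in $s$; since $i_{\deriv{t}\Upsilon_t}\real\Omega\equiv0$ while $i_{\deriv{t}G_t}\real\Omega_q$ is homogeneous in $s,$ dividing by the leading power of $s$ and letting $s\to0$ forces $i_{\deriv{t}G_t}\real\Omega_q\equiv0.$ Hence $i_{\deriv{t}\Phi_t}\real\Omega_q=0$ near $0,$ and by the degree-$1$ homogeneity of $\Phi_t$ and $\deriv{t}\Phi_t$ it vanishes on all of $T_pL\setminus\{0\};$ thus $(\Phi_t)_t$ is horizontal.

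For the derivative, by the analog of Lemma~\ref{lemma:tangent space of Lagrangian Grassmannian} given in Lemma~\ref{lemma:tangent space at a Lag cone}, $\deriv{t}TC_{\hat q_t}\Lambda_t$ is the degree-$2$ homogeneous function $k_t$ on $TC_{\hat q_t}\Lambda_t$ with $dk_t=(\Phi_t)_*\bigl(i_{\deriv{t}\Phi_t}\omega_q\bigr).$ Pulling back by $G_t$ and integrating along the ray $s\mapsto G_t(r,s)$ gives $k_t(\Phi_t\sigma(r))=\tfrac12\,\omega_q\bigl(\deriv{t}(\Phi_t\sigma(r)),\Phi_t\sigma(r)\bigr).$ On the other hand, the geodesic equation $\deriv{t}\Lambda_t=dh_t$ together with the identification of the time derivative via $\omega$ gives $i_{\deriv{t}\Psi_t}\omega=dh$ on $L;$ pulling back by $\kappa$ and contracting with $\pderiv{s}$ yields $\omega_{\Upsilon_t}\bigl(\deriv{t}\Upsilon_t,\pderiv{s}\Upsilon_t\bigr)=\partial_s(h\circ\kappa),$ an identity both of whose sides vanish at $s=0.$ Differentiating once more in $s$ and setting $s=0,$ every term carrying the factor $\deriv{t}\Upsilon_t(r,0)=0$ drops out, leaving $\omega_q\bigl(\deriv{t}(\Phi_t\sigma(r)),\Phi_t\sigma(r)\bigr)=\partial_s^2(h\circ\kappa)(r,0).$ Finally, $\partial_s^2(h\circ\kappa)(r,0)=\nabla_{\sigma(r)}dh(\sigma(r))$ by the definition of the cone-Hessian (Definition~\ref{dfn:cone-Hessian}), and this corresponds under $\Phi_t$ to $\nabla_vdh_t(v)$ for $v=\Phi_t(\sigma(r)),$ using $d(h_t)_{\hat q_t}\circ d(\Psi_t)_p=dh_p.$ Combining, $k_t(v)=\tfrac12\nabla_vdh_t(v),$ as claimed.

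\textbf{Main obstacle.} The technical heart is the cone-smooth/blowup bookkeeping: that $\kappa$ lifts through $\pi$ so that $\Psi_t\circ\kappa$ is smooth up to $s=0$ and $\pderiv{s}(\Psi_t\circ\kappa)(r,0)=d(\Psi_t)_p(\sigma(r));$ that horizontality of the cone-immersed lifting $(\Phi_t)_t$, which is only required in the smooth locus $T_pL\setminus\{0\},$ really follows from the $s\to0$ limit; and, above all, the clean identification $\partial_s^2(h\circ\kappa)(r,0)=\nabla_{\sigma(r)}dh(\sigma(r))=\nabla_{\Phi_t\sigma(r)}dh_t(\Phi_t\sigma(r)),$ which requires unwinding the connection-free definition of the cone-Hessian at a critical point and the cone chain rule. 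Everything else is multilinear algebra and Taylor expansion kept under control by the vanishing of $\deriv{t}\Upsilon_t$ at $s=0.$
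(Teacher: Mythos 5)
Your proposal is correct, and it rests on the same underlying idea as the paper's proof --- blow up at the critical point and identify the leading-order data in $s$ with the tangent-cone geodesic --- but the implementation is genuinely different. The paper rescales the ambient structure: it sets $\Omega_s:=s^{-n}M_s^*\mathbf{X}^*\Omega$ and $\chi_{s,t}:=M_s^{-1}\circ\mathbf{X}^{-1}\circ\Psi_t\circ\mathbf{Y}\circ\mu_s$, uses Lemma~\ref{lemma: milnor lemma} to extend everything (including the rescaled Hamiltonian $s^{-2}h\circ\mathbf{Y}\circ\mu_s$) smoothly to $s=0$, and then obtains horizontality and constancy of the Hamiltonian at $s=0$ purely by continuity from the family of honest $\Omega_s$-geodesics at $s>0$. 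You instead stay with the original Calabi--Yau structure, pull back the two defining identities $i_{\deriv{t}\Psi_t}\real\Omega=0$ and $i_{\deriv{t}\Psi_t}\omega=dh$ by a polar coordinate map $\kappa,$ and extract their leading Taylor coefficients in $s,$ exploiting that $\deriv{t}\Upsilon_t$ vanishes to first order and $h\circ\kappa$ to second order at $s=0.$ The paper's packaging buys an essentially automatic ``limit of geodesics is a geodesic'' step and is the template reused verbatim in Lemmas~\ref{lemma: blowup cylinder} and~\ref{lemma:integration of tangent cone}; your version avoids introducing the family $\Omega_s$ at the cost of the order-of-vanishing bookkeeping in the contraction $i_{\deriv{t}G_t}\real\Omega_q$ (which you handle correctly: each frame component is homogeneous in $s$ with errors one order higher, so dividing by the leading power and letting $s\to0$ is legitimate). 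Your final identification $\partial_s^2(h\circ\kappa)(r,0)=\nabla_{\sigma(r)}dh(\sigma(r))$ is exactly the computation the paper performs in the proof of Lemma~\ref{lemma:commute}, and the Euler-identity step $k_t(v)=\tfrac12 dk_t|_v(v)$ is a clean substitute for the paper's direct formula $k_{0,t}\circ\chi_{0,t}(x)=\tfrac12\partial_s^2\,h_t\circ\Psi_t\circ\mathbf{Y}(sx)|_{s=0}.$ No gaps.
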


\begin{dfn}
In the situation of Lemma~\ref{lemma:geodesic of tangent cones}, we call $(TC_{\hat q_t}\Lambda_t)_{t\in[0,1]}$ the \emph{geodesic of tangent cones} of $(\Lambda_t)_t$ at $q.$
\end{dfn}

We now introduce the notion of rescaling data, which is used in the proof of Lemma~\ref{lemma:geodesic of tangent cones} and will appear in other proofs as well.
\begin{dfn}\label{dfn:rescaling data}
Let $\Lambda_0,\Lambda_1 \subset X$ be smoothly embedded Lagrangian submanifolds intersecting at a point $q.$ \emph{Rescaling data} for $(\Lambda_0,\Lambda_1,q)$ consists of the following.
\begin{itemize}
\item
A neighborhood $q \in U \subset X.$
\item
A ball centered at zero $V \subset \C^n.$
\item
A symplectomorphism $\mathbf{X} : V \to U$ such that $\mathbf{X}(0) =q$ and $\mathbf{X}^{-1}(\Lambda_0)$ and $\mathbf{X}^{-1}(\Lambda_1)$ are contained in Lagrangian linear subspaces of $\C^n.$
\end{itemize}
For $s \geq 0,$ let $M_s^*: \C^n \to \C^n$ denote multiplication by $s,$ and abbreviate $V_s : = M_s^{-1}(V).$ In particular, $V_0 = \C^n.$ Rescaling data determines the following additional data.
\begin{itemize}
\item
Lagrangian linear subspaces $\widehat{\Lambda}_0,\widehat{\Lambda}_1\subset \C^n$ containing $\mathbf{X}^{-1}(\Lambda_0)$ and $\mathbf{X}^{-1}(\Lambda_1)$ respectively.
\item
A family of complex structures $J_s$ on $V_s$ given by $J_s := M_s^*\mathbf{X}^*J$ for $s > 0$ and $J_0 = \lim_{s \to 0} J_s$ in the sense of $C^\infty$ convergence on compact subsets.
\item
A family of $n$-forms $\Omega_s$ on $V_s$ given by $\Omega_s : = M_s^*\mathbf{X}^*\Omega$ for $s > 0$ and $\Omega_0 = \lim_{s \to 0} \Omega_s$ in sense of $C^\infty$ convergence on compact subsets.
\end{itemize}
In fact, $J_0$ is obtained by extending the linear complex structure $(\mathbf{X^*}J)_0$ on $T_0V_0$ to a constant coefficient complex structure on $V_0.$ Similarly, $\Omega_0$ is obtained by extending the alternating multi-linear form $(\mathbf{X}^*\Omega)_0$ on $T_0V_0$ to a constant coefficient differential form on $V_0.$
\end{dfn}

	The proof of Lemma~\ref{lemma:geodesic of tangent cones} relies on the following elementary observation (compare with~\cite[Lemma~2.1]{milnor}).
	
	\begin{lemma}
		\label{lemma: milnor lemma}
		Let $f:\mathbb{R}^k\times\mathbb{R}\to\mathbb{R}$ be smooth with $f(x,0)=0,\;x\in\mathbb{R}^k.$ Then we have $f(x,s)=s\cdot g(x,s)$ for $(x,s)\in\mathbb{R}^k\times\mathbb{R},$ where $g:\mathbb{R}^k\times\mathbb{R}\to\mathbb{R}$ is smooth and satisfies
		\[
		g(x,0)=\pderiv[f]{s}(x,0),\qquad \pderiv[g]{s}(x,0)=\frac{1}{2}\frac{\partial^2f}{\partial s^2}(x,0),\qquad x\in\mathbb{R}^k.
		\]
	\end{lemma}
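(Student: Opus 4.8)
The plan is to use the standard Hadamard-style trick of expressing the difference quotient as an integral over the unit interval. First I would define
\[
g(x,s) := \int_0^1 \pderiv[f]{s}(x,\tau s)\, d\tau, \qquad (x,s) \in \mathbb{R}^k \times \mathbb{R}.
\]
Since $(x,s,\tau) \mapsto \pderiv[f]{s}(x,\tau s)$ is smooth and the integration is over the compact interval $[0,1]$, the standard theorem on differentiating a parameter-dependent integral shows that $g$ is smooth on $\mathbb{R}^k \times \mathbb{R}$; this is the only point that requires more than a one-line verification, and it is entirely routine.

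Next I would check the factorization. For fixed $(x,s)$ the chain rule gives $\deriv{\tau}\left(f(x,\tau s)\right) = s\, \pderiv[f]{s}(x,\tau s)$, so by the fundamental theorem of calculus
\[
s \cdot g(x,s) = \int_0^1 \deriv{\tau}\left(f(x,\tau s)\right)\, d\tau = f(x,s) - f(x,0) = f(x,s),
\]
where the hypothesis $f(x,0) = 0$ is used in the last equality.

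Finally I would read off the two derivative identities directly from the definition of $g$. Setting $s = 0$ gives $g(x,0) = \int_0^1 \pderiv[f]{s}(x,0)\, d\tau = \pderiv[f]{s}(x,0)$. Differentiating under the integral sign yields $\pderiv[g]{s}(x,s) = \int_0^1 \tau\, \frac{\partial^2 f}{\partial s^2}(x,\tau s)\, d\tau$, and evaluating at $s = 0$ gives $\pderiv[g]{s}(x,0) = \left(\int_0^1 \tau\, d\tau\right)\frac{\partial^2 f}{\partial s^2}(x,0) = \frac{1}{2}\frac{\partial^2 f}{\partial s^2}(x,0)$. Every step is elementary, so there is no genuine obstacle; the proof is complete once smoothness of $g$ is granted.
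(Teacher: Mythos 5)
Your proof is correct and is exactly the standard Hadamard-lemma argument; the paper itself omits the proof, deferring to Milnor's Lemma~2.1, whose argument is precisely the integral formula $g(x,s)=\int_0^1 \pderiv[f]{s}(x,\tau s)\,d\tau$ that you use. The factorization, the value $g(x,0)$, and the computation of $\pderiv[g]{s}(x,0)$ via differentiation under the integral sign are all verified correctly.
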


	\begin{proof}[Proof of Lemma~\ref{lemma:geodesic of tangent cones}]
Let $U,V,\mathbf{X}$ be rescaling data for $(\Lambda_0,\Lambda_1,q)$ and let $\Omega_s$ be the associated family of $n$-forms as in Definition~\ref{dfn:rescaling data}. Let $M_s : \C^n \to \C^n$ denote multiplication by $s.$

Choose a horizontal lifting $(\Psi_t : L \to X,S)_t$ of the geodesic $(\Lambda_t)_t.$  Let $p \in L$ satisfy $\Psi_t(p) = q.$ Let $p \in W \subset L$ be a neighborhood such that
\[
\Psi_t(W) \subset U, \qquad t \in [0,1].
\]
After perhaps shrinking $W,$ identify $W$ with a ball $B \subset \R^n$ centered at zero by a diffeomorphism $\mathbf{Y} : B \to W$ such that $\mathbf{Y}(0) = p.$ For $s \geq 0,$ let $\mu_s : \R^n \to \R^n$ denote multiplication by $s$ and write \[
B_s : = \mu_s^{-1}(B).
\]
For $s > 0,$ define a family of Lagrangian cone-immersions
\[
\chi_{s,t} : B_s \to V_s
\]
by
\[
\chi_{s,t} = M_s^{-1}\circ \mathbf{X}^{-1} \circ \Psi_t \circ \mathbf{Y}\circ \mu_s.
\]
By Lemma~\ref{lemma: milnor lemma}, since
\[
\mathbf{X}^{-1} \circ \Psi_t \circ \mathbf{Y} \circ \mu_0(x) = 0, \qquad x \in B_0,
\]
it follows that $\chi_{s,t}$ extends smoothly to $s = 0$ with
\[
\chi_{0,t}(x) = \left.\pderiv{s}\mathbf{X}^{-1} \circ \Psi_t \circ \mathbf{Y} \circ \mu_s(x)\right|_{s = 0} = \left.\pderiv{s}\mathbf{X}^{-1} \circ \Psi_t \circ \mathbf{Y}(sx)\right|_{s = 0}.
\]
Thus, identifying $\C^n,\R^n,$ with their respective tangent spaces at zero, we have
\begin{equation}\label{eq:chiconederiv}
d\mathbf{X}_0(\chi_{0,t}(x)) = (d\Psi_t)_p(d\mathbf{Y}_0(x)).
\end{equation}
In particular,
\[
d\mathbf{X}_0 \circ \chi_{0,t} : \R^n \to TC_{\hat q_t}\Lambda_t
\]
is a cone-smooth parameterization.

We proceed to compute the Hamiltonian $k_{s,t}$ of the family of Lagrangian cone-immersions $\chi_{s,t}.$ Without loss of generality, we may assume that $h_t(\hat q_t) = 0.$ Since $\mathbf{X}$ is a symplectomorphism, and $M_s^{-1}$ rescales the symplectic form by a factor of $s^{-2},$ it follows that $k_{s,t}$ is given by
\[
k_{s,t} \circ \chi_{s,t} =  s^{-2} h_t \circ \Psi_t \circ \mathbf{Y} \circ \mu_s, \qquad s > 0.
\]
Remark~\ref{rem:critical point} asserts that $\hat q_t = [(\Psi_t,p)]$ is a critical point of $h_t,$ so Lemma~\ref{lemma: milnor lemma} implies that $k_{s,t} \circ \chi_{s,t}$ extends smoothly to $s = 0.$ Moreover,
\[
k_{0,t} \circ \chi_{0,t}(x) = \left.\frac{1}{2}\frac{\partial^2}{\partial s^2} h_t\circ \Psi_t\circ \mathbf{Y}(sx)\right|_{s = 0} = \frac{1}{2} \nabla_v dh_t(v),
\]
where
\[
v = d(\Psi_t \circ \mathbf{Y})_0(x) \in TC_{\hat q_t}\Lambda_t.
\]
Since $(\Psi_t,S)_t$ is a horizontal lifting of $(\Lambda_t)_t$, it follows that $(\chi_{s,t},0)_t$ is a horizontal lifting of the path of cone-immersed Lagrangians $([(\chi_{s,t},0)])_t$ with respect to the $n$-form $\Omega_s$ for $s > 0.$ By continuity, $(\chi_{0,t},0)_t$ is a horizontal lifting of the path of cone-immersed Lagrangians $([(\chi_{0,t},0)])_t$ with respect to the $n$-form $\Omega_0.$ Since $\Lambda_t$ is a geodesic, $(k_{s,t})_t$ is constant in $t$ and thus $([(\chi_{s,t},0)])_t$ is a geodesic for $s \geq 0$. Since
$(\mathbf{X}^*\Omega)_0 = (\Omega_0)_0,$
it follows that $d\mathbf{X}_0 \circ \chi_{0,t}$ is a horizontal lifting of the path of Lagrangian cones $(TC_{\hat q_t} \Lambda_t)_t.$ Moreover, $(TC_{\hat q_t} \Lambda_t)_t$ is a geodesic with Hamiltonian as claimed. Finally, it follows from equation~\eqref{eq:chiconederiv} that $(d\Psi_t)_p : T_pL \to T_qX,\, t \in [0,1],$ is a horizontal lifting of $(TC_{\hat q_t}\Lambda_t)_{t\in[0,1]}.$
	\end{proof}

\subsection{Cylindrical transform commutes with blowup}
\begin{lemma}\label{lemma: blowup cylinder}
Let $\Lambda_0,\Lambda_1,$ be smoothly embedded positive Lagrangians in $X$ that intersect transversally at a point $q$ of Maslov index $0.$ Let $\mathcal{Z} \subset \mathcal{SLC}(S^{n-1};\Lambda_0,\Lambda_1)$ be a family of imaginary special Lagrangian cylinders converging regularly to $q.$ Let $\Phi : S^{n-1}\times [0,1]\times [0,1) \to X$ be a regular parameterization of $\mathcal{Z}.$  Then,
\[
\left.\frac{\partial \Phi}{\partial s}\right|_{s = 0}: S^{n-1}\times [0,1] \to T_qX
\]
is an imaginary special Lagrangian immersion with respect to the induced Calabi-Yau structure on $T_qX.$
\end{lemma}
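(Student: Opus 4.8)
I would prove this by adapting the blowup argument behind Lemma~\ref{lemma:geodesic of tangent cones}: rescale the given family $\Phi$ about $q$ and pass to the limit $s\to 0.$ The key point is that the imaginary special Lagrangian condition on an immersed cylinder $Z=[f:L\to X],$ with $L:=S^{n-1}\times[0,1],$ amounts to the vanishing of the pulled-back forms $f^*\omega$ and $f^*\real\Omega,$ both of which are homogeneous under rescaling of the target and so survive the blowup and pass to the $C^\infty$ limit.

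Concretely, I would first fix a Darboux parameterization $\mathbf{X}:V\to U$ of a neighborhood $U$ of $q$ by a ball $V\subset\C^n$ centered at the origin, with $\mathbf{X}(0)=q,$ chosen (this is convenient, though not essential) so that $d\mathbf{X}_0:\C^n=T_0\C^n\to T_qX$ is complex linear; then $d\mathbf{X}_0$ is an isomorphism of linear Calabi-Yau structures from $(\C^n,\omega_{\mathrm{std}},J_{\mathrm{std}},\Omega_0)$ onto the induced structure $(T_qX,\omega_q,J_q,\Omega_q),$ where $\Omega_0$ is the constant-coefficient $n$-form on $\C^n$ extending $(\mathbf{X}^*\Omega)_0=(d\mathbf{X}_0)^*\Omega_q.$ For $s>0$ let $M_s:\C^n\to\C^n$ be multiplication by $s,$ put $V_s:=M_s^{-1}(V),$ and set $\Omega_s:=s^{-n}M_s^*\mathbf{X}^*\Omega$ on $V_s.$ Exactly as in the proof of Lemma~\ref{lemma:geodesic of tangent cones}, $\Omega_s\to\Omega_0$ in $C^\infty$ on compact subsets as $s\searrow 0,$ while $s^{-2}M_s^*\mathbf{X}^*\omega=\omega_{\mathrm{std}}$ identically since $\mathbf{X}$ is a symplectomorphism. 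Because $\Phi(\cdot,\cdot,0)\equiv q,$ the slice $\Phi_s:=\Phi(\cdot,\cdot,s)$ maps $L$ into $U$ for $s$ in some interval $[0,\delta),$ so I may define
\[
\psi_s:=M_s^{-1}\circ\mathbf{X}^{-1}\circ\Phi_s:L\to V_s,\qquad s\in(0,\delta).
\]
Applying Lemma~\ref{lemma: milnor lemma} to $\mathbf{X}^{-1}\circ\Phi,$ which vanishes identically at $s=0,$ shows that $\psi_s$ extends to a map smooth on $L\times[0,\delta)$ with $\psi_0=(d\mathbf{X}_0)^{-1}\circ v,$ where $v:=\left.\partial\Phi/\partial s\right|_{s=0};$ in particular $\psi_s\to\psi_0$ in $C^\infty(L,\C^n).$ For $s\in(0,\delta)$ the slice $\Phi_s$ represents an element of $\mathcal{Z},$ hence is an imaginary special Lagrangian immersion, i.e. $\Phi_s^*\omega=0$ and $\Phi_s^*\real\Omega=0;$ computing pullbacks through the rescaling then gives
\[
\psi_s^*\omega_{\mathrm{std}}=s^{-2}\Phi_s^*\omega=0,\qquad \psi_s^*\real\Omega_s=s^{-n}\Phi_s^*\real\Omega=0.
\]

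Finally, letting $s\searrow 0$ and invoking the two $C^\infty$ convergences, we obtain $\psi_0^*\omega_{\mathrm{std}}=0$ and $\psi_0^*\real\Omega_0=0.$ Moreover $\psi_0=(d\mathbf{X}_0)^{-1}\circ v$ is an immersion since $v$ is one, by condition~\eqref{regular convergence to intersection point}\ref{item:derivative Phi immersion} of Definition~\ref{definition: interior regularity}. Transporting through $d\mathbf{X}_0,$ the map $v:L\to T_qX$ is then a Lagrangian immersion with $v^*\real\Omega_q=0;$ since a Lagrangian subspace of $T_qX$ is totally real, $v^*\Omega_q$ is nowhere vanishing, and together with $v^*\real\Omega_q=0$ this shows $v$ is an imaginary special Lagrangian immersion for the induced Calabi-Yau structure on $T_qX.$ I do not expect a serious obstacle: this is essentially a specialization of the argument behind Lemma~\ref{lemma:geodesic of tangent cones}, and the only slightly delicate points are the smooth extension of the rescaled maps $\psi_s$ across $s=0$, which Lemma~\ref{lemma: milnor lemma} handles, and the routine bookkeeping identifying $(\C^n,\omega_{\mathrm{std}},\Omega_0)$ with the induced Calabi-Yau structure on $T_qX$ via $d\mathbf{X}_0.$
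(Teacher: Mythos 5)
Your proposal is correct and follows essentially the same route as the paper's proof: rescale by $M_s^{-1}\circ\mathbf{X}^{-1}$ in a Darboux chart, use Lemma~\ref{lemma: milnor lemma} to extend the rescaled maps smoothly across $s=0,$ and pass the closed conditions $\psi_s^*\omega_{\mathrm{std}}=0$ and $\psi_s^*\real\Omega_s=0$ to the limit, with the immersion property at $s=0$ supplied by Definition~\ref{definition: interior regularity}. The only cosmetic differences are that you make the pullback identities explicit where the paper simply invokes continuity, and you add the inessential normalization that $d\mathbf{X}_0$ be complex linear.
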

\begin{proof}
Let $U,V,\mathbf{X}$ be rescaling data for $(\Lambda_0,\Lambda_1,q)$ and let $\Omega_s$ be the associated family of $n$-forms as in Definition~\ref{dfn:rescaling data}. Let $M_s : \C^n \to \C^n$ denote multiplication by~$s.$

Choose $\delta > 0$ small enough that $\Phi(p,t,s) \in U$ for all $(p,t) \in S^{n-1}\times[0,1]$ and $s < \delta.$ By Lemma~\ref{lemma: milnor lemma}, since $\Phi(t,p,0) = q$ for all $(t,p) \in S^{n-1}\times[0,1],$ we have
		\[
		\mathbf{X}^{-1}\circ\Phi(p,t,s) = s \cdot \Psi(p,t,s), \quad(p,t,s) \in S^{n-1} \times[0,1] \times [0,\delta),
		\]
		where $\Psi : S^{n-1} \times [0,1] \times [0,\delta)\to \C^n$ is smooth with
		\[
		\Psi(p,t,0) = \pderiv[(\mathbf{X}^{-1}\circ\Phi)]{s}(p,t,0), \quad (p,t) \in S^{n-1} \times [0,1].
		\]
For $s \in [0,\delta),$ write
\[
\Psi_s := \Psi|_{S^{n-1}\times[0,1]\times\{s\}}.
\]
For $s \in (0,\delta),$ the map $\Psi_s$ is an immersion representing an $\Omega_{s}$-imaginary special Lagrangian cylinder. As $\Phi$ is regular, it follows from Definition~\ref{definition: interior regularity} \eqref{regular convergence to intersection point} that the map
$
\Psi_0
$
is an immersion. By continuity
$
\Psi_0
$
is an $\Omega_{0}$-imaginary special Lagrangian immersion. Thus,
\[
\left.\frac{\partial \Phi}{\partial s}\right|_{s = 0} = d\mathbf{X}_0\circ \left.\pderiv[(\mathbf{X}^{-1}\circ\Phi)]{s}\right|_{s = 0} = d\mathbf{X}_0\circ \Psi_0
\]
is an imaginary special Lagrangian immersion with respect to the induced Calabi-Yau structure on $T_qX.$
\end{proof}
\begin{dfn}\label{dfn:tangent family}
In the setting of Lemma~\ref{lemma: blowup cylinder}, let $\mathcal{Z}^q_\Phi \in \mathcal{SLC}(S^{n-1};T_q \Lambda_0,T_q\Lambda_1)$ be given by
\[
\mathcal{Z}^q_\Phi : = \left[\left.\frac{\partial \Phi}{\partial s}\right|_{s = 0}: S^{n-1}\times [0,1] \to T_qX \right],
\]
and let $\mathcal{Z}_q$ denote the $\R_{>0}$ orbit of $\mathcal{Z}^q_\Phi$ in $\mathcal{SLC}(S^{n-1};T_q \Lambda_0,T_q\Lambda_1).$ We call $\mathcal{Z}_q$ the \emph{tangent family} of $\mathcal{Z}$ at $q.$ One verifies that $\mathcal{Z}_q$ is independent of $\Phi.$ By Definition~\ref{definition: interior regularity}~\eqref{regular convergence to intersection point} the cylinders in $\mathcal{Z}_q$ are nowhere tangent to the Euler vector field.
\end{dfn}
In the following, we use Notation~\ref{ntn:critical point}.
\begin{lemma}\label{lemma:commute}
		Let $(\Lambda_t)_{t\in[0,1]}$ be a geodesic of positive Lagrangians in $X$ with derivative $(h_t)_t.$ Suppose the endpoints $\Lambda_0$ and $\Lambda_1$ are smoothly embedded and let $q\in\crit((\Lambda_t)_t)$ be a transverse intersection point of $\Lambda_0$ and $\Lambda_1.$ Moreover, for $t \in [0,1]$ assume $\hat q_t$ is an absolute minimum or maximum of $h_t.$ Let $\mathcal{Z}$ denote the cylindrical transform of $(\Lambda_t)_t,$ which converges regularly to $q$ by Lemma~\ref{lemma:rocking lemma}. Then, the tangent family $\mathcal{Z}_q \subset \mathcal{SLC}(S^{n-1};T_q \Lambda_0,T_q\Lambda_1)$ coincides with the cylindrical transform of the geodesic of tangent cones $(TC_{\hat q_t}\Lambda_t)_t.$
\end{lemma}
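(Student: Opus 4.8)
The plan is to run the blowup rescaling argument from the proof of Lemma~\ref{lemma:geodesic of tangent cones} and the proof of Lemma~\ref{lemma: blowup cylinder} simultaneously, using the explicit regular parameterization of $\mathcal{Z}$ supplied by Lemma~\ref{lemma:rocking lemma}. Recall that in Setting~\ref{set:rocking} one has a horizontal lifting $(\Psi_t : L \to X, S)_t$, a non-degenerate critical point $p$ of the Hamiltonian $h = h_t\circ\Psi_t$ with $\Psi_t(p) = q$, and a polar coordinate map $\kappa : S^{n-1}\times[0,\epsilon)\to L$ centered at $p$ whose spheres parameterize level sets of $h$. By Lemma~\ref{lemma:rocking lemma}, the map $\Phi(c,t,s) = \Psi_t(\kappa(c,s))$ is a regular parameterization of $\mathcal{Z}$ about $q$. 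So the tangent family $\mathcal{Z}_q$ is the $\R_{>0}$-orbit of the immersed cylinder represented by $\left.\partial_s\right|_{s=0}\Phi(\cdot,\cdot,s) : S^{n-1}\times[0,1]\to T_qX$.

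First I would compute $\left.\partial_s\right|_{s=0}\Phi(c,t,s)$ in terms of cone derivatives. Since $\kappa|_{S^{n-1}\times\{0\}}$ is the constant map to $p$ and $\partial_s\kappa(c,0) = \sigma(c)$ for the section $\sigma$ associated to $\kappa$ (Definition~\ref{definition:polar coordinates}), the chain rule through the blowup — together with the defining property of the cone-derivative (Definition~\ref{definition: cone differentiability}) and Lemma~\ref{lemma:blowupdifferential} — gives
\[
\left.\pderiv{s}\right|_{s=0}\Phi(c,t,s) = (d\Psi_t)_p\big(\sigma(c)\big) \in TC_{\hat q_t}\Lambda_t \subset T_qX.
\]
Thus the immersed cylinder $\mathcal{Z}^q_\Phi$ is represented by the map $(c,t)\mapsto (d\Psi_t)_p(\sigma(c))$. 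On the other hand, by Lemma~\ref{lemma:geodesic of tangent cones}, the family of cone derivatives $(d\Psi_t)_p : T_pL\to T_qX$ is a horizontal lifting of the geodesic of tangent cones $(TC_{\hat q_t}\Lambda_t)_t$, and its Hamiltonian is $\tfrac12\nabla_v dh_t(v)$, which, because $p$ is a non-degenerate critical point and an extremum of $h$, is a definite (say negative) $2$-homogeneous function on each $TC_{\hat q_t}\Lambda_t$.

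Next I would identify the right-hand side with the cylindrical transform of $(TC_{\hat q_t}\Lambda_t)_t$. Write $H_t := \tfrac12\nabla_\bullet dh_t(\bullet)$ for the Hamiltonian of the geodesic of tangent cones and $H := H_t\circ (d\Psi_t)_p : T_pL\setminus\{0\}\to\R$, which is time-independent and $2$-homogeneous. By Definition~\ref{dfn:cylinder}, the cylindrical transform of $(TC_{\hat q_t}\Lambda_t)_t$ is parameterized, for each nonzero level $c$, by $\Phi^{\mathrm{cone}}_c : (H^{-1}(c))\times[0,1]\to T_qX$, $(v,t)\mapsto (d\Psi_t)_p(v)$. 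Because $H$ is $2$-homogeneous and definite, the section $\sigma : S^{n-1}\to T_pL\setminus\{0\}$ can be rescaled within each ray so that its image is exactly a level set $H^{-1}(c_0)$; this uses that $\kappa$ was chosen in Setting~\ref{set:rocking} precisely so its spheres are level sets of $h$, hence — after the blowup and passing to the cone-derivative via the Milnor-type expansion of the proof of Lemma~\ref{lemma:geodesic of tangent cones} — its associated section $\sigma$ traces a level set of the quadratic-like $H$. Reparameterizing the sphere accordingly (which changes neither the immersed cylinder nor its $\R_{>0}$-orbit), the map $(c,t)\mapsto(d\Psi_t)_p(\sigma(c))$ becomes exactly $\Phi^{\mathrm{cone}}_{c_0}$ up to precomposition with a diffeomorphism of $S^{n-1}$. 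Since rescaling $\sigma$ by $s\in\R_{>0}$ produces the cylinder of $s^2 c_0$-level sets, the full $\R_{>0}$-orbit $\mathcal{Z}_q$ matches the full cylindrical transform of $(TC_{\hat q_t}\Lambda_t)_t$, as claimed.

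The main obstacle I anticipate is bookkeeping the two compatible blowups cleanly: on the one side $\mathcal{Z}_q$ is defined via the oriented blowup of $X$ at $q$ (rescaling $M_s$ on $\C^n$ in Darboux coordinates), while on the other side the geodesic of tangent cones lives on the blowup of $L$ at $p$ (rescaling $\mu_s$ on $\R^n$), and one must check these are intertwined by $d\mathbf{X}_0$ exactly as in equation~\eqref{eq:chiconederiv}. Concretely, one should verify that the section $\sigma$ associated with the polar map $\kappa$ pushes forward under $(d\Psi_t)_p$ to a level set of $H_t$; this is where the choice of $\kappa$ in Setting~\ref{set:rocking} — whose spheres are level sets of $h$, and whose blowup behaviour is governed by the non-degenerate Hessian of $h$ at $p$ — does the work, via Lemma~\ref{lemma: nice level sets} applied to the $2$-homogeneous function $H$. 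Once this compatibility is in place, everything else is a matter of matching parameterizations up to reparameterization of the $S^{n-1}$ factor and rescaling, which affects neither the immersed cylinder nor its $\R_{>0}$-orbit.
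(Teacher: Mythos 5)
Your proposal is correct and follows essentially the same route as the paper: reduce to showing the two $\R_{>0}$-orbits share a common cylinder, compute $\left.\partial_s\right|_{s=0}\Phi(c,t,s)=(d\Psi_t)_p(\sigma(c))$ from the regular parameterization of Lemma~\ref{lemma:rocking lemma}, invoke Lemma~\ref{lemma:geodesic of tangent cones} for the horizontal lifting and Hamiltonian of the geodesic of tangent cones, and observe that $\sigma$ traces a level set of that Hamiltonian because $H(\sigma(c))=\tfrac12\,\partial_s^2(h\circ\kappa)(c,0)$ is independent of $c.$ The only cosmetic difference is that no rescaling of $\sigma$ within rays is needed (and would in fact change the cylinder); as you note in your second pass, the choice of $\kappa$ in Setting~\ref{set:rocking} already makes $\sigma$ parameterize a single level set.
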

\begin{proof}
By definition, the tangent family $\mathcal{Z}_q$ is an $\R_{>0}$ orbit in the space of imaginary special Lagrangian cylinders $\mathcal{SLC}(S^{n-1};T_q \Lambda_0,T_q\Lambda_1).$ By Lemma~\ref{lemma:geodesic of cones}, so is the cylindrical transform of the geodesic of tangent cones $(TC_{\hat q_t}\Lambda_t)_t.$ Thus, to prove the lemma, it suffices to show that the two orbits share a common point.

Recall Setting~\ref{set:rocking} and let $\sigma: \orientedprojective(T_pL) \cong S^{n-1} \to T_pL\setminus\{0\}$ be the section associated with $\kappa.$ By Lemma~\ref{lemma:rocking lemma} the map
\[
		\Phi:S^{n-1}\times[0,1]\times[0,\epsilon)\to X,\qquad(c,t,s)\mapsto\Psi_t(\kappa(c,s)),
\]
is a regular parameterization of $\mathcal{Z}$ about $q.$ We calculate
\begin{equation}\label{equation:dPhis}
\pderiv[\Phi]{s}(c,t,0) = (d\Psi_t)_p \left( \pderiv[\kappa]{s}(c,0)\right) = (d\Psi_t)_p \left( \sigma(c) \right).
\end{equation}
Lemma~\ref{lemma:geodesic of tangent cones} asserts that $((d\Psi_t)_p)_t$ is a horizontal lifting of the geodesic of tangent cones $(TC_{\hat q_t}\Lambda_t)_t.$
Let
\[
h^T_t = \deriv{t}TC_{\hat q_t}\Lambda_t
\]
denote the Hamiltonian of the geodesic of tangent cones and let $h^T = h^T_t \circ (d\Psi_t)_p$ denote the Hamiltonian with respect to the horizontal lifting $((d\Psi_t)_p)_t.$ By Lemma~\ref{lemma:geodesic of tangent cones} we have
\begin{equation}\label{eq:hT}
h^T(v) = \frac{1}{2}\nabla_{v}dh(v), \qquad v\in T_pL.
\end{equation}

Keeping in mind that $p$ is a critical point of $h,$ we have
\[
\nabla_{\sigma(c)}dh(\sigma(c)) = \frac{\partial^2(h \circ \kappa)}{\partial s^2}(c, 0).
\]
Thus, since $h\circ \kappa(c,s)$ is independent of $c,$ also $a: = \nabla_{\sigma(c)}dh(\sigma(c))$ is independent of~$c.$ So, it follows from equation~\eqref{eq:hT} that $\sigma$ parameterizes the $\frac{a}{2}$ level set of $h^T.$ By equation~\eqref{equation:dPhis}, the cylinder $\mathcal{Z}_\Phi^q$ from Definition~\ref{dfn:tangent family} coincides with the cylinder of $\frac{a}{2}$ level sets associated to $(TC_{\hat q_t}\Lambda_t)_t$ as desired.
\end{proof}

\subsection{\texorpdfstring{$C^{1,1}$}{C11} regularity from blowing up}
The following lemmas are used in the proof of Theorem~\ref{theorem:perturbed C1 geodesic} as well as Theorem~\ref{theorem:spider web}~\ref{geodesic of open C1 Lags}.
	\begin{lemma}
		\label{lemma:cone-smooth function is C1}
		Let $\Theta$ be a compact manifold with corners and let $((f_t:M\to N,S))_{t\in \Theta}$ be a cone-smooth family of maps. Let $p \in S$ such that the cone-derivative $(df_t)_p$ is linear for all $t.$ Then, $f_t : M \to N$ is $C^{1,1}$ in a neighborhood of $p$ uniformly in $t.$
	\end{lemma}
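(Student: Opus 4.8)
The plan is to reduce the statement to a local, explicit analysis near the cone point $p$ using the oriented blowup. First I would work in a coordinate chart, identifying a neighborhood of $p$ in $M$ with a ball $B \subset \R^n$ centered at $0$, and a neighborhood of $f_t(p)$ in $N$ with a ball in $\R^m$; since we only care about $C^{1,1}$ regularity in a neighborhood of $p$, this is harmless. In polar coordinates, the oriented blowup $\widetilde M_p$ near $E_p$ is modeled on $S^{n-1}\times[0,\epsilon)$, with blowdown map $\pi(\theta,r) = r\theta$. By hypothesis the composite $F_t := f_t\circ\pi : S^{n-1}\times[0,\epsilon)\times\Theta \to \R^m$ is smooth, and $F_t(\theta,0) = f_t(p)$ for all $\theta$ and $t$. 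Applying Lemma~\ref{lemma: milnor lemma} (with the variable $r$ playing the role of $s$ and $(\theta,t)$ packaged into the $x$-variables, using that $\Theta$ is a compact manifold with corners so we may smoothly extend across the boundary) we obtain a smooth map $G_t(\theta,r)$ with
\[
F_t(\theta,r) = f_t(p) + r\,G_t(\theta,r), \qquad G_t(\theta,0) = \left.\pderiv{r}\right|_{r=0}F_t(\theta,r).
\]

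The key point is to identify $G_t(\theta,0)$ with the cone-derivative. By Definition~\ref{definition: cone differentiability}(2) and the chain rule, $G_t(\theta,0) = d(f_t)_p(\theta)$, the cone-derivative evaluated at the unit vector $\theta$; this is exactly the content of equation~\eqref{equation:dderivatives} in Lemma~\ref{lemma:blowupdifferential}. Now use the hypothesis: $(df_t)_p$ is \emph{linear}, so $d(f_t)_p(\theta) = A_t\theta$ for a matrix $A_t$ depending smoothly on $t$ (smoothness in $t$ follows from smoothness of $F_t$ in all variables jointly and compactness of $\Theta$). Writing $x = r\theta$ for $x \in B\setminus\{0\}$, we get
\[
f_t(x) = f_t(p) + |x|\,G_t\!\left(\tfrac{x}{|x|}, |x|\right) = f_t(p) + A_t x + |x|\,R_t\!\left(\tfrac{x}{|x|},|x|\right),
\]
where $R_t(\theta,r) := G_t(\theta,r) - G_t(\theta,0)$ is smooth and vanishes at $r = 0$. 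Applying Lemma~\ref{lemma: milnor lemma} once more to $R_t$ gives $R_t(\theta,r) = r\,S_t(\theta,r)$ with $S_t$ smooth, so $f_t(x) = f_t(p) + A_t x + |x|^2 S_t(x/|x|,|x|)$.

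It remains to read off $C^{1,1}$ regularity with uniformity in $t$ from this formula. The error term $E_t(x) := |x|^2 S_t(x/|x|,|x|)$ is a priori only defined for $x\neq 0$; I would check that it is $C^{1,1}$ on the ball with all relevant bounds controlled by $\sup_{\Theta}$-norms of finitely many derivatives of $F_t$, which are finite by compactness of $S^{n-1}\times[0,\epsilon]\times\Theta$. Concretely, $|E_t(x)| \le C|x|^2$, and differentiating, each first derivative $\partial_i E_t$ is a sum of terms of the shape (bounded smooth function of $(x/|x|,|x|)$) times $|x|$, hence $|\partial_i E_t(x)| \le C|x|$ and $\partial_i E_t$ extends continuously by $0$ at the origin; since $|x|$ is Lipschitz, $E_t \in C^{1,1}$ near $0$ with the Lipschitz constant of $\nabla E_t$ bounded by $C$. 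Combining, $f_t(x) = f_t(p) + A_t x + E_t(x)$ is $C^{1,1}$ near $p$, with $C^{1,1}$-norm bounded uniformly in $t$, as claimed. The main obstacle is the bookkeeping in this last step — making sure that after two applications of Lemma~\ref{lemma: milnor lemma} and the conversion between polar and Cartesian coordinates, the remainder genuinely has the homogeneity needed for a Lipschitz gradient (the function $x/|x|$ is only Lipschitz away from $0$, so one must verify the $|x|^2$ prefactor really does kill the singularity in the first derivatives); everything else is a routine application of results already in the excerpt.
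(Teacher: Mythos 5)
Your proposal is correct and follows essentially the same route as the paper: blow up at $p$, use the vanishing of the (linearized-part-subtracted) data on the exceptional sphere together with Lemma~\ref{lemma: milnor lemma} to extract a factor of the radial coordinate, and then obtain the Lipschitz bound on the gradient from the homogeneity count that compensates the $O(1/|x|)$ blow-up of $\partial_i(x/|x|)$. The only cosmetic difference is that the paper subtracts a smooth family to reduce to $(df_t)_p=0$ and factors the blowup differential once, $\widetilde{df_t}=s\xi_t$, closing the estimate via the metric inequalities $\min(s(\widetilde x),s(\widetilde y))\,\delta(\widetilde x,\widetilde y)\le C d(x,y)$ and $|s(\widetilde x)-s(\widetilde y)|\le Cd(x,y)$, whereas you factor the function twice in explicit polar coordinates; the verification you flag at the end does go through and is exactly the content of those inequalities.
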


	\begin{proof}
		It suffices to prove the lemma when $N = \R.$ It follows from the definition of the cone derivative that $f_t$ is differentiable at $p$ and its ordinary derivative coincides with its cone derivative. After subtracting a smooth family of functions, we may assume that $(df_t)_p = 0.$

Recall Definition~\ref{definition: many tangent spaces}. We think of the blowup differential $\widetilde{df}_t$ as a section of the dual of the blowup tangent bundle $\widetilde{TM}_S^*.$ It follows from equation~\eqref{equation:dderivatives} that $(\widetilde{df_t})_{\widetilde p}$ vanishes for all $\widetilde p \in E_p.$ Choose an open neighborhood $E_p \subset U \subset \widetilde M_S$ and a diffeomorphism $\chi : S^{n-1} \times [0,\epsilon) \to U.$ Let $s : U \to \R$ be given by the composition of $\chi^{-1}$ and the projection to $[0,\epsilon).$ By Lemma~\ref{lemma: milnor lemma}, there exists a smooth family of sections $\xi_t$ of $\widetilde{TM}_S^*|_U$ such that $\widetilde{df_t} = s\xi_t.$ Let $d : M \times M \to \R$ denote the distance function of a smooth Riemannian metric on $M$ and let $\delta : \widetilde M_S \times \widetilde M_S \to \R$ denote the distance function of a smooth Riemannian metric on $\widetilde M_S.$ Then, there exists a constant $C > 0$ such that for
\[
\widetilde x,\widetilde y \in U, \qquad x = \pi(\widetilde x), \;y = \pi(\widetilde y),
\]
we have
\[
\min(s(\widetilde x),s(\widetilde y)) \delta(\widetilde x,\widetilde y) \leq C d(x,y), \qquad
|s(\widetilde x) - s(\widetilde y)| \leq C d(x,y).
\]
Choose a local trivialization of $T^*M$ near $p$ and pull it back to obtain a local trivialization of $\widetilde{TM}^*_S$ near $E_p.$ Working in these trivializations, and assuming without loss of generality that $\min(s(\widetilde x),s(\widetilde y)) = s(\widetilde x),$ we have
\begin{align*}
\frac{|(df_t)_x-(df_t)_y|}{d(x,y)} & =  \frac{\left|(\widetilde{df_t})_{\widetilde x} - (\widetilde{df_t})_{\widetilde y}\right|}{d(x,y)} \\
& = \frac{|s(\widetilde x)\xi_t(\widetilde x) - s(\widetilde y)\xi_t(\widetilde y)|}{d(x,y)}\\
&\leq \frac{|s(\widetilde x)\xi_t(\widetilde x) - s(\widetilde x)\xi_t(\widetilde y)| + |s(\widetilde x) - s(\widetilde y)||\xi_t(\widetilde y)|}{d(x,y)}  \\
&\leq \frac{C|s(\widetilde x)\xi_t(\widetilde x) - s(\widetilde x)\xi_t(\widetilde y)|}{\min(s(\widetilde x),s(\widetilde y))\delta(\widetilde x,\widetilde y)} + \frac{|s(\widetilde x) - s(\widetilde y)||\xi_t(\widetilde y)|}{d(x,y)} \\
&\leq C\frac{|\xi_t(\widetilde x) - \xi_t(\widetilde y)|}{\delta(\widetilde x, \widetilde y)} + C|\xi_t(\widetilde y)|,
\end{align*}
which is uniformly bounded because $\xi_t$ is a smooth family.
	\end{proof}

\begin{lemma}\label{lemma:C11 regularity from blowup}
		Let $(\Lambda_t)_{t\in[0,1]}$ be a geodesic of positive Lagrangians in $X$ with derivative $(h_t)_t.$ Suppose the endpoints $\Lambda_0$ and $\Lambda_1$ are smoothly embedded and let $q\in\crit((\Lambda_t)_t).$ If the tangent cones $TC_{\hat q_t}\Lambda_t$ are linear for $t \in [0,1],$ then the geodesic $(\Lambda_t)_t$ is of regularity $C^{1,1}$ in a neighborhood of $q.$
\end{lemma}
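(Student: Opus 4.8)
The plan is to combine the two preceding structural results: Lemma~\ref{lemma:geodesic of tangent cones}, which says the tangent cones $TC_{\hat q_t}\Lambda_t$ form a geodesic of Lagrangian cones with horizontal lifting given by the cone derivatives $(d\Psi_t)_p$, and Lemma~\ref{lemma:cone-smooth function is C1}, which converts linearity of the cone derivative of a cone-smooth family of maps into uniform $C^{1,1}$ regularity near the cone point. So the heart of the matter is to exhibit a lifting of the geodesic $(\Lambda_t)_t$ near $q$ that is a cone-smooth family of maps whose cone derivative at the relevant cone point is linear, and then invoke Lemma~\ref{lemma:cone-smooth function is C1}.

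First I would fix a horizontal lifting $(\Psi_t : L \to X, S)_t$ of $(\Lambda_t)_t$ and let $p \in L$ be the unique point with $\Psi_t(p) = q$ for all $t$ (Notation~\ref{ntn:critical point}). Since $(\Lambda_t)_t$ is cone-smooth, the family $(\Psi_t, S)_t$ is a cone-smooth family of maps in the sense of Definition~\ref{definition:cone immersed submanifold}, i.e.\ $\Psi_t \circ \pi : \widetilde L_S \to X$ is smooth in $(\widetilde L_S) \times [0,1]$. By hypothesis the tangent cone $TC_{\hat q_t}\Lambda_t$ is a linear subspace of $T_qX$ for each $t$; by the definition of the tangent cone this means exactly that the cone derivative $(d\Psi_t)_p : T_pL \to T_qX$ is a linear map for each $t$ (its image is the linear subspace $TC_{\hat q_t}\Lambda_t$, and since it is degree-$1$ homogeneous and smooth away from $0$, having linear image forces it to be linear — here one uses that $\Psi_t$ is a cone-\emph{immersion}, so $(d\Psi_t)_p$ is injective, hence a linear parameterization of the linear space $TC_{\hat q_t}\Lambda_t$). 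Now apply Lemma~\ref{lemma:cone-smooth function is C1} with $\Theta = [0,1]$, $M = L$, $N = X$, and the cone point $p \in S$: it yields that $\Psi_t : L \to X$ is $C^{1,1}$ in a neighborhood of $p$, uniformly in $t$. Since the other elements of $S$ (if any) do not lie in $\crit((\Lambda_t)_t)$ near $q$ — more precisely, we only need regularity near $q$, and away from $p$ the lifting $\Psi_t$ is already smooth — this gives a $C^{1,1}$ lifting in a neighborhood of $p$, uniformly in $t$, hence a $C^{1,1}$ parameterization of $(\Lambda_t)_t$ near $q$ in the sense of Definition~\ref{definition:geodesic}.

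The step I expect to be the main obstacle is the bookkeeping around ``$S$'' and uniformity in $t$: one must be slightly careful that Lemma~\ref{lemma:cone-smooth function is C1} is applied at the single cone point $p$ over $q$, that the linearity hypothesis on $TC_{\hat q_t}\Lambda_t$ is genuinely the linearity hypothesis on $(df_t)_p$ demanded there (this is where the cone-immersion property enters, ensuring the cone derivative is an injective — and hence, given homogeneity and smoothness off $0$ together with linear image, linear — map rather than merely a map with linear image), and that the conclusion ``$C^{1,1}$ near $p$ uniformly in $t$'' really does translate into the statement that the \emph{geodesic}, as a path of Lagrangians, is of class $C^{1,1}$ in a neighborhood of $q$ per the definition at the end of Definition~\ref{definition:geodesic}. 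Once these identifications are made, the proof is immediate from Lemma~\ref{lemma:geodesic of tangent cones} and Lemma~\ref{lemma:cone-smooth function is C1}, with essentially no further computation.
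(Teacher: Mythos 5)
Your overall architecture (Lemma~\ref{lemma:geodesic of tangent cones} plus Lemma~\ref{lemma:cone-smooth function is C1}) matches the paper's, but there are two genuine gaps. First, your argument that $(d\Psi_t)_p$ is linear does not work: a degree-$1$ homogeneous map that is smooth away from $0$, injective, and has image a linear subspace need not be linear (e.g.\ in polar coordinates on $\R^2$, the map $(r,\theta)\mapsto(r,f(\theta))$ for a nontrivial diffeomorphism $f$ of $S^1$ is homogeneous, injective, and onto $\R^2$). The hypothesis that the tangent cone $TC_{\hat q_t}\Lambda_t$ is linear only says it admits \emph{some} linear parameterization, not that the particular representative $(d\Psi_t)_p$ coming from your chosen lifting is linear. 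The paper closes this by choosing the horizontal lifting with $\Psi_0$ smooth, so that $(d\Psi_0)_p$ is the ordinary (linear) derivative, and then invoking Lemma~\ref{lemma:geodesic of tangent cones} together with the uniqueness and linearity of horizontal liftings in $\laggrass^+(n)$ (Lemma~\ref{lemma:there is always a horizontal lifting}) to conclude $(d\Psi_t)_p$ is linear for all $t$.

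Second, and more substantially: Definition~\ref{definition:geodesic} requires the lifting to lie in $C^{1,1}(L\times[0,1],X)$, i.e.\ to be $C^{1,1}$ \emph{jointly} in space and time. Lemma~\ref{lemma:cone-smooth function is C1} only gives spatial $C^{1,1}$ regularity of each $\Psi_t$, uniformly in $t$; that is strictly weaker. You flag this translation as a point needing care but do not supply it, and it is roughly half of the paper's proof. One must additionally show (i) that $(d\Psi_t)_x$ is Lipschitz in $t$ uniformly in $x$ (which follows from smoothness of the blowup differential $\widetilde{d\Psi_t}$ in $(t,\widetilde x)$ and the identity $(\widetilde{d\Psi_t})_{\widetilde x}=(d\Psi_t)_x$), and (ii) that the time derivative $\pderiv[\Psi_t]{t}$ is Lipschitz. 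For (ii) the paper uses the explicit formula $\pderiv[\Psi_t]{t}=-J^t\nabla^t h-\tan\tilde\theta_{\Lambda_t}\,\nabla^t h$ from~\cite[Remark 5.6]{solomon}, combined with the $C^{1,1}$ regularity of the Hamiltonian $h$ (itself obtained from Lemma~\ref{lemma:cone-smooth function is C1} applied to $h_t$, using that $\hat q_t$ is a critical point) and the joint Lipschitz regularity of $\Psi_t^*g$, $\tilde\theta_{\Lambda_t}$, and $J^t$. Without this step the stated $C^{1,1}$ regularity of the geodesic does not follow.
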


\begin{proof}

Let $(\Psi_{t} : S^n \to X,S)_{t \in [0,1]}$ be a horizontal lifting of the geodesic $\Lambda_{t}$ with $\Psi_{0}$ smooth and let $p \in S$ such that $\Psi_{t}(p) = q.$ By Lemma~\ref{lemma:geodesic of tangent cones}, the family of cone derivatives $(d\Psi_{t})_p : T_p S^n \to T_{q}X$ is a horizontal lifting of the geodesic of Lagrangian linear subspaces $TC_{\hat q_{t}}\Lambda_{t} \subset T_{q}X.$ Since $\Psi_{0}$ is smooth, the cone derivative $(d\Psi_{0})_p$ is linear, so by Lemma~\ref{lemma:there is always a horizontal lifting} the cone derivative $(d\Psi_{t})_p$ is linear for $t \in [0,1].$ Thus, by Lemma~\ref{lemma:cone-smooth function is C1}, the cone immersions $(\Psi_{t} : S^n \to X,S)$ are $C^{1,1}$ in a neighborhood of $p$ uniformly in $t.$
Furthermore, the time derivative $\pderiv[\Psi_{t}]{t}$ exists by definition of a cone-smooth family.

Next, we claim that the differential $(d\Psi_{t})_x$ is $C^{0,1}$ as a function of $t$ uniformly in $x\in S^n$ in a neighborhood of $p.$ Indeed, the blowup derivative $(\widetilde{d\Psi_{t}})_{\tilde x}$ depends smoothly on $t$ and $\tilde x.$ So, it is $C^{0,1}$ in $t$ uniformly in $\tilde x.$ On the other hand, for $\tilde x$ such that $\pi(\tilde x) = x,$ we have
\[
(\widetilde{d\Psi_{t}})_{\tilde x} = (d\Psi_{t})_x.
\]

Finally, we show the partial derivative $\pderiv[\Psi_{t}]{t}$ is $C^{0,1}$ in a neighborhood of $p.$ Indeed, since $\hat q_{t}$ is a critical point of $h_{t},$ Lemma~\ref{lemma:cone-smooth function is C1} implies that $h_{t}$ is $C^{1,1}.$ So, $dh_{t}$ is $C^{0,1}.$ Write
\[
h = h_{t}\circ \Psi_{t},
\]
for the Hamiltonian associated to the horizontal lifting $(\Psi_{t})_t,$ which is independent of $t$ because $(\Lambda_{t})_t$ is a geodesic. The Hamiltonian $h$ is $C^{1,1}$ as the composition of $C^{1,1}$ maps. Recall that
\[
\theta_{\Lambda_{t}} : \Lambda_{t} \to \left(-\frac{\pi}{2},\frac{\pi}{2}\right)
\]
denotes the Lagrangian angle, and write
\[
\tilde \theta_{\Lambda_{t}} = \theta_{\Lambda_{t}} \circ \Psi_{t}: L \to \left(-\frac{\pi}{2},\frac{\pi}{2}\right).
\]
Write $\nabla^t$ for the gradient with respect to the pull-back metric $\Psi_{t}^*g.$ Let
\[
J^t : TL \to \Psi_{t}^*TX
\]
denote the bundle map given by
\[
J^t(\xi) = J \circ d\Psi_{t}(\xi).
\]
Since $d\Psi_{t}$ is $C^{0,1}$ jointly in space and time, it follows that the families $\Psi_{t}^*g, \tilde\theta_{\Lambda_{t}}$ and $J^t,$ are $C^{0,1}$ jointly in space and time. By~\cite[Remark 5.6]{solomon} we have for $x\in S^n,$
\begin{align*}
\pderiv[\Psi_{t}]{t}(x)&=-J\nabla h_{t}(\Psi_{t}(x))-\tan\theta_{\Lambda_{t}}(\Psi_{t}(x))\nabla h_{t}(\Psi_{t}(x))\\
&=-J^t\nabla^t h(x)-\tan\tilde \theta_{\Lambda_{t}}(x)\nabla^t h(x),
\end{align*}
so $\pderiv[\Psi_{t}]{t}(x)$ is $C^{0,1}$ as desired.

It follows that $\Psi_{t}$ is $C^{1,1}$ as a map $S^n \times [0,1] \to X$ and the geodesic $(\Lambda_{t})_t$ is $C^{1,1}$ as claimed.
\end{proof}

	\begin{proof}[Proof of Theorem~\ref{theorem:perturbed C1 geodesic}]

Theorem~\ref{theorem:perturbed C1 geodesic} is the same as Theorem~1.6 of~\cite{cylinders} with the exception of the assumption that $(\Lambda_t)_t$ is $C^1$ and the claim that for $\Lambda \in \mathcal{Y}$ the geodesic from $\Lambda_0$ to $\Lambda$ is of regularity $C^{1,1}.$ Possibly shrinking $\mathcal{Y},$ we may assume that all $\Lambda \in \mathcal{Y}$ intersect $\Lambda_0$ transversally at exactly two points. After possibly replacing $\mathcal{Y}$ with its connected component containing $\Lambda_1,$ we may assume that $\mathcal{Y}$ is connected. For $\Lambda \in \mathcal{Y},$ consider a smooth path $(\Lambda_{1,r})_{r \in [0,1]}$ with $\Lambda_{1,0} = \Lambda_1$ and $\Lambda_{1,1} = \Lambda.$ For $r \in [0,1]$ let $(\Lambda_{t,r})_{t \in [0,1]}$ be the unique geodesic in $\mathcal{X}$ from $\Lambda_0$ to $\Lambda_{1,r}.$ We prove that $(\Lambda_{t,r})_{t \in [0,1]}$ is of regularity $C^{1,1}.$

Indeed, for $(t,r)\in[0,1]\times[0,1],$ write
		\[
		h_{t,r}:=\deriv{t}\Lambda_{t,r}.
		\]
Recalling Remark~\ref{rem:critical point} and Notation~\ref{ntn:critical point}, let $\hat q_{r,t}$ be the point of $\Lambda_{t,r}$ where $h_{t,r}$ attains its maximum and let
\[
q_r := \im \hat q_{r,t} \in \crit((\Lambda_{t,r})_t).
\]
By Lemma~\ref{lemma: critical points are non-degenerate}, the point $\hat q_{r,t}$ is a non-degenerate critical point of $h_{t,r}.$ Thus, for $0 \neq v \in TC_{\hat q_{r,t}}\Lambda_{t,r}$ we have
\begin{equation}\label{eq:negative hessian}
\nabla_vdh_{t,r}(v) < 0.
\end{equation}
Let
\[
\left(\varphi_r:\mathbb{C}^n\to T_{q_r}X\right)_{r\in[0,1]}
\]
be a smooth family of symplectic complex-linear isomorphisms such that for $r\in[0,1]$ we have
\[
\varphi_r^*\Omega=\rho(q_r)\Omega_0,
\]
where $\Omega_0$ denotes the standard holomorphic $n$-form on $\C^n.$ For $(t,r)\in[0,1]\times[0,1],$ write
		\[
		C_{t,r}:=\varphi_r^{-1}(TC_{\hat q_{r,t}}\Lambda_{t,r}).
		\]
		By Lemma~\ref{lemma:geodesic of tangent cones} and inequality~\eqref{eq:negative hessian}, for $r \in [0,1]$ the path $(C_{t,r})_{t\in[0,1]}$ is a geodesic in $\lagcones^+(n)$ with negative derivative. By definition of the strong $C^{1,\alpha}$ topology on $\mathfrak{G}_\mathcal{O},$ the geodesic $(C_{t,r})_{t\in[0,1]}$ depends continuously on $r$ with respect to the $C^{1,\alpha}$ topology on $\mathfrak{G}_{\laggrass^+(n)}.$ By assumption, the geodesic $(C_{t,0})_t$ is linear. By Proposition~\ref{proposition:geodesic is in fact linear}, all the cones $C_{t,r}$ are in fact linear subspaces, and so are the tangent cones $TC_{\hat q_{r,t}}\Lambda_{t,r}.$ Similarly, when $\hat q_{r,t}$ is the point of $\Lambda_{t,r}$ where $h_{t,r}$ attains its minimum, the tangent cones $TC_{\hat q_{r,t}}\Lambda_{t,r}$ are linear. So, by Lemma~\ref{lemma:C11 regularity from blowup} the geodesic $\Lambda_{t,r}$ is of regularity $C^{1,1}.$
	\end{proof}

	\section{Geodesics of small open Lagrangians}
	\label{section:geodesics of small open Lagrangians}

In this section we show how to integrate geodesics of positive Lagrangian cones to construct geodesics of small open positive Lagrangians. The section culminates with the proof of Theorem~\ref{theorem:spider web}. In the following we use Notation~\ref{ntn:critical point}.

\begin{lemma}\label{lemma:integration of tangent cone}
		Let $(X,\omega,J,\Omega)$ be Calabi-Yau, and let $\Lambda_0,\Lambda_1\subset X$ be smoothly embedded positive Lagrangians intersecting transversally at a point $q$. Let $(C_t)_{t\in[0,1]}$ be a geodesic of positive Lagrangian cones with positive/negative derivative in $T_qX$ such that $C_i = T_q \Lambda_i$ for $i = 0,1.$
\begin{enumerate}[label=(\alph*)]
			\item\label{general regular family of cylinders} There exists a one-parameter family $(Z_s)_{s\in(0,\epsilon)}\subset\mathcal{SLC}\left(S^{n-1};\Lambda_0,\Lambda_1\right)$ converging regularly to $q$ with tangent family the cylindrical transform of $(C_t)_t.$ This family is unique up to reparameterization.
			\item\label{general geodesic of open C1 Lags} There exist open neighborhoods, $q\in U_i\subset\Lambda_i,\;i=0,1,$ which are connected by a geodesic $(U_t)_{t\in[0,1]}$ of open positive Lagrangians with tangent cones $TC_{\hat q_t} U_t = C_t.$ Given $U_i, i = 0,1,$ such a geodesic is unique up to reparameterization.
		\end{enumerate}
\end{lemma}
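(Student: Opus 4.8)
The plan is to build the family of part~\ref{general regular family of cylinders} by reversing, near $q,$ the blowup/rescaling used in the proof of Lemma~\ref{lemma:geodesic of tangent cones}: one starts from a cylinder representing the cylindrical transform of $(C_t)_t$ in $T_qX,$ rescales it to be small, and corrects it by an implicit function argument to an honest imaginary special Lagrangian cylinder in $X$ with boundary on $\Lambda_0$ and $\Lambda_1.$ Part~\ref{general geodesic of open C1 Lags} will then follow from part~\ref{general regular family of cylinders} via Lemma~\ref{lemma: geodesic of small Lagrangians}, Lemma~\ref{lemma:commute} and the bijection of Lemma~\ref{lemma:geodesic of cones}. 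To set up, I fix a Darboux parameterization $\mathbf{X}:V\to U$ of a neighborhood of $q$ exactly as in the proof of Lemma~\ref{lemma:geodesic of tangent cones}, with $\mathbf{X}(0)=q$ and $\mathbf{X}^{-1}(\Lambda_i)$ an open piece of a linear subspace that is identified with $C_i$ under $d\mathbf{X}_0.$ For $s>0$ I set $M_s:=$ multiplication by $s$ on $\C^n,$ $\Omega_s:=s^{-n}M_s^*\mathbf{X}^*\Omega$ and $J_s:=M_s^*\mathbf{X}^*J,$ so that $(\omega,J_s,\Omega_s)\to(\omega,J_0,\Omega_0)$ in $C^\infty_{\mathrm{loc}}$ as $s\searrow0$ for a flat Calabi--Yau structure $(\omega,J_0,\Omega_0)$ (here I suppress the rescaled symplectic form $s^{-2}M_s^*\mathbf{X}^*\omega=\omega$); being $\Omega$-imaginary special Lagrangian for $\mathbf{X}\circ M_s\circ f$ is equivalent to being $\Omega_s$-imaginary special Lagrangian for $f$ with respect to $(\omega,J_s,\Omega_s).$ By Lemma~\ref{lemma:geodesic of cones}, the cylindrical transform of $(C_t)_t$ is an $\R_{>0}$-orbit in $\mathcal{SLC}(S^{n-1};C_0,C_1)$ of $\Omega_0$-imaginary special Lagrangian cylinders nowhere tangent to the Euler vector field; I fix a representative $\chi:L\to\C^n,$ $L:=S^{n-1}\times[0,1],$ adapted to its harmonics, and a Weinstein neighborhood $(V',\psi)$ of $[\chi]$ compatible with $C_0,C_1$ as in Lemma~\ref{lemma:Weinstein neighborhood of cylinder}, so that $\psi|_L=\chi.$

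Next I run an implicit function argument parallel to the proof of Lemma~\ref{lemma:perturbed geodesic of cones}. Fix $\alpha\in(0,1);$ for $u$ in a small neighborhood $\mathcal{W}$ of $0$ in $C^{2,\alpha}(L;\partial L),$ let $j_u:L\to\C^n$ be the Lagrangian immersion built from $\psi$ and the graph of $du,$ which has boundary on $C_0\cup C_1$ since $u$ vanishes on $\partial L$ and hence $du|_{\partial L}$ is conormal. I consider
\[
\mathcal{F}:[0,\epsilon)\times\mathcal{W}\to C^\alpha(L),\qquad\mathcal{F}(s,u):=*_s\bigl(j_u^*\real\Omega_s\bigr),
\]
with $*_s$ the Hodge star of the pullback of the metric of $(\omega,J_s,\Omega_s).$ Then $\mathcal{F}$ is smooth, $\mathcal{F}(0,0)=0$ as $\chi$ is $\Omega_0$-imaginary special, and by Lemma~\ref{lemma:Laplacian}~\ref{linearization is Laplacian} the linearization $D_u\mathcal{F}(0,0)$ is the operator $\Delta_\rho$ of~\eqref{equation:Laplacian} for the structure $(\omega,J_0,\Omega_0),$ which by Lemma~\ref{lemma:Laplacian}~\ref{elliptic} and elliptic theory restricts to an isomorphism $C^{2,\alpha}(L;\partial L)\to C^\alpha(L).$ The implicit function theorem and elliptic regularity then give a smooth map $s\mapsto u_s\in\mathcal{W},\ s\in[0,\epsilon),$ with $u_0=0,$ $\mathcal{F}(s,u_s)=0,$ and each $u_s$ smooth. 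For $s\in(0,\epsilon),$ after shrinking $\epsilon$ so that $M_s\circ j_{u_s}(L)\subset V,$ I put $Z_s:=[\mathbf{X}\circ M_s\circ j_{u_s}]\in\mathcal{SLC}(S^{n-1};\Lambda_0,\Lambda_1)$ (boundary embeddedness being inherited from $\chi$), and I define $\Phi:S^{n-1}\times[0,1]\times[0,\epsilon)\to X$ by $\Phi(p,t,s):=\mathbf{X}(s\cdot j_{u_s}(p,t))$ for $s>0$ and $\Phi(\cdot,\cdot,0)\equiv q.$ Since $\mathbf{X}^{-1}\circ\Phi(p,t,s)=s\cdot j_{u_s}(p,t)$ depends smoothly on $(p,t,s),$ the map $\Phi$ is smooth with $\partial_s\Phi|_{s=0}=d\mathbf{X}_0\circ\chi;$ as $\chi$ is an immersion, embedded on each boundary component, and nowhere tangent to the Euler field, conditions \eqref{regular convergence to intersection point}\ref{critical point}, \eqref{regular convergence to intersection point}\ref{item:derivative Phi immersion} and \eqref{regular convergence to intersection point}\ref{item:nowhere tangent Euler} of Definition~\ref{definition: interior regularity} hold, so by Lemma~\ref{rem: easy regularity} (after shrinking $\epsilon$) $\Phi$ is a regular parameterization. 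Hence $(Z_s)_{s\in(0,\epsilon)}$ converges regularly to $q,$ and by Definition~\ref{dfn:tangent family} its tangent family is the $\R_{>0}$-orbit of $[d\mathbf{X}_0\circ\chi],$ i.e.\ the cylindrical transform of $(C_t)_t;$ this proves existence in~\ref{general regular family of cylinders}. Applying Lemma~\ref{lemma: geodesic of small Lagrangians} to $(Z_s)_s$ then gives neighborhoods $q\in U_i\subset\Lambda_i$ and a geodesic $(U_t)_t$ of open positive Lagrangians with cylindrical transform $(Z_s)_s$ and critical locus $\{q\};$ by Lemma~\ref{lemma:commute} the cylindrical transform of its geodesic of tangent cones at $q$ is the tangent family, hence that of $(C_t)_t,$ so by the injectivity in Lemma~\ref{lemma:geodesic of cones} the geodesic of tangent cones of $(U_t)_t$ at $q$ is $(C_t)_t;$ that is, $TC_{\hat q_t}U_t=C_t.$ This proves existence in~\ref{general geodesic of open C1 Lags}.

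I expect the uniqueness statements to be the main obstacle. The uniqueness in~\ref{general geodesic of open C1 Lags} should reduce to that in~\ref{general regular family of cylinders}: by Lemma~\ref{lemma:rocking lemma} a geodesic as in~\ref{general geodesic of open C1 Lags} has a cylindrical transform converging regularly to $q,$ whose tangent family is the cylindrical transform of $(C_t)_t$ by Lemma~\ref{lemma:commute}, so two such geodesics have cylindrical transforms converging regularly to $q$ with the same tangent family, and once these are known to agree up to reparameterization the uniqueness clause of Lemma~\ref{lemma: geodesic of small Lagrangians} finishes the argument. For the uniqueness in~\ref{general regular family of cylinders} I must show that the connected component of $\mathcal{SLC}(S^{n-1};\Lambda_0,\Lambda_1)$ admitting an end that converges regularly to $q$ with the prescribed tangent family, together with that end, is unique. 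The plan is to re-run the implicit function argument over the larger space $\cob{2,\alpha}(L),$ where $D_u\mathcal{F}(0,0)$ is surjective with one-dimensional kernel spanned by the fundamental harmonic, so that the solution set near $(0,0)$ is a two-dimensional manifold whose image under $(s,u)\mapsto[\mathbf{X}\circ M_s\circ j_u]$ is an open piece of the one-manifold $\mathcal{SLC}(S^{n-1};\Lambda_0,\Lambda_1)$ near $q$ (Proposition~\ref{proposition:space of special Lagrangian cylinders}), the extra dimension corresponding to the choice of scale; any competing family converging regularly to $q$ with the given tangent family, pulled back through $\mathbf{X}^{-1}$ and $M_s^{-1},$ consists of cylinders lying in this piece. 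The delicate points — isolating the normalization that eliminates the scale redundancy through the relative Lagrangian flux (Lemma~\ref{lemma: flux as Hamiltonian}, Definition~\ref{dfn: flux}, Remark~\ref{rem: flux}) and checking that regular convergence together with non-tangency to the Euler field pins down both the component and the end — I expect to handle by arguments parallel to Lemma~\ref{lemma: Dirichlet} and the uniqueness clause of Lemma~\ref{lemma:perturbed geodesic of cones}.
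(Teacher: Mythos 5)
Your existence argument for both parts coincides with the paper's proof almost step for step: the same Darboux chart and rescaling $M_s$, the same model cylinder taken from the cylindrical transform of $(C_t)_t$ via Lemma~\ref{lemma:geodesic of cones}, the same Weinstein neighborhood and implicit function argument for $(u,s)\mapsto *\,j_u^*\real\Omega_s$ over $C^{2,\alpha}(L;\partial L)$, the same verification of regular convergence through Lemma~\ref{rem: easy regularity}, and the same derivation of part~\ref{general geodesic of open C1 Lags} from Lemmas~\ref{lemma: geodesic of small Lagrangians} and~\ref{lemma:commute}. Your reduction of uniqueness in~\ref{general geodesic of open C1 Lags} to uniqueness in~\ref{general regular family of cylinders} also matches the paper.

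The genuine gap is the uniqueness in part~\ref{general regular family of cylinders}, which you explicitly defer; this is the only step of the proof that requires a new idea beyond the blowup-plus-IFT template, and your sketch does not yet close it. The implicit function theorem gives uniqueness only among graphs of $du$ with $u$ vanishing on \emph{both} boundary components, whereas a competing family $(Y_s)$, pulled back by $M_s^{-1}\circ\mathbf{X}^{-1}$ after matching $s$-derivatives at $s=0$ and invoking Lemma~\ref{lemma: milnor lemma}, is a priori only a family of graphs $j_{u_s}$ with $u_s\in\cob{\infty}(L)$, i.e.\ with an undetermined constant boundary value on $S^{n-1}\times\{1\}$; this constant is exactly the scale redundancy you mention. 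The paper eliminates it by an explicit choice of reparameterization: letting $F^{\rho}_s$ denote the relative Lagrangian flux of the radial family $r\mapsto[r\,l^\rho_s]$, the identity $M_s^*\omega_0=s^2\omega_0$ gives $F^{\rho}_{\rho(s)}/F^{\id}_{s}=s^2/\rho(s)^2$, so taking $\rho(s)=s\sqrt{F^{\id}_s}/\sqrt{F^{\id}_0}$ makes $F^\rho_s$ constant in $s$; homotopy invariance of the flux (Remark~\ref{rem: flux}) then forces the boundary constants $A_s$ to vanish, so $u_s\in C^{\infty}(L;\partial L)$ and IFT uniqueness yields $u_s=k(s)$, i.e.\ $Y_{\rho^{-1}(s)}=Z_s$. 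Your alternative framing via the two-dimensional solution manifold over $\cob{2,\alpha}(L)$ is plausible in outline, but without this concrete normalization the claim that the competing family agrees with $(Z_s)_s$ \emph{up to reparameterization} is not established, and the assertion that its rescaled cylinders ``lie in this piece'' still needs the smooth extension to $s=0$ that the paper extracts from Lemma~\ref{lemma: milnor lemma}.
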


	\begin{proof}
Let $U,V,\mathbf{X}$ be rescaling data for $(\Lambda_0,\Lambda_1,q)$ and let $\widehat{\Lambda}_0,\widehat{\Lambda}_1,J_s,\Omega_s$ be the associated additional data as in Definition~\ref{dfn:rescaling data}. Let $M_s : \C^n \to \C^n$ denote multiplication by $s$ and let $V_s = M_s^{-1}(V).$

Let $\omega_0$ denote the standard symplectic form on $V_0 = \C^n.$ After possibly rescaling $\Omega_0$ by a positive constant, the quadruple $(V_0,\omega_0,J_0,\Omega_0)$ is a Calabi-Yau manifold isomorphic to $\mathbb{C}^n$ with the standard structure. Let $g_0$ denote the corresponding K\"ahler metric. The derivative of $\mathbf{X}$ gives an isomorphism $d\mathbf{X} : V_0 \simeq T_0V_0 \to T_qX$ respecting the Calabi-Yau structure $(\omega_0,J_0,\Omega_0)$ and the induced Calabi-Yau structure on the tangent space $T_qX.$ So,
\[
\widehat \Lambda_t : = (d\mathbf{X})^{-1}(C_t),\qquad t \in [0,1],
\]
is an $\Omega_0$-geodesic of positive Lagrangian cones from $\widehat\Lambda_0$ to $\widehat\Lambda_1$ with positive/negative definite derivative, which we denote by $\left(\widehat h_t\right)_t.$ Abbreviate $L = S^{n-1}\times [0,1].$ Let
\[
\widehat Z_0 = [f: L \to V] \in \mathcal{SLC}(S^{n-1};\widehat \Lambda_0,\widehat\Lambda_1)
\]
belong to the $\R_{>0}$ orbit corresponding to the geodesic $\left(\widehat\Lambda_t\right)_{t\in[0,1]}$ by Lemma~\ref{lemma:geodesic of cones}. In particular, $\widehat Z_0$ is nowhere tangent to the Euler vector field.
Recalling Lemma~\ref{lemma:Weinstein neighborhood of cylinder}, choose a Weinstein neighborhood $(W,\psi)$ of $\widehat Z_0$ compatible with $\widehat\Lambda_{0}$ and $\widehat \Lambda_{1},$ where $W \subset T^*L$ and $\psi:W \to V$ with $\psi|_L = f.$ Let $\pi_L : T^*L \to L$ denote the projection.
Let $\alpha\in(0,1).$ For $u\in\cob{2,\alpha}(L),$ let $\operatorname{Graph}(du) \subset T^*L$ denote the graph. Let $0 \in \mathcal{W} \subset C^{2,\alpha}(L;\partial L)$ be an open neighborhood such that for $u \in \mathcal{W}$ we have $\operatorname{Graph}(du) \subset W.$ For $u \in \mathcal{W},$ let $j_u : L \to V$ be given by
\[
j_u = \psi \circ \left( \pi_L|_{\operatorname{Graph}(du)}\right)^{-1}.
\]		

Define a differential operator
		\[
		\mathcal{F}:\mathcal{W}\times[0,1)\to C^\alpha\left(L\right),\qquad(u,s)\mapsto*j_u^*\real\Omega_s,
		\]
		where $*$ denotes the Hodge star operator of the metric $f^*g_0.$ Since $\widehat Z_0$ is imaginary special Lagrangian, we have $\mathcal{F}(0,0)=0.$ The operator $\mathcal{F}$ is smooth and by Lemma~\ref{lemma:Laplacian}, the linearization in the directions of $\mathcal{W}$ is equal to the Riemannian Laplacian,
		\[
		d\mathcal{F}_{(0,0)}(u,0)=\Delta u,\qquad u\in C^{2,\alpha}\left(L;\partial L\right),
		\]
		which is an isomorphism $C^{2,\alpha}\left(L;\partial L\right)\to C^\alpha\left(L\right).$ By the implicit function theorem, for some $\epsilon>0$ and shrinking $\mathcal{W}$ if necessary, there exists a smooth map
\[
k:[0,\epsilon)\to\mathcal{W}
\]
such that for $s\in[0,\epsilon),$ the function $k(s)$ is the unique element of $\mathcal{W}$ satisfying $\mathcal{F}(k(s),s)=0.$ By elliptic-regularity (e.g.~\cite[Chapter~17]{gilbarg-trudinger}), $k$ is in fact a smooth map $[0,\epsilon)\to C^\infty\left(L;\partial L\right).$ The cylinder $\widehat{Z}_s: = [j_{k(s)}: L \to V]$ is $\Omega_s$-imaginary special Lagrangian for $s\in[0,\epsilon).$

The one-parameter family of cylinders of part~\ref{general regular family of cylinders} is given by
		\[
		Z_s:=[\mathbf{X}\circ M_s \circ j_{k(s)}: L \to X],\qquad s\in(0,\epsilon).
		\]
Indeed, all the cylinders $Z_s$ are imaginary special Lagrangian with respect to the Calabi-Yau form $\Omega,$ so it remains to show the regularity of the family $(Z_s)_s$ about $q.$ Define a map
		\[
		\Phi:S^{n-1}\times[0,1]\times[0,\epsilon)\to X,\qquad(p,t,s)\mapsto \mathbf{X}\circ M_s \circ j_{k(s)}(p,t).
		\]
We proceed to verify that $\Phi$ is a regular parameterization of $(Z_s)_s$ about $q.$ By construction, $\Phi_s := \Phi|_{S^{n-1}\times [0,1]\times \{s\}}$ parameterizes $Z_s$ for $s \in (0,\epsilon),$ so $\Phi|_{S^{n-1}\times [0,1]\times (0,\epsilon)}$ satisfies condition~\eqref{interior regularity first part}\ref{item: rep} of Definition~\ref{definition: interior regularity}. Condition~\eqref{regular convergence to intersection point}\ref{critical point} is satisfied because $\mathbf{X}(0) = q.$ Furthermore, since $j_0 = f,$ we have
\begin{equation}\label{eq:tangent family}
\left.\pderiv[\Phi]{s}\right|_{s = 0} = d\mathbf{X}_0 \circ f.
\end{equation}
So, $\Phi$ satisfies conditions~\eqref{regular convergence to intersection point}\ref{item:derivative Phi immersion} and \eqref{regular convergence to intersection point}\ref{item:nowhere tangent Euler} of Definition~\ref{definition: interior regularity}. So, by Lemma~\ref{rem: easy regularity}, possibly after diminishing $\epsilon,$ the map $\Phi$ is a regular parameterization of $(Z_s)_s$ about~$q.$ Recalling Definition~\ref{dfn:tangent family}, it also follows from equation~\eqref{eq:tangent family} that the tangent family of $(Z_s)_s$ is the cylindrical transform of $(C_t)_t.$

Next, we address uniqueness.  Let $(Y_s)_{s\in(0,\epsilon')}\subset\mathcal{SLC}\left(S^{n-1};\Lambda_0,\Lambda_1\right)$ be another family converging regularly to $q$ with tangent family the cylindrical transform of $(C_t)_t.$ We show that after reparameterization in $s$ and possibly shrinking $\epsilon,\epsilon',$ the families $(Y_s)_s$ and $(Z_s)_s$ coincide. Indeed, let $\Phi':S^{n-1}\times[0,1]\times[0,\epsilon')\to X$ be a regular parameterization of $(Y_s)_s$ about $q.$ Since the tangent families of $(Z_s)_s$ and $(Y_s)_s$ both coincide with the cylindrical transform of $(C_t)_t,$ after possibly reparameterizing $(Y_s)_s$ and composing $\Phi'$ with a diffeomorphism of $S^{n-1} \times [0,1] \times [0,\epsilon'),$ we may assume that
\begin{equation}\label{eq:pderphi'=}
\left.\pderiv[\Phi']{s}\right|_{s = 0} = \left.\pderiv[\Phi]{s}\right|_s.
\end{equation}
Possibly shrinking $\epsilon',$ we may assume that $Y_s$ is contained in $U$ for $s \in (0,\epsilon').$ For $\rho : [0,\epsilon'') \to [0,\epsilon')$ a diffeomorphism with $\deriv[\rho]{s}(0) = 1,$ define $l^\rho_s : L \to V_s$ by
\[
l^\rho_s(p,t) := M_s^{-1} \circ \mathbf{X}^{-1}\circ \Phi'(p,t,\rho^{-1}(s)).
\]
By Lemma~\ref{lemma: milnor lemma}, equation~\eqref{eq:tangent family} and equation~\eqref{eq:pderphi'=}, the family $l^\rho_s$ extends smoothly to $s = 0,$ with
\begin{equation}\label{eq:lrho0}
l^\rho_0 = f_0.
\end{equation}
Possibly after composing $\Phi'$ with a diffeomorphism of $S^{n-1} \times [0,1] \times [0,\epsilon'),$ for $s \in [0,\epsilon'')$ there exists $u^\rho_s \in \cob{\infty}(L)$ such that $l^\rho_s = j_{u^\rho_s}.$ It follows from equation~\eqref{eq:lrho0} that $u^\rho_0 = 0.$

We claim that for an appropriate choice of $\rho,$ we have $u^\rho_s \in C^\infty(L;\partial L).$
Indeed, for $r \in [0,1]$ and $s \in [0,\epsilon''),$ define $l^\rho_{r,s} : L \to V$ by
\[
l^\rho_{r,s}(p,t) := r l^\rho_s(p,t).
\]
Let
\[
\widehat Y^\rho_{r,s} := [l^\rho_{r,s} : L \to V] \in \mathcal{LC}(S^{n-1};\widehat \Lambda_0,\widehat\Lambda_1).
\]
For $s \in [0,\epsilon'),$ let
\[
F^\rho_s : = \operatorname{RelFlux}\left(\left(\widehat Y^\rho_{r,s}\right)_{r \in (0,1]}\right).
\]
Since $M_s^*\omega_0 = s^2 \omega_0,$ we have
\[
F^{\rho}_{\rho(s)}/F^{\id}_{s} = s^2/\rho(s)^2.
\]
Choosing
\[
\rho(s) := s\frac{\sqrt{F_s^{\id}}}{\sqrt{F_0^{\id}}},
\]
it follows that $F^\rho_{\rho(s)}$ is constant in $s,$ and thus $F^\rho_s$ is also constant in $s.$
Since $l^\rho_{0,s} = 0$ for $s \in [0,\epsilon''),$ it follows from Remark~\ref{rem: flux} \eqref{it: flux homotopy invariant} that
\[
\operatorname{RelFlux}\left(\left(\widehat Y^\rho_{1,s}\right)_{s \in [0,s_1]}\right) = 0, \qquad s_1 \in [0,\epsilon'').
\]
Let $A_s \in \R$ be the unique constant such that $\left.\deriv[u^\rho_s]{s} \right|_{S^{n-1}\times\{1\}} \equiv A_s.$ By Definition~\ref{dfn: flux}, we have
\[
\operatorname{RelFlux}\left(\left(\widehat Y^\rho_{1,s}\right)_{r \in [0,s_1]}\right) =  \int_0^{s_1}A_s.
\]
Combining the preceding two equations, we obtain
\[
\int_0^{s_1}A_s = 0, \qquad s_1 \in [0,\epsilon'').
\]
Differentiating with respect to $s_1,$ it follows from the fundamental theorem of calculus that $A_s = 0$ for all $s \in [0,\epsilon'').$ Since $u^\rho_0 = 0,$ we obtain $u_r \in C^{\infty}(L;\partial L)$ as desired.

By construction, the cylinder
\[
\widehat Y^\rho_s := [l^\rho_s : L \to V_s]
\]
is $\Omega_s$-imaginary special Lagrangian. So, $\mathcal{F}(u^\rho_s,s) = 0$ and by the uniqueness of $k(s),$ it follows that $u^\rho_s = k(s).$ It follows that $Y_{\rho^{-1}(s)} = Z_s.$
Thus, we have established part~\ref{general regular family of cylinders} of the lemma.

We construct the geodesic $(U_t)_t$ of open positive Lagrangians of part~\ref{general geodesic of open C1 Lags} by combining Lemma~\ref{lemma: geodesic of small Lagrangians} and part~\ref{general regular family of cylinders}. The tangent cones $TC_{\hat q_t} U_t$ coincide with $C_t$ by Lemma~\ref{lemma:commute}. The uniqueness claim follows from Lemma~\ref{lemma:commute}, the uniqueness claim of Lemma~\ref{lemma: geodesic of small Lagrangians} and the uniqueness claim of part~\ref{general regular family of cylinders}.

	\end{proof}
	
\begin{proof}[Proof of Theorem~\ref{theorem:spider web}]
By Theorem~\ref{theorem:linear geodesic} there exists a geodesic of Lagrangian linear subspaces $(\lambda_t)_{t \in [0,1]} \subset T_qX$ with $\lambda_i = T_q\Lambda_i$ for $i = 0,1.$ Applying Lemma~\ref{lemma:integration of tangent cone} with $C_t = \lambda_t,$ we obtain a regular family of cylinders as in part~\ref{regular family of cylinders} of the theorem and a geodesic of open positive Lagrangians $(U_t)_t$ as in part~\ref{geodesic of open C1 Lags} of the theorem. Since $TC_{\hat q_t} U_t = \lambda_t$ is linear, it follows from Lemma~\ref{lemma:C11 regularity from blowup} that $(U_t)_t$ has regularity $C^{1,1}.$
\end{proof}

	\bibliographystyle{amsabbrvcnobysame}
	\bibliography{bibli}

\end{document}